\newcommand{\define}{\;\stackrel{\Delta}{=}\;}
\newcommand{\ba}{\left[ \begin{array}}
	\newcommand{\ea}{\\ \end{array} \right]}
\def\g{{\boldsymbol{g}}}
\def\s{{\boldsymbol{s}}}
\def\x{{\boldsymbol{x}}}
\def\y{{\boldsymbol{y}}}
\def\z{{\boldsymbol{z}}}
\newcommand{\vg}{{\mathbf{g}}}
\newcommand{\vs}{{\mathbf{s}}}
\newcommand{\vx}{{\mathbf{x}}}
\newcommand{\vy}{{\mathbf{y}}}
\newcommand{\vz}{{\mathbf{z}}}
\newcommand{\cA}{{\mathcal{A}}}
\newcommand{\cF}{{\mathcal{F}}}
\newcommand{\cN}{{\mathcal{N}}}
\newcommand{\cO}{{\mathcal{O}}}
\newcommand{\cW}{{\mathcal{W}}}
\newcommand{\bxi}{{\boldsymbol{\xi}}}
\newcommand{\EE}{\mathbb{E}}
\newcommand{\RR}{\mathbb{R}}
\def\tran{^{\mathsf{T}}}
\def\one{\mathds{1}}
\newcommand{\diag}{{\mathrm{diag}}} % matrix diagonal -> vector
\newcommand{\grad}{{\nabla}}    % gradient
\newcommand{\tr}{{\mathrm{tr}}} % trace
\newcommand{\vvvert}{{\vert\kern-0.25ex\vert\kern-0.25ex\vert}}
\newtheorem{theorem}{Theorem}
\newtheorem{assumption}{Assumption}
\newtheorem{remark}{Remark}
\newtheorem{lemma}{Lemma}
\newtheorem{proposition}[theorem]{Proposition} 
\newtheorem{corollary}{Corollary}
\newcommand{\BlackBox}{\rule{1.5ex}{1.5ex}} 
\newenvironment{proof}{\par\noindent{\bf Proof\ }}{\hfill\BlackBox\\[2mm]}
\begin{document}
	
	\title{Removing Data Heterogeneity Influence Enhances Network Topology Dependence of Decentralized SGD}
	\date{Version 2 \hspace{5mm} \today}
	\author{Kun Yuan\footnote{K. Yuan is with DAMO Academy,
			Alibaba (US) Group, Bellevue, WA 98004, USA. \textup{kun.yuan@alibaba-inc.com}} \and Sulaiman A.\ Alghunaim\footnote{S. Alghunaim is with Dept. Electrical Engr., Kuwait University, Safat 13060, Kuwait. \textup{sulaiman.alghunaim@ku.edu.kw}}
		\and Xinmeng Huang\footnote{X. Huang is with  Grad. Group in AMCS, Univ. of Pennsylvania, Philadelphia, PA 19104, USA. \textup{xinmengh@sas.upenn.edu}}
	} \maketitle

	\begin{abstract}
	{ We consider the decentralized stochastic optimization problems,  where a network of $n$ nodes, each owning a local cost function, cooperate to find a minimizer of the globally-averaged cost. A widely studied decentralized algorithm for this problem is decentralized SGD (D-SGD), in which each node 
	averages only with its neighbors.  D-SGD is efficient in single-iteration communication, but it is very sensitive to the network topology. For smooth objective functions, the transient stage (which measures the number of iterations the algorithm has to experience before achieving the linear speedup stage) of D-SGD is on the order of ${\Omega}(n/(1-\beta)^2)$ and  $\Omega(n^3/(1-\beta)^4)$ for strongly and generally convex cost functions, respectively, where $1-\beta \in (0,1)$ is a topology-dependent quantity that approaches $0$ for a large and sparse network. Hence,  D-SGD suffers from slow convergence for large and sparse networks.

			In this work, we study the non-asymptotic convergence property of the  D$^2$/Exact-diffusion algorithm. By eliminating the influence of data heterogeneity between nodes, D$^2$/Exact-diffusion is shown to have an enhanced transient stage that is on the order of $\tilde{\Omega}(n/(1-\beta))$ and  $\Omega(n^3/(1-\beta)^2)$ for strongly and generally convex cost functions (where $\tilde{\Omega}(\cdot)$ hides all logarithm factors), respectively. Moreover, when D$^2$/Exact-diffusion is implemented with gradient accumulation and multi-round gossip communications, its transient stage can be further improved to $\tilde{\Omega}(1/(1-\beta)^{\frac{1}{2}})$ and $\tilde{\Omega}(n/(1-\beta))$ for strongly and generally convex cost functions, respectively. These established results for D$^2$/Exact-Diffusion have the best ({\em i.e.}, weakest) dependence on network topology to our knowledge compared to existing decentralized algorithms. We also conduct numerical simulations to validate our theories. 
		}
	\end{abstract}
	
	\section{Introduction}
	Large-scale optimization and learning have become an essential tool in many practical applications. State-of-the-art performance has been reported in various fields such as signal processing, control, reinforcement learning, and deep learning. The amount of data needed to achieve satisfactory results in these tasks is typically very large. Moreover,  increasing the size of training data can significantly improve the ultimate performance in these tasks. For this reason, the scale of optimization and learning nowadays calls for efficient distributed 
	solutions across multiple computing nodes (\textit{e.g}, machines).

	This work considers a network of $n$ collaborative nodes connected  through a given topology. Each node owns a private and local cost function $f_i:\mathbb{R}^d \rightarrow \real \mathbb{R}$ and the goal of the network is to find a solution, denoted by $x^\star$, of the {\em stochastic optimization} problem
	\begin{align}\label{eq:general-prob}
	&\ \min_{x \in \RR^d}\quad f(x) = \frac{1}{n}\sum_{i=1}^n f_i(x), \quad \mbox{where} \quad f_i(x) = \mathbb{E}_{{\xi}_i \sim D_i}F(x; {\xi}_i).
	\end{align}
	In this problem, $\xi_i$ is a random variable that represents the local data available at node $i$, and it follows a local distribution $D_i$. 
%	$\mathbb{E}_{\boldsymbol{\xi}_i \sim D_i}$ denotes the expectation over the random variable $\bxi_i$  with distribution $D_i$. The function $f_i$ is the expectation of some loss function $F(x; \boldsymbol{\xi}_i)$ over $\boldsymbol{\xi}_i$, which represents the local data available at node $i$. 
	Each node $i$ has the access to the stochastic gradient $\nabla F(x_i; \xi_i)$ of its private cost, but has to communicate to exchange information with other nodes. In practice, the local data distribution $D_i$ within each node is generally different, and hence, $f_i(x) \neq f_j(x)$ holds for any node $i$ and $j$. For {\em convex costs}, the data heterogeneity across the network can be characterized by $b^2 = \frac{1}{n}\sum_{i=1}^n \|\nabla f_i(x^\star)\|^2$. If all local data samples follow the same distribution $D$, we have $f_i(x) = f_j(x)$ for any $i, j$ and hence $b^2 = 0$.

	% The goal of the network is to find a solution to problem \eqref{eq:general-prob} through local interactions only, \textit{i.e.,} each node $i$ can only communicate with their direct neighbors. The network topology is either specified by the physical nature of the problem or chosen by the designer.  

	One of the leading algorithms to solve problem \eqref{eq:general-prob} is parallel SGD (P-SGD) \cite{zinkevich2010parallelized}.  In P-SGD,  each node computes its local stochastic gradient and then synchronizes across the entire network to find the  globally averaged stochastic gradient used to update the model parameters (\textit{i.e.}, solution estimates).  The global synchronization step needed to compute the  globally averaged stochastic gradient can be implemented via  Parameter Server \cite{smola2010architecture,li2014scaling} or Ring-Allreduce \cite{ring-allreduce}, which suffers from either significant bandwidth cost or high latency, see \cite[Table I]{ying2021bluefog} for detailed discussion. Decentralized SGD (D-SGD) \cite{lopes2008diffusion,nedic2009distributed,chen2012diffusion} is a promising alternative to P-SGD  due to its ability to reduce the communication overhead \cite{lian2017can,assran2019stochastic,lian2018asynchronous,yuan2021decentlam}. D-SGD is based on \textit{local averaging} (also known as {\em gossip averaging}) in which each node computes the locally averaged model parameters with their direct neighbors as opposed to the global average. Moreover, no global synchronization step is required in D-SGD. On a delicately-designed sparse topology such as one-peer exponential graph \cite{assran2019stochastic,ying2021exponential}, each node only needs to communicate with \textit{one} neighbor per iteration in D-SGD, resulting in a much cheaper communication cost compared to P-SGD, see the discussion in \cite{assran2019stochastic,bluefog2021_4616052,chen2021accelerating} and \cite[Table I]{ying2021bluefog}.

	Apart from its efficient single-iteration communication, D-SGD can asymptotically achieve the same linear speedup  as P-SGD  \cite{lian2017can,lian2018asynchronous,assran2019stochastic,koloskova2020unified}. {{\em Linear speedup} refers to a property in distributed algorithms where the number of iterations needed to reach an $\epsilon$-accurate solution reduces linearly with the number of nodes.} The \textit{transient stage} \cite{pu2019sharp}, which refers to those iterations before an algorithm reaches its linear speedup stage, is an important metric to measure the convergence performance of decentralized algorithms. \textbf{The convergence rate and  transient stage of D-SGD is very sensitive to the network topology.} For example, the transient stage of D-SGD for generally convex or non-convex objective functions is on the order of $O(n^3/(1-\beta)^4)$ \cite{koloskova2020unified}, where $1-\beta$ measures the network topology connectivity. For a large and sparse network in which $1-\beta$  approaches to $0$, D-SGD will suffer from an extremely long transient stage and it may not be able to reach the linear speedup stage given limited training time and computing resource budget. 	For this reason, D-SGD may end up with a low-quality solution that is  significantly worse than that obtained by P-SGD. As a result, 
% 	and the solution accuracy achieved by D-SGD may be  worse than that obtained by P-SGD. Hence, 
	 improving the network topology dependence (\textit{i.e.}, making the convergence rate less sensitive to network topology) in D-SGD is crucial to enhance its convergence rate and solution accuracy. 
	
	The main factor in D-SGD contributing to its strong dependence on the network topology is the data heterogeneity across each node as shown in \cite{koloskova2020unified,yuan2020influence,pu2019sharp}. This naturally motivates us to examine whether removing the influence of data heterogeneity ({\em i.e.}, $b^2$) can improve the dependence on the topology ({\em i.e.}, $1-\beta$) of D-SGD. 
	% Data heterogeneity is caused by the difference in the local data distribution $D_i$, which can be characterized by the term $b^2 = \frac{1}{n}\sum_{i=1}^n \|\nabla f_i(x^\star)\|^2$ for convex costs. If $D_i \neq D_j$, we have $f_i(x) \neq f_j(x)$ and hence $b^2$ exists. Moreover, if all local data samples follow the same distribution $D$, we have $f_i(x) = f_j(x)$ for any $i, j$ and hence $b^2 = 0$. As a result, our problem reduces to examine whether removing the influence of $b^2$ can improve the convergence rate dependence on $1-\beta$ for D-SGD. 

	\subsection{Main Results}
	This work revisits the D$^2$ algorithm \cite{tang2018d}, which is  also known as Exact-Diffusion \cite{yuan2017exact1,yuan2018exact2,yuan2020influence} (or NIDS \cite{li2019decentralized}). D$^2$/Exact-Diffusion is a decentralized optimization algorithm that can remove the influence of data heterogeneity \cite{yuan2018exact2,li2019decentralized}, but it is  unclear whether D$^2$/Exact-Diffusion has an improved network topology dependence compared to D-SGD in the transient stage.  In this work, we establish non-asymptotic convergence rates for D$^2$/Exact-Diffusion under both the generally-convex and strongly-convex  settings. The established bounds show that D$^2$/Exact-Diffusion has the best known network topology dependence compared with existing results. In particular, 
	this paper establishes that D$^2$/Exact-Diffusion at iteration $T$ converges with rate 
	\begin{align}
	\frac{1}{T+1}\sum_{k=0}^T\big(\mathbb{E}f(\bar{x}^{(k)}) - f(x^\star)\big) &= O\left(\frac{\sigma}{\sqrt{n T}} + \frac{\sigma^{2/3}}{(1-\beta)^{1/3} T^{2/3}} + \frac{1}{(1-\beta)T}\right) \hspace{2.6cm} \mbox{(G-C)} \label{D2-rate-GC} \\
	\frac{1}{H_T}\sum_{k=0}^T h_k \big(\mathbb{E}f(\bar{x}^{(k)}) - f(x^\star) \big) &= \tilde{O}\left(\frac{\sigma^2}{n T} + \frac{\sigma^2}{(1-\beta) T^2} + 
	\frac{1}{1-\beta} \exp\{-(1-\beta) T \}  \right) \hspace{1.6cm} \mbox{(S-C)} \label{D2-rate-SC} 
	\end{align}
	where $\bar{x}^{(k)} = \frac{1}{n}\sum_{i=1}^n x_i^{(k)}$, $x_i^{(k)}$ is the estimate of agent $i$ at iteration $k$,  and $\sigma^2$ denotes the variance of the stochastic gradient $\nabla F(x; \bxi_i)$. The weights $h_k \ge 0$ are given in Lemma \ref{lm-sc-ergodic-consenus} and $H_T = \sum_{k=0}^T h_k$. The notation $\tilde{O}(\cdot)$ hides all logarithm factors.  Moreover, (G-C) stands for the generally convex scenario while (S-C) stands for strongly convex one. Below, we compare this result with D-SGD. 
	
	\vspace{1mm}
	\noindent \textbf{Transient stage and linear speedup}. When $T$ is sufficiently large, the first term $\sigma^2/(nT)$ in \eqref{D2-rate-SC} (or $\sigma/\sqrt{nT}$ in \eqref{D2-rate-GC}) will dominate the rate.  In this scenario, D$^2$/Exact-Diffusion requires $T = \Omega(1/n\epsilon)$ in \eqref{D2-rate-SC} (or $T = \Omega(1/n\epsilon^2)$ in \eqref{D2-rate-GC}) iterations to reach a desired $\epsilon$-accurate solution for strongly convex (or generally convex) problems, which is inversely proportional to the network size $n$. Therefore,  we say an algorithm reaches the linear-speedup stage if for some $T$, the term involving $nT$ is dominating the rate. Rates \eqref{D2-rate-GC} and \eqref{D2-rate-SC} corroborate that D$^2$/Exact-Diffusion, similar to P-SGD,  achieves linear speedup for sufficiently large $T$. We note that D$^2$/Exact-Diffusion can also achive linear speedup   in the non-convex setting  \cite{tang2018d,alghunaim2021unified}. 	The {\em transient stage}  is the amount of iterations needed to reach the linear-speedup stage, which is an important metric to measure the scalability of distributed algorithms \cite{pu2019sharp}. For example, let us consider D$^2$/Exact-Diffusion in the strongly convex scenario \eqref{D2-rate-SC}: to reach linear speedup, $T$ has to satisfy  $(1-\beta)T^2\le nT$  (\textit{i.e.,} $T \ge n/(1-\beta)$). Therefore, the transient stage in D$^2$/Exact-Diffusion for strongly-convex scenario requires  $\tilde{\Omega}(n/(1-\beta))$ iterations. 
	
	\vspace{1mm}
	\noindent \textbf{Comparison with D-SGD.} Table \ref{table-dsgd_vs_D2} lists the convergence rates for D-SGD and D$^2$/Exact-Diffusion for both generally and strongly convex scenarios.  Compared to D-SGD, it is observed in \eqref{D2-rate-GC} and \eqref{D2-rate-SC} that D$^2$/Exact-Diffusion has eliminated the data heterogeneity $b^2$ term. Note that the term related to data heterogeneity $b^2$ has the strongest topology dependence on $1-\beta$ for D-SGD. In Table \ref{table-transient-stage-local} we list the transient stage of D$^2$/Exact-Diffusion and other existing algorithms. It is observed that D$^2$/Exact-Diffusion has an improved transient stage in terms of $1-\beta$ compared to D-SGD by removing the influence of data heterogeneity. Gradient tracking methods \cite{xu2015augmented,di2016next,nedic2017achieving,qu2018harnessing,xin2020improved} can also remove the data heterogeneity, but their transient stage established in existing prior works still suffers from worse network topology dependence than D$^2$/Exact-Diffusion and even D-SGD. In other words, D$^2$/Exact-Diffusion enjoys the state-of-the-art topology dependence in the generally and strongly convex scenarios.
	
	\begin{table}[t]
		\centering
		\begin{tabular}{ccc}
			\toprule
			\textbf{Scenario} & \textbf{D$^2$/Exact-Diffusion}                         & \textbf{D-SGD}                    \\ \midrule
			Generally-convex  & \eqref{D2-rate-GC}     & \eqref{D2-rate-GC}~+~$O\left(\frac{b^{2/3}}{(1-\beta)^{2/3} T^{2/3}}\right)$ \vspace{1mm}\\
			Strongly-convex  & \eqref{D2-rate-SC}    & \eqref{D2-rate-SC}~+~$O\left(\frac{b^2}{(1-\beta)^2 T^2}\right)$       \\
			\bottomrule
		\end{tabular}
		\caption{\small Convergence rate comparison with D-SGD  from \cite{koloskova2020unified}.}
		\label{table-dsgd_vs_D2}
	\end{table}
	
	\vspace{1mm}
	{\noindent \textbf{Further improvement with multi-round gossip communication.} Another orthogonal idea to improve network topology dependence is to run multiple gossip steps per D$^2$/Exact-Diffusion update. By utilizing multiple gossip communications and gradient accumulation, the transient stage of D$^2$/Exact-Diffusion can be significantly improved in both generally- and strongly-convex scenarios, see the last row in Table \ref{table-transient-stage-local}. 
% 	further significantly improve the transient stage of D$^2$/Exact-Diffusion from $\Omega(n^3(1-\beta)^{-2})$ to $\tilde{\Omega}(n(1-\beta)^{-1})$ for generally-convex scenario, and from $\tilde{\Omega}(n(1-\beta)^{-1})$ to $\tilde{\Omega}((1-\beta)^{-0.5})$ for strongly-convex scenario, see the last column in Table \ref{table-transient-stage-local}. 
% 	It is worth noting that the achieved $\tilde{\Omega}((1-\beta)^{-0.5})$ transient stage for strongly-convex scenario is the first result to our knowledge that is nearly independent\footnote{By ``nearly independent'', we mean the transient stage relates to $\log(n)$, but not to network size $n$. } of the network size $n$.
	} 
% 	{\color{red}[ But $1-\beta$ depends on $n$?]}
	
	\subsection{Contributions}
	This work makes the following contributions:
	\begin{itemize}
		\item We revisit the D$^2$/Exact-Diffusion algorithm \cite{tang2018d,yuan2017exact1,yuan2018exact2,li2019decentralized,yuan2020influence} and establish its non-asymptotic convergence rate  under the {\em generally-convex} settings. By removing the influence of data heterogeneity, D$^2$/Exact-Diffusion is shown to improve the transient stage of D-SGD from $\Omega(n^3/(1-\beta)^4)$ to $\Omega(n^3/(1-\beta)^2)$, which is less sensitive to the network topology.
		
		\item We also establish the non-asymptotic convergence rate of  D$^2$/Exact-Diffusion under the {\em strongly-convex} settings. It is shown that D$^2$/Exact-Diffusion improves the transient stage of D-SGD from $\tilde{\Omega}(n/(1-\beta)^2)$ to $\tilde{\Omega}(n/(1-\beta))$. Furthermore, we prove that the transient stage of D-SGD  is lower bounded by $\tilde{\Omega}(n/(1-\beta))$ with homogeneous data ({\em i.e.}, $b^2=0$) in the strongly-convex scenario. This implies that the dependence of D-SGD on the network topology can only match with D$^2$/Exact-Diffusion if the data is homogeneous\footnote{The transient stage of D-SGD is lower bounded by $\Omega(n/(1-\beta)^2)$ in the heterogeneous case \cite{koloskova2020unified,pu2019sharp}}, which is typically an impractical assumption.

% 		\item We prove that the transient stage of D-SGD  is lower bounded by $\Omega(1/(1-\beta))$ under homogeneous data ({\em i.e.}, $b^2=0$) for the strongly-convex scenario. This bound coincides with the transient stage of D$^2$/Exact-Diffusion under heterogeneous data in terms of {\em network topology dependence}. This implies that the dependence of D-SGD on the network topology can only match with D$^2$/Exact-Diffusion if the data is homogeneous\footnote{The transient stage of D-SGD is lower bounded by $\Omega(n/(1-\beta)^2)$ under heterogeneous case \cite{koloskova2020unified,pu2019sharp}}, which is an impractical assumption.
		
		\item We further improve network topology dependence by integrating {\em multiple gossip} and {\em gradient accumulation} to D$^2$/Exact-Diffusion. With these two useful techniques, the transient stage of D$^2$/Exact-Diffusion improves from $\Omega(n^3/(1-\beta)^2)$ to $\tilde{\Omega}(n/(1-\beta))$ in the generally-convex scenario, and $\tilde{\Omega}(n/(1-\beta))$ to $\tilde{\Omega}(1/(1-\beta)^{1/2})$ in the strongly-convex scenario. D$^2$/Exact-Diffusion with multiple gossip communications has a significantly better dependence on topology and network size than existing algorithms \cite{koloskova2020unified,pu2019sharp,pu2020distributed,huang2021improve,koloskova2021improved}.

	\end{itemize}

\begin{table}[]
\centering
\begin{tabular}{rcc}
\toprule
\textbf{Scenario}           & \textbf{Generally-convex}                                 & \textbf{Strongly-convex}                                                \\ \midrule
{D-SGD} \cite{koloskova2020unified,pu2019sharp}              & $\Omega\left(\frac{n^3}{(1-\beta)^4}\right)$     & $\tilde{\Omega}\left(\frac{n}{(1-\beta)^2}\right)$          \vspace{1mm}   \\
{Gradient Tracking} \cite{pu2020distributed}  & N.A.                                             & $\tilde{\Omega}\left(\frac{n}{(1-\beta)^3}\right)$           \vspace{1mm}  \\
\textbf{D$^2$/Exact-Diffusion (Ours)} & $\Omega\left(\frac{n^3}{(1-\beta)^2}\right)$     & $\tilde{\Omega}\left(\frac{n}{1-\beta}\right)$               \vspace{1mm}  \\
\textbf{D$^2$/ED-MG (Ours)}           & $\tilde{\Omega}\left(\frac{n}{(1-\beta)}\right)$ & $\tilde{\Omega}\left(\frac{1}{(1-\beta)^{\frac{1}{2}}}\right)$ \\ \bottomrule
\end{tabular}
\caption{\small Transient stage comparison between D-SGD, gradient tracking,  D$^2$/Exact-Diffusion, and D$^2$/Exact-Diffusion with multi-round gossip communication (D$^2$/ED-MG for short) in the strongly convex and generally convex settings. Note that $1-\beta \in (0,1)$. Notation $\tilde{\Omega}(\cdot)$ hides all logarithm factors. The smaller the transient stage is, the faster the algorithm will achieve linear speedup.}
\label{table-transient-stage-local}
\end{table}
	
	\subsection{Related Works}
	\noindent \textbf{Decentralized optimization.} Distributed optimization algorithms can be (at least) traced back to the work \cite{tsitsiklis1986distributed}. Decentralized gradient descent (DGD) \cite{nedic2009distributed,lopes2008diffusion,chen2012diffusion} and dual averaging \cite{duchi2011dual} are among the earliest decentralized optimization algorithms. DGD can have several forms depending on the combination/communication step order  such as consensus or diffusion \cite{sayed2014adaptation} (diffusion is also called adapt-then-combine DGD or just DGD in many recent works). Both DGD and dual averaging  suffer from a bias caused by data heterogeneity even under deterministic settings (\textit{i.e.,} no gradient noise exists) \cite{nedic2009distributed,chen2013distributed,yuan2016convergence} -- see more explanation in Sec.~\ref{sec-dsgd-bias}. Numerous algorithms have been proposed to address this issue, such as alternating direction method of multipliers (ADMM) methods \cite{wei2012distributed,shi2014linear}, explicit bias-correction methods (such as EXTRA~\cite{shi2015extra}, Exact-Diffusion~\cite{yuan2017exact1,yuan2018exact2}, NIDS~\cite{li2019decentralized}, and gradient tracking \cite{xu2015augmented, di2016next, nedic2017achieving, qu2018harnessing} -- see \cite{alghunaim2020decentralized}), and dual acceleration \cite{scaman2017optimal, scaman2018optimal, uribe2020dual}. These algorithms, in the deterministic setting, can converge to the exact solution without any bias. On the other hand, decentralized stochastic methods (in which the gradient is noisy) have also gained a lot of attentions recently. Since Decentralized SGD (D-SGD) has the same asymptotic linear speedup as P-SGD \cite{chen2012diffusion,sayed2014adaptation,lian2017can,koloskova2020unified,pu2019sharp} but with a more efficient single-iteration communication, it has been extensively studied in the context of large-scale machine learning (such as deep learning). 
	
	\vspace{1mm}
	\noindent \textbf{Data heterogeneity and network dependence.} It is well known that D-SGD is largely affected by the gradient noise, but it was unclear how the data heterogeneity influences the performance of D-SGD. The work \cite{yuan2020influence}  clarified that the error caused by the data heterogeneity can be greatly amplified when network topology is sparse, which can even be larger than the gradient noise error. The works \cite{pu2019sharp} and \cite{koloskova2020unified} also showed that the error term caused by data heterogeneity has the worst dependence on the network topology for D-SGD. It is thus conjectured that removing the influence of data heterogeneity can improve the topology dependence of D-SGD. The D$^2$/Exact-Diffusion algorithm and gradient tracking methods have been studied under stochastic settings in \cite{tang2018d} and \cite{pu2020distributed,xin2020improved,lu2019gnsd,zhang2019decentralized,xin2022fast}, respectively. However, the analysis in these works does not reveal whether the removal of data heterogeneity can improve the dependence on network topology. The work \cite{yuan2020influence} studied D$^2$/Exact-Diffusion in the \textit{steady-state} (asymptotic) regime and for {\em strongly-convex} costs. Under this setting, \cite{yuan2020influence} showed that D$^2$/Exact-Diffusion has an improved network topology dependence by removing data heterogeneity, but it is unclear whether the improved steady-state performance in \cite{yuan2020influence} carries over to D-SGD's {\em non-asymptotic performance} (convergence rate). This paper clarifies the improvement in the {\em non-asymptotic} convergence rate in D$^2$/Exact-Diffusion for {\em both} generally and strongly convex scenarios. These results demonstrate that removing influence of data heterogeneity improves the dependence on network topology for D-SGD. In addition, the established  dependence on network topology for D$^2$/Exact-Diffusion in the strongly-convex scenario coincides with  lower bound of D-SGD with {\em homogeneous} dataset. 
	
	\vspace{1mm}
	\noindent \textbf{Transient stage.} As to the transient stage of D-SGD, the work \cite{pu2019sharp} shows that it is $\Omega(n/(1-\beta)^2)$ for strongly-convex settings, and the result from \cite{koloskova2020unified} imply that it is $\Omega(n^3/(1-\beta)^4)$ for generally-convex settings. In comparison, we establish that D$^2$/Exact-Diffusion has an improved $\Omega(n/(1-\beta))$ and $\Omega(n^3/(1-\beta)^2)$ transient stage for strongly and generally convex scenarios, respectively. This work does not study the transient stage of D$^2$/Exact-Diffusion for the non-convex scenario as the current analysis cannot be directly extended to such setting. Note that \cite{tang2018d} and \cite{xin2020improved} provides an $\Omega(n^3/(1-\beta)^6)$ transient stage for D$^2$/Exact-Diffusion and stochastic gradient tracking under the  non-convex setting, respectively.  These transient analysis results, however, are worse than D-SGD in terms of network topology dependence.  There are some recent works  \cite{yuan2021decentlam,yu2019linear,lin2021quasi} that target to alleviate the influence of data heterogeneity on decentralized stochastic {\em momentum} SGD, but they do not show an improved dependence on network topology.
	
	\vspace{1mm}
	\noindent \textbf{Multi-round gossip communication.} Multi-round gossip communication has been utilized in recent works to boost the  performance of decentralized algorithms. For example, \cite{berahas2018balancing} employs multi-round gossip to balance communication and computation burdens, \cite{scaman2017optimal} develops an optimal decentralized algorithm based upon multi-round gossip for smooth and strongly convex problems in the deterministic scenario, and \cite{li2020decentralized} achieves near-optimal communication complexity with multi-round gossip and increasing penalty parameters. For decentralized and stochastic optimization, a recent work \cite{lu2021optimal} imposes multi-round gossip communication and gradient accumulation to stochastic gradient tracking to significantly improve the convergence rate under the non-convex setting. However, these works do not show how multi-round gossip communication can improve the network topology dependence in the strongly and generally convex scenarios. In addition, our analysis utilizes the special structure of D$^2$/Exact-Diffusion to achieve $\tilde{\Omega}(1/(1-\beta)^{1/2})$ transient stage for the strongly-convex scenario. The analysis in \cite{lu2021optimal}, which is tailored for gradient tracking, cannot be extended to achieve $\tilde{\Omega}(1/(1-\beta)^{1/2})$ transient stage for the  strongly-convex problems, see Remark \ref{remark-weakest-dependence}. 
% 	{\color{red}[shall we mention that the analysis in \cite{lu2021optimal} do not prove improved network bounds for vanilla GT when employing one round single round?]}
	
% 	One goal of this work is to examine how the multi-round gossip communication will improve the network dependence of D$^2$/Exact-Diffusion. D$^2$/Exact-Diffusion has a different algorithm structure and it can save half the communication overheads per iteration compared to gradient tracking. Apparently, the differences in algorithm structures and problem settings (we consider generally and strongly convex problems while \cite{lu2021optimal} considers non-convex problems) will result in different analyses and conclusions. For example, D$^2$/Exact-Diffusion with multi-round gossip has a $\tilde{\Omega}(1/(1-\beta)^{1/2})$ transient stage for the strongly-convex scenario, the first result to our knowledge that is nearly-independent of the network size $n$. 
	
	\vspace{1mm}
	\noindent \textbf{Parallel works.} Simultaneously and independently\footnote{\cite{huang2021improve} appeared online on May 11, 2021 and our work appeared online on May 17, 2021.}, a parallel work \cite{huang2021improve} has established a result under strongly-convex scenario that is partially similar to this work. 
	However, it does not study the generally-convex scenario. 
% 	We also prove a lower bound of D-SGD with homogeneous dataset that shows that D$^2$/Exact-Diffusion's dependence on network topology cannot be worse than D-SGD and always better under the heterogeneous setting. Note that our analysis techniques are also different from \cite{huang2021improve}. 
% 	Another parallel work \cite{koloskova2021improved} established that gradient tracking has an improved network topology dependence under both convex and non-convex scenarios\footnote{\cite{koloskova2021improved} appeared online around November 15, 2021.}. However, D$^2$/Exact-Diffusion is more communication-efficient than gradient tracking, and additional results on lower bound of homogeneous D-SGD and transient stage on D$^2$/Exact-Diffusion with multi-round gossip communication are also established in our work. Moreover, our analysis techniques are also different from \cite{huang2021improve} and \cite{koloskova2021improved}. A detailed comparison with \cite{huang2021improve} and \cite{koloskova2021improved} is listed in Appendix [XXX]. 
Another parallel work \cite{koloskova2021improved} established that gradient tracking has an improved network topology dependence that nearly-matches D$^2$/Exact-Diffusion (up to a $\log(1-\beta)$ term)\footnote{\cite{koloskova2021improved} appeared online around November 15, 2021.}. This implies D$^2$/Exact-Diffusion has a slightly better theoretical dependence on network topology. Moreover, D$^2$/Exact-Diffusion is more communication-efficient than gradient tracking since it only requires one communication round per iteration. We note that different from \cite{huang2021improve} and \cite{koloskova2021improved}, additional results on lower bound of homogeneous D-SGD and transient stage on D$^2$/Exact-Diffusion with multi-round gossip communication are also established in our work. Moreover, our analysis techniques are also different from \cite{huang2021improve} and \cite{koloskova2021improved}. A detailed comparison with \cite{koloskova2021improved} is listed in Table \ref{table-D2_vs_GT}. 

	\subsection{Notations}\label{sec:notations}
	Throughout the paper we let $x_i^{(k)} \in \mathbb{R}^d$ denote the local solution estimate for node $i$ at iteration $k$. Furthermore, we define the matrices
	\begin{subequations}
	\begin{align}
	\vx^{(k)} &= [x_1^{(k)}, \ldots, x_n^{(k)}]^T \in \RR^{n\times d}, \\
	\nabla F(\vx^{(k)};\bxi^{(k)}) &= [\nabla F_1(x_1^{(k)};\xi_1^{(k)}),\ldots, \nabla F_n(x_n^{(k)};\xi_n^{(k)})]^T \in \RR^{n\times d}, \\
	\nabla f(\vx^{(k)}) &= [\nabla f_1(x_1^{(k)}),\ldots, \nabla f_n(x_n^{(k)})]^T \in \RR^{n\times d},
	\end{align}
	\end{subequations}
	% \newpage
	% \begin{align}
	% \vx^{(k)} = 
	% \ba{c}
	% (\x_1^{(k)})\tran \\
	% \vdots  \\
	% (\x_n^{(k)})\tran
	% \ea \in \RR^{n\times d} \quad \quad
	% \nabla F(\vx^{(k)};\bxi^{(k+1)}) = 
	% \ba{c}
	% \nabla F_1(\x_1^{(k)};\bxi_1^{(k+1)})\tran \\
	% \vdots \\
	% \nabla F_n(\x_n^{(k)};\bxi_n^{(k+1)})\tran
	% \ea \in \RR^{n\times d}
	% \end{align}
	which collect all local variables/gradients across the network. Note that $\nabla f(\vx^{(k)}) = \mathbb{E}[\nabla F(\vx^{(k)};\bxi^{(k)})]$. We use $\mathrm{col}\{a_1, \ldots, a_n\}$
	and $\mathrm{diag}\{a_1, \ldots, a_n\}$ to denote a column vector and a diagonal matrix formed from $a_1, \ldots, a_n$. We let $\mathds{1}_n =
	\mathrm{col}\{1, \ldots , 1\} \in \RR^n$ and $I_n \in \RR^{n\times n}$ denote the identity matrix. For a symmetric matrix $A \in \RR^{n\times n}$, we let $\lambda_i(A)$  to be the $i$th largest eigenvalue and $\rho(A)=\max_i |\lambda_i(A)|$ denote the spectral radius of matrix $A$. In addition, we let $[n]=\{1,\ldots, n\}$ for any positive integer $n$. Suppose that $A\in \mathbb{R}^{n\times n}$ is a positive semidefinite matrix with  eigen-decomposition $A = U\Lambda U^T$ where $U\in \mathbb{R}^{n\times n}$ is an orthogonal matrix and $\Lambda\in \mathbb{R}^{n\times n}$ is a non-negative diagonal matrix. Then, we let $A^{\frac{1}{2}} = U\Lambda^{\frac{1}{2}}U^T \in \mathbb{R}^{n\times n}$ be the square root of the matrix $A$. Note that $A^{\frac{1}{2}}$ is also positive semidefinite and $A^{\frac{1}{2}} \times A^{\frac{1}{2}} = A$. For a vector $a$, we let $\|a\|$ denote its $\ell_2$ norm. For a matrix $A$, we let $\|A\|$ denote its $\ell_2$ norm and $\|A\|_F$ denote its Frobenius norm. We use $\lesssim$ and $\gtrsim$ to indicate inequalities that hold by omitting  absolute constants. 
	
	\section{Preliminaries and Assumptions}
	\label{sec:ass}

	% \begin{algorithm}[b]
	% \caption{D$^2$/Exact-Diffusion}
	% \label{Algorithm: D2}
	
	% \textbf{Require:} Let $\bar{W}=(I+W)/2$ and initialize $x_i^{(0)}$ arbitrarily; let $\psi_i^{(0)} = x_i^{(0)}$.
	
	% \vspace{1mm}
	% \textbf{for} $k=0,1,2,...$, every node $i$ \textbf{do}
	
	% \vspace{0.5mm}
	% \hspace{5mm} 	Sample $\xi^{(k)}_{i}$ and calculate $g_i^{(k)} = \nabla F(x_i^{(k)};\xi_i^{(k)})$; \
	
	% \hspace{5mm} Update $\psi_i^{(k+1)} = x_i^{(k)} - \gamma g_i^{(k)}$; \hspace{3.55cm} \triangleright \mbox{ \footnotesize{local gradient descent step}}$ \
	
	% \hspace{5mm} Update	$\phi_i^{(k+1)} = \psi_i^{(k+1)} + x_i^{(k)} - \psi_i^{(k)}; \hspace{2.5cm} \triangleright \mbox{ \footnotesize{solution correction step}}$ \
	
	% \hspace{5mm} Update	$x^{(k+1)}_i = \sum_{j\in \mathcal{N}_i} \bar{w}_{ij}\, \phi_j^{(k+1)}; \hspace{2.97cm} \triangleright  \mbox{ \footnotesize{communication step}}$
	% \end{algorithm}
	
	\subsection{Weight Matrix}
	To model the decentralized communication, we let $w_{ij} \ge 0$ be the weight used by node $i$ to scale information flowing from node $j$ to node $i$. We let $\cN_i$ denote the neighbors of node $i$, including node $i$ itself.  We let $w_{ij}=0$ if  node $j \notin \cN_i$. We define the weight matrix $W = [w_{ij}] \in \RR^{n\times n}$ and assume the following condition.
	\begin{assumption}[\sc Weight matrix]
		\label{ass-W}
		The network is strongly connected and the weight matrix $W$ is doubly stochastic and symmetric, {\em i.e.}, $W = W^T$ and $W\mathds{1}_n = \mathds{1}_n$. 
		% \begin{align}
		% W = W^T, \quad W\mathds{1}_n = \mathds{1}_n.
		% \end{align} 
		$\hfill \square$
	\end{assumption}
	\begin{remark}[\sc Spectral gap]\label{rm-beta}
		Under Assumption \ref{ass-W}, it holds that $1 = \lambda_1(W) > \lambda_2(W) \ge \cdots \ge \lambda_n(W) > -1$. If we let $\beta = \rho(W - \frac{1}{n}\mathds{1}\mathds{1}^T)$, it follows that $\beta = \max\{|\lambda_2(W)|, |\lambda_n(W)|\} \in (0,1)$. The network quantity $1-\beta$ is called the spectral gap of $W$, which reflects the connectivity of the network topology. The scenario $1 - \beta \to 1$ implies a well-connected topology (e.g., for fully connected topology, we can choose $W = \frac{1}{n}\mathds{1}\mathds{1}^T$ and hence $\beta = 0$). In contrast, the scenario $1 - \beta \to 0$ implies a badly-connected topology. $\hfill \square$ 
	\end{remark}
		\subsection{D$^2$/Exact-Diffusion Algorithm}
	Using notations $\vx^{(k)}$ and $\nabla F(\vx^{(k)};\bxi^{(k)})$ introduced in Sec.~\ref{sec:notations}, the D$^2$ algorithm \cite{tang2018d}, also known as Exact-Diffusion in  \cite{yuan2017exact1,yuan2018exact2,yuan2020influence} or NIDS in \cite{li2019convergence}, can be written as 
	\begin{align}\label{eq:d2}
	\vx^{(k+1)} = \bar{W}\Big( 2 \vx^{(k)} - \vx^{(k-1)} - \gamma \big(\nabla F(\vx^{(k)};\bxi^{(k)}) - \nabla  F(\vx^{(k-1)};\bxi^{(k-1)})\big)\Big), \quad \forall~ k=1,2,\ldots
	\end{align}
	where  $\bar{W} := (W + I)/2$ and $\gamma$ is the learning rate (stepsize) parameter. The algorithm is initialized with $\vx^{(1)} = \bar{W}\big(\vx^{(0)} - \gamma \big(\nabla F(\vx^{(0)};\bxi^{(0)})\big)$ for any $\vx^{(0)}$. Different from the vanilla decentralized SGD (D-SGD), D$^2$/Exact-Diffusion exploits the last two consecutive iterates and stochastic gradients to update the current variable. Such algorithm construction is proved to be able to remove the influence of data heterogeneity, see \cite{tang2018d,yuan2018exact2,yuan2020influence,li2019decentralized}. Recursion \eqref{eq:d2} can be conducted in a decentralized manner as listed in Algorithm \ref{Algorithm: D2} (see \cite{yuan2017exact1,yuan2020influence} for details). A fundamental difference from  
	D-SGD lies in the solution correction step. If $x_i^{(k)} - \psi_i^{(k)}$ is removed, then Algorithm \ref{Algorithm: D2} reduces to D-SGD.
	
	\begin{algorithm}[h!]
		\caption{D$^2$/Exact-Diffusion}
		\label{Algorithm: D2}
		
		\textbf{Require:} Let $\bar{W}=(I+W)/2$ and initialize $x_i^{(0)}=0$; let $\psi_i^{(0)} = x_i^{(0)}$.
		
		\vspace{1mm}
		\textbf{for} $k=0,1,2,...$, every node $i$ \textbf{do}
		
		\vspace{0.5mm}
		\hspace{5mm} 	Sample $\xi^{(k)}_{i}$ and calculate $g_i^{(k)} = \nabla F(x_i^{(k)};\xi_i^{(k)})$; \
		
		\hspace{5mm} Update $\psi_i^{(k+1)} = x_i^{(k)} - \gamma g_i^{(k)}$; $\hspace{3.55cm}  \triangleright  \mbox{ \footnotesize{local gradient descent step}}$
		
		\hspace{5mm} Update $\phi_i^{(k+1)} = \psi_i^{(k+1)} + x_i^{(k)} - \psi_i^{(k)}$; $\hspace{2.3cm}  \triangleright  \mbox{ \footnotesize{solution correction step}}$
		
		\hspace{5mm} Update	$x^{(k+1)}_i = \sum_{j\in \mathcal{N}_i} \bar{w}_{ij}\, \phi_j^{(k+1)}$; $\hspace{2.77cm}   \triangleright \mbox{ \footnotesize{communication step}}$
	\end{algorithm}
	
\noindent	\textbf{Primal-dual form.}
	For  analysis purposes, we rewrite recursion \eqref{eq:d2} into the following primal-dual equivalent form \cite{yuan2018exact2,li2019decentralized}:
	\begin{align}
	\begin{cases}\label{eq:d2-pd}
	\vx^{(k+1)} = \bar{W}\big( \vx^{(k)} - \gamma \nabla F(\vx^{(k)};\bxi^{(k)}) \big) - V \vy^{(k)}, \\
	\vy^{(k+1)} = \vy^{(k)} + V \vx^{(k+1)},\quad \forall~ k = 0, 1,2,\ldots
	\end{cases}
	\end{align}
	where $\vy = [\y_1,\ldots,\y_n]^T \in \mathbb{R}^{n\times d}$, $\y_i \in \RR^d$, and $V=(I-\bar{W})^{1/2} \in \mathbb{R}^{n\times n}$ is a positive semi-definite matrix. For initialization, we let $\vy^{(0)} = 0$. The equivalence between \eqref{eq:d2} and \eqref{eq:d2-pd} can be easily verified, see \cite{yuan2017exact1,yuan2020influence}.
% 	To verify the equivalence between \eqref{eq:d2-pd} and  \eqref{eq:d2}, we notice that the first recursion in \eqref{eq:d2-pd} implies 
% 	% \begin{align}\label{sdnhwe}
% 	% \vx^{(k)} = \bar{W}\big( \vx^{(k-1)} - \gamma \nabla F(\vx^{(k-1)};\bxi^{(k)}) \big) - \vy^{(k-1)},\quad \forall\,k = 1,2,\ldots.
% 	% \end{align}
% 	% Subtracting \eqref{sdnhwe} from the first recursion in \eqref{eq:d2-pd}, we have
% 	\begin{align}\label{xnswdhsdh}
% 	\vx^{(k+1)} - \vx^{(k)} =&\ \bar{W}\Big(\vx^{(k)} - \vx^{(k-1)} -  \gamma \big(\nabla F(\vx^{(k)};\bxi^{(k)}) - \nabla  F(\vx^{(k-1)};\bxi^{(k-1)})\big)\Big) \nonumber \\
% 	& - V \big(\vy^{(k)} - \vy^{(k-1)} \big), \quad  \forall\,k = 1,2,\ldots.
% 	\end{align}
% 	Substituting the second recursion in \eqref{eq:d2-pd} to the above recursion and using $V^2 = I-\bar{W}$ will result in the D$^2$/Exact-Diffusion algorithm in \eqref{eq:d2}. When $k=0$, it is straightforward to verify the initialization conditions in \eqref{eq:d2} and \eqref{eq:d2-pd} are also equivalent.
	
	\subsection{Optimality Condition}
	The primal-dual recursion \eqref{eq:d2-pd} facilitates the following optimality condition of problem \eqref{eq:general-prob}. 
	\begin{lemma} [\sc Optimality Condition]\label{lm-opt-cond}
		Assume each cost function $f_i(x)$ in problem \eqref{eq:general-prob} is convex. 
		Then, there exists some primal-dual pair $(\vx^\star, \vy^\star)$, {in which $\vy^\star$ lies in the range space of $V$}, that satisfies 
	\begin{subequations}
	        	\begin{align}
		\gamma \bar{W} \nabla f(\vx^\star) + V \vy^\star &= 0, \label{eq-opt-cond-1} \\
		V \vx^\star &= 0, \label{eq-opt-cond-2}
		\end{align}
	\end{subequations}
		and it holds that $\x_1^\star = \cdots = \x_n^\star = \x^\star$ where $\x^\star$ is a global solution to problem \eqref{eq:general-prob}.  $\hfill \blacksquare$
	\end{lemma}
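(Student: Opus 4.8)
The plan is to construct the pair $(\vx^\star, \vy^\star)$ explicitly from a global minimizer of \eqref{eq:general-prob} and then verify \eqref{eq-opt-cond-1}--\eqref{eq-opt-cond-2} using the spectral structure of $W$. First I would record the relevant facts about $\bar W = (I+W)/2$ and $V = (I-\bar W)^{1/2}$: by Assumption \ref{ass-W} and Remark \ref{rm-beta}, $\bar W$ is symmetric with $\bar W \mathds{1}_n = \mathds{1}_n$, all eigenvalues of $I-\bar W$ lie in $[0,1)$, and — because the network is strongly connected, i.e. $\lambda_2(W) < 1$ — the null space of $I-\bar W$ equals $\mathrm{span}\{\mathds{1}_n\}$. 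Since $V$ is symmetric positive semi-definite with $V^2 = I-\bar W$, it follows that $\Null(V) = \mathrm{span}\{\mathds{1}_n\}$, that $\mathrm{range}(V) = \Null(V)^\perp = \{\mathds{1}_n\}^\perp$, and that $\mathrm{range}(V) = \mathrm{range}(V^2)$. I would also note $\mathds{1}_n^\top \bar W = \mathds{1}_n^\top$ and $\mathds{1}_n^\top V = 0$, both immediate from symmetry and $\bar W\mathds{1}_n=\mathds{1}_n$, $V\mathds{1}_n=0$.

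Next, let $x^\star \in \RR^d$ be a global solution of the convex problem \eqref{eq:general-prob}; by first-order optimality, $\nabla f(x^\star) = \frac{1}{n}\sum_{i=1}^n \nabla f_i(x^\star) = 0$. Set $\vx^\star := \mathds{1}_n (x^\star)^\top \in \RR^{n\times d}$, so that every row of $\vx^\star$ equals $x^\star$; this immediately yields the consensus claim $\x_1^\star = \cdots = \x_n^\star = x^\star$ together with \eqref{eq-opt-cond-2}, since $V\mathds{1}_n = 0$. For \eqref{eq-opt-cond-1}, the $i$th row of $\nabla f(\vx^\star)$ is $\nabla f_i(x^\star)$, hence
\[ \mathds{1}_n^\top \bar W \nabla f(\vx^\star) = \mathds{1}_n^\top \nabla f(\vx^\star) = \Big(\sum_{i=1}^n \nabla f_i(x^\star)\Big)^\top = 0. \]
Thus every column of $-\gamma \bar W \nabla f(\vx^\star)$ lies in $\{\mathds{1}_n\}^\perp = \mathrm{range}(V^2)$, so there is some $\vz \in \RR^{n\times d}$ with $V^2\vz = -\gamma \bar W\nabla f(\vx^\star)$; taking $\vy^\star := V\vz$ gives $\gamma \bar W\nabla f(\vx^\star) + V\vy^\star = 0$, which is \eqref{eq-opt-cond-1}, and $\vy^\star = V\vz$ manifestly lies in the range space of $V$.

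There is no genuinely hard step; the argument reduces the decentralized optimality conditions to the classical condition $\nabla f(x^\star) = 0$ via the eigenstructure of $W$. The one place that needs care is the identity $\Null(I-\bar W) = \mathrm{span}\{\mathds{1}_n\}$: it relies on strong connectivity (equivalently $\lambda_2(W) < 1$), and without it neither the solvability of $V\vy^\star = -\gamma \bar W\nabla f(\vx^\star)$ nor the uniqueness of the consensus value would hold. I would also remark, for completeness, that conversely any $(\vx^\star,\vy^\star)$ satisfying \eqref{eq-opt-cond-1}--\eqref{eq-opt-cond-2} must be of this form: \eqref{eq-opt-cond-2} forces $\vx^\star = \mathds{1}_n c^\top$ for some $c\in\RR^d$, and left-multiplying \eqref{eq-opt-cond-1} by $\mathds{1}_n^\top$ (using $\mathds{1}_n^\top V = 0$ and $\mathds{1}_n^\top\bar W=\mathds{1}_n^\top$) gives $\sum_i \nabla f_i(c) = 0$, so $c$ minimizes $f$; moreover $\vy^\star$ is then the unique element of $\mathrm{range}(V)$ with this property, since two solutions differ by a vector in $\Null(V)\cap\mathrm{range}(V) = \{0\}$.
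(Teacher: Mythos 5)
Your proof is correct and follows essentially the same route as the paper, which simply defers to the standard argument of \cite[Lemma~3.1]{shi2015extra}: build $\vx^\star$ as the consensual stack of a global minimizer, note $\mathds{1}_n^T\nabla f(\vx^\star)=0$ so that $-\gamma\bar{W}\nabla f(\vx^\star)$ lies in $\{\mathds{1}_n\}^\perp=\mathrm{range}(V)=\mathrm{range}(V^2)$, and solve for $\vy^\star$ in the range of $V$. Your careful handling of $\mathrm{Null}(I-\bar W)=\mathrm{span}\{\mathds{1}_n\}$ via strong connectivity, and the converse/uniqueness remark, are exactly the ingredients of that cited proof, so there is nothing to add.
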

	The proof of the above lemma is simple and can be referred to \cite[Lemma~3.1]{shi2015extra}. It is worth noting that when there is no gradient noise, \textit{i.e.}, $\nabla F(\vx^{(k)};\bxi^{(k)}) = \nabla f(\vx^{(k)})$, the fixed point $(\vx^o, \vy^o)$ of the primal-dual recursion \eqref{eq:d2-pd} satisfies the optimality condition \eqref{eq-opt-cond-1}--\eqref{eq-opt-cond-2}. This implies the iterates $\x^{(k)}$ generated by the D$^2$/Exact-Diffusion algorithm will converge to a global solution $\x^\star$ of problem \eqref{eq:general-prob} in expectation. Such conclusion holds without any assumption whether the data distribution is homogeneous or not.  
	
	\subsection{D-SGD and Data-Heterogeneity Bias}
	\label{sec-dsgd-bias}
	The (adapt-then-combine) D-SGD algorithm (also known as diffusion in \cite{lopes2008diffusion,chen2012diffusion,sayed2014adaptation}) \cite{nedic2009distributed,lopes2008diffusion,chen2012diffusion,lian2017can} has the update form:
	\begin{align}
	\label{diffusion}
	\vx^{(k+1)} = W\big( \vx^{(k)} - \gamma \nabla F(\vx^{(k)};\bxi^{(k)}) \big).
	\end{align}
	Without any auxiliary variable to help correct the gradient direction like D$^2$/Exact-Diffusion recursion \eqref{eq:d2-pd}, D-SGD suffers from a solution deviation caused by data heterogeneity even if there is no gradient noise. Since data heterogeneity exists, we have $b^2 = \frac{1}{n}\sum_{i=1}^n \|\nabla f_i(x^\star)\|^2 > 0$, which implies there exists at least one node $i$ such that $\grad f_i(x^\star) \neq 0$. 
	% By data heterogeneity, we mean there exist at least one node $i$ such that $\grad f_i(x^\star) \neq 0$ (\textit{e.g}, $ f_i(x)$  and $f_j(x)$ have different minimizers for some nodes $i$ and $j$), which commonly occurs in distributed settings. 
	Suppose there is no gradient noise and $\vx^{(k)}$ is initialized as a consensual solution $\vx^\star = [\x^\star,\ldots, \x^\star]^T$ where $\x^\star$ is a solution to problem \eqref{eq:general-prob}, which satisfies $\mathds{1}_n^T \nabla f(\vx^\star) = 0$. Following recursion \eqref{diffusion}, we have
	\begin{align}\label{xnsdbsd8}
	\vx^{k+1} = W\big( \vx^{\star} - \gamma \nabla f(\vx^{\star}) \big) = \vx^{\star} - \gamma W \nabla f(\vx^{\star}) \neq \vx^\star,
	\end{align}
	in which the last inequality holds because $W\nabla f(\vx^\star) \neq 0$ in general. Relation \eqref{xnsdbsd8} implies that even if D-SGD starts from the optimal consensual solution, it can still jump away to a biased solution due to the data heterogeneity affect.
	
	\subsection{Assumptions}
	We now introduce some standard assumptions that will be used throughout the paper.
	
	\begin{assumption}[\sc Convexity]
		\label{ass:convex}
		Each cost function $f_i(x)$ is convex. 
% 		$\hfill \square$
	\end{assumption}
	
	\begin{assumption}[\sc Smoothness]
		\label{ass:smoothness}
		Each local cost function $f_i(x)$ is differentiable, and there exists a constant $L$ such that for each $x, y\in\RR^d$
		\begin{align}
		\|\nabla f_i(x) - \nabla f_i(y)\| \le L \|x - y\|,\quad \forall\, i \in [n]. \label{smooth-1}
		\end{align}
% 		$\hfill \square$
	\end{assumption}
	
	\begin{assumption}[\sc Gradient noise]\label{ass:gradient-noise} 
	Define the filtration  $\cF^{(k)} \hspace{-0.8mm}=\hspace{-0.8mm} \big\{\hspace{-0.8mm} \{\hspace{-0.4mm}\xi_i^{(k)}\hspace{-0.4mm}\}_{i=1}^n, 
		\{\hspace{-0.4mm}x_i^{(k)}\hspace{-0.4mm}\}_{i=1}^n, \ldots, \{\hspace{-0.4mm}\xi_i^{(0)}\hspace{-0.4mm}\}_{i=1}^n,  \{\hspace{-0.4mm}x_i^{(0)}\hspace{-0.4mm}\}_{i=1}^n\hspace{-0.4mm} \big\}$. Then, it is assumed that for any $k$ and $i$ that 
	\begin{subequations}
	        	\begin{align}
		\mathbb{E}[\nabla F(x_i^{(k)};\xi_i^{(k)}) - \nabla f_i(x_i^{(k)})|\cF^{(k-1)}] &= 0, \label{gd-1}\\
		\mathbb{E}[\|\nabla F(x_i^{(k)};\xi_i^{(k)}) - \nabla f_i(x_i^{(k)})\|^2|\cF^{(k-1)}] &\le \sigma^2 \label{gd-2},
		\end{align}
	\end{subequations}
		for some constant $\sigma^2 \geq 0$. Moreover, we assume $\xi_i^{(k)}$ are independent of each other for any $k$ and $i$. 
% 		$\hfill \square$
	\end{assumption}

	\section{Fundamental Transformation}
	In this section we transform the primal-dual update of the D$^2$/Exact-Diffusion algorithm \eqref{eq:d2-pd} into another equivalent recursion. This transformation, inspired by \cite{yuan2018exact2,yuan2020influence}, is fundamental to establish the refined convergence results for D$^2$/Exact-Diffusion. 
	
	Let  $(\vx^\star, \vy^\star)$ be one pair of variables that satisfies the optimality conditions in Lemma \ref{lm-opt-cond}. Subtracting \eqref{eq-opt-cond-1} and \eqref{eq-opt-cond-2} from the primal-dual recursion \eqref{eq:d2-pd}, we have
\begin{subequations}
        	\begin{align}
	\vx^{(k+1)} - \vx^\star &= \bar{W}\big( \vx^{(k)} - \vx^\star - \gamma(\nabla F(\vx^{(k)};\bxi^{(k)}) - \nabla f(\vx^\star)) \big) - V (\vy^{(k)} - \vy^\star), \label{eq:d2-pd-1}\\
	\vy^{(k+1)} - \vy^\star &= \vy^{(k)} - \vy^\star + V (\vx^{(k+1)} - \vx^\star).\label{eq:d2-pd-2}
	\end{align}
\end{subequations}
	To simplify the notation, we define $\vs^{(k)}:=\nabla F(\vx^{(k)};\bxi^{(k)}) - \nabla f(\vx^{(k)})$ as the gradient noise at iteration $k$. By substituting \eqref{eq:d2-pd-1} into \eqref{eq:d2-pd-2}, and recalling that $I - V^2 = \bar{W}$, we obtain 
	\begin{align}\label{xzn2websd7}
		\left[
		\begin{array}{c}
		\hspace{-2mm}\vx^{(k+1)} - \vx^\star  \hspace{-1.5mm}\\
		\hspace{-2mm}\vy^{(k+1)} - \vy^\star \hspace{-1.5mm}
		\end{array}
		\right] \hspace{-0.6mm} = \hspace{-0.6mm}
		\underbrace{\left[
			\begin{array}{cr}
			\hspace{-2mm}\bar{W} & -V \hspace{-1.5mm}\\
			\hspace{-2mm}V \bar{W} & \bar{W} \hspace{-1.5mm}
			\end{array}
			\right]}_{B}
		\hspace{-1mm}
		\left[
		\begin{array}{c}
		\hspace{-2mm}\vx^{(k)} - \vx^\star \hspace{-1.5mm}\\
		\hspace{-2mm}\vy^{(k)} - \vy^\star \hspace{-1.5mm}
		\end{array}
		\right]
		- \gamma 
		\left[
		\begin{array}{c}
		\hspace{-2mm}\bar{W}(\nabla f(\vx^{(k)}) - \nabla f(\vx^\star) + \vs^{(k)}) \hspace{-1.5mm}\\
		\hspace{-2mm} V \bar{W}(\nabla f(\vx^{(k)}) - \nabla f(\vx^\star) + \vs^{(k)}) \hspace{-1.5mm}
		\end{array}
		\right].
		\end{align}
	The main difficulty analyzing the convergence of the above recursion (as well as \eqref{eq:d2-pd-1}--\eqref{eq:d2-pd-2}) is that terms $\vx^{(k)} - \vx^\star$ and $\vy^{(k)} - \vy^\star$ are entangled together to update $\vx^{(k+1)} - \vx^\star$ or $\vy^{(k+1)} - \vy^\star$. For example, the update of $\vx^{(k+1)} - \vx^\star$ relies on both $\bar{W}(\vx^{(k)} - \vx^\star)$ and $-V(\vy^{(k)} - \vy^\star)$. In the following, we identify a 
	change of basis and transform \eqref{xzn2websd7} into another equivalent form so that the involved iterated variables can be ``decoupled''. To this end, we need to introduce a fundamental decomposition lemma. This lemma was first established in \cite{yuan2018exact2}. We have improved this lemma by establishing an upper bound of an important term (see \eqref{bound-XLXR}) that is critical for our later analysis. 
	
	\begin{lemma}[\sc Fundamental decomposition]\label{lm-decom}
		Under Assumption \ref{ass-W}, the matrix $B \in \mathbb{R}^{2n\times 2n}$ in \eqref{xzn2websd7} can be
		diagonalized as
		\begin{align}\label{eq-fundamental-decomposision}
		B = \underbrace{[r_1\ r_2\ c X_R]}_{X}
		\underbrace{\ba{ccc}
			1 & 0 & 0\\
			0 & 1 & 0\\
			0 & 0 & D_1
			\ea}_{D}
		\underbrace{\ba{c}
			\ell_1^T \\
			\ell_2^T \\
			X_L/c
			\ea}_{X^{-1}}
		\end{align}
		for any constant $c > 0$ where $D \in \mathbb{R}^{2n \times 2n}$ is a diagonal matrix. Moreover, we have 
		\begin{align}\label{zb236asd00991}
		r_1 = 
		\ba{c}
		\mathds{1}_n\\
		0
		\ea, \quad 
		r_2 = 
		\ba{c}
		0 \\
		\mathds{1}_n
		\ea, \quad
		\ell_1 = 
		\ba{c}
		\frac{1}{n}\mathds{1}_n\\
		0
		\ea, \quad
		\ell_2 = 
		\ba{c}
		0 \\
		\frac{1}{n}\mathds{1}_n
		\ea
		\end{align}
		and $X_R\in\RR^{2n\times 2(n-1)}$, $X_L\in \RR^{2(n-1)\times 2n}$.  Also, the matrix $D_1$ is a
		diagonal matrix with
		diagonal entries strictly less than $1$ in magnitude and
		\begin{align}\label{D1-norm2}
		\|D_1\| = \bar{\lambda}_2^{1/2}, \quad \mbox{where} \quad \bar{\lambda}_2 = \frac{1+\lambda_2(W)}{2}.
		\end{align}
		Furthermore,  it holds that 
		\begin{align}\label{bound-XLXR}
		\|X_L\|\|X_R\| \le \bar{\lambda}^{-1/2}_n
		\end{align}
		where $\bar{\lambda}_n = (1+\lambda_n(W))/2$.  (Proof is in Appendix \ref{app:decom}). $\hfill \blacksquare$
	\end{lemma}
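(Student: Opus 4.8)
The plan is to diagonalize $B$ by exploiting that $\bar W=(I+W)/2$ and $V=(I-\bar W)^{1/2}$ are simultaneously diagonalizable, which decouples $B$ into $n$ independent $2\times2$ blocks indexed by the eigenvalues of $W$; the explicit vectors in \eqref{zb236asd00991} and the norm \eqref{D1-norm2} then fall out of these blocks, and the new estimate \eqref{bound-XLXR} reduces to bounding the condition number of each $2\times2$ eigenvector matrix. Concretely, under Assumption \ref{ass-W} write $W=Q\,\mathrm{diag}(\lambda_1(W),\dots,\lambda_n(W))\,Q^T$ with $Q$ real orthogonal whose first column is $\mathds{1}_n/\sqrt n$, so $\bar W=Q\bar\Lambda Q^T$ and $V=Q\Lambda_V Q^T$ with $\bar\Lambda=\mathrm{diag}(\bar\lambda_1,\dots,\bar\lambda_n)$, $\bar\lambda_i=(1+\lambda_i(W))/2$, $\Lambda_V=\mathrm{diag}(\sqrt{1-\bar\lambda_1},\dots,\sqrt{1-\bar\lambda_n})$, and by Remark \ref{rm-beta} $\bar\lambda_1=1$ while $0<\bar\lambda_i<1$ for $i\ge2$. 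Conjugating $B$ by the orthogonal matrix $\mathrm{diag}(Q,Q)$ and using $\bar W=I-V^2$ turns it into $\begin{bmatrix}\bar\Lambda & -\Lambda_V\\ \Lambda_V\bar\Lambda & \bar\Lambda\end{bmatrix}$, which, after the permutation grouping coordinate $i$ with coordinate $n+i$, is block diagonal with $2\times2$ blocks $\hat B_i=\begin{bmatrix}\bar\lambda_i & -\sqrt{1-\bar\lambda_i}\\ \bar\lambda_i\sqrt{1-\bar\lambda_i} & \bar\lambda_i\end{bmatrix}$.

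Next I would analyze these blocks. The $i=1$ block is $\hat B_1=I_2$ (since $\bar\lambda_1=1$), which supplies the two eigenvalues equal to $1$; undoing the conjugation and permutation identifies the corresponding right/left eigenvectors as $r_1,r_2,\ell_1,\ell_2$ in \eqref{zb236asd00991}, which one can also check directly from $Br_j=r_j$, $\ell_j^T B=\ell_j^T$, $\ell_i^T r_j=\delta_{ij}$ using $\bar W\mathds{1}_n=\mathds{1}_n$ and $V\mathds{1}_n=0$ (the latter because $\mathds{1}_n\in\Null(I-\bar W)=\Null(V)$). For $i\ge2$, the key structural observation is that $\hat B_i=S_iR_iS_i^{-1}$ with $S_i=\mathrm{diag}(1,\bar\lambda_i^{1/2})$ and $R_i=\begin{bmatrix}\bar\lambda_i & -\sqrt{\bar\lambda_i(1-\bar\lambda_i)}\\ \sqrt{\bar\lambda_i(1-\bar\lambda_i)} & \bar\lambda_i\end{bmatrix}=\bar\lambda_i^{1/2}O_i$, where $O_i$ is orthogonal (because $\bar\lambda_i^2+\bar\lambda_i(1-\bar\lambda_i)=\bar\lambda_i$). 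Hence the eigenvalues of $\hat B_i$ equal $\bar\lambda_i^{1/2}$ times those of the rotation $O_i$, i.e.\ a complex-conjugate pair of modulus $\bar\lambda_i^{1/2}<1$; collecting all of them into the diagonal matrix $D_1$ yields \eqref{D1-norm2}, $\|D_1\|=\max_{i\ge2}\bar\lambda_i^{1/2}=\bar\lambda_2^{1/2}$, with every entry strictly less than $1$ in magnitude.

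The new bound \eqref{bound-XLXR} now follows from the same similarity. Since $R_i=\bar\lambda_i^{1/2}O_i$ is normal, it is unitarily diagonalizable, $R_i=U_i\,\mathrm{diag}(\mu_i^-,\mu_i^+)\,U_i^{*}$ with $U_i$ unitary, so $\hat B_i=(S_iU_i)\,\mathrm{diag}(\mu_i^-,\mu_i^+)\,(S_iU_i)^{-1}$; the eigenvector matrix $T_i:=S_iU_i$ has the same singular values as $S_i$ (as $U_i$ is unitary), namely $1$ and $\bar\lambda_i^{1/2}$, so $\|T_i\|=1$ and $\|T_i^{-1}\|=\bar\lambda_i^{-1/2}$. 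Assembling, $X=\mathrm{diag}(Q,Q)\cdot(\text{permutation})\cdot\mathrm{blkdiag}(I_2,T_2,\dots,T_n)$ up to a rescaling of its first two columns (to make them exactly $r_1,r_2$) and the free constant $c$, and $X^{-1}$ is the analogous product with $\mathrm{blkdiag}(I_2,T_2^{-1},\dots,T_n^{-1})$. The rescaling touches only the first two columns/rows, and the orthogonal and permutation factors preserve spectral norms, so $cX_R$ has the norm of $\mathrm{blkdiag}(T_2,\dots,T_n)$ and $X_L/c$ the norm of $\mathrm{blkdiag}(T_2^{-1},\dots,T_n^{-1})$; since the spectral norm of a block-diagonal matrix is the maximum over blocks, $\|cX_R\|=\max_{i\ge2}\|T_i\|=1$ and $\|X_L/c\|=\max_{i\ge2}\|T_i^{-1}\|=\bar\lambda_n^{-1/2}$ (the smallest $\bar\lambda_i$ being $\bar\lambda_n=(1+\lambda_n(W))/2$), whence $\|X_L\|\,\|X_R\|=\bar\lambda_n^{-1/2}$, which is \eqref{bound-XLXR}; note the constant $c$ cancels.

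The main obstacle is the bound \eqref{bound-XLXR}. Everything else is essentially the decomposition of \cite{yuan2018exact2} and is routine once the $2\times2$ block structure is exposed; the one real point is to spot the similarity $\hat B_i=S_iR_iS_i^{-1}$ exhibiting $\hat B_i$ as a diagonally-scaled \emph{normal} matrix, so that its diagonalizing transformation is $S_i$ composed with a unitary and its ill-conditioning is exactly $\kappa(S_i)=\bar\lambda_i^{-1/2}$, and then to keep careful track of the permutation and the normalization of $r_1,r_2$ so that $\|X_R\|$ and $\|X_L\|$ can be read off cleanly as norms of block-diagonal matrices.
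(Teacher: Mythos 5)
Your proposal is correct and follows essentially the same route as the paper's proof: conjugate $B$ by $\mathrm{diag}(Q,Q)$, permute into $2\times 2$ blocks, factor each block as a diagonal scaling $\mathrm{diag}(1,\bar{\lambda}_i^{1/2})$ times a normal (scaled-orthogonal) matrix, and read off the eigenvalues and the condition-number bound from that scaling. The only cosmetic difference is that you evaluate $\|X_R\|$ and $\|X_L\|$ directly as maxima over the block-diagonal factors (getting equality), whereas the paper bounds the product via $\|X\|\,\|X^{-1}\|\le\|C\|\,\|C^{-1}\|$; both yield \eqref{bound-XLXR}.
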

	
	% \begin{remark}
	% We can construct $W$ such that $\bar{\lambda}_n$ is bounded away from zero. For example, if we construct $W$ such that $\lambda_n(W) > 0$, it holds that $\bar{\lambda}_n = (1+\lambda_n(W))/2 > 1/2$. In this scenario, it holds that $\|X_L\|\|X_R\| \le \bar{\lambda}^{-1/2}_n \le \sqrt{2} = O(1)$.
	% \end{remark}
	
	Left-multiplying  $X^{-1}$ to both sides of \eqref{xzn2websd7} and using Lemma \ref{lm-decom}, we can get the transformed recursion given in Lemma \ref{lm-transform-error-dyanmic}. Note that Lemma \ref{lm-transform-error-dyanmic} is also an improved version of \cite[Lemma~3]{yuan2020influence}, which will facilitate  our sharper convergence analysis of the D$^2$/Exact-Diffusion algorithm. 
		\begin{lemma}[\sc Transformed Recursion] \label{lm-transform-error-dyanmic}
			Under Assumptions \ref{ass-W}, the D$^2$/Exact-Diffusion error recursion \eqref{xzn2websd7} can be transformed into
			\begin{align}
			\label{transformed-error-dynamics}
			\ba{c}
			\bar{\z}^{(k+1)} \\
			\check{\vz}^{(k+1)}
			\ea	
			\hspace{-1mm}&=\hspace{-1mm} 
			\ba{c}
			\bar{\z}^{(k)} - \frac{\gamma}{n}\mathds{1}^T (\nabla f(\vx^{(k)}) - \nabla f(\vx^\star)) \\
			D_1 \check{\vz}^{(k)} - \frac{\gamma}{c} \check{\vg}^{(k)}
			\ea - \gamma  
			\ba{c}
			\bar{\vs}^{(k)}\\
			\frac{1}{c} \check{\vs}^{(k)}
			\ea,
			\end{align}
			where $\check{\vg}^{(k)}, \bar{\vs}^{(k)}$ and $\check{\vs}^{(k)}$ are defined as
		\begin{subequations}
		        	\begin{align}
			\check{\vg}^{(k)} & \define \big(X_{L, \ell} Q_R  + X_{L,r} Q_R (I - \bar{\Lambda}_R)^{\frac{1}{2}} \big) \bar{\Lambda}_R Q_R^T (\nabla f(\vx^{(k)}) - \nabla f(\vx^\star)) \label{g-check}\\
			\bar{\vs}^{(k)} & \define \frac{1}{n}\mathds{1}^T\vs^{(k)} \label{s-bar}\\
			\check{\vs}^{(k)} & \define  \big(X_{L, \ell} Q_R  + X_{L,r} Q_R (I - \bar{\Lambda}_R)^{\frac{1}{2}} \big) \bar{\Lambda}_R Q_R^T \vs^{(k)}. \label{s-check}
			\end{align}
		\end{subequations}
			Here, the matrices $X_{L,\ell}, X_{L,r} \in \RR^{2(n-1) \times n} $ are the left and right part of the matrix $X_{L} = [X_{L,\ell}\ \ X_{L,r}]$, respectively,  $\bar{\Lambda}_R = \mathrm{diag}\{\bar{\lambda}_2(W),\dots, \bar{\lambda}_n(W)\} \in \RR^{(n-1)\times (n-1)}$, and $Q_R\in \RR^{n\times (n-1)}$ is defined in \eqref{zbnzb0zbzbz09}.
			The relation between the original and the transformed error vectors are 
			\begin{align}\label{w-y-vs-bz-cz}
			\ba{c}
			\hspace{-1mm}\bar{\z}^{(k)}\hspace{-1mm} \\
			\hspace{-1mm}\check{\vz}^{(k)}\hspace{-1mm}
			\ea = 
			\ba{c}
			\ell_1^T\\
			X_L/c 
			\ea
			\ba{c}
			\hspace{-1mm}\vx^{(k)} - \vx^\star\hspace{-1mm}\\
			\hspace{-1mm}\vy^{(k)} - \vy^\star\hspace{-1mm}
			\ea
			\quad \mbox{and} \quad
			\ba{c}
			\hspace{-1mm}\vx^{(k)} - \vx^\star\hspace{-1mm}\\
			\hspace{-1mm}\vy^{(k)} - \vy^\star\hspace{-1mm}
			\ea
			\hspace{-1mm}	= {\hspace{-1mm}
				\ba{ccc}
				\hspace{-1mm}r_1 \hspace{-1mm}&\hspace{-1mm} c X_R\hspace{-1mm}
				\ea}\hspace{-1mm}
			\ba{c}
			\hspace{-1mm}\bar{\z}^{(k)}\hspace{-1mm} \\
			\hspace{-1mm}\check{\vz}^{(k)}\hspace{-1mm}
			\ea.\hspace{-1mm}
			\end{align}
			Note that $\bar{\z}^{(k)} \in \RR^{1 \times d}$ and $\check{\vz}^{(k)} \in \RR^{2(n-1)\times d}$ (Proof is in Appendix \ref{app-transform}). $\hfill \blacksquare$
	\end{lemma}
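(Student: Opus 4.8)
The plan is to derive the transformed recursion \eqref{transformed-error-dynamics} directly by left-multiplying the error recursion \eqref{xzn2websd7} by $X^{-1}$ from Lemma \ref{lm-decom} and tracking each block. Concretely, I would introduce the change of variable
\begin{align*}
\ba{c}\bar{\z}^{(k)}\\ \check{\vz}^{(k)}\ea \define \ba{c}\ell_1^T\\ X_L/c\ea \ba{c}\vx^{(k)}-\vx^\star\\ \vy^{(k)}-\vy^\star\ea,
\end{align*}
noting that because $\vy^\star$ and all iterates $\vy^{(k)}$ lie in the range space of $V$ (hence orthogonal to $\mathds{1}_n$ in the lower block), the component along $\ell_2^T = [0,\ \tfrac1n\mathds{1}_n^T]$ is identically zero; this is why the transformed state has dimension $1 + 2(n-1)$ rather than $2n$. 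Then $X^{-1} B X = D = \mathrm{diag}\{1,1,D_1\}$, so the homogeneous part of the recursion becomes $\mathrm{diag}\{1, D_1\}$ acting on $(\bar\z^{(k)}, \check\vz^{(k)})$ — this gives the $\bar\z^{(k)}$ and $D_1\check\vz^{(k)}$ terms. The work is entirely in the inhomogeneous (gradient and noise) term.

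For the inhomogeneous term, I would split the vector
\begin{align*}
-\gamma \ba{c}\bar{W}\big(\nabla f(\vx^{(k)})-\nabla f(\vx^\star)+\vs^{(k)}\big)\\ V\bar{W}\big(\nabla f(\vx^{(k)})-\nabla f(\vx^\star)+\vs^{(k)}\big)\ea
\end{align*}
into its gradient-deviation part and its noise part (they have identical structure), and apply $X^{-1} = [\ell_1,\ \ell_2,\ X_L/c]^T$ to each. The top row uses $\ell_1^T = [\tfrac1n\mathds{1}_n^T,\ 0]$: since $\mathds{1}_n^T\bar{W} = \mathds{1}_n^T$, the top component of the gradient term collapses to $\tfrac{\gamma}{n}\mathds{1}^T(\nabla f(\vx^{(k)})-\nabla f(\vx^\star))$ and the noise term to $\gamma\bar\vs^{(k)}$ with $\bar\vs^{(k)} = \tfrac1n\mathds{1}^T\vs^{(k)}$, matching \eqref{s-bar}. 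The bottom (checked) row requires computing $X_L \big[\,\bar W\,;\, V\bar W\,\big]$ applied to $\nabla f(\vx^{(k)})-\nabla f(\vx^\star)$ (resp. $\vs^{(k)}$); the key is to show this equals $\big(X_{L,\ell}Q_R + X_{L,r}Q_R(I-\bar\Lambda_R)^{1/2}\big)\bar\Lambda_R Q_R^T$ times that vector, i.e. the operator $\check{\cdot}$ defined in \eqref{g-check}--\eqref{s-check}. This is where I expect to unpack the explicit eigenstructure: write $\bar W = \tfrac12(I+W)$ with eigenvalues $\bar\lambda_i$, let $Q = [\tfrac1{\sqrt n}\mathds{1}_n\ \ Q_R]$ be the orthogonal eigenbasis of $W$ (with $Q_R$ the part \eqref{zbnzb0zbzbz09} spanning the complement of $\mathds{1}_n$), so that $\bar W = Q\,\mathrm{diag}\{1,\bar\Lambda_R\}Q^T$ and $V = (I-\bar W)^{1/2} = Q_R(I-\bar\Lambda_R)^{1/2}Q_R^T$. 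Substituting, $\bar W(\cdot)$ projects onto the $Q_R$-block as $Q_R\bar\Lambda_R Q_R^T(\cdot)$ plus a consensus part that is already accounted for in the $\bar\z$ component, and $V\bar W(\cdot) = Q_R(I-\bar\Lambda_R)^{1/2}\bar\Lambda_R Q_R^T(\cdot)$. Splitting $X_L = [X_{L,\ell}\ \ X_{L,r}]$ conformally with the $(\vx,\vy)$-blocks then produces exactly the combination $X_{L,\ell}Q_R\bar\Lambda_R Q_R^T + X_{L,r}Q_R(I-\bar\Lambda_R)^{1/2}\bar\Lambda_R Q_R^T$, which is the claimed $\check{\vg}^{(k)}$ and $\check{\vs}^{(k)}$ after factoring $\bar\Lambda_R Q_R^T$ on the right. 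Dividing the checked row by $c$ accounts for the $X_L/c$ normalization and yields the $-\tfrac{\gamma}{c}\check\vg^{(k)}$ and $-\tfrac{\gamma}{c}\check\vs^{(k)}$ terms.

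Finally, the relation \eqref{w-y-vs-bz-cz} between original and transformed vectors follows immediately from the definition above (forward direction) and from $X D X^{-1}$ being the diagonalization, using that the $r_2, \ell_2$ directions are inactive so $\vx^{(k)}-\vx^\star$ and $\vy^{(k)}-\vy^\star$ are reconstructed via $[r_1\ \ cX_R]$ acting on $(\bar\z^{(k)},\check\vz^{(k)})$. I would also verify the dimension bookkeeping ($\bar\z^{(k)}\in\RR^{1\times d}$, $\check\vz^{(k)}\in\RR^{2(n-1)\times d}$) as a sanity check. The main obstacle is the careful eigenbasis computation in the checked block — keeping track of which pieces of $\bar W$ and $V\bar W$ survive after projecting out the consensus direction, and correctly partitioning $X_L$ into its $\vx$- and $\vy$-halves so that the two terms in \eqref{g-check} are matched precisely; everything else is routine once Lemma \ref{lm-decom} is in hand.
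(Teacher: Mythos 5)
Your proposal is correct and follows essentially the same route as the paper's proof in Appendix \ref{app-transform}: left-multiply \eqref{xzn2websd7} by $X^{-1}$, collapse the $\ell_1$ row via $\mathds{1}^T\bar{W}=\mathds{1}^T$, discard the $\ell_2$ row because $\vy^{(k)}$ and $\vy^\star$ lie in the range of $V$, and handle the checked row via the eigen-decompositions $\bar{W}=\frac{1}{n}\mathds{1}\mathds{1}^T+Q_R\bar{\Lambda}_RQ_R^T$ and $V\bar{W}=Q_R(I-\bar{\Lambda}_R)^{1/2}\bar{\Lambda}_RQ_R^T$. The only point to make explicit is that the consensus part drops out of the checked block because $X_{L,\ell}\mathds{1}_n=0$ and $X_{L,r}\mathds{1}_n=0$ (a consequence of $X^{-1}X=I$), rather than being ``absorbed'' into $\bar{\z}$, but this is exactly the bookkeeping you flag and it goes through as in the paper.
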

	
	\begin{remark}[\sc recursion interpretation]
		Using the left relation in \eqref{w-y-vs-bz-cz} and the definition of $\ell_1$ in  \eqref{zb236asd00991}, it holds that 
		\begin{align}
		[\bar{\z}^{(k)}]^T = \frac{1}{n}\mathds{1}^T(\vx^{(k)} - \vx^\star) = \bar{x}^{(k)} - x^\star.
		\end{align}
		Therefore, $\bar{\z}^{(k)}$ gauges the distance between the averaged variable $\bar{x}^{(k)}$ and the solution $x^\star$. On the other hand, using the right relation in \eqref{w-y-vs-bz-cz} and the definition of $r_1$ in \eqref{zb236asd00991}, it holds that $\vx^{(k)} - \vx^\star = \mathds{1}_n \bar{\z}^{(k)} + c X_{R,u} \check{\vz}^{(k)} = \bar{\vx}^{(k)} - \vx^\star + c X_{R,u} \check{\vz}^{(k)}$, where $X_{R,u} \in \RR^{n\times 2(n-1)}$ is the upper part of matrix $X_R = [X_{R,u};X_{R,d}]$. This implies 
		\begin{align}\label{zzznnn}
		c X_{R,u} \check{\vz}^{(k)} = \vx^{(k)} - \bar{\vx}^{(k)}.
		\end{align}
		Hence, $\check{\vz}^{(k)}$ measures the consensus error, {\em i.e.}, the distance between ${\vx}^{(k)}$ and $\bar{\vx}^{(k)}$.  $\hfill \square$
	\end{remark}
	
% 	One benefit that recursion \eqref{transformed-error-dynamics} introduces is that the iterates $\bar{\z}^{(k)}$ and $\check{\vz}^{(k)}$ are untangled when updating $\bar{\z}^{(k+1)}$ and $\check{\vz}^{(k+1)}$. For example, the update of $\bar{\z}^{(k+1)}$ only relies on $\bar{\z}^{(k)}$ but not on $\check{\vz}^{(k)}$. Such decoupling will bring in significant convenience in convergence analysis for D$^2$/Exact-Diffusion. 
	The following proposition establishes the magnitude of $\|\check{\vz}^{(0)}\|^2$, which will be used in later derivations. 
	
	%\begin{remark}[\sc The magnitudes of $\|\vy^\star\|^2$ ]
	% Let $V^{+}$ be the Moore-Penrose inverse of $V$, then by the definition of $V$, we have $\lambda_k(V^+)=\lambda_{n-k}(V)^{-1}=\frac{1}{\sqrt{1-\bar{\lambda}_k}}$ for any $1\leq k\leq n-1$ and $\lambda_n(V^+)=0$. Besides, the eigenvector of $V^+$ corresponding to $\lambda_k(V^+)=$ is the same as the eigenvector of $\bar{W}$ corresponding to $\lambda_k(\bar{W})$. Since $\vy^\star\in \textnormal{range}(V)=\textnormal{range}(V^+)$, it holds that $V^+V\vy^\star = \vy^\star$ and furthermore by using the relation of  \eqref{eq-opt-cond-1}, 
	% \begin{align}
	%     \vy^\star=V^+V\vy^\star=-\gamma V^+\bar{W}\nabla f(\vx^\star).
	% \end{align}
	%This leads to 
	%\begin{equation}
	%    \|\vy^\star\|^2\leq \gamma^2 \|V^+\bar{W}\|^2\|\nabla f(\vx^\star)\|^2\leq \frac{\gamma^2\bar{\lambda}_2^2}{1-\bar{\lambda}_2}\|\nabla f(\vx^\star)\|^2=O\left(\frac{n\gamma^2\bar{\lambda}_2^2}{1-\bar{\lambda}_2}\right)
	%\end{equation}
	%where we regard $\sum_{i=1}^n\|\nabla f_i(x^\star)\|^2=O(n)$.
	%\end{remark}
	
	{
		\begin{proposition}[\sc The magnitude of $\|\check{\vz}^{(0)}\|_F^2$] \label{remark-z0}
			If we initialize $\vx^{(0)} = 0$, $\vy^{(0)} = 0$ and set $c = \|X_L\|$, it holds that $\|\check{\vz}^{(0)}\|_F^2 \le \frac{\gamma^2 \bar{\lambda}_2^2 \|\nabla f(\vx^\star)\|_F^2}{1-\bar{\lambda}_2}$. If we further assume $\|\nabla f(\vx^\star)\|_F^2 = \sum_{i=1}^n\|\nabla f_i(x^\star)\|^2 = O(n)$, it follows that $\|\check{\vz}^{(0)}\|_F^2 = O(\frac{n \gamma^2 \bar{\lambda}_2^2}{1-\bar{\lambda}_2})$ (Proof is in Appendix \ref{app-proof-remark}).  $\hfill \blacksquare$
			%   		\begin{align}
			% 		\hspace{-1mm}\check{\vz}^{(0)}\hspace{-1mm}
			% 		 = 
			% 		X_L/c
			% 		\ba{c}
			% 		\hspace{-1mm}\vx^{(0)} - \vx^\star\hspace{-1mm}\\
			% 		\hspace{-1mm} - \vy^\star\hspace{-1mm}
			% 		\ea.
			% 		\end{align}
			% 		Therefore, in general, it holds that 
			%        \begin{align}\label{eqn:bufiqwq}
			%            \|\check{\vz}^0\|^2\leq \frac{1}{c^2}\|X_L\|^2(\|\vx^{(0)} - \vx^\star\|^2+\| \vy^\star\|^2)=O\left(\frac{\|X_L\|^2}{c^2}\left(n+\frac{n\gamma^2\bar{\lambda}_2^2}{1-\bar{\lambda}_2}\right)\right)
			%        \end{align}
			%        where we regard $\sum_{i=1}^n\|x_i^0-x^\star\|^2=O(n)$. However, if we initialize $x_1^0=\cdots =x_n^0$,  then $\vx^{(0)}-\vx^\star = \mathds{1}_n (x^{(0)}-x^\star)^T$, and hence 
			%        \begin{align}
			%		\hspace{-1mm}\check{\vz}^{(0)}\hspace{-1mm}
			%		 = 
			%		X_L/c
			%		\ba{c}
			%		\hspace{-1mm} \mathds{1}_n(x^{(0)} - x^\star)^T\hspace{-1mm}\\
			%		\hspace{-1mm} - \vy^\star\hspace{-1mm}
			%		\ea=X_L/c\left( r_1(x^{(0)} - x^\star)^T + 
			%		\ba{c}
			%		\hspace{-1mm} 0\hspace{-1mm}\\
			%		\hspace{-1mm} - \vy^\star\hspace{-1mm}
			%		\ea\right)\overset{(*)}{=}X_L/c
			%		\ba{c}
			%		\hspace{-1mm} 0 \hspace{-1mm}\\
			%		\hspace{-1mm} - \vy^\star\hspace{-1mm}
			%		\ea
			%		\end{align}
			%		where $(*)$ holds because $X^{-1}X=I$ leading to $X_Lr_1=0$. Therefore, in this case, it holds that 
			%		\begin{align}\label{eqn:buoqiwpdmc}
			%		    \|\check{\vz}^{(0)}\|^2\leq \frac{1}{c^2}\|X_L\|^2\|\vy^\star\|^2=O\left(\frac{\|X_L\|^2}{c^2}\frac{n\gamma^2\bar{\lambda}_2^2}{1-\bar{\lambda}_2}\right).
			%		\end{align}
		\end{proposition}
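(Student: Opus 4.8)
The plan is to trace $\check{\vz}^{(0)}$ backwards through the change of basis of Lemma~\ref{lm-decom}, reduce it to $\vy^\star$, and then bound $\vy^\star$ via the optimality conditions. Setting $k=0$ in the left relation of \eqref{w-y-vs-bz-cz} and using $\vx^{(0)}=\vy^{(0)}=0$ gives $\check{\vz}^{(0)} = -\tfrac{1}{c}\,X_L\,[\,\vx^\star\,;\,\vy^\star\,]$, where $[\,\vx^\star\,;\,\vy^\star\,]\in\RR^{2n\times d}$ denotes the vertical stack. The block relation $(X_L/c)\,r_1 = 0$ contained in the identity $X^{-1}X = I$ (read off from \eqref{eq-fundamental-decomposision} and \eqref{zb236asd00991}) shows $X_L\,r_1 = 0$; since $\vx^\star = \mathds{1}_n (x^\star)^T$ is consensual by Lemma~\ref{lm-opt-cond}, we have $[\,\vx^\star\,;\,\mathbf{0}\,] = r_1 (x^\star)^T$ and therefore $X_L[\,\vx^\star\,;\,\mathbf{0}\,]=0$. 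Hence $\check{\vz}^{(0)} = -\tfrac{1}{c}\,X_L\,[\,\mathbf{0}\,;\,\vy^\star\,]$, and taking Frobenius norms with sub-multiplicativity and the choice $c=\|X_L\|$ collapses this to the clean estimate $\|\check{\vz}^{(0)}\|_F \le \|\vy^\star\|_F$. Note that \eqref{bound-XLXR} is not needed here; only $c=\|X_L\|$ is used.

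The next step is to bound $\|\vy^\star\|_F$ purely in terms of $\nabla f(\vx^\star)$. I would work in the orthonormal eigenbasis $\{q_i\}_{i=1}^n$ of $W$ (with $q_1 = \mathds{1}_n/\sqrt n$), which simultaneously diagonalizes $\bar W = (I+W)/2$ and $V = (I-\bar W)^{1/2}$, with eigenvalues $\bar\lambda_i$ and $v_i := (1-\bar\lambda_i)^{1/2}$ respectively (so $v_1=0$). Projecting the optimality relation \eqref{eq-opt-cond-1}, i.e.\ $V\vy^\star = -\gamma\,\bar W\,\nabla f(\vx^\star)$, onto each $q_i$ gives $v_i\,(q_i^T\vy^\star) = -\gamma\,\bar\lambda_i\,(q_i^T\nabla f(\vx^\star))$. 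For $i=1$ this is $0=0$ because $\mathds{1}_n^T\nabla f(\vx^\star) = n\nabla f(x^\star) = 0$; the component $q_1^T\vy^\star$ is then fixed to $0$ by the side condition that $\vy^\star$ lies in the range of $V$ (stated in Lemma~\ref{lm-opt-cond}), i.e.\ in $\mathrm{span}\{q_2,\dots,q_n\}$. For $i\ge 2$ we solve $q_i^T\vy^\star = -(\gamma\bar\lambda_i/v_i)\,q_i^T\nabla f(\vx^\star)$, so by Parseval $\|\vy^\star\|_F^2 = \gamma^2\sum_{i=2}^n \tfrac{\bar\lambda_i^2}{1-\bar\lambda_i}\,\|q_i^T\nabla f(\vx^\star)\|^2$. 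Since $t\mapsto t^2/(1-t)$ is increasing on $[0,1)$ and $\bar\lambda_i\le\bar\lambda_2<1$ for $i\ge 2$ (Remark~\ref{rm-beta}), and since the $q_1$-component of $\nabla f(\vx^\star)$ vanishes, this is at most $\tfrac{\gamma^2\bar\lambda_2^2}{1-\bar\lambda_2}\sum_{i=1}^n\|q_i^T\nabla f(\vx^\star)\|^2 = \tfrac{\gamma^2\bar\lambda_2^2}{1-\bar\lambda_2}\|\nabla f(\vx^\star)\|_F^2$. Combined with $\|\check{\vz}^{(0)}\|_F\le\|\vy^\star\|_F$ this gives the first claim, and substituting the additional hypothesis $\|\nabla f(\vx^\star)\|_F^2 = \sum_{i=1}^n\|\nabla f_i(x^\star)\|^2 = O(n)$ yields $\|\check{\vz}^{(0)}\|_F^2 = O\big(n\gamma^2\bar\lambda_2^2/(1-\bar\lambda_2)\big)$.

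The arguments are essentially bookkeeping, so I do not anticipate a genuine obstacle, but two small points deserve care. First, $\vy^\star$ is pinned down only by using \emph{both} the algebraic relation \eqref{eq-opt-cond-1} \emph{and} the range condition from Lemma~\ref{lm-opt-cond}: relation \eqref{eq-opt-cond-1} alone leaves the $\mathds{1}_n$-direction of $\vy^\star$ undetermined, which is precisely the direction $X_L$ annihilates, so it is harmless, but the bound on $\|\vy^\star\|_F$ does rely on it being zero. Second, one must keep track of transposes when passing between the $n\times d$ matrix variables and their eigen-coordinates $q_i^T(\cdot)\in\RR^{1\times d}$, and verify that the factor $c=\|X_L\|$ cancels \emph{exactly} against $\|X_L\|$ in the sub-multiplicative bound rather than leaving any residual $\|X_L\|\,\|X_R\|$-type term. (The case $n=1$ is vacuous since $\check{\vz}^{(0)}$ is then empty, so $n\ge 2$ may be assumed throughout.)
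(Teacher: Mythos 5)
Your proposal is correct and follows essentially the same route as the paper's proof: you reduce $\check{\vz}^{(0)}$ to $\vy^\star$ by using $X_L r_1=0$ and the choice $c=\|X_L\|$, and then bound $\|\vy^\star\|_F$ from the optimality condition \eqref{eq-opt-cond-1} together with the range-space condition on $\vy^\star$, obtaining the same spectral factor $\bar{\lambda}_2^2/(1-\bar{\lambda}_2)$. Your explicit eigenbasis/Parseval computation is simply the componentwise form of the paper's pseudo-inverse argument $\vy^\star=-\gamma V^\dagger \bar{W}\nabla f(\vx^\star)$ with $\|V^\dagger\bar{W}\|^2\le\bar{\lambda}_2^2/(1-\bar{\lambda}_2)$, so there is no substantive difference.
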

	}
	
	% Similar to the error recursion of D-SGD, the transformed recursion \eqref{transformed-error-dynamics} relates how $\bar{x}^{(k)} - x^\star$ and the consensus term evolve with iterations for D$^2$/Exact-Diffusion. Hence, the analysis of D-SGD (\textit{e.g.}, \cite{koloskova2020unified,koloskova2019decentralized}) can be modified for the transformed recursion \eqref{transformed-error-dynamics} of D$^2$/Exact-Diffusion. {\color{red}[Kun: Don't you think the above paragraph weaken the paper? On the other hand, we have to give credit to \cite{koloskova2020unified,koloskova2019decentralized}.]}

	\section{Convergence Results: Generally-Convex Scenario}
	\label{sec:general-convex}
	With Assumption \ref{ass:gradient-noise}, it is easy to verify that 
	\begin{align}\label{avg-noise}
	\mathbb{E}[\|\bar{\vs}^{(k)}\|^2|\cF^{(k-1)}] \le \frac{\sigma^2}{n}, \quad k=1,2,\ldots
	\end{align}
	We first establish a descent lemma for the D$^2$/Exact-Diffusion algorithm in the generally-convex setting, which describes how $\mathbb{E}\|\bar{\z}^{(k)}\|^2$ evolves with iteration.
	\begin{lemma}[\sc Descent Lemma]\label{lm-descent-gc}
		Under Assumptions \ref{ass:convex}--\ref{ass:gradient-noise} and learning rate $\gamma < \frac{1}{4L}$, it holds for $k=0,1,\dots$ that 
		\begin{align}\label{23bsd999-3}
		& \mathbb{E}\|\bar{\z}^{(k+1)}\|^2 \le \mathbb{E}\|\bar{\z}^{(k)}\|^2 \hspace{-0.5mm}-\hspace{-0.5mm} \gamma \big(\mathbb{E} f(\bar{x}^{(k)}) \hspace{-0.5mm}-\hspace{-0.5mm} f(x^\star)\big) \hspace{-0.5mm}+\hspace{-0.5mm} \frac{3L\gamma }{2n \bar{\lambda}_n} \mathbb{E}\|\check{\vz}^{(k)}\|_F^2 \hspace{-0.5mm}+\hspace{-0.5mm} \frac{\gamma^2 \sigma^2}{n},\quad k=0,1,\dots 
		\end{align}
		where $\bar{\lambda}_n = \lambda_n(\bar{W}) = (1 + \lambda_n(W))/2$. (Proof is in Appendix \ref{app-lm-descent-gc})
		$\hfill \blacksquare$
	\end{lemma}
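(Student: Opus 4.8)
The plan is to start from the first block row of the transformed recursion \eqref{transformed-error-dynamics}. Since $[\bar{\z}^{(k)}]^T = \bar{x}^{(k)}-x^\star$ and $\frac{1}{n}\mathds{1}^T\nabla f(\vx^\star)=\nabla f(x^\star)=0$, that row reads (after transposing to column vectors) $\bar{x}^{(k+1)}-x^\star = (\bar{x}^{(k)}-x^\star) - \gamma g^{(k)} - \gamma \bar{\vs}^{(k)}$, where $g^{(k)}:=\frac{1}{n}\sum_{i=1}^n \nabla f_i(x_i^{(k)})$ is the noiseless network-averaged gradient. First I would square both sides, expand, and take conditional expectation given $\cF^{(k-1)}$: because $\vx^{(k)}$ (hence $g^{(k)}$) is $\cF^{(k-1)}$-measurable and $\mathbb{E}[\bar{\vs}^{(k)}\mid\cF^{(k-1)}]=0$ by \eqref{gd-1}, the cross term disappears, and $\mathbb{E}[\|\bar{\vs}^{(k)}\|^2\mid\cF^{(k-1)}]\le\sigma^2/n$ by \eqref{avg-noise} contributes the $\gamma^2\sigma^2/n$ term. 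It then remains to bound the deterministic quantity $\|\bar{x}^{(k)}-x^\star\|^2 - 2\gamma\langle \bar{x}^{(k)}-x^\star,\,g^{(k)}\rangle + \gamma^2\|g^{(k)}\|^2$.

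For the inner-product term I would lower-bound $\langle \bar{x}^{(k)}-x^\star,\,\nabla f_i(x_i^{(k)})\rangle$ node by node, writing it as $\langle x_i^{(k)}-x^\star,\nabla f_i(x_i^{(k)})\rangle + \langle \bar{x}^{(k)}-x_i^{(k)},\nabla f_i(x_i^{(k)})\rangle$. Convexity of $f_i$ gives $\langle x_i^{(k)}-x^\star,\nabla f_i(x_i^{(k)})\rangle\ge f_i(x_i^{(k)})-f_i(x^\star)$, and $L$-smoothness gives the quadratic lower bound $\langle \bar{x}^{(k)}-x_i^{(k)},\nabla f_i(x_i^{(k)})\rangle\ge f_i(\bar{x}^{(k)})-f_i(x_i^{(k)})-\frac{L}{2}\|\bar{x}^{(k)}-x_i^{(k)}\|^2$. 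Averaging over $i$ cancels the $f_i(x_i^{(k)})$ contributions and yields $\langle \bar{x}^{(k)}-x^\star,g^{(k)}\rangle\ge f(\bar{x}^{(k)})-f(x^\star)-\frac{L}{2n}\|\vx^{(k)}-\bar{\vx}^{(k)}\|_F^2$, with no $b^2$ term since the heterogeneous gradients enter only through $\frac{1}{n}\sum_i\nabla f_i=\nabla f$. For $\|g^{(k)}\|^2$ I would write $g^{(k)}=\frac{1}{n}\sum_i\big(\nabla f_i(x_i^{(k)})-\nabla f_i(\bar{x}^{(k)})\big)+\nabla f(\bar{x}^{(k)})$ (again using $\nabla f(x^\star)=0$), then apply $\|a+b\|^2\le 2\|a\|^2+2\|b\|^2$, Jensen, $L$-smoothness, and the convex–smooth bound $\|\nabla f(\bar{x}^{(k)})\|^2\le 2L\big(f(\bar{x}^{(k)})-f(x^\star)\big)$, obtaining $\|g^{(k)}\|^2\le\frac{2L^2}{n}\|\vx^{(k)}-\bar{\vx}^{(k)}\|_F^2+4L\big(f(\bar{x}^{(k)})-f(x^\star)\big)$.

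Combining these, the deterministic part is at most $\|\bar{x}^{(k)}-x^\star\|^2-(2\gamma-4L\gamma^2)\big(f(\bar{x}^{(k)})-f(x^\star)\big)+\frac{L\gamma}{n}(1+2L\gamma)\|\vx^{(k)}-\bar{\vx}^{(k)}\|_F^2$. The stepsize condition $\gamma<\frac{1}{4L}$ enters twice: $4L\gamma^2<\gamma$ so that $2\gamma-4L\gamma^2>\gamma$, producing the clean $-\gamma(f(\bar{x}^{(k)})-f(x^\star))$ term (using $f(\bar{x}^{(k)})\ge f(x^\star)$), and $1+2L\gamma<\frac{3}{2}$ for the consensus coefficient. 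Finally I would convert the consensus term using \eqref{zzznnn}: $\vx^{(k)}-\bar{\vx}^{(k)}=c X_{R,u}\check{\vz}^{(k)}$, and with $c=\|X_L\|$ as in Proposition \ref{remark-z0} and \eqref{bound-XLXR}, $\|\vx^{(k)}-\bar{\vx}^{(k)}\|_F^2\le c^2\|X_R\|^2\|\check{\vz}^{(k)}\|_F^2=\|X_L\|^2\|X_R\|^2\|\check{\vz}^{(k)}\|_F^2\le\bar{\lambda}_n^{-1}\|\check{\vz}^{(k)}\|_F^2$. Taking total expectation gives exactly \eqref{23bsd999-3}.

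The main obstacle will be the inner-product bound, specifically resisting the tempting route of splitting $\nabla f_i(x_i^{(k)})=\big(\nabla f_i(x_i^{(k)})-\nabla f_i(x^\star)\big)+\nabla f_i(x^\star)$ and invoking co-coercivity: that requires realigning the two arguments of the inner product to the common point $x_i^{(k)}$, which destroys the function-value gap one needs and re-introduces node-dependent products with $\nabla f_i(x^\star)$ that do not average away, i.e.\ a spurious $b^2$-type term. The decomposition through $x_i^{(k)}$ sketched above keeps everything in terms of function-value gaps and the consensus error $\|\vx^{(k)}-\bar{\vx}^{(k)}\|_F^2$, and it is precisely this cancellation — enabled by D$^2$/Exact-Diffusion's bias correction — that removes the heterogeneity term. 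A secondary concern is purely bookkeeping: tracking constants tightly enough through the $\gamma<\frac{1}{4L}$ reductions to land on the stated coefficient $\frac{3L\gamma}{2n\bar{\lambda}_n}$ rather than a looser absolute constant.
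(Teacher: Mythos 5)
Your proposal is correct and follows essentially the same route as the paper's own proof: the same split of the inner product through $x_i^{(k)}$ (convexity plus the $L$-smoothness lower bound), the same add-and-subtract of $\nabla f_i(\bar{x}^{(k)})$ for the squared-gradient term, the same use of $\gamma<\tfrac{1}{4L}$ to get the coefficients $\gamma$ and $\tfrac{3L\gamma}{2n}$, and the same conversion of the consensus error via \eqref{zzznnn}, $c=\|X_L\|$, and \eqref{bound-XLXR}. The only cosmetic difference is that you cancel $\tfrac{1}{n}\mathds{1}^T\nabla f(\vx^\star)=0$ at the outset, whereas the paper carries the $\nabla f_i(x^\star)$ terms and cancels them inside the bounds of terms (A) and (B).
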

	With inequality \eqref{23bsd999-3}, we have for $T\ge 0$ that 
	\begin{align}\label{c286}
	\frac{1}{T+1}\sum_{k=0}^T\big(\mathbb{E} f(\bar{x}^{(k)}) \hspace{-0.5mm}-\hspace{-0.5mm} f(x^\star)\big) \le \frac{\mathbb{E}\|\bar{\z}^{(0)}\|^2}{\gamma (T+1)} + \frac{3L}{2n \bar{\lambda}_n(T+1)}\sum_{k=0}^T \mathbb{E}\|\check{\vz}^{(k)}\|_F^2 + \frac{\gamma \sigma^2}{n}.
	\end{align}
	We next bound the ergodic consensus term on the right-hand-side. 
	{
		\begin{lemma}[\sc Consensus Lemma]\label{lm-consensus}
			Under Assumptions \ref{ass-W}-\ref{ass:gradient-noise} and learning rate $\gamma \le  \frac{(1-\beta_1)\bar{\lambda}_n^{1/2}}{4\bar{\lambda}_2 L} $, it holds that
			\begin{align}\label{xxcznzxnxcn}
			\mathbb{E}\|\check{\vz}^{(k+1)}\|_F^2 &\le  \left(\frac{1+\beta_1}{2}\right) \mathbb{E}\|\check{\vz}^{(k)}\|_F^2 + \frac{16 n \gamma^2 \bar{\lambda}_2^2 L}{1-\beta_1} (\mathbb{E}f(\bar{x}^{(k)}) - f(x^\star))  +  4 n \gamma^2  \bar{\lambda}_2^2 \sigma^2, \quad k=0,1,\cdots
			\end{align}
			where $\beta_1 = \bar{\lambda}^{1/2}_2$, $\bar{\lambda}_2 = (1 + \lambda_2(W))/2$, and $\bar{\lambda}_n = (1 + \lambda_n(W))/2$ (Proof is in Appendix \ref{app-gc-consensus}).
			$\hfill \blacksquare$
		\end{lemma}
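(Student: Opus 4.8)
The plan is to iterate the second block of the transformed recursion \eqref{transformed-error-dynamics}, namely $\check{\vz}^{(k+1)} = D_1\check{\vz}^{(k)} - \tfrac{\gamma}{c}\check{\vg}^{(k)} - \tfrac{\gamma}{c}\check{\vs}^{(k)}$, and to control each term separately. First I would condition on $\cF^{(k-1)}$: since $\check{\vz}^{(k)}$ and $\check{\vg}^{(k)}$ are $\cF^{(k-1)}$-measurable (they depend only on $\vx^{(k)},\vy^{(k)}$) while $\mathbb{E}[\check{\vs}^{(k)}\mid\cF^{(k-1)}]=0$ (it is linear in the zero-mean noise $\vs^{(k)}$ by \eqref{s-check} and \eqref{gd-1}), the cross term vanishes and $\mathbb{E}[\|\check{\vz}^{(k+1)}\|_F^2\mid\cF^{(k-1)}] = \|D_1\check{\vz}^{(k)} - \tfrac{\gamma}{c}\check{\vg}^{(k)}\|_F^2 + \tfrac{\gamma^2}{c^2}\mathbb{E}[\|\check{\vs}^{(k)}\|_F^2\mid\cF^{(k-1)}]$. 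To the first term I would apply Young's inequality $\|a-b\|_F^2 \le (1+t)\|a\|_F^2 + (1+1/t)\|b\|_F^2$ with $t = (1-\beta_1)/\beta_1$, using $\|D_1\|=\beta_1=\bar{\lambda}_2^{1/2}$ from \eqref{D1-norm2}; this gives a leading factor $(1+t)\beta_1^2 = \beta_1$, leaving the slack $\tfrac{1+\beta_1}{2}-\beta_1=\tfrac{1-\beta_1}{2}$ to absorb the perturbation, and the blow-up factor $1+1/t = 1/(1-\beta_1)$ on the $\check{\vg}^{(k)}$ term.

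The core estimate is an operator-norm bound on the common linear map $M \define \big(X_{L,\ell}Q_R + X_{L,r}Q_R(I-\bar{\Lambda}_R)^{1/2}\big)\bar{\Lambda}_R Q_R^T$ that defines $\check{\vg}^{(k)}$ and $\check{\vs}^{(k)}$. Taking $c=\|X_L\|$ as in Proposition \ref{remark-z0}, and using $\|Q_R\|=1$, $\|(I-\bar{\Lambda}_R)^{1/2}\|\le 1$, $\|\bar{\Lambda}_R\|=\bar{\lambda}_2$, and $\|X_{L,\ell}\|,\|X_{L,r}\|\le\|X_L\|=c$, I get $\|M\|\le 2c\bar{\lambda}_2$, hence $\tfrac1c\|\check{\vg}^{(k)}\|_F \le 2\bar{\lambda}_2\,\|\nabla f(\vx^{(k)}) - \nabla f(\vx^\star)\|_F$ and $\tfrac1c\|\check{\vs}^{(k)}\|_F \le 2\bar{\lambda}_2\,\|\vs^{(k)}\|_F$. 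Next I would split $\|\nabla f(\vx^{(k)}) - \nabla f(\vx^\star)\|_F^2 \le 2\|\nabla f(\vx^{(k)}) - \nabla f(\bar{\vx}^{(k)})\|_F^2 + 2\|\nabla f(\bar{\vx}^{(k)}) - \nabla f(\vx^\star)\|_F^2$, bound the first piece by $2L^2\|\vx^{(k)}-\bar{\vx}^{(k)}\|_F^2$ via Assumption \ref{ass:smoothness}, and the second by $4Ln(f(\bar{x}^{(k)})-f(x^\star))$ via the convex--smooth inequality $\|\nabla f_i(x)-\nabla f_i(y)\|^2\le 2L(f_i(x)-f_i(y)-\langle\nabla f_i(y),x-y\rangle)$ summed over $i$, where the first-order term drops because $\sum_i\nabla f_i(x^\star)=n\nabla f(x^\star)=0$. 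Finally, by \eqref{zzznnn} together with the improved bound $\|X_L\|\|X_R\|\le\bar{\lambda}_n^{-1/2}$ from Lemma \ref{lm-decom}, $\|\vx^{(k)}-\bar{\vx}^{(k)}\|_F^2 = c^2\|X_{R,u}\check{\vz}^{(k)}\|_F^2 \le \bar{\lambda}_n^{-1}\|\check{\vz}^{(k)}\|_F^2$, so $\tfrac1{c^2}\|\check{\vg}^{(k)}\|_F^2 \le \tfrac{8\bar{\lambda}_2^2 L^2}{\bar{\lambda}_n}\|\check{\vz}^{(k)}\|_F^2 + 16\bar{\lambda}_2^2 Ln(f(\bar{x}^{(k)})-f(x^\star))$.

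Assembling the pieces, the coefficient of $\|\check{\vz}^{(k)}\|_F^2$ is $\beta_1 + \tfrac{1}{1-\beta_1}\gamma^2\tfrac{8\bar{\lambda}_2^2L^2}{\bar{\lambda}_n}$; the stepsize condition $\gamma\le\frac{(1-\beta_1)\bar{\lambda}_n^{1/2}}{4\bar{\lambda}_2 L}$ yields $\frac{8\gamma^2\bar{\lambda}_2^2L^2}{(1-\beta_1)\bar{\lambda}_n}\le\frac{1-\beta_1}{2}$, so this coefficient is at most $\beta_1+\tfrac{1-\beta_1}{2}=\tfrac{1+\beta_1}{2}$, as claimed. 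The function-value term becomes $\tfrac{1}{1-\beta_1}\gamma^2\cdot 16\bar{\lambda}_2^2 Ln(f(\bar{x}^{(k)})-f(x^\star))$, and the noise term is $\tfrac{\gamma^2}{c^2}\mathbb{E}[\|\check{\vs}^{(k)}\|_F^2\mid\cF^{(k-1)}] \le 4\gamma^2\bar{\lambda}_2^2\,\mathbb{E}[\|\vs^{(k)}\|_F^2\mid\cF^{(k-1)}] \le 4n\gamma^2\bar{\lambda}_2^2\sigma^2$ using Assumption \ref{ass:gradient-noise}; taking total expectation gives \eqref{xxcznzxnxcn}. I expect the main obstacle to be the tight operator-norm bookkeeping for the composite map $M$ in $\check{\vg}^{(k)}$ and $\check{\vs}^{(k)}$ — in particular, extracting exactly the factor $2\bar{\lambda}_2$ and correctly marrying the refined bound $\|X_L\|\|X_R\|\le\bar{\lambda}_n^{-1/2}$ with the normalization $c=\|X_L\|$ — and then calibrating the Young parameter $t$ against the stepsize constraint so that the contraction rate lands precisely at $\tfrac{1+\beta_1}{2}$ rather than merely at a quantity of the right order.
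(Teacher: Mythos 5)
Your proposal is correct and follows essentially the same route as the paper's proof: the same mean-square expansion of the $\check{\vz}$-recursion with the cross term killed by the zero-mean noise, the same Young/Jensen split calibrated so the contraction lands at $(1+\beta_1)/2$, the same operator-norm bounds on the map defining $\check{\vg}^{(k)}$ and $\check{\vs}^{(k)}$ (with $c=\|X_L\|$ and $\|X_L\|\|X_R\|\le\bar{\lambda}_n^{-1/2}$), the same splitting of $\nabla f(\vx^{(k)})-\nabla f(\vx^\star)$ through $\bar{\vx}^{(k)}$, and the same stepsize condition. The only differences are cosmetic (e.g., conditioning on $\cF^{(k-1)}$ and bounding $\|Q_R\bar{\Lambda}_RQ_R^T\|$ by submultiplicativity rather than via the explicit eigen-decomposition), so no changes are needed.
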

		
		\begin{lemma}[\sc Ergodic Consensus Lemma]\label{lm-ergodic-consensus}
			Under the same assumptions as Lemma \ref{lm-consensus}, it holds that (Proof is in Appendix \ref{app-lm-ergodic-consensus})
			\begin{align}\label{eq:ergodic-consensus}
			\frac{1}{T+1}\sum_{k=0}^T \mathbb{E}\|\check{\vz}^{(k)}\|_F^2 \le &\  \frac{32 n \gamma^2 \bar{\lambda}_2^2 L}{(1-\beta_1)^2(T+1)}\sum_{k=0}^{T}\big(\mathbb{E} f(\bar{x}^{(k)}) - f(x^\star) \big) +  \frac{8 n\gamma^2\bar{\lambda}_2^2\sigma^2}{1-\beta_1} + \frac{3\mathbb{E}\|\check{\vz}^{(0)}\|_F^2}{(1-\beta_1)(T+1)} 
.			\end{align}
			$\hfill \blacksquare$
	\end{lemma}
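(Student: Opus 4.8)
The plan is to unroll the one-step contraction from Lemma \ref{lm-consensus} and then average over iterations. For brevity write $a_k := \mathbb{E}\|\check{\vz}^{(k)}\|_F^2$, $\rho := \frac{1+\beta_1}{2}$, $u_k := \frac{16 n \gamma^2 \bar{\lambda}_2^2 L}{1-\beta_1}\big(\mathbb{E}f(\bar{x}^{(k)}) - f(x^\star)\big) \ge 0$ and $v := 4 n \gamma^2 \bar{\lambda}_2^2 \sigma^2 \ge 0$, so that Lemma \ref{lm-consensus} reads $a_{k+1} \le \rho\, a_k + u_k + v$ for all $k \ge 0$. Since $\beta_1 = \bar{\lambda}_2^{1/2} \in (0,1)$ under Assumption \ref{ass-W} (see Remark \ref{rm-beta}), we have $\rho \in (0,1)$ and $1-\rho = \frac{1-\beta_1}{2}$, so the recursion is a genuine linear contraction with nonnegative additive forcing terms.

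First I would iterate this inequality to obtain, for every $k \ge 0$,
\[
a_k \le \rho^k a_0 + \sum_{j=0}^{k-1}\rho^{\,k-1-j}\,(u_j + v),
\]
with the empty sum understood as $0$ when $k=0$. Summing over $k=0,1,\dots,T$ and swapping the order of summation in the double sum gives
\[
\sum_{k=0}^T a_k \;\le\; a_0\sum_{k=0}^T\rho^k \;+\; \sum_{j=0}^{T-1}(u_j+v)\sum_{k=j+1}^{T}\rho^{\,k-1-j} \;\le\; \frac{a_0}{1-\rho} \;+\; \frac{1}{1-\rho}\sum_{j=0}^{T-1}(u_j+v),
\]
where both geometric sums are bounded by $\sum_{m\ge 0}\rho^m = \tfrac{1}{1-\rho}$. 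Using $u_j, v \ge 0$ I extend the last sum to $j=T$, divide by $T+1$, and substitute $1-\rho = \frac{1-\beta_1}{2}$ to get
\[
\frac{1}{T+1}\sum_{k=0}^T a_k \;\le\; \frac{2 a_0}{(1-\beta_1)(T+1)} \;+\; \frac{2}{(1-\beta_1)(T+1)}\sum_{j=0}^{T}u_j \;+\; \frac{2v}{1-\beta_1}.
\]

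Finally I would plug in the definitions of $u_j$ and $v$: the middle term becomes $\frac{32 n\gamma^2\bar{\lambda}_2^2 L}{(1-\beta_1)^2(T+1)}\sum_{j=0}^T\big(\mathbb{E}f(\bar{x}^{(j)}) - f(x^\star)\big)$ and the last term becomes $\frac{8 n\gamma^2\bar{\lambda}_2^2\sigma^2}{1-\beta_1}$, while bounding $2 \le 3$ in the first term gives exactly \eqref{eq:ergodic-consensus} (with $a_0 = \mathbb{E}\|\check{\vz}^{(0)}\|_F^2$). There is no real obstacle here: this is the standard geometric-series unrolling for linear recursions, and the only points needing a little care are the interchange of summation order and matching numerical constants — in particular noting that the $(1-\beta_1)^{-1}$ and $(1-\beta_1)^{-2}$ factors in the statement come precisely from $1-\rho = (1-\beta_1)/2$ together with the $(1-\beta_1)^{-1}$ already present in $u_k$. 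The hypotheses carried over from Lemma \ref{lm-consensus} (in particular the stepsize bound $\gamma \le \frac{(1-\beta_1)\bar{\lambda}_n^{1/2}}{4\bar{\lambda}_2 L}$) are exactly what is needed to invoke $a_{k+1}\le \rho a_k + u_k + v$, so nothing further is required.
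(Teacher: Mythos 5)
Your proof is correct and follows essentially the same route as the paper's: unroll the one-step contraction from Lemma \ref{lm-consensus}, sum over iterations, swap the order of summation, and bound the geometric series by $\frac{1}{1-\rho}=\frac{2}{1-\beta_1}$. The only cosmetic difference is that the paper sums over $k=1,\dots,T$ and adds the $k=0$ term separately (which is where its constant $3$ comes from), whereas you include $k=0$ directly and then relax $2\le 3$ — both yield \eqref{eq:ergodic-consensus}.
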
}
	With inequalities \eqref{c286} and \eqref{eq:ergodic-consensus}, and the fact that $\beta^2_1 = \bar{\lambda}_2 = (1 + \lambda_2(W))/2 \le (1+\beta)/2$ where $\beta=\rho(W - \frac{1}{n}\mathds{1}\mathds{1}^T)$ is defined in Remark \ref{rm-beta}, we can show the following convergence result for D$^2$/Exact-Diffusion in the generally convex scenario. 
	\begin{theorem}[\sc Convergence Property]\label{thm-generally-convex}
		Under Assumptions \ref{ass-W}-\ref{ass:gradient-noise} and learning rate
		\begin{align}
		\gamma = \min\left\{\frac{1}{4L}, \frac{(1-\beta_1)\bar{\lambda}_n^{1/2}}{10 L \bar{\lambda}_2}, \left(\frac{r_0}{r_1(T+1)}\right)^{\frac{1}{2}}, \left(\frac{r_0}{r_2(T+1)}\right)^{\frac{1}{3}}, \left(\frac{r_0}{r_3}\right)^{\frac{1}{3}}\right\}
		\end{align}
		where $r_0, r_1$, $r_2$ and $r_2$ are constants defined in \eqref{sndsnnds}, and $\bar{\lambda}_n$ is bounded away from $0$, then it holds that
		\begin{align}\label{xbsd87}
		\frac{1}{T+1}\sum_{k=0}^T\big(\mathbb{E}f(\bar{x}^{(k)}) - f(x^\star)\big) = O\Big( \frac{\sigma}{\sqrt{nT}} + 
		\frac{\sigma^{\frac{2}{3}}}{(1-\beta)^{\frac{1}{3}}T^{\frac{2}{3}}} +  \frac{1}{(1-\beta)T}\Big).
		\end{align}
		(Proof is in Appendix \ref{app-thm-gc})
		$\hfill \blacksquare$
	\end{theorem}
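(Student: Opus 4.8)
The plan is to combine the descent inequality \eqref{c286} with the ergodic consensus bound \eqref{eq:ergodic-consensus} to eliminate the $\frac{1}{T+1}\sum_k \mathbb{E}\|\check{\vz}^{(k)}\|_F^2$ term, then tune the stepsize $\gamma$ to balance the resulting terms. First I would substitute \eqref{eq:ergodic-consensus} into the right-hand side of \eqref{c286}. This produces, on the right, a term proportional to
\[
\frac{3L}{2n\bar\lambda_n}\cdot\frac{32 n\gamma^2\bar\lambda_2^2 L}{(1-\beta_1)^2(T+1)}\sum_{k=0}^T\big(\mathbb{E}f(\bar x^{(k)})-f(x^\star)\big)
= \frac{48 L^2\gamma^2\bar\lambda_2^2}{\bar\lambda_n(1-\beta_1)^2}\cdot\frac{1}{T+1}\sum_{k=0}^T\big(\mathbb{E}f(\bar x^{(k)})-f(x^\star)\big),
\]
which is of the \emph{same form} as the left-hand side. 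The key observation is that the stepsize constraint $\gamma \le \frac{(1-\beta_1)\bar\lambda_n^{1/2}}{10 L\bar\lambda_2}$ already guarantees this coefficient is at most $\tfrac12$ (up to constants), so it can be absorbed into the left-hand side, leaving
\[
\frac{1}{T+1}\sum_{k=0}^T\big(\mathbb{E}f(\bar x^{(k)})-f(x^\star)\big)
\lesssim \frac{\mathbb{E}\|\bar\z^{(0)}\|^2}{\gamma(T+1)} + \frac{\gamma\sigma^2}{n}
+ \frac{L\gamma^2\bar\lambda_2^2\sigma^2}{\bar\lambda_n(1-\beta_1)} + \frac{L\,\mathbb{E}\|\check{\vz}^{(0)}\|_F^2}{n\bar\lambda_n(1-\beta_1)(T+1)}.
\]

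Next I would bound the initialization terms. Since $\vx^{(0)}=0$, $\bar x^{(0)}=0$, so $\mathbb{E}\|\bar\z^{(0)}\|^2 = \|x^\star\|^2 = O(1)$; and by Proposition \ref{remark-z0} with $c=\|X_L\|$ we have $\mathbb{E}\|\check{\vz}^{(0)}\|_F^2 = O\!\big(\tfrac{n\gamma^2\bar\lambda_2^2}{1-\bar\lambda_2}\big)$, so the last term becomes $O\!\big(\tfrac{L\gamma^2\bar\lambda_2^2}{\bar\lambda_n(1-\beta_1)(1-\bar\lambda_2)(T+1)}\big)$, which is lower order in $1/\gamma$ than the first term and can be folded in. Collecting constants, the bound has the shape $\frac{r_0}{\gamma(T+1)} + r_1\gamma + r_2\gamma^2/(T+1)\cdot(\ldots)$ — more precisely, defining $r_0,r_1,r_2,r_3$ as in \eqref{sndsnnds} (with $r_0$ capturing the $O(1)$ initialization, $r_1 \asymp \sigma^2/n$, and $r_2,r_3$ capturing the $\bar\lambda_2^2\sigma^2/(1-\beta)$-type terms), the right-hand side is $\lesssim \frac{r_0}{\gamma(T+1)} + r_1\gamma + r_2\gamma^2 + r_3\gamma^2$ after the per-term rearrangement, wait — I would organize it as $\frac{r_0}{\gamma T} + r_1\gamma + r_2\gamma^2$, where the $\gamma^2$ coefficient is $\asymp \sigma^2\bar\lambda_2^2/(1-\beta)$. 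One then invokes the standard stepsize-tuning lemma: choosing $\gamma = \min\{\gamma_{\max}, (r_0/(r_1 T))^{1/2}, (r_0/(r_2 T))^{1/3}\}$ where $\gamma_{\max}$ is the constraint $\min\{\tfrac{1}{4L}, \tfrac{(1-\beta_1)\bar\lambda_n^{1/2}}{10L\bar\lambda_2}\}$ yields a bound of order $\frac{r_0 L}{(1-\beta)T} + \sqrt{\frac{r_0 r_1}{T}} + \frac{(r_0^2 r_2)^{1/3}}{T^{2/3}}$. Plugging in $r_0=O(1)$, $r_1 = O(\sigma^2/n)$, $r_2 = O(\sigma^2/(1-\beta))$ and using $1-\beta_1 \gtrsim 1-\beta$ (since $\beta_1^2 = \bar\lambda_2 \le (1+\beta)/2$, hence $1-\beta_1 \ge 1-\sqrt{(1+\beta)/2} \gtrsim 1-\beta$) and $\bar\lambda_n$ bounded away from $0$, this collapses exactly to $O\!\big(\tfrac{\sigma}{\sqrt{nT}} + \tfrac{\sigma^{2/3}}{(1-\beta)^{1/3}T^{2/3}} + \tfrac{1}{(1-\beta)T}\big)$.

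The main obstacle I anticipate is the bookkeeping around the consensus-term feedback: one must verify that the stepsize constraint $\gamma\le \frac{(1-\beta_1)\bar\lambda_n^{1/2}}{10L\bar\lambda_2}$ (with the constant $10$ rather than $4$ as in Lemma \ref{lm-consensus}) is precisely tight enough to make the coefficient $\frac{48 L^2\gamma^2\bar\lambda_2^2}{\bar\lambda_n(1-\beta_1)^2}$ strictly below $1$ so the absorption is valid, and to track how the factor $\frac{1}{(1-\beta_1)^2}$ from \eqref{eq:ergodic-consensus} interacts with the $\frac{1}{\bar\lambda_n(1-\beta_1)}$ already present — getting the net power of $(1-\beta)$ right in $r_2$ is what determines whether the final rate has $(1-\beta)^{-1/3}$ or something worse. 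A secondary subtlety is confirming that all the lower-order initialization contributions (the $\|\check{\vz}^{(0)}\|_F^2$ term, which itself carries a $\gamma^2$) genuinely do not affect the leading-order rate after the min-stepsize is substituted; this is routine but must be checked against each of the three candidate values of $\gamma$ in the $\min$. Everything else — the descent lemma, the consensus recursion, Proposition \ref{remark-z0}, and the final algebraic stepsize optimization — is already in hand from the stated results, so the proof is essentially an assembly plus one careful constant-chasing argument.
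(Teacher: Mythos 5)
Your proposal follows essentially the same route as the paper's proof in Appendix \ref{app-thm-gc}: substitute the ergodic consensus bound \eqref{eq:ergodic-consensus} into \eqref{c286}, absorb the resulting self-referential term using the constraint $\gamma \le \frac{(1-\beta_1)\bar\lambda_n^{1/2}}{10L\bar\lambda_2}$ (which makes $\frac{48L^2\gamma^2\bar\lambda_2^2}{(1-\beta_1)^2\bar\lambda_n}\le\frac12$), bound $\mathbb{E}\|\check{\vz}^{(0)}\|_F^2$ via Proposition \ref{remark-z0}, and finish with the standard stepsize-tuning argument over $r_0,\dots,r_3$ together with $1-\beta_1\ge(1-\beta)/4$ and $\bar\lambda_2,\bar\lambda_n$ treated as constants. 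The only cosmetic difference is that the paper carries the $r_3\gamma^2/(T+1)$ term explicitly as a fourth candidate $(r_0/r_3)^{1/3}$ in the $\min$, which you fold in as a lower-order term; this does not change the argument.
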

% 	\begin{remark}\label{remark-lambda-constant}
% 		The bound in \eqref{xbsd87} is a result of the bound \eqref{xbsd87-0} where we ignored the term $\bar{\lambda}^2_2/\bar{\lambda}_n$. It is because this quantity can be regarded as a constant when $\bar{\lambda}_n$ is bounded away from zero. For example, if $\bar{W} = (3I + W)/4$, we have $\bar{\lambda}_2 \ge \bar{\lambda}_n \ge 1/2$. 
% 		$\hfill \square$
% 	\end{remark}
% 	{\color{red}[Kun: do we need to mention this remark? I suggest we just mention this in the proof and not highlight it here unless some reviewer asks. ]}
% 	{\color{red}[Do we need this remark? Our bound has the form $O(\bar{\lambda}_2^{c+\epsilon}/\bar{\lambda}_n^c)$ where $c,\epsilon>0$. Note that in general as the network becomes sparse or connected we have $\bar{\lambda}_2 \approx \lambda_n$, hence, we can regard the ratio to be order $O(\bar{\lambda}_2^{\epsilon}) \leq O(1)$. If we use this argument in the proof, then we can just simply remove this remark.  ]
% 	}
	\begin{corollary}[\sc Transient stage]
		Under the same assumptions as Theorem \ref{thm-generally-convex}, the transient stage for D$^2$/Exact-Diffusion is on the order of $\Omega\left(\frac{n^3}{(1-\beta)^2}\right)$. 
	\end{corollary}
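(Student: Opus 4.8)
The plan is to extract the transient stage directly from the convergence bound \eqref{xbsd87} in Theorem~\ref{thm-generally-convex}. In that bound the linear-speedup term is $\sigma/\sqrt{nT}$: once the algorithm is in the linear-speedup stage this term governs the $\epsilon$-iteration complexity, and it scales inversely with the number of nodes $n$. By definition, the transient stage is the smallest number of iterations $T$ beyond which $\sigma/\sqrt{nT}$ dominates the remaining two terms $\sigma^{2/3}/((1-\beta)^{1/3}T^{2/3})$ and $1/((1-\beta)T)$ appearing in \eqref{xbsd87}. Hence the proof amounts to a pair of short comparisons and a maximum.

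First I would impose $\sigma/\sqrt{nT} \gtrsim \sigma^{2/3}/((1-\beta)^{1/3}T^{2/3})$; clearing the powers of $T$ gives $T^{1/6} \gtrsim \sqrt{n}/((1-\beta)^{1/3}\sigma^{1/3})$, i.e. $T \gtrsim n^3/((1-\beta)^2\sigma^2)$. Next I would impose $\sigma/\sqrt{nT} \gtrsim 1/((1-\beta)T)$; this yields the milder condition $T \gtrsim n/((1-\beta)^2\sigma^2)$. The transient stage is the maximum of these two thresholds, and since $n^3/(1-\beta)^2 \ge n/(1-\beta)^2$ for all $n \ge 1$, the first threshold is the binding one. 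Treating $\sigma$ as an absolute constant, as is standard in the transient-stage literature and consistent with how the orders are reported in Table~\ref{table-transient-stage-local}, this produces a transient stage of order $\Omega(n^3/(1-\beta)^2)$, which is the claim.

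There is no substantive obstacle here: the argument only reuses Theorem~\ref{thm-generally-convex} and performs elementary algebra on exponents. The one point deserving care is to confirm that among the two non-speedup terms it is the $\sigma^{2/3}/((1-\beta)^{1/3}T^{2/3})$ term — decaying like $T^{-2/3}$ rather than like $T^{-1}$ — that generates the dominant, cubic-in-$n$ threshold, so that the weaker $1/((1-\beta)T)$ term does not accidentally govern the bound; the monotone comparison $n^3/(1-\beta)^2 \ge n/(1-\beta)^2$ makes this explicit.
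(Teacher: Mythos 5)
Your proposal is correct and follows essentially the same argument as the paper: require $\sigma^{2/3}/((1-\beta)^{1/3}T^{2/3}) \lesssim \sigma/\sqrt{nT}$ and $1/((1-\beta)T) \lesssim \sigma/\sqrt{nT}$, solve each for $T$, and take the maximum, yielding $T \gtrsim \max\{n^3/((1-\beta)^2\sigma^2),\, n/((1-\beta)^2\sigma^2)\} = \Omega(n^3/(1-\beta)^2)$. The algebra on the exponents and the identification of the $T^{-2/3}$ term as the binding constraint are both right, so no changes are needed.
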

	\begin{proof}
		To achieve the linear speedup stage, $T$ has to be large enough so that 
		\begin{align}
		\frac{\sigma^{\frac{2}{3}}}{(1-\beta)^{\frac{1}{3}}T^{\frac{2}{3}}}\lesssim \frac{\sigma}{\sqrt{nT}} \quad \text{and} \quad  \frac{1}{(1-\beta)T} \lesssim \frac{\sigma}{\sqrt{nT}},
		\end{align}
		which is equivalent to $T \gtrsim \max\big\{ \frac{n^3}{(1-\beta)^2 \sigma^2}, \frac{n}{(1-\beta)^2 \sigma^2}\big\} = \Omega\big(\frac{n^3}{(1-\beta)^2}\big)$.
	\end{proof}
	\begin{remark} The result from \cite{koloskova2020unified} indicates that the transient stage of D-SGD for the generally convex scenario is on the order of $\Omega\big(\frac{n^3}{(1-\beta)^4}\big)$. By removing the influence of the data heterogeneity, D$^2$/Exact-Diffusion improves the transient stage to $\Omega\big(\frac{n^3}{(1-\beta)^2}\big)$, which  has a better network topology dependence on $1-\beta$. 
	$\hfill \square$
	\end{remark}
	\begin{remark}\label{rmk-gt}
	An independent and parallel work in \cite{koloskova2021improved} shows that gradient tracking can also improve the transient stage of D-SGD. The convergence rate and transient stage in the generally-convex scenario established in \cite{koloskova2021improved} are listed in Table \ref{table-D2_vs_GT}. It is observed that the transient stage of D$^2$/Exact-Diffusion is better than gradient tracking by a factor $\log^2(\frac{1}{1-\beta}(1+\log(\frac{1}{1-\beta})))$. Moreover, D$^2$/Exact-Diffusion is more communication-efficient than gradient tracking since it only requires one  communication  round  per  iteration. 
	\end{remark}
    \begin{table}[t]
		\centering
		\begin{tabular}{rcc}
			\toprule
			\textbf{Algorithm} & \textbf{Convergence rate}                         & \textbf{Transient stage}                    \\ \midrule
			\textbf{Gradient-tracking} \cite{koloskova2021improved}  & $O\left(\frac{\sigma}{\sqrt{n T}} + \frac{\tau^{1/3}\sigma^{2/3}}{ T^{2/3}} + \frac{\tau}{T}\right)$     & $\Omega\left(\frac{n^3}{(1-\beta)^2}\log^2(\frac{1}{1-\beta}(1 \hspace{-0.5mm}+\hspace{-0.5mm} \log(\frac{1}{1-\beta})))\right)$ \vspace{1mm}\\
			\textbf{D$^2$/Exact-Diffusion (Ours)}  & $O\left(\frac{\sigma}{\sqrt{n T}} \hspace{-0.5mm} +\hspace{-0.5mm} \frac{\sigma^{2/3}}{(1-\beta)^{1/3} T^{2/3}} \hspace{-0.5mm}+\hspace{-0.5mm} \frac{1}{(1-\beta)T}\right)$    & $\Omega\left(\frac{n^3}{(1-\beta)^2}\right)$       \\
			\bottomrule
		\end{tabular}
		\caption{\small Convergence rate and transient stage comparison in the generally-convex scenario between our results and a parallel work on gradient tracking \cite{koloskova2021improved}. In the table, $\tau = \frac{2}{1-\beta}\log(\frac{50}{1-\beta}(1 + \log(\frac{1}{1-\beta})))+1$ is in \cite[Lemma 20]{koloskova2021improved}. }
		\label{table-D2_vs_GT}
	\end{table}	
	
	\section{Convergence results: Strongly-Convex Scenario}
	
	\subsection{Convergence Analysis of D$^2$/Exact-Diffusion}
	\label{sec:strongly-convex}
	
	In this subsection we establish the convergence rate of D$^2$/Exact-Diffusion in the strongly convex scenario and examine its transient stage. 
	
	\begin{assumption}[\sc strongly convex] \label{ass:sc}
		Each $f_i(x)$ is strongly convex, {\em i.e.}, there exists a constant $\mu > 0$ such that for any $x, y \in \RR^d$ we have:
		\begin{align}
		f_i(y) \ge f_i(x) + \langle \nabla f_i(x), y - x \rangle + \frac{\mu}{2}\|y-x\|^2 , \quad \forall~i\in[n].
		\end{align} 
		$\hfill \square$
	\end{assumption}
	
	\begin{lemma}[\sc Descent Lemma]\label{lm-sc-descent}
		When Assumptions \ref{ass:smoothness}--\ref{ass:sc} hold and learning rate $\gamma < \frac{1}{4L}$, it holds for $k=0,1,\ldots$ that
		\begin{align}\label{23bsd999-sc}
		& \mathbb{E}\|\bar{\z}^{(k+1)}\|^2 \le \left(1 - \frac{\gamma \mu}{2}\right)\mathbb{E}\|\bar{\z}^{(k)}\|^2 \hspace{-0.5mm}-\hspace{-0.5mm} \gamma \big(\mathbb{E} f(\bar{x}^{(k)}) \hspace{-0.5mm}-\hspace{-0.5mm} f(x^\star)\big) \hspace{-0.5mm}+\hspace{-0.5mm} \frac{5L\gamma }{2n \bar{\lambda}_n} \mathbb{E}\|\check{\vz}^{(k)}\|_F^2 \hspace{-0.5mm}+\hspace{-0.5mm} \frac{\gamma^2 \sigma^2}{n}.
		\end{align}
		(Proof is in Appendix \ref{app-sc-descent})
		$\hfill \blacksquare$
	\end{lemma}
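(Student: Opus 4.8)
\emph{Proof proposal.} The argument will parallel that of the generally-convex descent lemma (Lemma~\ref{lm-descent-gc}), with strong convexity supplying the contraction factor $1-\gamma\mu/2$. I start from the $\bar{\z}$-row of the transformed recursion~\eqref{transformed-error-dynamics}. Using the identification $[\bar{\z}^{(k)}]^{T}=\bar{x}^{(k)}-x^\star$ (so that $\|\bar{\z}^{(k)}\|^2=\|\bar{x}^{(k)}-x^\star\|^2$) and the optimality identity $\tfrac1n\mathds{1}^{T}\nabla f(\vx^\star)=\nabla f(x^\star)=0$, this row reads
\[
\bar{x}^{(k+1)}-x^\star=(\bar{x}^{(k)}-x^\star)-\frac{\gamma}{n}\sum_{i=1}^n\big(\nabla f_i(x_i^{(k)})-\nabla f_i(x^\star)\big)-\gamma\,\bar{\vs}^{(k)} .
\]
Squaring, taking $\mathbb{E}[\,\cdot\,|\,\cF^{(k-1)}]$, and invoking Assumption~\ref{ass:gradient-noise} — unbiasedness~\eqref{gd-1} kills the cross term with $\bar{\vs}^{(k)}$, and~\eqref{avg-noise} bounds $\mathbb{E}[\|\bar{\vs}^{(k)}\|^2|\cF^{(k-1)}]\le\sigma^2/n$ — gives
\[
\mathbb{E}\big[\|\bar{\z}^{(k+1)}\|^2\,\big|\,\cF^{(k-1)}\big]\le\|\bar{\z}^{(k)}\|^2-2\gamma\big\langle \bar{x}^{(k)}-x^\star,\ \bar{g}^{(k)}\big\rangle+\gamma^2\|\bar{g}^{(k)}\|^2+\frac{\gamma^2\sigma^2}{n},\qquad \bar{g}^{(k)}:=\frac1n\sum_{i=1}^n\nabla f_i(x_i^{(k)}) .
\]

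For the cross term I will, for each $i$, split $\bar{x}^{(k)}-x^\star=(\bar{x}^{(k)}-x_i^{(k)})+(x_i^{(k)}-x^\star)$; lower-bound $\langle x_i^{(k)}-x^\star,\nabla f_i(x_i^{(k)})\rangle$ by strong convexity of $f_i$ at $x^\star$ (obtaining $f_i(x_i^{(k)})-f_i(x^\star)+\tfrac{\mu}{2}\|x_i^{(k)}-x^\star\|^2$), and lower-bound $\langle \bar{x}^{(k)}-x_i^{(k)},\nabla f_i(x_i^{(k)})\rangle$ by the $L$-smoothness descent inequality (obtaining $f_i(\bar{x}^{(k)})-f_i(x_i^{(k)})-\tfrac{L}{2}\|x_i^{(k)}-\bar{x}^{(k)}\|^2$). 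Averaging over $i$ the $f_i(x_i^{(k)})$ cancel, leaving $\langle\bar{x}^{(k)}-x^\star,\bar{g}^{(k)}\rangle\ge\big(f(\bar{x}^{(k)})-f(x^\star)\big)+\tfrac{\mu}{2n}\sum_i\|x_i^{(k)}-x^\star\|^2-\tfrac{L}{2n}\sum_i\|x_i^{(k)}-\bar{x}^{(k)}\|^2$. I then apply $\|x_i^{(k)}-x^\star\|^2\ge\tfrac12\|\bar{x}^{(k)}-x^\star\|^2-\|x_i^{(k)}-\bar{x}^{(k)}\|^2$ together with $\mu\le L$, which turns the $\mu$-term into the contraction $-\tfrac{\gamma\mu}{2}\|\bar{\z}^{(k)}\|^2$ plus a $\tfrac{L\gamma}{n}\sum_i\|x_i^{(k)}-\bar{x}^{(k)}\|^2$ penalty, while the smoothness step contributes a further $\tfrac{L\gamma}{n}\sum_i\|x_i^{(k)}-\bar{x}^{(k)}\|^2$. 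For the quadratic term I write $\bar{g}^{(k)}=\nabla f(\bar{x}^{(k)})+\tfrac1n\sum_i\big(\nabla f_i(x_i^{(k)})-\nabla f_i(\bar{x}^{(k)})\big)$ and use $\|\nabla f(\bar{x}^{(k)})\|^2\le 2L\big(f(\bar{x}^{(k)})-f(x^\star)\big)$ (standard for convex $L$-smooth $f$ with $\nabla f(x^\star)=0$) together with $L$-smoothness and Jensen for the second piece, so that $\gamma^2\|\bar{g}^{(k)}\|^2\le 4\gamma^2 L\big(f(\bar{x}^{(k)})-f(x^\star)\big)+\tfrac{2\gamma^2 L^2}{n}\sum_i\|x_i^{(k)}-\bar{x}^{(k)}\|^2$.

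Collecting everything and using $\gamma<\tfrac1{4L}$ to absorb $4\gamma^2 L\big(f(\bar{x}^{(k)})-f(x^\star)\big)$ into $-2\gamma\big(f(\bar{x}^{(k)})-f(x^\star)\big)$ (leaving $-\gamma\big(f(\bar{x}^{(k)})-f(x^\star)\big)$) and to bound $\tfrac{2\gamma^2 L^2}{n}\le\tfrac{L\gamma}{2n}$, the three consensus contributions combine as $\tfrac{L\gamma}{n}+\tfrac{L\gamma}{n}+\tfrac{L\gamma}{2n}=\tfrac{5L\gamma}{2n}$, yielding a single $\tfrac{5L\gamma}{2n}\sum_i\|x_i^{(k)}-\bar{x}^{(k)}\|^2$ term. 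Finally I convert this network consensus error into $\|\check{\vz}^{(k)}\|_F^2$ via the identity $\vx^{(k)}-\bar{\vx}^{(k)}=cX_{R,u}\check{\vz}^{(k)}$ from~\eqref{zzznnn}, with $\|X_{R,u}\|\le\|X_R\|$, the normalization $c=\|X_L\|$ of Proposition~\ref{remark-z0}, and the refined bound $\|X_L\|\,\|X_R\|\le\bar{\lambda}_n^{-1/2}$ from Lemma~\ref{lm-decom}; these give $\sum_i\|x_i^{(k)}-\bar{x}^{(k)}\|^2=\|\vx^{(k)}-\bar{\vx}^{(k)}\|_F^2\le\bar{\lambda}_n^{-1}\|\check{\vz}^{(k)}\|_F^2$, which delivers~\eqref{23bsd999-sc} after taking total expectation. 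There is no serious obstacle once the transformed recursion and this refined decomposition bound are in hand; the only point requiring care is the bookkeeping of the several consensus-error contributions — one from the smoothness-descent step, one from the strong-convexity conversion, one from $\|\bar{g}^{(k)}\|^2$ — so that, after the $\gamma\le\tfrac1{4L}$ reductions, they sum to exactly $\tfrac{5L\gamma}{2n}$; the remaining steps are routine combinations of convexity, strong convexity, $L$-smoothness, and Young's inequality, exactly as in the proof of Lemma~\ref{lm-descent-gc}.
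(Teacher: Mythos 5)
Your proposal is correct and follows essentially the same route as the paper's proof: strong convexity applied at $(x_i^{(k)},x^\star)$ combined with the smoothness descent inequality for the cross term (the paper's \eqref{23nbsdn-3}), the bound $\tfrac1n\sum_i\|x_i^{(k)}-x^\star\|^2\ge\tfrac12\|\bar x^{(k)}-x^\star\|^2-\tfrac1n\|\vx^{(k)}-\bar\vx^{(k)}\|_F^2$ with $\mu\le L$, the same quadratic-term estimate, and the same $\gamma<\tfrac1{4L}$ absorptions leading to the $\tfrac{5L\gamma}{2n}$ coefficient and the conversion to $\|\check{\vz}^{(k)}\|_F^2$ via \eqref{zzznnn} and $\|X_L\|\|X_R\|\le\bar\lambda_n^{-1/2}$. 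No gaps; the bookkeeping matches the paper's \eqref{23nbsdn-3}--\eqref{shdshsd-sc} exactly.
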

	With inequality \eqref{23bsd999-sc}, we have 
	\begin{align}\label{23bsd999-sc-thm-n1}
	\mathbb{E} f(\bar{x}^{(k)}) \hspace{-0.5mm}-\hspace{-0.5mm} f(x^\star) \le    \left(1 - \frac{\gamma \mu}{2}\right)\frac{\mathbb{E} \|\bar{\z}^{(k)}\|^2}{\gamma} - \frac{\mathbb{E}\|\bar{\z}^{(k+1)}\|^2}{\gamma}  \hspace{-0.5mm}+\hspace{-0.5mm} \frac{5L }{2n \bar{\lambda}_n} \mathbb{E}\|\check{\vz}^{(k)}\|_F^2 \hspace{-0.5mm}+\hspace{-0.5mm} \frac{\gamma \sigma^2}{n}.
	\end{align}
	If we take the uniform average for both sides over $k=0,\ldots, T$, the term $(1-\frac{\gamma \mu}{2})\frac{\mathbb{E}\|\bar{\z}^{(k)}\|^2}{\gamma}$ from the $k$th iteration cannot cancel the term  $-\frac{\mathbb{E}\|\bar{\z}^{(k)}\|^2}{\gamma}$ from the $(k+1)$th iteration. Inspired by \cite{stich2019local}, we instead take the weighted average for both sides over $k=0,\ldots,T$ so that 
	\begin{align}
	&\ \frac{1}{H_T}\sum_{k=0}^T h_k \big(\mathbb{E} f(\bar{x}^{(k)}) \hspace{-0.5mm}-\hspace{-0.5mm} f(x^\star)\big) \nonumber \\
	\le&\  \frac{1}{H_T}\sum_{k=0}^T h_k \Big( \frac{(1 - \frac{\gamma \mu}{2})\mathbb{E} \|\bar{\z}^{(k)}\|^2}{\gamma} - \frac{\mathbb{E}\|\bar{\z}^{(k+1)}\|^2}{\gamma}  \Big) + \frac{5L}{2 n H_T \bar{\lambda}_n}\sum_{k=0}^T  h_k \mathbb{E}\|\check{\vz}^{(k)}\|_F^2  + \frac{\gamma \sigma^2}{n},
	\end{align}
	where $h_k\ge 0$ is some weight to be determined, and $H_T = \sum_{k=0}^T h_k$. If we let $h_{k} = (1 - \frac{\gamma \mu}{2}) h_{k+1}$ for $k=0,1,\ldots$, the above inequality becomes
	\begin{align}\label{znb2375zzz-n1}
	\frac{1}{H_T}\sum_{k=0}^T h_k \big(\mathbb{E} f(\bar{x}^{(k)}) \hspace{-0.5mm}-\hspace{-0.5mm} f(x^\star)\big) \le \frac{h_0 \mathbb{E} \|\bar{\z}^{(0)}\|^2}{H_T \gamma} + \frac{5L}{2 n H_T \bar{\lambda}_n}\sum_{k=0}^T  h_k \mathbb{E}\|\check{\vz}^{(k)}\|_F^2  + \frac{\gamma \sigma^2}{n}.
	\end{align}
	We next bound the ergodic consensus term in the right-hand-side. 
	\begin{lemma}[\sc ergodic consensus lemma]\label{lm-sc-ergodic-consenus}
		Under Assumptions \ref{ass-W}, \ref{ass:smoothness}, \ref{ass:gradient-noise}, and \ref{ass:sc} and if learning rate satisfies $\gamma \le \frac{(1-\beta_1) \bar{\lambda}_n^{1/2}}{4L\bar{\lambda}_2} $, then it holds that 
		{
			\begin{align}\label{1bsd0925-sc-lm}
			\frac{1}{H_T}\sum_{k=0}^T h_k \mathbb{E}\|\check{\vz}^{(k)}\|_F^2  \le  \frac{4 C h_0}{H_T (1-\beta_1)} \hspace{-0.5mm}+\hspace{-0.5mm} \frac{8n\gamma^2\bar{\lambda}_2^2\sigma^2}{1-\beta_1}  \hspace{-0.5mm}+\hspace{-0.5mm} \frac{128 n\gamma^2 \bar{\lambda}_2^2 L}{(1-\beta_1)^2 H_T} \sum_{k=0}^{T} h_{k} \big(\mathbb{E} f(\bar{x}^{(k)}) \hspace{-0.5mm}-\hspace{-0.5mm} f(x^\star) \big),
			\end{align}}
		where {$C = \mathbb{E}\|\check{\vz}^{(0)}\|_F^2$}, the positive weights $\{h_k\}_{k=0}^\infty$ satisfy 
		\begin{align}\label{h-condition}
		h_k \le h_\ell \left( 1 + \frac{1-\beta_1}{4} \right)^{k-\ell} \mbox{ for any } k\ge 0 \mbox{ and } 0\le \ell \le k, 
		\end{align}
		and $H_T = \sum_{k=0}^T h_k$. (Proof is in Appendix \ref{app-sc-ergodic-consensus-lm})
		$\hfill \blacksquare$
	\end{lemma}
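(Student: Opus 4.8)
The starting point is the per-iteration consensus recursion from Lemma~\ref{lm-consensus}, which (with $\beta_1 = \bar\lambda_2^{1/2}$) reads
\begin{align}\label{plan-consensus}
\mathbb{E}\|\check{\vz}^{(k+1)}\|_F^2 \le \Big(\tfrac{1+\beta_1}{2}\Big)\mathbb{E}\|\check{\vz}^{(k)}\|_F^2 + \frac{16 n\gamma^2\bar\lambda_2^2 L}{1-\beta_1}\big(\mathbb{E} f(\bar x^{(k)}) - f(x^\star)\big) + 4n\gamma^2\bar\lambda_2^2\sigma^2.
\end{align}
The plan is to unroll this linear recursion from iteration $0$ to $k$, so that $\mathbb{E}\|\check{\vz}^{(k)}\|_F^2$ is bounded by $\big(\tfrac{1+\beta_1}{2}\big)^{k}\,C$ plus two geometric sums: one weighted by $\big(\tfrac{1+\beta_1}{2}\big)^{k-1-j}\big(\mathbb{E} f(\bar x^{(j)}) - f(x^\star)\big)$ over $j=0,\dots,k-1$, and one of the constant-$\sigma^2$ terms which sums to at most $\frac{4n\gamma^2\bar\lambda_2^2\sigma^2}{1-(1+\beta_1)/2} = \frac{8n\gamma^2\bar\lambda_2^2\sigma^2}{1-\beta_1}$. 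This already produces the second term on the right-hand side of \eqref{1bsd0925-sc-lm}.

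Next I would form the $h_k$-weighted average $\frac{1}{H_T}\sum_{k=0}^T h_k\,\mathbb{E}\|\check{\vz}^{(k)}\|_F^2$ and handle the three resulting pieces separately. For the decaying initial term, $\frac{1}{H_T}\sum_{k=0}^T h_k \big(\tfrac{1+\beta_1}{2}\big)^k C$: using the weight-growth condition \eqref{h-condition} in the form $h_k \le h_0\big(1+\tfrac{1-\beta_1}{4}\big)^k$, the summand is bounded by $h_0 C\big[(1+\tfrac{1-\beta_1}{4})\cdot\tfrac{1+\beta_1}{2}\big]^k$; the bracketed ratio is strictly below $1$ (indeed one checks $(1+\tfrac{1-\beta_1}{4})\tfrac{1+\beta_1}{2}\le 1-\tfrac{1-\beta_1}{4}$ for $\beta_1\in(0,1)$), so the geometric sum is at most $\frac{4}{1-\beta_1}$, giving the $\frac{4Ch_0}{H_T(1-\beta_1)}$ term. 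For the double sum $\frac{1}{H_T}\sum_{k=0}^T h_k \sum_{j=0}^{k-1}\big(\tfrac{1+\beta_1}{2}\big)^{k-1-j}\big(\mathbb{E} f(\bar x^{(j)}) - f(x^\star)\big)$, I would swap the order of summation to $\sum_{j=0}^{T}\big(\mathbb{E} f(\bar x^{(j)}) - f(x^\star)\big)\sum_{k=j+1}^{T} h_k\big(\tfrac{1+\beta_1}{2}\big)^{k-1-j}$; then bound $h_k \le h_j\big(1+\tfrac{1-\beta_1}{4}\big)^{k-j}$ via \eqref{h-condition}, so the inner sum over $k$ is again a convergent geometric series (same ratio argument) bounded by $\frac{1}{\beta_1}\cdot\frac{4}{1-\beta_1}\le\frac{8}{1-\beta_1}$ (absorbing the harmless $1/\beta_1\le 2$-type factor into the constant $128$). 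This converts the double sum into $\frac{8}{1-\beta_1}\sum_{j=0}^T h_j\big(\mathbb{E} f(\bar x^{(j)}) - f(x^\star)\big)$ up to the prefactor $\frac{16n\gamma^2\bar\lambda_2^2 L}{1-\beta_1}$, yielding the claimed $\frac{128n\gamma^2\bar\lambda_2^2 L}{(1-\beta_1)^2 H_T}\sum_{k=0}^T h_k\big(\mathbb{E} f(\bar x^{(k)}) - f(x^\star)\big)$ term. The constant-noise piece was already dealt with above and contributes its term after averaging (since $\frac{1}{H_T}\sum_k h_k = 1$).

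The main obstacle is getting the geometric-sum bookkeeping to close cleanly: one must verify that the product of the contraction factor $\tfrac{1+\beta_1}{2}$ from the consensus recursion and the growth factor $1+\tfrac{1-\beta_1}{4}$ from the weights is strictly contractive, uniformly in $\beta_1\in(0,1)$, and that the resulting constants telescope into the stated $\frac{4}{1-\beta_1}$ and $\frac{128}{(1-\beta_1)^2}$ without hidden dependence on $\beta_1$ in the wrong direction. This is why the weight condition \eqref{h-condition} uses the slack $\tfrac{1-\beta_1}{4}$ rather than the full $\tfrac{1-\beta_1}{2}$ gap — it leaves exactly enough room for the ratio to stay below $1$. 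Everything else (unrolling a scalar linear recursion, swapping a double sum, summing geometric series, using $\frac{1}{H_T}\sum h_k=1$) is routine; the stepsize restriction $\gamma\le\frac{(1-\beta_1)\bar\lambda_n^{1/2}}{4L\bar\lambda_2}$ is inherited verbatim from Lemma~\ref{lm-consensus} and needs no further work here.
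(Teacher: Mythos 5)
Your plan is correct and follows essentially the same route as the paper's proof: unroll the per-iteration recursion of Lemma \ref{lm-consensus}, take the $h_k$-weighted average, use the weight-growth condition \eqref{h-condition} to dominate $h_k$ by $h_\ell(1+\tfrac{1-\beta_1}{4})^{k-\ell}$, and swap the order of the double sum so that both geometric series contract at ratio $1-\tfrac{1-\beta_1}{4}$, which yields exactly the paper's constants $\tfrac{4}{1-\beta_1}$ and $\tfrac{128}{(1-\beta_1)^2}$. One inessential quibble: the extra factor in the inner geometric sum is $1+\tfrac{1-\beta_1}{4}\le\tfrac{5}{4}$ (equivalently one can use $\tfrac{2}{1+\beta_1}\le 2$), not $\tfrac{1}{\beta_1}$ — your step $\tfrac{1}{\beta_1}\le 2$ would require $\beta_1\ge\tfrac{1}{2}$, which is not guaranteed, but with the correct factor the stated bound $\tfrac{8}{1-\beta_1}$ and the constant $128$ go through unchanged.
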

	With inequalities \eqref{znb2375zzz-n1} and \eqref{1bsd0925-sc-lm}, and the fact that $\beta^2_1 = \bar{\lambda}_2 = (1 + \lambda_2(W))/2 \le (1+\beta)/2$ with $\beta=\rho(W - \frac{1}{n}\mathds{1}\mathds{1}^T)$, we can establish the convergence property of D$^2$/Exact-Diffusion as follows.
	\begin{theorem}[\sc Convergence property]\label{thm2}
		Under Assumptions \ref{ass-W}, \ref{ass:smoothness}, \ref{ass:gradient-noise}, and \ref{ass:sc}, if 
		\begin{align}\label{gamma-sc}
		\gamma = \min\left\{\frac{1}{4L}, \frac{1-\beta_1}{26L} \Big(\frac{\bar{\lambda}_n^{1/2}}{\bar{\lambda}_2}\big), \frac{2\ln(2 n \mu \mathbb{E}\|\bar{\z}^{(0)}\|^2 T^2/[\sigma^2 (1-\beta)])}{\mu T}\right\},
		\end{align}
		with $\bar{\lambda}_n$ is bounded away from zero, it holds that
		\begin{align}\label{thm2-results}
		\frac{1}{H_T}\sum_{k=0}^T h_k \big(\mathbb{E} f(\bar{x}^{(k)}) - f(x^\star) \big) =  \tilde{O}\left(
		\frac{\sigma^2}{n T} + \frac{\sigma^2}{(1-\beta) T^2} + \frac{1}{1-\beta}\exp\{-(1-\beta) T \} \right). 
		\end{align}
		where $h_k$ and $H_T$ are defined in Lemma \ref{lm-sc-ergodic-consenus}. Notation $\tilde{O}(\cdot)$ hides logarithm terms. 
		(Proof is in Appendix \ref{app-thm-sc-convergence})
	
		$\hfill \blacksquare$
	\end{theorem}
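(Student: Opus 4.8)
The plan is to merge the weighted-averaged descent inequality \eqref{znb2375zzz-n1} with the ergodic consensus bound \eqref{1bsd0925-sc-lm}, choose the geometric weight sequence that makes \eqref{znb2375zzz-n1} telescope, absorb the resulting feedback term, and then tune the stepsize. First I would fix the weights: since \eqref{znb2375zzz-n1} was obtained under the relation $h_k=(1-\tfrac{\gamma\mu}{2})h_{k+1}$, take $h_k=h_0(1-\tfrac{\gamma\mu}{2})^{-k}$, so that $H_T=h_0\sum_{k=0}^T(1-\tfrac{\gamma\mu}{2})^{-k}$ and $h_k/h_\ell=(1-\tfrac{\gamma\mu}{2})^{-(k-\ell)}$. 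I would then verify the admissibility condition \eqref{h-condition} required by Lemma \ref{lm-sc-ergodic-consenus}: it suffices that $(1-\tfrac{\gamma\mu}{2})^{-1}\le 1+\tfrac{1-\beta_1}{4}$, which follows from $\gamma\mu\lesssim 1-\beta_1$ and hence from the stepsize cap $\gamma\le\tfrac{1-\beta_1}{26L}\tfrac{\bar\lambda_n^{1/2}}{\bar\lambda_2}$ in \eqref{gamma-sc} together with $\mu\le L$ and $\bar\lambda_n$ bounded away from $0$.

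Next I would substitute \eqref{1bsd0925-sc-lm} into \eqref{znb2375zzz-n1}. The consensus bound contributes on the right-hand side a term proportional to the left-hand side, namely $\tfrac{5L}{2n\bar\lambda_n}\cdot\tfrac{128 n\gamma^2\bar\lambda_2^2 L}{(1-\beta_1)^2}\cdot\tfrac{1}{H_T}\sum_k h_k(\mathbb{E}f(\bar{x}^{(k)})-f(x^\star))$. I would impose $\gamma$ small enough that its coefficient is at most $\tfrac12$, i.e. $\gamma^2\lesssim \tfrac{\bar\lambda_n(1-\beta_1)^2}{L^2\bar\lambda_2^2}$, which is exactly the second entry in the minimum defining $\gamma$ in \eqref{gamma-sc}; then this term moves to the left. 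After rearranging and using $\beta_1^2=\bar\lambda_2\le\tfrac{1+\beta}{2}$ (so $1-\beta_1\ge\tfrac{1-\beta}{4}$ and $1-\bar\lambda_2\ge\tfrac{1-\beta}{2}$) I would be left with an inequality of the form
\[
\frac{1}{H_T}\sum_{k=0}^T h_k\big(\mathbb{E}f(\bar{x}^{(k)})-f(x^\star)\big)\ \lesssim\ \frac{h_0}{H_T}\Big(\frac{\mathbb{E}\|\bar{\z}^{(0)}\|^2}{\gamma}+\frac{LC}{n\bar\lambda_n(1-\beta_1)}\Big)+\frac{L\gamma^2\bar\lambda_2^2\sigma^2}{\bar\lambda_n(1-\beta_1)}+\frac{\gamma\sigma^2}{n},
\]
where $C=\mathbb{E}\|\check{\vz}^{(0)}\|_F^2$.

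I would then simplify the transient part. Since $H_T\ge h_T$, one has $\tfrac{h_0}{H_T}\le(1-\tfrac{\gamma\mu}{2})^{T}\le e^{-\gamma\mu T/2}$. Plugging the estimate $C=O\!\big(n\gamma^2\bar\lambda_2^2/(1-\bar\lambda_2)\big)$ from Proposition \ref{remark-z0} and the bounds $1-\bar\lambda_2\ge\tfrac{1-\beta}{2}$, $1-\beta_1\ge\tfrac{1-\beta}{4}$, $\bar\lambda_n=\Theta(1)$, the $C$-term is $O\!\big(L\gamma^2 e^{-\gamma\mu T/2}/(1-\beta)^2\big)$, which with $\gamma\lesssim\tfrac{1-\beta}{L}$ is of lower order than $\tfrac1\gamma\mathbb{E}\|\bar{\z}^{(0)}\|^2 e^{-\gamma\mu T/2}$, so the bound collapses to $\tfrac1\gamma\mathbb{E}\|\bar{\z}^{(0)}\|^2 e^{-\gamma\mu T/2}+O\!\big(\tfrac{\gamma^2\sigma^2}{1-\beta}\big)+\tfrac{\gamma\sigma^2}{n}$ up to constants. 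Finally I would tune $\gamma=\min\{\tfrac1{4L},\ \tfrac{1-\beta_1}{26L}\tfrac{\bar\lambda_n^{1/2}}{\bar\lambda_2},\ \tfrac{2\ln(\cdot)}{\mu T}\}$ by a standard two-regime argument (the tuning lemma in the style of \cite{stich2019local,koloskova2020unified}). When $T$ is large enough that the third entry is active, $\gamma=\Theta(\tfrac{\ln(\cdot)}{\mu T})$ makes $\tfrac{\gamma\sigma^2}{n}=\tilde{O}(\tfrac{\sigma^2}{nT})$ and $\tfrac{\gamma^2\sigma^2}{1-\beta}=\tilde{O}(\tfrac{\sigma^2}{(1-\beta)T^2})$, and the inner logarithm is chosen precisely so that $e^{-\gamma\mu T/2}=\tfrac{\sigma^2(1-\beta)}{2n\mu\mathbb{E}\|\bar{\z}^{(0)}\|^2 T^2}$, whence $\tfrac1\gamma\mathbb{E}\|\bar{\z}^{(0)}\|^2 e^{-\gamma\mu T/2}=\tilde{O}(\tfrac{\sigma^2}{nT})$; when $T$ is smaller the stepsize is pinned at $\gamma=\Theta(\tfrac{1-\beta}{L})$, so (treating $\mu/L$ as a constant in the exponent) $e^{-\gamma\mu T/2}=e^{-\Theta((1-\beta)T)}$ and the transient term is $\tilde{O}\!\big(\tfrac1{1-\beta}e^{-(1-\beta)T}\big)$. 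Taking the larger bound in each regime gives \eqref{thm2-results}.

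I expect the main obstacle to be bookkeeping rather than a new idea: one must keep the numerical constants tight enough that the single cap $\gamma\le\tfrac{1-\beta_1}{26L}\tfrac{\bar\lambda_n^{1/2}}{\bar\lambda_2}$ simultaneously (i) bounds the feedback coefficient by $\tfrac12$, (ii) validates the weight-admissibility condition \eqref{h-condition}, and (iii) is compatible with the hypotheses of Lemmas \ref{lm-sc-descent} and \ref{lm-sc-ergodic-consenus}, and then to check that the $C$-term really is lower-order so it disappears into the exponential contribution; the remaining steps are the by-now standard geometric-weight averaging and stepsize-tuning machinery.
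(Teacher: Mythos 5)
Your proposal is correct and follows essentially the same route as the paper's proof in Appendix E.3: geometric weights $h_k=(1-\gamma\mu/2)h_{k+1}$ to telescope \eqref{znb2375zzz-n1}, verification of \eqref{h-condition} via $\gamma\le\frac{1-\beta_1}{2\mu}$, substitution of Lemma \ref{lm-sc-ergodic-consenus} with the feedback coefficient $\frac{320\gamma^2\bar{\lambda}_2^2L^2}{(1-\beta_1)^2\bar{\lambda}_n}\le\frac12$ absorbed by the second stepsize cap, the bound $h_0/H_T\le\exp(-\gamma\mu T/2)$, Proposition \ref{remark-z0} for $C$, and the same min-of-three stepsize tuning yielding \eqref{thm2-results}. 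The only cosmetic differences (treating the $C$-term as lower order rather than carrying it, and merging the two "stepsize-pinned" cases into one regime) do not change the argument.
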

% 	\begin{remark}
% 		Comparing the bound in \eqref{thm2-results} and a more accurate bound in \eqref{znb2375zzz-6}, it is observed we ignored the constant $\bar{\lambda}_2^2/\bar{\lambda}_n$ in \eqref{thm2-results} for the same reason listed in Remark \ref{remark-lambda-constant}. $\hfill \square$
% 	\end{remark}
% 		{\color{red}[We can also remove this remark as in remark 3. ]
% 	}
	
	\begin{corollary}[\sc Transient stage]\label{coro-ts-sc}
		Under assumptions in Theorem \ref{thm2}, the transient stage for D$^2$/Exact-Diffusion in the strongly convex scenario is on the order of  $\tilde{\Omega}(\frac{n}{1-\beta})$. 
	\end{corollary}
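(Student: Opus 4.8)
The plan is to mirror the argument used for the generally-convex transient-stage corollary: locate the iteration count $T$ past which the linear-speedup term $\sigma^2/(nT)$ in the rate \eqref{thm2-results} dominates the remaining two terms. Concretely, I would impose the two conditions
\begin{align}
\frac{\sigma^2}{(1-\beta)T^2} \;\lesssim\; \frac{\sigma^2}{nT}
\qquad\text{and}\qquad
\frac{1}{1-\beta}\exp\{-(1-\beta)T\} \;\lesssim\; \frac{\sigma^2}{nT},
\end{align}
and then conclude that once both hold, the rate is $\tilde O(\sigma^2/(nT))$, i.e., the algorithm has reached the linear-speedup stage.

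First I would dispatch the polynomial term: the left inequality above reduces immediately to $T \gtrsim \frac{n}{1-\beta}$, which already produces the claimed scaling. Next I would treat the exponential term by taking logarithms, which turns the right inequality into the requirement $(1-\beta)T \gtrsim \log\!\big(\tfrac{nT}{(1-\beta)\sigma^2}\big)$; since $\log T$ is sublinear in $T$, this is satisfied as soon as $T \gtrsim \tfrac{1}{1-\beta}\log\!\big(\tfrac{n}{(1-\beta)^2\sigma^2}\big)$ up to an absolute constant, i.e., $T = \tilde\Omega\!\big(\tfrac{1}{1-\beta}\big)$. Combining the two requirements, D$^2$/Exact-Diffusion enters its linear-speedup regime for $T = \tilde\Omega\!\big(\max\{\tfrac{n}{1-\beta},\tfrac{1}{1-\beta}\}\big) = \tilde\Omega\!\big(\tfrac{n}{1-\beta}\big)$, which is exactly the stated transient stage.

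The only step needing any care — and the one I would be most deliberate about — is the logarithmic bookkeeping on the exponential term: one must check that the poly-log factors already absorbed into the $\tilde O(\cdot)$ of \eqref{thm2-results}, together with the extra $\log$ produced by inverting $\exp\{-(1-\beta)T\}$ and the explicit $\ln(\cdot)$ already present in the stepsize choice \eqref{gamma-sc}, all collapse into a single poly-logarithmic factor, so that the final bound is genuinely $\tilde\Omega(n/(1-\beta))$ with no worse hidden dependence on $n$ or $1-\beta$. This is routine, but it is where all the constants and logarithms need to be tracked; the rest is the same two-inequality comparison used in the generally-convex case.
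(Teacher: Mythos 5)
Your proposal is correct and follows essentially the same route as the paper: the paper also derives $T \gtrsim n/(1-\beta)$ from $\frac{\sigma^2}{(1-\beta)T^2} \lesssim \frac{\sigma^2}{nT}$ and simply dismisses the exponential term as decaying fast enough to ignore, absorbing logarithms into $\tilde{\Omega}(\cdot)$. Your explicit check that the exponential term only demands $T = \tilde{\Omega}\big(\frac{1}{1-\beta}\big)$, which is dominated by $\frac{n}{1-\beta}$, is a slightly more careful version of the same comparison.
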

	\begin{proof} The third term \eqref{thm2-results} decays exponentially fast and hence can be ignored compared to the first two terms. To reach the linear speedup, it is enough to set 
		\begin{align}
		\frac{\sigma^2}{(1-\beta)T^2} \lesssim \frac{\sigma^2}{nT}, \quad \mbox{which amounts to} \quad T \gtrsim \frac{n}{1-\beta}.
		\end{align}
		We use $\tilde{\Omega}(\cdot)$ rather than $\Omega(\cdot)$ because some logarithm factors are hidden inside.
	\end{proof}
	\begin{remark} It is established in \cite{koloskova2020unified,pu2019sharp} that the transient stage of D-SGD for the strongly-convex scenario is on the order of $\Omega(\frac{n}{(1-\beta)^2})$. By removing the influence of the data heterogeneity, D$^2$/Exact-Diffusion improves the transient stage to $\tilde{\Omega}(\frac{n}{1-\beta})$ which has an improved dependence on $1-\beta$. This improved transient stage is consistent with those established in parallel works \cite{huang2021improve,koloskova2021improved}.
	$\hfill \square$
	\end{remark}

	\subsection{Transient Stage Lower Bound of the Homogeneous D-SGD}
	In Sec.~\ref{sec:strongly-convex}, we have shown that D$^2$/Exact-Diffusion, by removing the influence of data heterogeneity, can improve the transient stage of D-SGD from 
	$\Omega(\frac{n}{(1-\beta)^2})$ to $\Omega(\frac{n}{1-\beta})$. In this section, we ask what is the optimal transient stage of D-SGD  if the data distributions are homogeneous (\textit{i.e., there is no influence of data heterogeneity})? Can D-SGD have a better network topology dependence than
	D$^2$/Exact-Diffusion in certain scenarios? The answer reveals that D-SGD dependence on the network topology can match D$^2$/Exact-Diffusion only under the homogeneous setting and always worse in heterogeneous setting. {\em In any cases, D-SGD cannot be more robust to network topology than 
	D$^2$/Exact-Diffusion.}

	To this end, we let $\cF_{\mu,L}=\{f: f \text{ is $\mu$-strongly convex}\text{, $L$-smooth with }\nabla f(x^\star)=0\}$ with some fixed global $x^\star$, $\cO_{\sigma^2}$ be all possible  gradient oracles with $\sigma^2$-bounded noise, $\cW_\beta=\{W:W \text{ satisfies Assumption \ref{ass-W}}$ and $ \rho(W-\frac{1}{n}\mathds{1}_n\mathds{1}_n^T)\leq \beta\}$,  and $\cA$ consists of D-SGD algorithms with all possible hyper-parameter choices (such as learning rate $\gamma$) {but with homogeneous dataset}, then  we consider the following minimax lower bound for D-SGD:
	\begin{equation}
	    T^{\rm dsgd}_{\text{trans}}=\min\limits_{A\in \cA}\max_{W\in\cW_\beta}\max_{\cO_{\sigma^2}}\max_{f_i\in\cF_{\mu,L}}\{\text{Transient time of (A)}\}.
	\end{equation}
% 	where $\cF_{\mu,L}=\{f: f \text{ is $\mu$-strongly convex}\text{, $L$-smooth with }\nabla f(x^\star)=0\}$, $\cO_{\sigma^2}$ denotes all possible  gradient  oracles with $\sigma^2$-bounded noise, $\cW_\beta=\{W:W \text{ satisfies Assumption \ref{ass-W} and }\rho(W-\frac{1}{n}\mathds{1}_n\mathds{1}_n^T)\leq \beta\}$,  and $\cA$ consists of D-SGD algorithms with all possible hyper-parameter choices (such as learning rate $\gamma$) {\color{blue} but with homogeneous dataset}. 
	This definition implies that D-SGD cannot be associated with a shorter transient stage than $T^\text{dsgd}_\text{trans}$  without further assumptions. Note that this lower bound only applies to  vanilla D-SGD with  single-round gossip communication. There might be algorithms (e.g., D-SGD with multi-round gossip communications per update) that enjoy provably shorter transient time.
	
		\begin{theorem}[Lower Bound] \label{thm-lower-bound}
		The transient time for D-SGD in the homogeneous scenario, i.e., $b^2=0$, is lower bounded by 
		\begin{equation}
		T^{\rm dsgd}_{\rm trans}=\tilde{\Omega}\left(\frac{n}{1-\beta}\right).
		\end{equation}
		(Proof is in Appendix \ref{app-lower-bound-prop})
		$\hfill \blacksquare$
	\end{theorem}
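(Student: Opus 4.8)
The plan is to establish the lower bound $T^{\rm dsgd}_{\rm trans} = \tilde{\Omega}(n/(1-\beta))$ by constructing an explicit ``hard instance'' consisting of a weight matrix $W \in \cW_\beta$, a family of homogeneous cost functions $f_i \in \cF_{\mu,L}$, and a gradient oracle in $\cO_{\sigma^2}$ for which D-SGD provably cannot enter its linear-speedup regime before $\Omega(n/(1-\beta))$ iterations (up to logarithm factors). Since the data is homogeneous, I take $f_i = f$ for all $i$ with $\nabla f(x^\star) = 0$; the natural choice is a simple quadratic $f(x) = \tfrac{\mu}{2}\|x - x^\star\|^2$ (or more generally a quadratic whose Hessian interpolates between $\mu$ and $L$), so that D-SGD becomes a \emph{linear} stochastic recursion and one can write its error covariance exactly. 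For the graph, I would pick a topology achieving spectral gap $\approx 1-\beta$ --- for instance a ring or a suitable circulant/linear-array graph --- and an associated doubly-stochastic symmetric $W$ whose second eigenvalue is exactly $\beta$ (or within a constant of it), so the hard direction of the dynamics is the eigenvector of $W$ with eigenvalue $\beta$.

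The core of the argument is a \emph{lower bound on the steady-state / transient consensus error} of the linear recursion. Writing $\vx^{(k+1)} = W(\vx^{(k)} - \gamma \nabla f(\vx^{(k)}) - \gamma \vs^{(k)})$ and decomposing into the averaged component $\bar x^{(k)}$ and the deviation $\vx^{(k)} - \mathds{1}\bar x^{(k)}$, the quadratic structure gives a closed recursion for $\EE\|\bar x^{(k)} - x^\star\|^2$ driven by the noise term $\gamma^2\sigma^2/n$ \emph{plus} a term proportional to the consensus error, and the consensus error itself satisfies a recursion that, projected onto the $\beta$-eigenvector, contracts only at rate $\beta^2(1-\gamma\mu)^2 \approx 1 - (1-\beta) - O(\gamma)$ while being continually re-injected with $\gamma^2\sigma^2$-sized noise. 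Hence the stationary consensus error along that direction is $\Theta\big(\tfrac{\gamma^2\sigma^2}{1-\beta^2}\big) = \Theta\big(\tfrac{\gamma^2\sigma^2}{1-\beta}\big)$, which feeds back into $\EE f(\bar x^{(k)}) - f(x^\star)$ an irreducible contribution of order $\tfrac{\gamma \sigma^2}{1-\beta}$ after the transient. Balancing against the optimization term: optimizing over $\gamma$, the best achievable error scales like (bias) $+$ (noise/consensus), and to reach the linear-speedup accuracy $\Theta(\sigma^2/(nT))$ one is forced to take $\gamma \lesssim \tfrac{1-\beta}{n}\cdot(\text{something})$, which then makes the contraction term $(1-\gamma\mu)^T$ require $T = \tilde\Omega(1/(\gamma\mu)) = \tilde\Omega(n/(1-\beta))$ iterations just to kill the initial error. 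I would formalize this by deriving matching two-sided bounds on $\EE f(\bar x^{(T)}) - f(x^\star)$ for the quadratic instance as a function of $\gamma$ and $T$, then showing that \emph{for every} choice of $\gamma$ (this is where the $\min$ over $\cA$ is handled) the resulting expression cannot be $\lesssim \sigma^2/(nT)$ unless $T \gtrsim n/(1-\beta)$ up to logs.

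Concretely the steps are: (i) fix the quadratic $f$, the circulant $W$ with $\lambda_2(W) = \beta$, and a Gaussian gradient oracle with variance $\sigma^2$; (ii) reduce D-SGD to the scalar linear recursion obtained by projecting onto the eigenbasis of $W$, and write exact expressions for $\EE\|\bar x^{(k)} - x^\star\|^2$ and for the energy in the $\beta$-eigen-direction; (iii) lower-bound the consensus-error energy by its stationary value $\Omega(\gamma^2\sigma^2/(1-\beta))$ (valid once $k \gtrsim 1/(1-\beta)$), and propagate this into a lower bound $\EE f(\bar x^{(T)}) - f(x^\star) \gtrsim c_1(1-\gamma\mu)^{2T}\|\bar x^{(0)} - x^\star\|^2 + c_2 \gamma\sigma^2/n + c_3 \gamma\sigma^2/(1-\beta)$; (iv) argue that for D-SGD to be in the linear-speedup stage one needs the last two terms both $\lesssim \sigma^2/(nT)$ and the first $\lesssim \sigma^2/(nT)$; the middle constraint forces $\gamma \lesssim 1/(\mu T)$-type scaling while the $1-\beta$ term forces $\gamma \lesssim n/((1-\beta)T)\cdot(\dots)$, and combining with the contraction requirement yields $T = \tilde\Omega(n/(1-\beta))$; (v) conclude by noting the construction lies in $\cF_{\mu,L}\times\cW_\beta\times\cO_{\sigma^2}$ with $b^2 = 0$, so the minimax transient time is at least this.

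The main obstacle I anticipate is step (iii)--(iv): turning the heuristic ``stationary consensus error $\Theta(\gamma^2\sigma^2/(1-\beta))$ implies transient $\Omega(n/(1-\beta))$'' into a clean, \emph{uniform-in-$\gamma$} statement. One has to handle the regime where $\gamma$ is taken very small (so the noise and consensus terms shrink but the contraction $(1-\gamma\mu)^T$ becomes negligibly close to $1$, keeping the initial-condition error large) and the regime where $\gamma$ is moderately large (contraction is fast but the $\gamma\sigma^2/(1-\beta)$ floor dominates and exceeds $\sigma^2/(nT)$ unless $T \gtrsim n/(1-\beta)$) --- and show that \emph{no} intermediate $\gamma$ escapes both horns. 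This requires carefully quantifying the constants and the logarithmic factor coming from the $(1-\gamma\mu)^T$ term (hence the $\tilde\Omega$ rather than $\Omega$), and making sure the ``transient stage'' is defined consistently with Corollary \ref{coro-ts-sc} so that the lower and upper bounds genuinely match. I would also need to verify that the chosen circulant $W$ simultaneously realizes $\rho(W - \tfrac1n\mathds{1}\mathds{1}^T) \le \beta$ and has an eigenvector whose D-SGD dynamics is as slow as claimed; a ring graph with appropriately chosen self-weight does this, and I would cite standard spectral estimates for circulant graphs.
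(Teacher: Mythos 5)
Your overall strategy is the same as the paper's: a homogeneous quadratic instance, Gaussian noise, reduction of D-SGD to a linear recursion, a stationary consensus-error floor of order $\gamma^2\sigma^2\beta^2/(1-\beta)$, and a two-horn argument over all stepsizes $\gamma$ (small $\gamma$ cannot kill the initial bias in $T$ steps, moderate $\gamma$ hits the consensus floor), yielding $T=\tilde\Omega(n/(1-\beta))$. The paper even uses the particularly clean choice $W=\beta I_n+\frac{1-\beta}{n}\mathds{1}_n\mathds{1}_n^T$, which realizes any prescribed $\beta$ exactly and makes every non-trivial eigenvalue equal to $\beta$, avoiding the circulant/ring spectral bookkeeping you mention.

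However, there is a genuine flaw in your steps (iii)--(iv). For your hard instance all nodes share the \emph{same} quadratic $f_i(x)=\frac{\mu}{2}\|x-x^\star\|^2$, so $\frac{1}{n}\sum_i\nabla f_i(x_i^{(k)})=\mu(\bar{x}^{(k)}-x^\star)$ exactly: the averaged iterate obeys $\bar{x}^{(k+1)}-x^\star=(1-\gamma\mu)(\bar{x}^{(k)}-x^\star)-\gamma\bar{s}^{(k)}$ with \emph{no} coupling to the consensus error. Consequently the claimed ``irreducible contribution of order $\gamma\sigma^2/(1-\beta)$'' to $\mathbb{E}f(\bar{x}^{(k)})-f(x^\star)$ does not exist; on this instance D-SGD's averaged iterate behaves exactly like centralized SGD, and any lower bound phrased in terms of $\mathbb{E}f(\bar{x}^{(T)})-f(x^\star)$ (the metric you say you would align with Corollary \ref{coro-ts-sc}) cannot exhibit any dependence on $1-\beta$, so your step (iv) collapses. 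The paper avoids this by defining the transient/linear-speedup criterion through the \emph{per-node} error $\frac{1}{n}\mathbb{E}\|\vx^{(k)}-\vx^\star\|^2=\mathbb{E}\|\bar{x}^{(k)}-x^\star\|^2+\frac{1}{n}\mathbb{E}\|\vx^{(k)}-\bar{\vx}^{(k)}\|^2$, which contains the consensus energy directly; the stationary floor $\Omega\big(\gamma^2\sigma^2\beta^2/(1-\beta)\big)$ then enters the requirement $\lesssim\sigma^2/(nk)$ and the two-horn argument goes through. To repair your write-up you must either adopt this per-node metric (in which case your argument becomes essentially the paper's) or construct coupling between consensus error and the averaged iterate, e.g., via nodes with distinct Hessians sharing the same minimizer (still $b^2=0$), which is a substantially more delicate construction than the one you outline.
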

	
% 	\begin{theorem}[Lower Bound] \label{thm-lower-bound}
% 		There exists strongly convex and smooth functions, certain type of gradient noise such that when learning rate $\gamma$ is set as in \eqref{gamma-sc}, the D-SGD algorithm \eqref{diffusion} with {\em homogeneous} data distributions will require at least $\tilde{\Omega}(n/(1-\beta))$ iterations to reach the linear speedup stage		for any weight matrix $W$ satisfying Assumption \ref{ass-W} 
% (Proof is in Appendix \ref{app-lower-bound-prop}).
% 	\end{theorem}
	From Corollary \ref{coro-ts-sc} and Theorem \ref{thm-lower-bound}, we observe that the transient stage of D$^2$/Exact-Diffusion in the strongly-convex scenario coincides with the lower bound of homogeneous D-SGD in terms of the dependence on network topology (\textit{i.e.}, the influence of $\beta$) and network size $n$.
	This implies that  D-SGD has the same transient stage as D$^2$/Exact-Diffusion under the impractical homogeneous case and worse dependence in the heterogeneous case \cite{koloskova2020unified,pu2019sharp}. Hence, the dependence of D$^2$/Exact-Diffusion on network topology is no worse than D-SGD and always better under the practical heterogeneous case.
% 	\begin{remark}
% 		Note that there is still a mismatch in dependence on the network size $n$. It is possible that the established lower bound in Theorem \ref{thm-lower-bound} is loose in $n$. It is also possible that the transient stage of D$^2$/Exact-Diffusion has a potential to be independent of $n$. We will leave this line of research as a future work. 
% 	\end{remark}
	
		\section{D$^2$/Exact-Diffusion with Multi-Round Gossip}
		In this section, we will show that the utilization of multi-round gossip communication in D$^2$/Exact-Diffusion can further improve the dependence on network topology. Motivated by \cite{lu2021optimal}, we propose the multi-step D$^2$/Exact-Diffusion described in Algorithm \ref{Algorithm: ED_multiple_gossip}. There are two fundamental differences between Algorithm \ref{Algorithm: ED_multiple_gossip} and the (vanilla) D$^2$/Exact-Diffusion in Algorithm \ref{Algorithm: D2}: {\em gradient accumulation} and {\em fast gossip averaging}. The details in the fast gossip averaging, which is inspired by \cite{liu2011accelerated}, are listed in Algorithm \ref{Algorithm: fast-gossip}. Note that Algorithm  \ref{Algorithm: fast-gossip} has a damping (or interpolation) step in the output, which is critical to guarantee the convergence for D$^2$/Exact-Diffusion with multi-round gossip communication. 
		
		\begin{algorithm}[h]
			\caption{D$^2$/Exact-Diffusion with multiple gossip steps}
			\label{Algorithm: ED_multiple_gossip}
			
			\textbf{Require:} Initialize $x_i^{(0)}=0$, $\psi_i^{(0)} = x_i^{(0)}$, the rounds of gossip steps $R$, and damping ratio $\tau\in(0,1)$.
			
			\vspace{1mm}
			\textbf{for} $k=0,1,2,...$, every node $i$ \textbf{do}
			
			\vspace{0.5mm}
			% \hspace{5mm} 	Sample $\xi^{(k)}_{i}$ and calculate $g_i^{(k)} = \nabla F(x_i^{(k)};\xi_i^{(k)})$; \
			\hspace{5mm} 	\colorbox{pink}{Sample $\{\xi^{(k,r)}_{i}\}_{r=1}^R$ independently and let  $g_i^{(k)} = \frac{1}{R}\sum\limits_{r=1}^R\nabla F(x_i^{(k)};\xi_i^{(k,r)})$;} $ \triangleright  \mbox{ \footnotesize{grad. accumulation}}$ 
			
			\hspace{5mm} Update $\psi_i^{(k+1)} = x_i^{(k)} - \gamma g_i^{(k)}$; $\hspace{5 cm}  \triangleright  \mbox{ \footnotesize{local gradient descent step}}$
			
			\hspace{5mm} Update $\phi_i^{(k+1)} = \psi_i^{(k+1)} + x_i^{(k)} - \psi_i^{(k)}$; $\hspace{3.75cm}  \triangleright  \mbox{ \footnotesize{solution correction step}}$
			
			\hspace{5mm} \colorbox{pink}{Update	$x^{(k+1)}_i = \textbf{FastGossipAverage}(\{\phi^{(k+1)}_i\}_{i=1}^n,{W}, R,\tau)$;}
			$\hspace{0.33 cm}   \triangleright \mbox{ \footnotesize{multiple gossip communication}}$
		\end{algorithm}
		
		\begin{algorithm}[h]
			\caption{$x_i =  \textbf{FastGossipAverage}(\{\phi_i\}_{i=1}^n,{W}, R,\tau)$}
			\label{Algorithm: fast-gossip}
			
			\textbf{Require:} $\{\phi_i\}_{i=1}^n$, ${W}$, $R$, $\tau$; let  ${z_i^{(0)} = z_i^{(-1)}=\phi_i}$ and step size $\eta = \frac{1-\sqrt{1-\beta^2}}{1+\sqrt{1+\beta^2}}$.
			
			\vspace{1mm}
			\textbf{for} $r=0,1,2,..., R-1$, every node $i$ \textbf{do}
			
			\vspace{0.5mm}
			\hspace{5mm} Update	$z^{(r+1)}_i = (1+\eta)\sum_{j\in\mathcal{N}_i}{w}_{ij} z^{(r)}_i-\eta z_i^{(r-1)}$;
			$\hspace{2.83cm}   \triangleright \mbox{ \footnotesize{fast gossip averaging}}$
			
			\textbf{Output:}	$x_i = (1-\tau){z}^{(R)}_i +\tau z^{(0)}_i$;
			$\hspace{5.67cm}   \triangleright \mbox{ \footnotesize{damping step}}$
		\end{algorithm}
% 		{\color{red}[In alg. 3, we are using $k$ in the for loop, isn't it better to use $r$ since $k$ is used for outer iteration in Alg.2? Note sure if this will affect notation in analysis or not.]}
		
		\subsection{Fast Gossip Averaging}
		Using $\vz^{(r)} = [z_1^{(r)},\dots, z_n^{(r)}]^T \in \RR^{n\times d}$, the fast gossip average update (Algorithm \ref{Algorithm: fast-gossip}) can be described by 
	\begin{subequations}
	    	\begin{align}
		\vz^{(r+1)}&= (1+\eta){W}\vz^{(r)}-\eta \vz^{(r-1)},\quad \textbf{for} \,\, r=0,1,\dots,R-1\label{eqn:fast-gossip1}\\
		{\vx}& = (1-\tau){\vz}^{(R)}+\tau{\vz}^{(0)}.\label{eqn:fast-gossip2}
		\end{align}
	\end{subequations}
		Since $\vz^{(-1)}=\vz^{(0)}$, it holds from \eqref{eqn:fast-gossip1}--\eqref{eqn:fast-gossip2} that  $\vz^{(r)}=M^{(r)}\vz^{(0)}$ where $M^{(r)}\in\RR^{n\times n}$ is defined by:
		\begin{align}
		M^{(-1)}&=M^{(0)}=I\label{eqn:M1}\\
		M^{(r+1)}&= (1+\eta){W}M^{(r)}-\eta M^{(r-1)},\quad \textbf{for} \,\, r=0,1,\dots,R-1.\label{eqn:M2}
		\end{align}
		Since $W$ is symmetric and doubly stochastic (Assumption \ref{ass-W}), the matrix $M^{(r)}$ is also symmetric and doubly stochastic for each $r=0,\dots, R$. Furthermore, the following result holds.   
		\begin{proposition} \label{prop-M-property}
			Under Assumption \ref{ass-W}, it holds that
			\begin{align}
% 			M^{(k)} = (M^{(k)})^T, \quad M^{(k)}\one =\one ,\quad \mbox{and}\quad \rho(M^{(k)}-\frac{1}{n}\one\one^T)\leq \min\{\sqrt{2}\Big(1-\sqrt{1-\beta}\Big)^k, 1\}.
M^{(r)} = (M^{(r)})^T, \quad M^{(r)}\one =\one ,\quad \mbox{and}\quad \rho(M^{(r)}-\frac{1}{n}\one\one^T)\leq \sqrt{2}\Big(1-\sqrt{1-\beta}\Big)^r.
			\end{align}
			where $\beta =\max\{|\lambda_2(W)|,|\lambda_n(W)|\}$. Therefore, the non-unit eigenvalues of $M^{(r)}$ vanish to zero as $r\to \infty$.
			(Proof is in Appendix \ref{app:M-propert}).
			$\hfill \blacksquare$
		\end{proposition}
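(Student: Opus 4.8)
The symmetry and double stochasticity of $M^{(r)}$ follow by an easy induction on $r$ using the recursion \eqref{eqn:M2}: $M^{(-1)}=M^{(0)}=I$ are symmetric and doubly stochastic, and if $M^{(r)},M^{(r-1)}$ are symmetric doubly stochastic, then so is $(1+\eta)WM^{(r)}-\eta M^{(r-1)}$ because $W$ commutes with this operation ($W^T=W$, $W\one=\one$, and the coefficients $(1+\eta)-\eta=1$ sum to one). The substance of the proposition is the spectral bound. The plan is to diagonalize everything simultaneously: since $W$ is symmetric, write $W=U\Lambda U^T$ with $\Lambda=\diag\{\lambda_1,\dots,\lambda_n\}$, $\lambda_1=1$. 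By induction each $M^{(r)}=U\,p_r(\Lambda)\,U^T$ where $p_r$ is the degree-$r$ polynomial determined by $p_{-1}=p_0\equiv 1$ and $p_{r+1}(\lambda)=(1+\eta)\lambda\,p_r(\lambda)-\eta\,p_{r-1}(\lambda)$. Hence $\rho(M^{(r)}-\tfrac1n\one\one^T)=\max_{2\le i\le n}|p_r(\lambda_i)|\le \max_{|\lambda|\le\beta}|p_r(\lambda)|$, and it suffices to bound $|p_r(\lambda)|$ for $|\lambda|\le\beta$.

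The recursion $p_{r+1}(\lambda)=(1+\eta)\lambda\,p_r(\lambda)-\eta\,p_{r-1}(\lambda)$ is exactly the Chebyshev-type recursion underlying accelerated gossip (Liu--Morse \cite{liu2011accelerated}): with $\eta=\frac{1-\sqrt{1-\beta^2}}{1+\sqrt{1-\beta^2}}$ one has, for $|\lambda|\le\beta$, the classical identity $p_r(\lambda)=\frac{?}{?}$ expressible through Chebyshev polynomials $T_r$ of the first kind via a Möbius substitution $\lambda\mapsto$ (a variable on which $T_r$ is evaluated). Concretely I would write $\lambda=\beta\cos\theta$ if $\lambda$ real in $[-\beta,\beta]$, solve the linear two-term recursion in closed form, and read off that the characteristic roots have modulus $\sqrt{\eta}$ when the discriminant $(1+\eta)^2\lambda^2-4\eta$ is negative, i.e. when $|\lambda|<\frac{2\sqrt{\eta}}{1+\eta}=\beta$. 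In that regime $p_r(\lambda)=\eta^{r/2}\bigl(a\cos(r\omega)+b\sin(r\omega)\bigr)$ for suitable phase $\omega$ and bounded coefficients $a,b$ fixed by $p_{-1}=p_0=1$; a short computation gives $|p_r(\lambda)|\le \sqrt{2}\,\eta^{r/2}$ uniformly on $|\lambda|\le\beta$ (the endpoint $|\lambda|=\beta$ handled as a limiting case, where $p_r$ grows only polynomially times $\eta^{r/2}$ and is still dominated). Finally, one checks $\sqrt{\eta}=\sqrt{\frac{1-\sqrt{1-\beta^2}}{1+\sqrt{1-\beta^2}}}\le 1-\sqrt{1-\beta}$: squaring, this reduces to $\frac{1-\sqrt{1-\beta^2}}{1+\sqrt{1-\beta^2}}\le (1-\sqrt{1-\beta})^2$, which after clearing denominators and using $\sqrt{1-\beta^2}=\sqrt{1-\beta}\sqrt{1+\beta}$ becomes an elementary inequality in $s=\sqrt{1-\beta}\in[0,1]$ that I would verify directly. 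Combining these bounds yields $\rho(M^{(r)}-\tfrac1n\one\one^T)\le\sqrt{2}\,(1-\sqrt{1-\beta})^r$.

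The main obstacle is the uniform bound $\max_{|\lambda|\le\beta}|p_r(\lambda)|\le\sqrt2\,\eta^{r/2}$: one must handle both the oscillatory interior regime (where the closed-form solution is a decaying sinusoid and the constant $\sqrt2$ comes from bounding $|a|+|b|$) and the boundary/degenerate case $|\lambda|=\beta$ where the two characteristic roots coalesce and $p_r$ acquires a linear-in-$r$ factor; care is needed to see that $r\,\eta^{r/2}$ is still $\le\sqrt2\,\eta^{r/2}$ is \emph{false} pointwise, so instead one argues that the degenerate value is a removable limit of interior values and the bound passes to the limit by continuity of $p_r$ in $\lambda$. Everything else—the induction for symmetry/stochasticity, the simultaneous diagonalization, and the final elementary inequality $\sqrt\eta\le 1-\sqrt{1-\beta}$—is routine.
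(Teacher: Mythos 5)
Your reduction is set up correctly: the induction for symmetry and double stochasticity is fine, the simultaneous diagonalization $M^{(r)}=U\,p_r(\Lambda)\,U^T$ with $p_{-1}=p_0\equiv 1$ and $p_{r+1}(\lambda)=(1+\eta)\lambda p_r(\lambda)-\eta p_{r-1}(\lambda)$ is valid, and the closing inequality $\sqrt{\eta}=\beta/(1+\sqrt{1-\beta^2})\le 1-\sqrt{1-\beta}$ is true and is exactly the inequality the paper also uses. The gap is your central claim $\max_{|\lambda|\le\beta}|p_r(\lambda)|\le \sqrt{2}\,\eta^{r/2}$: it is false, and the failure is not a removable boundary technicality. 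Because $\beta=\max\{|\lambda_2(W)|,|\lambda_n(W)|\}$ is attained by an actual eigenvalue of $W$, the degenerate case $|\lambda|=\beta$ always occurs; there the characteristic roots coalesce at modulus $\sqrt{\eta}$ and the explicit solution is $p_r(\beta)=\big(1+r(1-\sqrt{\eta})\big)\eta^{r/2}$, which exceeds $\sqrt{2}\,\eta^{r/2}$ as soon as $r(1-\sqrt{\eta})>\sqrt{2}-1$ (already $r=2$ for, say, $\beta=0.9$). Your proposed patch is logically backwards: if the bound held on the open interval $|\lambda|<\beta$, continuity of $p_r$ would force it at $|\lambda|=\beta$ as well; since it provably fails there, it must already fail in the interior near the endpoint. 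Indeed your coefficient $b=(\cos\omega-\sqrt{\eta})/\sin\omega$ blows up as $\omega\to 0$, so $\sqrt{a^2+b^2}$ is not uniformly bounded by $\sqrt 2$ (numerically, $\beta=0.9$, $\lambda=0.89$, $r=8$ gives $|p_r(\lambda)|\approx 2.6\,\eta^{r/2}$). Rescuing your route would require trading the linear-in-$r$ factor against the strict gap between $\sqrt{\eta}$ and $1-\sqrt{1-\beta}$, a genuinely different and delicate argument that you have not supplied; moreover a direct check at the endpoint ($\beta=0.9$, $r=2$: $p_2(\beta)\approx 0.686$ versus $\sqrt{2}\,(1-\sqrt{1-\beta})^2\approx 0.661$) shows that this path cannot simply recover the stated constant $\sqrt 2$ for all $r$, so the boundary case must be engaged head-on rather than absorbed.

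For comparison, the paper does not argue eigenvalue-by-eigenvalue at all. It rewrites the two-term recursion \eqref{eqn:M2} as a first-order iteration on the stacked matrix $[M^{(r+1)};M^{(r)}]$ driven by the $2n\times 2n$ block matrix $W_2=\big[\,(1+\eta)W\ \ -\eta I;\ I\ \ 0\,\big]$, invokes the cited result of \cite{liu2011accelerated} that the restriction of $W_2$ to the subspace orthogonal to $\mathds{1}_n$ contracts with factor $\beta/(1+\sqrt{1-\beta^2})$ per step, telescopes over $r$, obtains the $\sqrt 2$ from the stacked initialization $\|[z;z]\|=\sqrt{2}\|z\|$, and finishes with $\beta/(1+\sqrt{1-\beta^2})\le 1-\sqrt{1-\beta}$. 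In other words, the delicate point your decomposition exposes (the Jordan-type behavior of the companion iteration at $|\lambda|=\beta$) is precisely what the paper outsources to the cited per-step contraction statement; as written, your self-contained Chebyshev computation does not establish the proposition.
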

		\noindent When the rounds of gossip steps $R$ are  sufficiently large, we can achieve the following important proposition:
		\begin{proposition}\label{prop-lambda-mbar}
			We let $\bar{M} = (1-\tau)M^{(R)} +\tau I$. If Assumption \ref{ass-W} holds and $R = \big\lceil \frac{\ln(n)+4}{\sqrt{1-\beta}}\big\rceil$  and $\tau = \frac{1}{2n}$, then it holds that
		\begin{align}\label{znzbab09823}
			\lambda_k(\bar{M})\in \left[\tau-(1-\tau)\rho(M-\frac{1}{n}\one\one^T),~\tau + (1-\tau)\rho(M-\frac{1}{n}\one\one^T)\right]\subseteq \left[\frac{1}{4n},~\frac{3}{4n}\right],\quad \forall\, ~ 2\leq k\leq n.
			\end{align}
			In other words, for $2\leq k\leq n$, $\lambda_k(\bar{M})$ can vanish with respect to $n$  while keeping a constant ratio $\frac{\lambda_2(\bar{M})}{\lambda_n(\bar{M})}\leq 3$  (Proof is in Appendix  \ref{app-proof-mbar}). 
						$\hfill \blacksquare$
		\end{proposition}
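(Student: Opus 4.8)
The plan is to build on Proposition \ref{prop-M-property}, which already controls the non-unit spectrum of $M^{(R)}$, and then track how the damping step $\bar M = (1-\tau) M^{(R)} + \tau I$ shifts the eigenvalues. First I would note that $M^{(R)}$ is symmetric and doubly stochastic (stated just before Proposition \ref{prop-M-property}), so it shares the eigenvector $\one$ with eigenvalue $1$, and its remaining eigenvalues all lie in $[-\rho_R, \rho_R]$ where $\rho_R := \rho(M^{(R)} - \frac{1}{n}\one\one^T) \le \sqrt 2 (1 - \sqrt{1-\beta})^R$. Adding $\tau I$ and scaling by $(1-\tau)$ is a simultaneously-diagonalizable perturbation, so for $2 \le k \le n$ we get exactly $\lambda_k(\bar M) = (1-\tau)\lambda_k(M^{(R)}) + \tau \in [\tau - (1-\tau)\rho_R,\ \tau + (1-\tau)\rho_R]$; this is the first inclusion in \eqref{znzbab09823}.

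Next I would plug in the prescribed $R = \lceil (\ln n + 4)/\sqrt{1-\beta}\rceil$ and $\tau = \frac{1}{2n}$ and show $\rho_R \le \frac{1}{4n}$, which forces the interval into $[\tau - \frac{1}{4n}, \tau + \frac{1}{4n}] = [\frac{1}{4n}, \frac{3}{4n}]$. The key estimate is $\sqrt 2\,(1-\sqrt{1-\beta})^R \le \frac{1}{4n}$. Taking logarithms, it suffices that $R \ge \frac{\ln(4\sqrt 2\, n)}{-\ln(1-\sqrt{1-\beta})} = \frac{\ln(4\sqrt 2\, n)}{\ln\frac{1}{1-\sqrt{1-\beta}}}$. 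Here I would use the elementary inequality $\ln\frac{1}{1-t} \ge t$ for $t \in (0,1)$ with $t = \sqrt{1-\beta}$, so the requirement reduces to $R \ge \frac{\ln(4\sqrt 2\, n)}{\sqrt{1-\beta}}$; since $\ln(4\sqrt 2) < 2 < 4$ we have $\ln(4\sqrt 2\, n) < \ln n + 4$, and the chosen $R$ satisfies this. Finally, the ratio bound: $\frac{\lambda_2(\bar M)}{\lambda_n(\bar M)} \le \frac{3/(4n)}{1/(4n)} = 3$, which holds because $\lambda_k(\bar M) > 0$ for all $k$ (the interval is contained in the positive reals), so $\bar M$ is positive definite and $\lambda_2(\bar M) = \max_{2\le k\le n}\lambda_k(\bar M) \le \frac{3}{4n}$ while $\lambda_n(\bar M) = \min_{2\le k\le n}\lambda_k(\bar M) \ge \frac{1}{4n}$.

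The only mildly delicate point — and the one I would be most careful about — is the transcendental estimate $\rho_R \le \frac{1}{4n}$: one must make sure the constants in $R$ (the ``$+4$'' and the $\sqrt 2$ prefactor from Proposition \ref{prop-M-property}) are large enough to absorb both the $\sqrt 2$ and the $\frac{1}{4}$, and that the convexity bound $\ln\frac{1}{1-t}\ge t$ is applied in the right direction (it makes the denominator \emph{smaller}, hence the sufficient lower bound on $R$ \emph{larger}, so it is the conservative and therefore correct direction). Everything else is a direct consequence of simultaneous diagonalization of $\bar M$ and $M^{(R)}$ and is essentially bookkeeping; no additional assumptions beyond Assumption \ref{ass-W} and the stated choices of $R$ and $\tau$ are needed.
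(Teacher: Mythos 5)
Your proof is correct and follows essentially the same route as the paper: invoke Proposition \ref{prop-M-property} to bound $\rho(M^{(R)}-\frac{1}{n}\one\one^T)$, use the elementary bound $\ln(1-t)\le -t$ (equivalently $\ln\frac{1}{1-t}\ge t$) with the chosen $R$ to get $\rho\le\frac{1}{4n}$, and then conclude via the exact affine eigenvalue relation $\lambda_k(\bar M)=(1-\tau)\lambda_k(M^{(R)})+\tau$ with $\tau=\frac{1}{2n}$. The only cosmetic difference is that the paper verifies $\sqrt{2}e^{-(\ln n+4)}\le\frac{1}{4n}$ directly while you check $\ln(4\sqrt{2}\,n)<\ln n+4$; these are the same arithmetic.
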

		
		\subsection{Reformulating D$^2$/Exact-Diffusion with Multiple Gossip steps}\label{sec-reformulation-D2-mg}
		
		\noindent \textbf{Primal recursion.} With the above discussion, it holds that $\vx^{(k+1)} = \bar{M} \phi^{(k+1)}$ after the fast gossip averaging step in Algorithm \ref{Algorithm: ED_multiple_gossip}. Substituting this relation into Algorithm \ref{Algorithm: ED_multiple_gossip}, we achieve the primal recursion for D$^2$/Exact-Diffusion with multiple gossip steps: 
		\begin{align}\label{eq:d2-multiple-gossip}
		\vx^{(k+1)} = \bar{M}\Big( 2 \vx^{(k)} - \vx^{(k-1)} - \gamma \big(\vg^{(k)} - \vg^{(k-1)}\big)\Big), \quad \forall~ k=1,2,\ldots
		\end{align}
		where $\vg^{(k)} = [g_1^{(k)},\dots, g_n^{(k)}]^T \in \RR^{n\times d}$ and $g_i^{(k)}$ is achieved by the gradient accumulation step in Algorithm \ref{Algorithm: ED_multiple_gossip}, $\bar{M} = (1-\tau)M^{(R)} +\tau I$ and $M^{(R)}$ is achieved by recursions \eqref{eqn:fast-gossip1} and \eqref{eqn:fast-gossip2}. The spectral properties of $\bar{M}$ is given in \eqref{znzbab09823}. 
		
		\noindent \textbf{Primal-dual recursion.} The primal recursion in \eqref{eq:d2-multiple-gossip} is equivalent to the following primal-dual updates 
		\begin{align}
		\begin{cases}\label{eq:d2-pd-mg}
		\vx^{(k+1)} = \bar{M}\big( \vx^{(k)} - \gamma \vg^{(k)} \big) - V \vy^{(k)}, \\
		\vy^{(k+1)} = \vy^{(k)} + \bar{V} \vx^{(k+1)},\quad \forall~ k = 0, 1,2,\ldots
		\end{cases}
		\end{align}
		where $\bar{V}=(I - \bar{M})^{\frac{1}{2}}$. 		Recursions \eqref{eq:d2-multiple-gossip} and \eqref{eq:d2-pd-mg} have two differences from the vanilla D$^2$/Exact-Diffusion recursions \eqref{eq:d2} and \eqref{eq:d2-pd}. First, the weight matrix $\bar{W}$ is replaced by $\bar{M}$. Second, the gradient $\vg^{(k)}$ is achieved via gradient accumulation. This implies that the convergence analysis of D$^2$/Exact-Diffusion with multi-round gossip communication can follow that of vanilla D$^2$/Exact-Diffusion. We only need to pay attentions to the influence of $\bar{M}$ obtained by multi-round gossip steps and the $\vg^{(k)}$ achieved by gradient accumulation.
		
		\subsection{Convergence Rate and Transient Stage}
		% For notation simplicity, we let $\tilde{\lambda}_2 = \lambda_2(\bar{M})$ and $\tilde{\lambda}_n = \lambda_n(\bar{M})$. 
		The following theorem establishes the convergence property of Algorithm \ref{Algorithm: ED_multiple_gossip} under general convexity. 
		
		\begin{theorem}[\sc Convergence under general convexity]\label{thm-generally-convex-mg}
			With Assumptions \ref{ass-W}-\ref{ass:gradient-noise}, $R = \big\lceil \frac{\ln(n)+4}{\sqrt{1-\beta}}\big\rceil$, and learning rate
			\begin{align}\label{znnbzzbbzz0000000-theory}
			% \gamma = \min\left\{\frac{(1-\beta)}{28 L} \Big( \frac{\tilde{\lambda}_n}{\tilde{\lambda}_2}\Big)^{\frac{1}{2}}, \Big(\frac{\tilde{r}_0}{\tilde{r}_1(K+1)}\Big)^{\frac{1}{2}}, \Big(\frac{\tilde{r}_0}{\tilde{r}_2(K+1)}\Big)^{\frac{1}{3}}\right\}
			\gamma = \min\left\{\frac{1}{4L},\frac{(1-\tilde{\beta}) \tilde{\lambda}_n^{\frac{1}{2}}}{10 L \tilde{\lambda}_2}, \Big(\frac{\tilde{r}_0}{\tilde{r}_1(K+1)}\Big)^{\frac{1}{2}}, \Big(\frac{\tilde{r}_0}{\tilde{r}_2(K+1)}\Big)^{\frac{1}{3}},\left(\frac{\tilde{r}_0}{\tilde{r}_3}\right)^\frac{1}{3}\right\},
			\end{align}
			where $\tilde{r}_0, \tilde{r}_1$, $\tilde{r}_2$ and $\tilde{r}_3$ are constants defined in \eqref{kzunb},  $\tilde{\beta}$, $\tilde{\lambda}_n$ and $\tilde{\lambda}_2$ are constants defined in \eqref{xbsd87-0-mg}, and $K$ is the number of outer loop, Algorithm \ref{Algorithm: ED_multiple_gossip}  converges at
			\begin{align}\label{xbsd87-mg}
			\frac{1}{K+1}\sum_{k=0}^K\big(\mathbb{E}f(\bar{x}^{(k)}) - f(x^\star)\big) = {O}\Big( \frac{\sigma}{\sqrt{nT}} + 
			\frac{\sigma^{\frac{2}{3}}\ln(n)^\frac{1}{3}}{n^{\frac{1}{3}}(1-\beta)^{\frac{1}{6}}T^{\frac{2}{3}}} +  \frac{\ln(n)}{(1-\beta)^{\frac{1}{2}}T}\Big).
			\end{align}
			where $T = KR$ is the total number of sampled data (or gossip communications) (Proof is in Appendix \ref{app-proof-mg-convex}). 
			
			$\hfill \blacksquare$
		\end{theorem}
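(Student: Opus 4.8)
The plan is to reduce Theorem \ref{thm-generally-convex-mg} to the already-proved Theorem \ref{thm-generally-convex} by observing that the primal--dual recursion \eqref{eq:d2-pd-mg} of Algorithm \ref{Algorithm: ED_multiple_gossip} is \emph{formally identical} to the vanilla primal--dual recursion \eqref{eq:d2-pd}, with three substitutions: (i) $\bar W \mapsto \bar M=(1-\tau)M^{(R)}+\tau I$; (ii) $V\mapsto\bar V=(I-\bar M)^{1/2}$; and (iii) the gradient-noise variance $\sigma^2\mapsto\sigma^2/R$. Substitution (iii) is the only genuinely new ingredient and comes from \emph{gradient accumulation}: writing $\tilde{\vs}^{(k)}:=\vg^{(k)}-\nabla f(\vx^{(k)})$, each row $\tilde s_i^{(k)}=\tfrac1R\sum_{r=1}^R\big(\nabla F(x_i^{(k)};\xi_i^{(k,r)})-\nabla f_i(x_i^{(k)})\big)$ is an average of $R$ conditionally independent, mean-zero terms of variance $\le\sigma^2$, so $\mathbb{E}[\|\tilde s_i^{(k)}\|^2\mid\cF^{(k-1)}]\le\sigma^2/R$ and, as in \eqref{avg-noise}, $\mathbb{E}[\|\tfrac1n\mathds 1^T\tilde{\vs}^{(k)}\|^2\mid\cF^{(k-1)}]\le\sigma^2/(nR)$. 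Substitutions (i)--(ii) are legitimate because, by Proposition \ref{prop-M-property}, $\bar M$ is symmetric and doubly stochastic, and by Proposition \ref{prop-lambda-mbar} its non-unit eigenvalues lie in $[\tfrac1{4n},\tfrac3{4n}]$; hence $0\preceq\bar M\preceq I$ (so $\bar V$ is real and PSD with $I-\bar V^2=\bar M$), $\lambda_2(\bar M)<1$, and $\lambda_n(\bar M)>0$.

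First I would re-run the fundamental transformation: Lemma \ref{lm-decom}, Lemma \ref{lm-transform-error-dyanmic} and Proposition \ref{remark-z0} carry over once $(\bar\lambda_2,\bar\lambda_n,\beta_1)$ are read as $(\tilde\lambda_2,\tilde\lambda_n,\tilde\beta):=(\lambda_2(\bar M),\lambda_n(\bar M),\lambda_2(\bar M)^{1/2})$, with the eigen-quantities $\bar\Lambda_R,Q_R$ in \eqref{g-check}--\eqref{s-check} now built from the non-unit spectrum of $\bar M$. Then I would re-derive the descent Lemma \ref{lm-descent-gc} and the consensus/ergodic-consensus Lemmas \ref{lm-consensus}--\ref{lm-ergodic-consensus} verbatim with $\sigma^2$ replaced by $\sigma^2/R$. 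At this point Proposition \ref{prop-lambda-mbar} collapses every topology factor: $1-\tilde\beta=1-\tilde\lambda_2^{1/2}=\Theta(1)$, $n\tilde\lambda_2^2=O(1/n)$, $1/(n\tilde\lambda_n)=O(1)$, $\tilde\lambda_2^2/\tilde\lambda_n=O(1/n)$, and the consensus stepsize cap $(1-\tilde\beta)\tilde\lambda_n^{1/2}/(10L\tilde\lambda_2)=\Omega(\sqrt{n}/L)$ is dominated by $1/(4L)$; moreover the analog of Proposition \ref{remark-z0} gives $\mathbb{E}\|\check{\vz}^{(0)}\|_F^2=O(\gamma^2/n)$.

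Feeding the transformed descent inequality and ergodic-consensus inequality into each other exactly as in \eqref{c286}--\eqref{eq:ergodic-consensus} (absorbing the $O(L^2\gamma^2/n)\cdot\frac{1}{K+1}\sum_k(\mathbb{E}f(\bar{x}^{(k)})-f(x^\star))$ cross-term into the left side, which is valid for $\gamma$ below the cap) yields
\begin{align*}
\frac{1}{K+1}\sum_{k=0}^K\big(\mathbb{E}f(\bar{x}^{(k)})-f(x^\star)\big) = O\!\left(\frac{\mathbb{E}\|\bar{\z}^{(0)}\|^2}{\gamma(K+1)}+\frac{L\gamma^2\sigma^2}{nR}+\frac{\gamma\sigma^2}{nR}\right),
\end{align*}
which is the origin of the constants $\tilde r_0,\dots,\tilde r_3$ from \eqref{kzunb}. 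I would then tune $\gamma$ as in Theorem \ref{thm-generally-convex}: balancing $\tfrac1{\gamma K}$ against $\tfrac{\gamma\sigma^2}{nR}$ gives $\gamma\asymp\sqrt{nR/(\sigma^2K)}$ and the term $\sigma/\sqrt{nRK}=\sigma/\sqrt{nT}$; balancing against $\tfrac{L\gamma^2\sigma^2}{nR}$ gives $\gamma\asymp(nR/(\sigma^2K))^{1/3}$ and the term $\sigma^{2/3}R^{1/3}/(n^{1/3}K^{2/3})$; and when the cap $\gamma\asymp1/L$ binds the term is $O(LR/T)$. Substituting $T=KR$ and $R=\lceil(\ln(n)+4)/\sqrt{1-\beta}\rceil=\Theta(\ln n/\sqrt{1-\beta})$, so that $R^{1/3}=\Theta((\ln n)^{1/3}/(1-\beta)^{1/6})$ and $R=\Theta(\ln n/(1-\beta)^{1/2})$, turns these three terms into precisely the right-hand side of \eqref{xbsd87-mg}.

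The main obstacle is the bookkeeping in the first paragraph: one must verify that the proofs of Lemmas \ref{lm-decom}--\ref{lm-ergodic-consensus} and Proposition \ref{remark-z0} use $\bar W$ \emph{only} through the generic properties ``symmetric doubly stochastic, $0\preceq\bar W\preceq I$, $\lambda_2<1$, $\lambda_n>0$'' and never through the special representation $\bar W=(I+W)/2$; in particular the surrogates $\check{\vg}^{(k)},\check{\vs}^{(k)}$ of \eqref{g-check},\eqref{s-check} must be re-expressed through the eigendecomposition of $\bar M$ rather than of $W$. Once that is checked, the rest is the identical two-inequality coupling and stepsize optimization already carried out for Theorem \ref{thm-generally-convex}, plus the elementary substitution of $R$.
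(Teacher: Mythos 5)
Your proposal is correct and follows essentially the same route as the paper's own proof: the appendix simply re-invokes the analysis of Theorem \ref{thm-generally-convex} with $\bar W$ replaced by $\bar M$, the noise variance reduced to $\sigma^2/R$, the spectral constants $\tilde\lambda_2,\tilde\lambda_n,\tilde\beta_1$ taken from Proposition \ref{prop-lambda-mbar}, and then substitutes $K=T/R$ and $R=\Theta(\ln n/(1-\beta)^{1/2})$ exactly as you do. Your extra bookkeeping (checking that Lemmas \ref{lm-decom}--\ref{lm-ergodic-consensus} and Proposition \ref{remark-z0} use $\bar W$ only through its generic symmetric doubly stochastic, positive-definite properties) is precisely the verification the paper leaves implicit.
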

		
		\begin{corollary}[\sc Transient stage under general convexity]
			Under the same assumptions as in Theorem \ref{thm-generally-convex-mg}, the transient stage for multi-step D$^2$/Exact-Diffusion is on the order of $\Omega\left(\frac{n\ln(n)^2}{{1-\beta}}\right)=\tilde{\Omega}\left(\frac{n}{{1-\beta}}\right)$. 
		\end{corollary}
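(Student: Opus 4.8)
The plan is to mirror the proof of the transient-stage corollary for vanilla D$^2$/Exact-Diffusion (the one following Theorem \ref{thm-generally-convex}): take the non-asymptotic rate \eqref{xbsd87-mg} from Theorem \ref{thm-generally-convex-mg} as a black box, single out the linear-speedup term $\sigma/\sqrt{nT}$, and find the smallest $T$ after which the two remaining error terms are dominated by it. Here $T = KR$ is the total number of sampled gradients (equivalently gossip communications), which is the correct unit for comparison with the other rows of Table \ref{table-transient-stage-local}, so no separate bookkeeping of the outer-loop count $K$ or the fixed $R = \lceil(\ln n + 4)/\sqrt{1-\beta}\rceil$ is needed.

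Concretely, I would require the two conditions
\begin{align*}
\frac{\sigma^{\frac{2}{3}}\ln(n)^{\frac{1}{3}}}{n^{\frac{1}{3}}(1-\beta)^{\frac{1}{6}}T^{\frac{2}{3}}} \lesssim \frac{\sigma}{\sqrt{nT}}
\qquad\text{and}\qquad
\frac{\ln(n)}{(1-\beta)^{\frac{1}{2}}T} \lesssim \frac{\sigma}{\sqrt{nT}}.
\end{align*}
Rearranging the first inequality, i.e.\ multiplying through by $\sqrt{nT}/\sigma$ and collecting powers of $T$, gives $T^{1/6} \gtrsim \ln(n)^{1/3} n^{1/6} / [(1-\beta)^{1/6}\sigma^{1/3}]$, hence $T \gtrsim n\ln(n)^2/[(1-\beta)\sigma^2]$. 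The second inequality, treated the same way, yields $T^{1/2} \gtrsim \ln(n) n^{1/2}/[(1-\beta)^{1/2}\sigma]$, i.e.\ again $T \gtrsim n\ln(n)^2/[(1-\beta)\sigma^2]$. Taking the larger of the two thresholds (they coincide), the transient stage is $\Omega\big(n\ln(n)^2/(1-\beta)\big)$, and absorbing the $\ln(n)$ factor into the $\tilde\Omega$ notation gives the claimed $\tilde\Omega\big(n/(1-\beta)\big)$.

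There is no real obstacle in the corollary itself: the exponents $(1-\beta)^{1/6}$ and $(1-\beta)^{1/2}$ appearing in \eqref{xbsd87-mg} are precisely those that make both balancing conditions collapse to the same $n/(1-\beta)$ scaling, so the argument is a one-line computation once Theorem \ref{thm-generally-convex-mg} is in hand. All of the work lies upstream, in establishing that the fast-gossip matrix $\bar M$ (via Proposition \ref{prop-lambda-mbar}, which makes $1-\tilde\beta$ a constant while shrinking $\tilde\lambda_2,\tilde\lambda_n$ like $1/n$) together with gradient accumulation over $R$ rounds converts the vanilla rate \eqref{D2-rate-GC} into \eqref{xbsd87-mg}; that is handled in the proof of Theorem \ref{thm-generally-convex-mg} in Appendix \ref{app-proof-mg-convex} and need not be revisited here.
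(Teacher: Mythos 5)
Your proposal is correct and follows essentially the same route as the paper: the paper's own proof imposes exactly the two domination conditions you state on the rate \eqref{xbsd87-mg} and reads off $T \gtrsim n[\ln(n)]^2/[\sigma^2(1-\beta)] = \tilde{\Omega}(n/(1-\beta))$, and your algebra reproducing both thresholds (which indeed coincide) is accurate.
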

		\begin{proof}
% 			Assume $\delta = s-\frac{3}{2}\left(1+\log_n(\frac{1}{1-\beta})\right)>0$.
			To achieve the linear speedup stage, $T$ has to be large enough such that 
			\begin{align}
			\frac{\sigma^{\frac{2}{3}}\ln(n)^\frac{1}{3}}{n^{\frac{1}{3}}(1-\beta)^{\frac{1}{6}}T^{\frac{2}{3}}} \lesssim \frac{\sigma}{\sqrt{nT}},\quad \quad  \frac{\ln(n)}{(1-\beta)^{\frac{1}{2}}T} \lesssim \frac{\sigma}{\sqrt{nT}}
%	\frac{\sigma^{\frac{2}{3}}(s\ln(n))^\frac{1}{3}n^{-\frac{s}{3}}}{(1-\beta)^{\frac{1}{6}}T^{\frac{2}{3}}} \le \frac{\sigma}{\sqrt{nT}},\quad \quad  \frac{s\ln(n)n^{-\frac{s}{3}}}{(1-\beta)^{\frac{1}{2}}T} \le \frac{\sigma}{\sqrt{nT}},\quad \quad R=\lceil \frac{s\ln(n)+4}{\sqrt{1-\beta}}\rceil \leq T
			\end{align}
			which is equivalent to 
			\begin{align}
			T =  \Omega\left(\frac{n[\ln(n)]^2}{\sigma^2(1-\beta)}\right)
% 			=&\Omega\left(\max\left\{\frac{\left(\ln(n)+\ln(\frac{1}{1-\beta})\right)^2n^{3-2s}}{\sigma^2},\frac{\ln(n)+\ln(\frac{1}{1-\beta})}{\sqrt{1-\beta}}\right\}\right)\\
	=\tilde{\Omega}\left(\frac{n}{{1-\beta}}\right).
			\end{align}
% 			where we use the fact that the first term in \eqref{eqn:guoqwhncx}  vanishes as $n$ goes to infinity.
		\end{proof}
		The following theorem establishes the convergence performance of Algorithm \ref{Algorithm: ED_multiple_gossip} with strong convexity. 
		
		\begin{theorem}[\sc Convergence under strong  convexity]\label{thm2-mg}With Assumptions \ref{ass-W}, \ref{ass:smoothness}, \ref{ass:gradient-noise}, \ref{ass:sc} and $R = \big\lceil \frac{\ln(n)+4}{\sqrt{1-\beta}} \big\rceil$, if the learning rate satisfies 
			\begin{align}\label{gamma-sc-mg}
			\gamma = \min\left\{ \frac{1}{4L},\frac{1-\tilde{\beta}}{26 L} \Big(\frac{\tilde{\lambda}_n^{1/2}}{\tilde{\lambda}_2}\Big), \frac{2\ln(2 n \mu \mathbb{E}\|\bar{\z}^{(0)}\|^2 K^2/[\tilde{\sigma}^2 (1-\tilde{\beta})])}{\mu K}\right\},
			\end{align}
			where $\tilde{\sigma}$, $\tilde{\beta}$, $\tilde{\lambda}_n$ and $\tilde{\lambda}_2$ are constants defined in \eqref{xbsd87-0-mg}, Algorithm \ref{Algorithm: ED_multiple_gossip}  converges at
			\begin{align}\label{thm2-results-mg}
			\frac{1}{H_K}\sum_{k=0}^K h_k \big(\mathbb{E} f(\bar{x}^{(k)}) - f(x^\star) \big) =  \tilde{O}\left(\frac{\sigma^2}{n T} + \frac{\sigma^2}{n (1-\beta)^\frac{1}{2}T^2} + 
			\exp\{-(1-\beta)^\frac{1}{2} T \}  \right). 
			\end{align}
			where $h_k$ and $H_K$ are defined in Lemma \ref{lm-sc-ergodic-consenus}. (Proof is in Appendix \ref{app-proof-thm2-mg}). 
			$\hfill \blacksquare$
		\end{theorem}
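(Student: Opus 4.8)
The plan is to exploit the structural identity, established in Section~\ref{sec-reformulation-D2-mg}, between the multi-round-gossip recursion \eqref{eq:d2-pd-mg} and the vanilla primal--dual recursion \eqref{eq:d2-pd}: the former is obtained from the latter by replacing the mixing matrix $\bar W$ with $\bar M = (1-\tau)M^{(R)}+\tau I$, replacing $V$ with $\bar V=(I-\bar M)^{1/2}$, and replacing the single-sample stochastic gradient by the accumulated gradient $\vg^{(k)}$. Consequently every ingredient of the strongly-convex analysis of vanilla D$^2$/Exact-Diffusion --- the optimality condition (Lemma~\ref{lm-opt-cond}), the fundamental decomposition (Lemma~\ref{lm-decom}), the transformed recursion (Lemma~\ref{lm-transform-error-dyanmic}), the descent lemma (Lemma~\ref{lm-sc-descent}), the ergodic consensus lemma (Lemma~\ref{lm-sc-ergodic-consenus}), and Theorem~\ref{thm2} itself --- carries over with the substitutions $\bar\lambda_2\mapsto\tilde\lambda_2:=\lambda_2(\bar M)$, $\bar\lambda_n\mapsto\tilde\lambda_n:=\lambda_n(\bar M)$, $\beta_1\mapsto\tilde\beta:=\tilde\lambda_2^{1/2}$, and $\sigma^2\mapsto\tilde\sigma^2$, with the \emph{outer} iteration index $K$ playing the role of $T$. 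Before reusing this machinery I would only need to check two facts: first, that $\bar M$ is symmetric, doubly stochastic, and \emph{positive definite} with a simple top eigenvalue, which follows from Proposition~\ref{prop-M-property} together with the choices $R=\lceil(\ln n+4)/\sqrt{1-\beta}\rceil$ and $\tau=\tfrac1{2n}$ (so that all non-unit eigenvalues of $\bar M$ lie in $[\tfrac1{4n},\tfrac3{4n}]$ by Proposition~\ref{prop-lambda-mbar}); and second, that the accumulated gradient $g_i^{(k)}=\tfrac1R\sum_{r=1}^R\nabla F(x_i^{(k)};\xi_i^{(k,r)})$ is unbiased with conditional variance at most $\tilde\sigma^2:=\sigma^2/R$, which is immediate from Assumption~\ref{ass:gradient-noise} and the independence of the $R$ samples.

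First I would record the spectral consequences of Proposition~\ref{prop-lambda-mbar}: since $\tilde\lambda_2,\tilde\lambda_n\in[\tfrac1{4n},\tfrac3{4n}]$ we have $\tilde\lambda_2/\tilde\lambda_n\le 3$, $1-\tilde\beta=1-\tilde\lambda_2^{1/2}=\Theta(1)$, $n\tilde\lambda_n=\Theta(1)$, and $\tilde\lambda_n^{1/2}/\tilde\lambda_2=\Theta(\sqrt n)$. These replace the topology-sensitive quantities $\bar\lambda_2$, $1/(1-\beta_1)$, $1/(n\bar\lambda_n)$ of the vanilla bound --- precisely the factors responsible for its $1-\beta$ dependence --- by $\Theta(1)$ (or benign $\Theta(1/n)$) factors. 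Feeding them into the $\bar M$/$\tilde\sigma^2$ forms of the descent lemma and the ergodic-consensus lemma and then into the weighted-average argument of Theorem~\ref{thm2} --- using the learning rate \eqref{gamma-sc-mg}, which is exactly the rate of Theorem~\ref{thm2} with the hatted quantities --- yields an intermediate bound of the shape $\tilde O\big(\tfrac{\tilde\sigma^2}{nK}+\tfrac{\tilde\sigma^2}{nK^2}+\exp\{-cK\}\big)$ for an absolute constant $c>0$. Here the middle term collects $\gamma^2\cdot\tfrac{1}{n\tilde\lambda_n}\cdot\tfrac{n\tilde\lambda_2^2\tilde\sigma^2}{1-\tilde\beta}=\Theta\big(\tfrac{\tilde\sigma^2}{n}\big)\gamma^2$ with $\gamma=\Theta(\ln K/(\mu K))$ in the linear-speedup regime (the extra $1/n$ coming from $\tilde\lambda_2^2/\tilde\lambda_n=\Theta(1/n)$), and the last term collects $h_0/H_K\le(1-\gamma\mu/2)^K$ with $\gamma=\Theta(1/L)$ in the pre-speedup regime, since $\tfrac{1-\tilde\beta}{26L}\cdot\tfrac{\tilde\lambda_n^{1/2}}{\tilde\lambda_2}=\Theta(\sqrt n/L)\ge\tfrac1{4L}$ for $n$ above a constant.

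Finally I would substitute $K=T/R$, $\tilde\sigma^2=\sigma^2/R$ with $R=\Theta(\ln n/\sqrt{1-\beta})$: the first term becomes $\tilde O(\sigma^2/(nT))$; the second becomes $\tilde O(\sigma^2 R/(nT^2))=\tilde O(\sigma^2/(n(1-\beta)^{1/2}T^2))$; and the exponential term becomes $\tilde O(\exp\{-(1-\beta)^{1/2}T\})$ (the $\ln n$ inside $R$ being absorbed into the $\tilde O$ under the convention used for the transient-stage analysis). This is exactly \eqref{thm2-results-mg}. I expect the main obstacle to be neither conceptual nor a single hard inequality, but the bookkeeping in the middle step: one must re-run the vanilla strongly-convex proof keeping $\tilde\lambda_2,\tilde\lambda_n,\tilde\beta,\tilde\sigma^2$ symbolic and verify that they enter the three error terms \emph{multiplicatively in the right places}, so that after substituting their $\Theta(1/n)$/$\Theta(1)$ values and the relation $T=KR$ the network-size and spectral-gap factors combine into the claimed form --- in particular that the consensus-induced term picks up exactly one extra $1/n$ relative to the noise floor (giving $\sigma^2/(n(1-\beta)^{1/2}T^2)$ rather than $\sigma^2/((1-\beta)^{1/2}T^2)$), and that the admissibility constraints linking the descent weights $h_k=(1-\gamma\mu/2)h_{k+1}$ to the consensus-lemma condition $h_k\le h_\ell(1+\tfrac{1-\tilde\beta}{4})^{k-\ell}$ remain simultaneously satisfiable under \eqref{gamma-sc-mg}.
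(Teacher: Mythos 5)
Your proposal is correct and follows essentially the same route as the paper's proof in Appendix~\ref{app-proof-thm2-mg}: re-run the strongly-convex analysis of Theorem~\ref{thm2} with $\bar{W}\mapsto\bar{M}$, $\sigma^2\mapsto\tilde{\sigma}^2=\sigma^2/R$ and the spectral bounds of Proposition~\ref{prop-lambda-mbar}, then substitute $K=T/R$ with $R=O(\ln(n)/(1-\beta)^{1/2})$ to obtain \eqref{thm2-results-mg}. Your bookkeeping of where $\tilde{\lambda}_2^2/\tilde{\lambda}_n=\Theta(1/n)$ and $1-\tilde{\beta}_1=\Theta(1)$ enter matches the paper's intermediate bound \eqref{results-mg-app}, so no gap remains.
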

		% \begin{remark}
		% Comparing the bound in \eqref{thm2-results} and a more accurate bound in \eqref{znb2375zzz-6}, it is observed we ignored the term $\bar{\lambda}_2/\bar{\lambda}_n$ in \eqref{thm2-results}. It is because $\bar{\lambda}_2/\bar{\lambda}_n$ can be regarded as a constant when $\bar{\lambda}_n$ is bounded away from zero.
		% \end{remark}
		
		\begin{corollary}[\sc Transient stage]\label{coro-ts-sc-mg}
			Under the same assumptions as Theorem \ref{thm2}, the transient stage for multi-step D$^2$/Exact-Diffusion in the strongly convex scenario is on the order of  $\tilde{\Omega}\big((1-\beta)^{-\frac{1}{2}}\big)$. 
		\end{corollary}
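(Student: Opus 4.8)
The plan is to repeat the elementary decay-rate comparison used in Corollary~\ref{coro-ts-sc}, now applied to the sharpened bound \eqref{thm2-results-mg} of Theorem~\ref{thm2-mg}. Recall that this bound reads
\begin{align*}
\frac{1}{H_K}\sum_{k=0}^K h_k \big(\mathbb{E} f(\bar{x}^{(k)}) - f(x^\star)\big) = \tilde{O}\Big(\frac{\sigma^2}{nT} + \frac{\sigma^2}{n(1-\beta)^{1/2}T^2} + \exp\{-(1-\beta)^{1/2}T\}\Big),
\end{align*}
where $T = KR$ counts the total number of sampled gradients (equivalently, gossip rounds). First I would discard the third term: it decays exponentially in $T$, so once $T$ exceeds a threshold that is only logarithmic in the problem parameters it is dominated by the two polynomially-decaying terms and therefore does not affect the transient stage.

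Next I would determine when the linear-speedup term $\sigma^2/(nT)$ -- the one that scales inversely with the network size $n$ -- takes over from the second term $\sigma^2/(n(1-\beta)^{1/2}T^2)$. Setting the two equal up to absolute constants, the requirement $\frac{\sigma^2}{n(1-\beta)^{1/2}T^2} \lesssim \frac{\sigma^2}{nT}$ is equivalent to $T \gtrsim (1-\beta)^{-1/2}$. Hence the algorithm reaches its linear-speedup regime as soon as $T = \Omega\big((1-\beta)^{-1/2}\big)$, and absorbing the logarithmic factors hidden inside $\tilde O(\cdot)$ (and those carried by the choice $R = \lceil(\ln n + 4)/\sqrt{1-\beta}\rceil$) gives the claimed transient stage $\tilde\Omega\big((1-\beta)^{-1/2}\big)$.

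There is no genuine obstacle here; the only care needed is the bookkeeping of the hidden logarithmic terms and a brief check that the exponential term is indeed negligible past the stated threshold, which is precisely why the conclusion is phrased with $\tilde\Omega$ rather than $\Omega$. All the substantive work -- namely establishing the rate \eqref{thm2-results-mg} itself, with its improved $(1-\beta)^{1/2}$ dependence coming from the spectral properties of $\bar M$ in Proposition~\ref{prop-lambda-mbar} together with the variance reduction afforded by gradient accumulation -- has already been carried out in Theorem~\ref{thm2-mg}.
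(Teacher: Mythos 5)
Your proposal is correct and follows essentially the same route as the paper: drop the exponential term (the paper notes this is valid precisely when $(1-\beta)^{1/2}T = \tilde{\Omega}(1)$, i.e.\ $T = \tilde{\Omega}((1-\beta)^{-1/2})$ — so the threshold is $(1-\beta)^{-1/2}$ times a logarithm rather than ``only logarithmic,'' a harmless imprecision since it matches the claimed transient stage anyway), then require $\frac{\sigma^2}{n(1-\beta)^{1/2}T^2} \lesssim \frac{\sigma^2}{nT}$, which gives $T \gtrsim (1-\beta)^{-1/2}$ with logarithmic factors absorbed into $\tilde{\Omega}(\cdot)$. Nothing further is needed.
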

		\begin{proof} The third term \eqref{thm2-results} decays exponentially fast and hence can be ignored compared to the first two terms as long as $(1-\beta)^\frac{1}{2}T=\tilde{\Omega}(1)$, i.e., $T=\tilde{\Omega}((1-\beta)^{-\frac{1}{2}})$, otherwise the exponential term remains  a constant. To reach the linear speedup, it is enough to set $\frac{\sigma^2}{n(1-\beta)^\frac{1}{2}T^2} \lesssim \frac{\sigma^2}{nT}$.
		% 			\begin{align}
% 			\frac{\sigma^2}{n(1-\beta)^\frac{1}{2}T^2} \le \frac{\sigma^2}{nT},
% 			\end{align}
			which amounts to $T = \tilde{\Omega} \big((1-\beta)^{-\frac{1}{2}}\big)$. We use $\tilde{\Omega}(\cdot)$ rather than $\Omega(\cdot)$ because  logarithm factors are hidden inside. 
			% We use $\tilde{\Omega}(\cdot)$ rather than $\Omega(\cdot)$ because some logarithm factors are hidden inside.
		\end{proof}
	\begin{remark} \label{remark-weakest-dependence}
	In Corollary \ref{coro-ts-sc-mg}, the transient stage of D$^2$/Exact-Diffusion with multi-round gossip communication has a significantly better (i.e., weaker) dependence on network topology connectivity $1-\beta$ and network size $n$ compared to existing works \cite{koloskova2020unified,pu2019sharp,pu2020distributed,huang2021improve,koloskova2021improved}, see Table \ref{table-transient-stage-local}.
% 	is nearly-independent of network size $n$. To our  knowledge, it is the first result which almost removes the influences of network size on the transient stage. 
% 	The establishment of such result heavily depends on the algorithm structure in D$^2$/Exact-Diffusion (especially the structure that the gradient is associated with the weight matrix, see \eqref{eq:d2}). The DeTAG method proposed in \cite{lu2021optimal} cannot achieve such transient stage even in the strongly-convex scenario since its  gradient is not associated with weight matrix. {\color{red}[Kun: do you mean that er us adapt-then-combine structure? It seems that the combination matrix in \cite{lu2021optimal} also uses ATC structure (see eq. (52) and (53) in https://arxiv.org/pdf/2006.08085.pdf)] }
	$\hfill \square$
	\end{remark}
	
%	{\color{red} [But we know that $1-\beta$ depends on $n$. Perhaps, we can say that the this transient stage is the tightest so far with weakest dependence on $n$.]}
	
	\section{Numerical Simulation}
	In this section, we validate the established theoretical results with numerical simulations. 
	
	\subsection{Strongly-Convex Scenario}
	\noindent \textbf{Problem.} We consider the following decentralized least-square problem
    \begin{align}\label{zbnzbba978}
        \min_{x \in \RR^d} \quad \frac{1}{2n}\sum_{i=1}^n  \|A_i x - b_i\|^2
    \end{align}
    where $A_i \in \RR^{M \times d}$ is the coefficient matrix, and $b_i \in \RR^d$ is the measurement. Quantities $A_i$ and $b_i$ are associated with node $i$, and $M$ is the size of local dataset. 
    
    \noindent \textbf{Simulation settings.} In our simulations, we set $d=10$ and $M=1000$. To control the data heterogeneity across the nodes, we first let each node $i$ be associated with a local solution $x^\star_{i}$, and such $x^\star_i$ is generated by $x^\star_i = x^\star + v_i$ where $x^\star\sim \cN(0, I_d)$ is a randomly generated vector while $v_i \sim \cN(0, \sigma^2_h I_d)$ controls the similarity between each local solution. Generally speaking, a large $\sigma^2_h$ results in local solutions  $\{x_i^\star\}$ that are vastly different from each other. With $x_i^\star$ at hand, we can generate local data that follows distinct distributions. At node $i$, we generate each element in $A_i$ following standard normal distribution. Measurement $b_i$ is generated by $b_i = A_i x_i^\star + s_i$ where $s_i \sim \cN(0, \sigma_s^2 I)$ is some white noise. Clearly, solution $x_i^\star$ controls the distribution of the measurements $b$. In this way, we can easily control data heterogeneity by adjusting $\sigma^2_h$. At each iteration $k$, each node will randomly sample a row in $A_i$ and the corresponding element in $b_i$ and use them to evaluate the stochastic gradient. The metric for all simulations in this subsection is $\frac{1}{n}\sum_{i=1}^n\|x_i^{(k)} - x^\star\|^2$ where $x^\star$ is the global simulation to problem \eqref{zbnzbba978} and it has a closed-form $x^\star = (\sum_{i=1}^n A_i^T A_i)^{-1} (\sum_{i=1}^n A_i^T A_i b_i)$. 
    
    \begin{figure}[t!]
	\centering
% 	\hspace{-0.55cm}
	\includegraphics[width=0.4\textwidth]{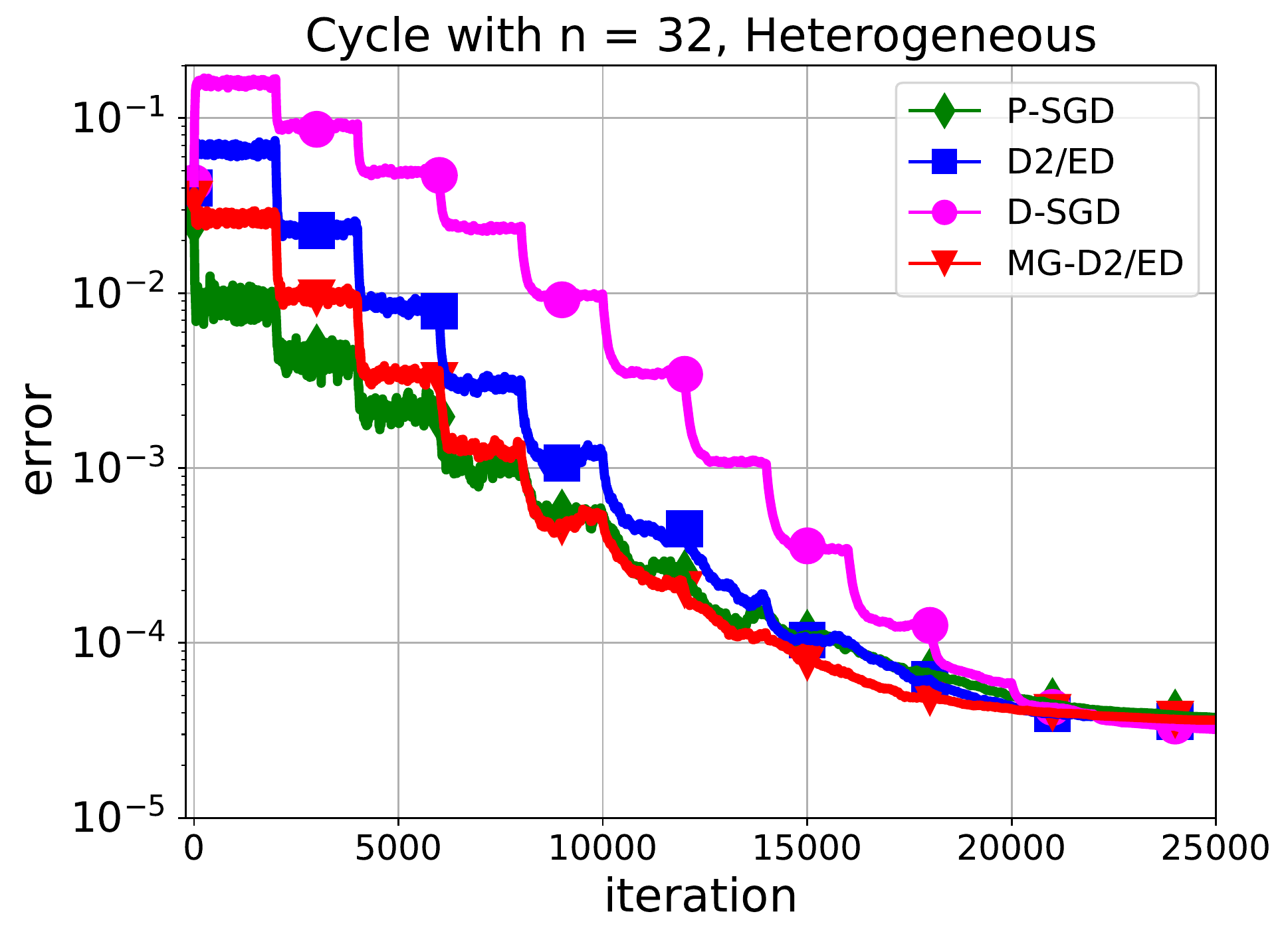}
	\includegraphics[width=0.4\textwidth]{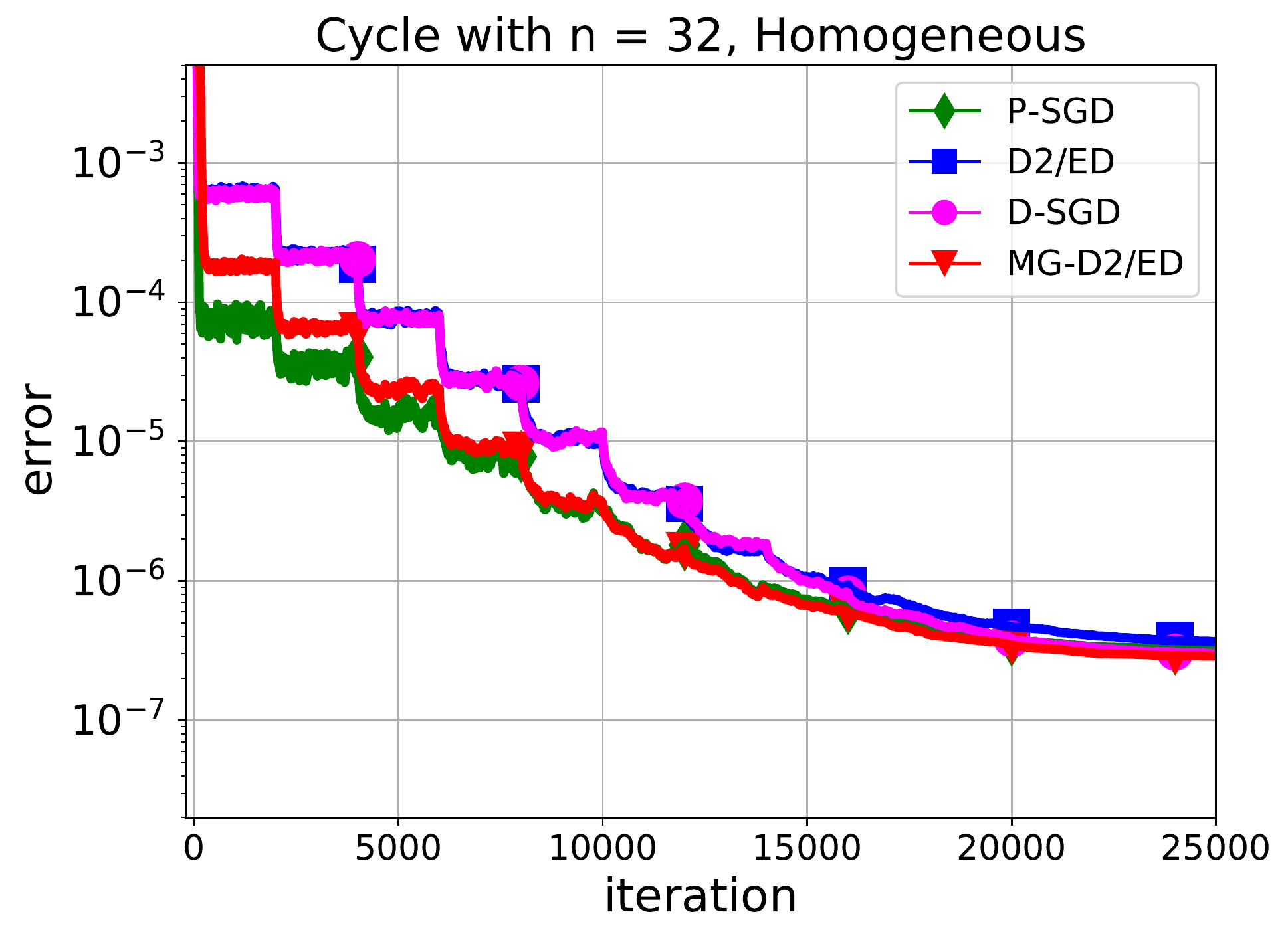}
	\caption{\small Performance of different stochastic algorithms to solve problem \eqref{zbnzbba978}. The left plot is with heterogeneous data while the right is with homogeneous data.
% 	{\color{blue}[Add homogeneous and heterogeneous in the titles. Can make FastD2 closer to P-SGD? revise the legends]} 
}
	\label{fig:ls-strongly-convex} 
    \end{figure}
    
    \noindent 
    \textbf{Performance with heterogeneous data.} We now compare the convergence performance of Parallel SGD (P-SGD), Decentralized SGD (D-SGD), D$^2$/Exact-Diffusion (D2/ED), and D$^2$/Exact-Diffusion with multi-round gossip communication (MG-D2/ED) when data heterogeneity exists. The target is to examine their robustness to the influence of network topology. To this end, we let $\sigma_h^2 = 0.2$ and organize $n=32$ nodes into a cycle. The left plot in Fig.  \ref{fig:ls-strongly-convex} lists the performances of all algorithms. Each algorithm utilizes the same learning rate which decays by half for every 2,000 gossip communications. In this plot, it is observed that all decentralized algorithms, after certain amounts of transient iterations, can match with P-SGD asymptotically. In addition, we find D-SGD is least robust while MG-D2/ED is most robust to network topology, which aligns with the theoretically established bounds for transient stage in Table \ref{table-transient-stage-local}. 
    
    % \begin{figure}[t!]
    % \centering
    % \begin{subfigure}{0.4\textwidth}
    %     \centering
    %     \includegraphics[width=\textwidth]{figures/ls_cycle_32nodes_2000decay_R2_nonhomo(1).pdf}
    %     \caption{\small Heterogeneous scenario.}
    %     \label{fig:first}
    % \end{subfigure}
    % \begin{subfigure}{0.4\textwidth}
    %     \centering
    %     \includegraphics[width=\textwidth]{figures/ls_cycle_32nodes_2000decay_R2_homo(1).pdf}
    %     \caption{\small Heterogeneous scenario.}
    %     \label{fig:second}
    % \end{subfigure}
    % \caption{\small Performance of different stochastic algorithms to solve problem \eqref{zbnzbba978}.}
    % \label{fig:ls-strongly-convex}
    % \end{figure}

    \noindent 
    \textbf{Performance with homogeneous data.} We next compare there algorithms with homogeneous data. To this end, we let $\sigma_h^2 = 0$ and organize $n=32$ nodes into a cycle. The other settings are the same as in the heterogeneous scenario discussed in the above. The right plot in Fig. \ref{fig:ls-strongly-convex} lists the performances of all algorithms. It is observed that D-SGD and D2/ED have almost the same convergence behaviours, which validates the conclusion in Theorem \ref{thm-lower-bound} that D-SGD can match with D2/ED in the homogeneous data scenario. In addition, we find MG-D2/ED requires less transient iterations than D-SGD and D2/ED to match with P-SGD, indicating that it is more robust to network topology even if in the homogeneous data scenario.

	\begin{figure}[t!]
	\centering
% 	\hspace{-0.55cm}
	\includegraphics[width=0.4\textwidth]{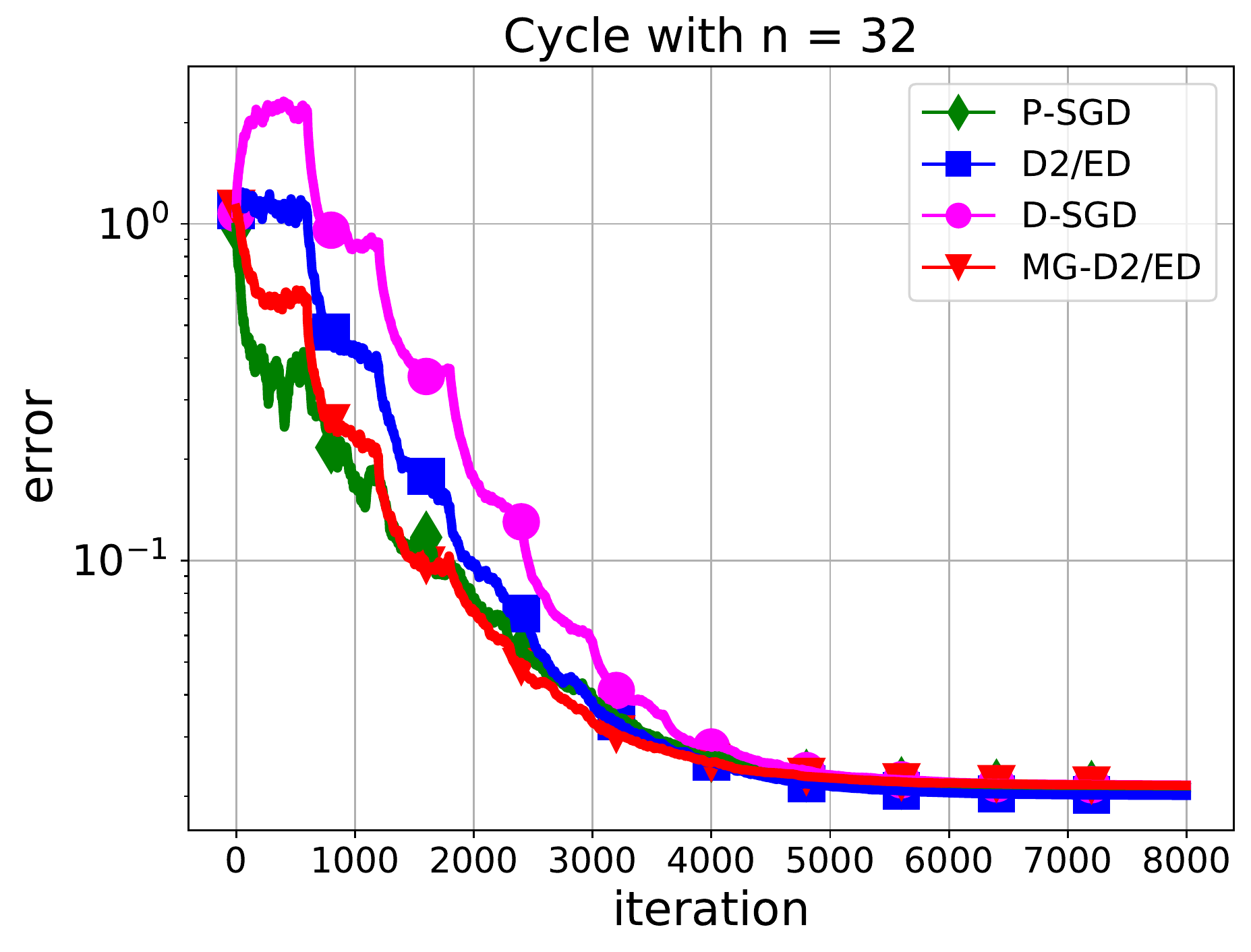}
	\includegraphics[width=0.4\textwidth]{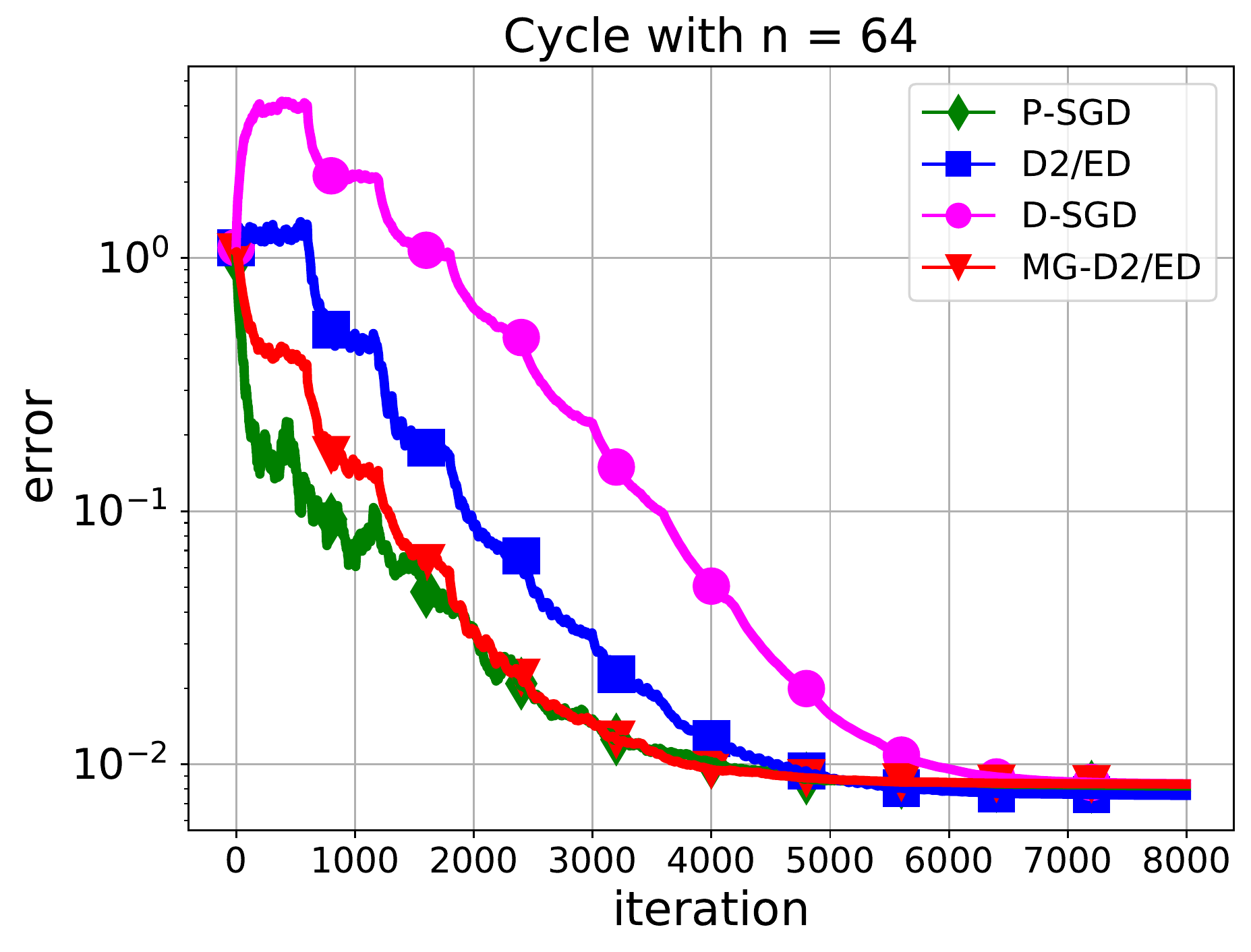}
	\caption{\small Performance of different stochastic algorithms to solve problem \eqref{zbnzbba978-lr}.
% 	{\color{blue}[Fix the stable stage in the left plot]}
	}
	\label{fig:lr-generally-convex}
    \end{figure}
	
	\subsection{Generally-Convex Scenario}
	
	\noindent \textbf{Problem.} We consider the following decentralized logistic regression problem
    \begin{align}\label{zbnzbba978-lr}
        \min_{x \in \RR^d} \quad \frac{1}{n}\sum_{i=1}^n  f_i(x) \quad \quad \mbox{where} \quad \quad f_i(x) = \frac{1}{M}\sum_{m=1}^M \ln\big(1 + \exp(-y_{i,m}h_{i,m}\tran x)\big)
    \end{align}
    where $\{h_{i,m}, y_{i,m}\}_{m=1}^M$ is the training dateset held by node $i$ in which $h_{i,m}\in \mathbb{R}^d$ is a feature vector while $y_{i,m} \in \{-1,+1\}$ is the corresponding label. 
    
    \noindent \textbf{Simulation settings.} Similar to the strongly-convex scenario, each node $i$ is associated with a local solution $x_i^\star$. To generate local dataset $\{h_{i,m}, y_{i,m}\}_{m=1}^M$, we first generate each feature vector $h_{i,m} \sim \cN(0, I_d)$. We label  $y_{i, m} = 1$ with probability $1/(1+ \exp(-y_{i,\ell}h_{i, m}\tran x_i^\star))$; otherwise $y_{i, m} = -1$.
    % To produce
% the corresponding label $y_{i,\ell}$, 
% we generate a random variable $z_{i,m} \sim \cU(0,1)$. If $z_{i, m} \le 1 + \exp(-y_{i,\ell}h_{i, m}\tran x_i^\star)$, we set $y_{i, m} = 1$; otherwise $y_{i, m} = -1$. 
We can  control data heterogeneity by adjusting $\sigma^2_h$. 

    \noindent \textbf{Robustness to network topology.} Fig. \ref{fig:lr-generally-convex} lists the performances of all stochastic algorithms with different network sizes. When size of the cycle graph increases from $32$ to $64$ (the quantity $1/(1-\beta)$ increases from $78.07$ to $311.51$), it is observed that the performance of D-SGD is significantly deteriorated. In contrast, such change of the network topology just influences D2/ED slightly. Furthermore, it is observed that MG-D2/ED is always very close to P-SGD no matter the topology is well-connected or not. These phenomenons are consistent with the established transient stage for the generally-convex scenario in Table \ref{table-transient-stage-local}. 
	
	\begin{figure}[t!]
	\centering
% 	\hspace{-0.55cm}
	\includegraphics[width=0.4\textwidth]{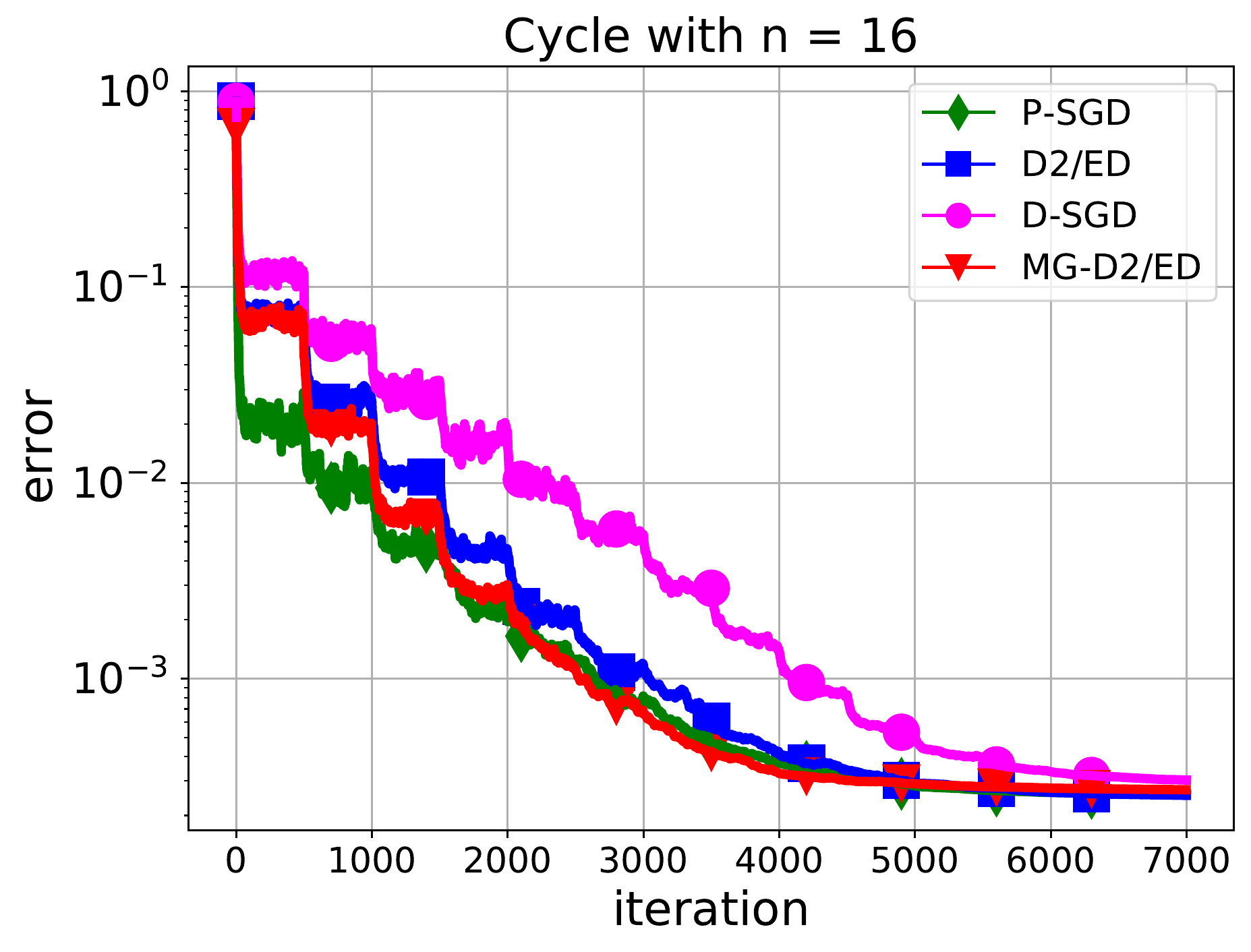}
	\includegraphics[width=0.4\textwidth]{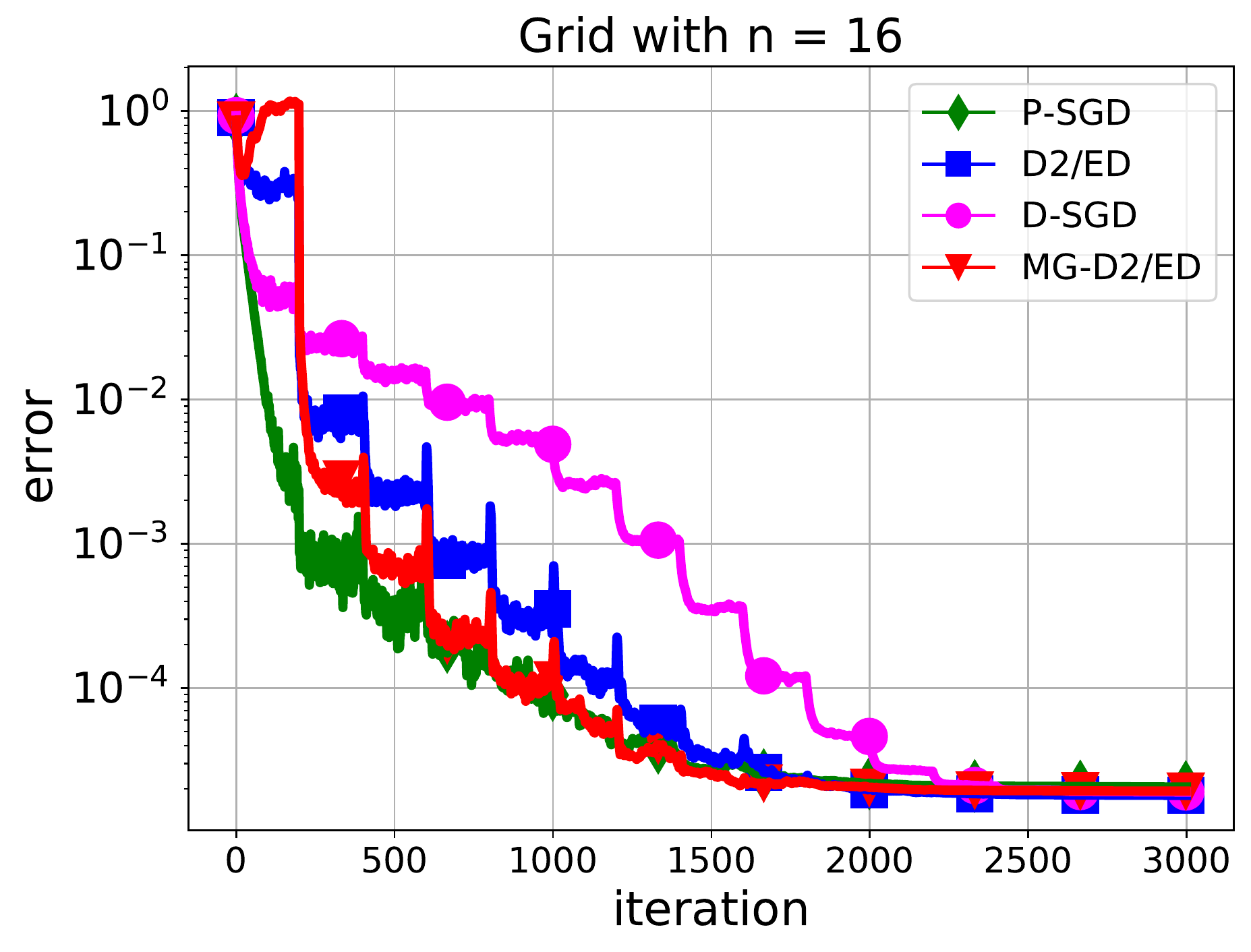}
	\caption{\small Performance of different stochastic algorithms to solve problem \eqref{real-dataset} with different real datasets and topologies. The left plot is with MNIST dataset, and the right is with COVTYPE dataset.
% 	{\color{blue}[Make the difference more significant.]}
	}
	\label{fig:real-dataset}
    \end{figure}
	
	\subsection{Simulation with Real Datasets}
	This subsection examines the performances of P-SGD, D-SGD, D2/ED, and MG-D2/ED with real datasets. We run experiments for the
regularized logistic regression problem with 
\begin{align}\label{real-dataset}
f_i(x) = \frac{1}{M}\sum_{m=1}^M \ln\big(1 + \exp(-y_{i,m}h_{i,m}\tran x)\big) + \frac{\rho}{2}\|x\|^2
\end{align}
where $\rho > 0$ is a positive constant. We consider two real datasets: MNIST \cite{deng2012mnist} and COVTYPE.binary  \cite{rossi2015network}. The MNIST recognition task has been transformed into a binary classification problem by  considering data with labels 2 and
4. In COVTYPE.binary, we use 50, 000
samples as training data and each data has dimension 54. In MNIST we use 10, 000 samples as
training data and each data has dimension 784. The regularization coefficient $\rho=0.001$ for all simulations.  To promote data heterogeneity, we control the ratio  of the sizes of positive and negative samples for each node. More specifically, in COVTYPE.binary, half of the nodes maintain $54\%$ positive samples while the other half maintain $54\%$  negative samples. Likewise, the ratio is fixed as $70\%$ for MNIST dataset, which is a bit larger since the heterogeneity between all grey-scale handwritten digits of $2$ and $4$ is relatively weak. Except for the fixed ratio of the positive samples to negative ones, all training data are distributed uniformly to each local node.  The left plot in Fig.~\ref{fig:real-dataset} illustrates the performance of various algorithms with MNIST dataset over the cycle graph while the right is with COVTYPE dataset over the grid graph. In all simulations, we find the transient stage as well as the robustness to network topology coincides those established in Table \ref{table-transient-stage-local} well. D2/ED always converges better than D-SGD, and MG-D2/ED is least sensitive to network topology compared to D-SGD and D2/ED. 
	
	\section{Conclusion and Discussion} 
	In this work, we revisited the D$^2$/Exact-Diffusion algorithm \cite{tang2018d,yuan2017exact1,yuan2018exact2,li2019decentralized,yuan2020influence} and studied  its non-asymptotic convergence rate  under both the  generally-convex and strongly-convex settings. By removing the influence of data heterogeneity, D$^2$/Exact-Diffusion is shown to improve the transient stage of D-SGD from $\Omega(n^3/(1-\beta)^4)$ to $\Omega(n^3/(1-\beta)^2)$ and from $\Omega(n/(1-\beta)^2)$ to $\tilde{\Omega}(n/(1-\beta))$ for the generally convex and strongly-convex settings, respectively. This result shows that D$^2$/Exact-Diffusion \cite{tang2018d,yuan2017exact1,yuan2018exact2,li2019decentralized,yuan2020influence} is less sensitive to the network topology. For the strongly-convex scenario, we also proved that our transient stage bound coincides with the lower bound of homogeneous D-SGD in terms of {\em network topology dependence}, which implies that D$^2$/Exact-Diffusion cannot have worse network dependence than D-SGD and has a better dependence in the heterogeneous setting. Moreover, when D$^2$/Exact-diffusion is equipped with gradient accumulation and multi-round gossip communications, its transient stage can be further improved to $\tilde{\Omega}(1/(1-\beta)^{\frac{1}{2}})$ and $\tilde{\Omega}(n/(1-\beta))$ for strongly and generally convex cost functions, respectively.

	There are still several open questions to answer for the family of data-heterogeneity-corrected methods such as EXTRA, D$^2$/Exact-Diffusion, and gradient-tracking. First, it is unclear whether these methods can still have improved dependence on network topology over time-varying topologies. Second, while data-heterogeneity-corrected methods are endowed with superior convergence properties in terms of robustness to heterogeneous data or network topology dependence, D-SGD can still {\em empirically} outperforms them in deep learning applications, see \cite{lin2021quasi,yuan2021decentlam}. Great efforts may still be needed to fill in the gap between theory and real implementation.
	
% 	First, it is unclear whether an improved transient stage dependence on $1-\beta$ can be established for  non-convex settings. Note that the analysis used in this paper  utilizes the convexity of $f(x)$ and cannot be directly extended to the non-convex settings. Second, it is also unclear whether the transient stage of gradient-tracking methods can  be enhanced. Although gradient-tracking methods remove the data heterogeneity effect, their existing transient stage results are even worse than D-SGD, see \cite{pu2020distributed,xin2020improved}. Third, while data-heterogeneity-corrected methods are endowed with superior convergence properties in terms of robustness to heterogeneous data or network topology dependence, D-SGD can still {\em empirically} outperforms them in deep learning applications, see \cite{lin2021quasi,yuan2021decentlam}. Great efforts may still be needed to fill in the gap between theory and real implementations.
	
	\bibliographystyle{ieeetr}
	\bibliography{reference}
	
	% Acknowledgements should go at the end, before appendices and references
	
	% \acks{We would like to acknowledge support for this project
	% from the National Science Foundation (NSF grant IIS-9988642)
	% and the Multidisciplinary Research Program of the Department
	% of Defense (MURI N00014-00-1-0637). }
	
	% Manual newpage inserted to improve layout of sample file - not
	% needed in general before appendices/bibliography.
	
% 	\newpage 
	\appendix
	\section{Notations and Preliminaries}
	\label{app:notation}
	
	% Note: in this sample, the section number is hard-coded in. Following
	% proper LaTeX conventions, it should properly be coded as a reference:
	
	%In this appendix we prove the following theorem from
	%Section~\ref{sec:textree-generalization}:
	
	We first review some notations and facts. 
	\begin{itemize}
		\item $W=[w_{ij}]\in \mathbb{R}^{n\times n}$ is a symmetric and doubly stochastic combination matrix
		\item $\bar{W} = (I+W)/2 \in \mathbb{R}^{n\times n}$
		\item $V = (I-\bar{W})^{1/2} = (\frac{I-W}{2})^{1/2}$ and hence $I-\bar{W} = V^2$
		\item $\lambda_i(W)$ is the $i$th largest eigenvalue of matrix $W$, and $\bar{\lambda}_i(W) = (1+\lambda_i(W))/2$ is the $i$th largest eigenvalue of matrix $\bar{W}$. Note that $\lambda_i(W)\in(-1,1)$ and $\bar{\lambda}_i(W) \in (0, 1)$ for $i=2,\dots, n$.
		\item Let $\Lambda = \diag\{\lambda_1(W),\ldots, \lambda_n(W)\} \in \mathbb{R}^{n\times n}$. It holds that $W = Q\Lambda Q^T$ where $Q = [q_1, q_2,\ldots, q_n]\in \mathbb{R}^{n\times n}$ is an orthogonal matrix and $q_1 = \frac{1}{\sqrt{n}}\mathds{1}_n$. 
		\item $\bar{W} = Q\bar{\Lambda}Q^T$ where $\bar{\Lambda} = (I + \Lambda)/2$.
		\item $V = Q(I-\bar{\Lambda})^{1/2}Q^T$
		\item If a matrix $A\in \mathbb{R}^{n\times n}$ is normal, \textit{i.e.}, $A A^T = A^T A$, it holds that $A = U D U^*$ where $D$ is a diagonal matrix and $U$ is a unitary matrix.
		\item If a matrix $\Pi\in \mathbb{R}^{n\times n}$ is a permutation matrix, it holds that $\Pi^{-1} = \Pi^T$.
	\end{itemize}
	
	\noindent \textbf{Smoothness.} Since each $f_i(x)$ is assumed to be $L$-smooth in Assumption \ref{ass:smoothness}, it holds that $f(x) = \frac{1}{n}\sum_{i=1}^n f_i(x)$ is also $L$-smooth. As a result, the following inequality holds for any $x, y \in \mathbb{R}^d$:
	\begin{align}
	f_i(x) - f_i(y) - \frac{L}{2}\|x - y\|^2 &\le  \langle \nabla f_i(y), x- y \rangle \label{sdu-2}
	\end{align}
	
	\noindent \textbf{Smoothness and convexity.} If each $f_i(x)$ is further assumed to be convex (see Assumption \ref{ass:convex}), it holds that $f(x) = \frac{1}{n}\sum_{i=1}^n f_i(x)$ is also  convex. For this scenario, it holds for any $x, y \in \mathbb{R}^d$ that:
	\begin{align}
	\|\nabla f(x) - \nabla f(x^\star)\|^2 &\le 2L\big(f(x) - f(x^\star)\big) \label{sdu-1} \\
	% 	f_i(\x) - f_i(\y) - \frac{L}{2}\|\x - \y\|^2 &\le  \langle \nabla f_i(\y), \x- \y \rangle \label{sdu-2}\\
	f_i(x) -	f_i(y) &\le  \langle \nabla f_i(x), x - y \rangle  \label{sdu-3}
	\end{align}
	
	\noindent \textbf{Submultiplicativity of the Frobenius norm.} Given matrices $W\in \RR^{n\times n}$ and $\vy\in \RR^{n\times d}$, it holds that 
	\begin{align}\label{submulti}
	\|W\vy\|_F \le \|W\|_2 \|\vy\|_F.
	\end{align}
	To verify it, by letting $y_j$ be the $j$th column of $\vy$, we have $\|W\vy\|_F^2 = \sum_{j=1}^d \|Wy_j\|_2^2 \le \sum_{j=1}^d \|W\|_2^2 \|y_j\|_2^2=\|W\|_2^2\|\vy\|_F^2$. 
	
	\section{The Fundamental Decomposition}
	\subsection{Proof of Lemma \ref{lm-decom}}
	\label{app:decom}
	We now analyze the eigen-decomposition of matrix $B$:
	\begin{align}\label{xnbxcbsdbds7}
	B = 
	\ba{cc}
	\bar{W} & -V \\
	V\bar{W} & \bar{W}
	\ea.
	\end{align}

	\noindent \textbf{Proof.}
	Using $\bar{W} = Q\bar{\Lambda}Q^T$, it holds that
	\begin{align}\label{B}
	B = 
	\ba{cc}
	Q & 0 \\
	0 & Q
	\ea
	\ba{cc}
	\bar{\Lambda} & -(I - \bar{\Lambda})^{1/2} \\
	\bar{\Lambda} (I - \bar{\Lambda})^{1/2} & \bar{\Lambda}
	\ea
	\ba{cc}
	Q^T & 0 \\
	0 & Q^T
	\ea.
	\end{align}
	Note that $\bar{\Lambda} (I - \bar{\Lambda})^{1/2} = (I - \bar{\Lambda})^{1/2} \bar{\Lambda}$ because both $\bar{\Lambda}$ and $I - \bar{\Lambda}$ are diagonal matrices. We next introduce 
	\begin{align}
	E_{(i)} &= 
	\ba{cc}
	\bar{\lambda}_i & -(1-\bar{\lambda}_i)^{1/2} \\
	\bar{\lambda}_i (1-\bar{\lambda}_i)^{1/2} & \bar{\lambda}_i
	\ea \in \mathbb{R}^{2\times 2} \label{Ei} \vspace{1mm} \\
	E &= \mathrm{BlockDiag}\{E_{(1)}, \cdots, E_{(n)} \} \in \RR^{2n\times 2n}
	\end{align}
	where $\bar{\lambda}_i = \lambda_i(\bar{W})$, and $E$ is a block diagonal matrix with each $i$th bloack diagonal matrix as $E_{(i)}$. It is easy to verify that there exists some permutation matrix $\Pi$ such that 
	\begin{align}\label{237sdgsd9}
	B = 
	\ba{cc}
	Q & 0 \\
	0 & Q
	\ea
	\Pi\, E\, \Pi^T 
	\ba{cc}
	Q^T & 0 \\
	0 & Q^T
	\ea.
	\end{align}
	Next we focus on the matrix $E_{(i)}$ defined in \eqref{Ei}. Note that $E_{(1)} = I$. For $i \ge 2$, it holds that 
	\begin{align}\label{x7236}
	E_{(i)} &= 
	\underbrace{\ba{cc}
		1 & 0 \\
		0 & \bar{\lambda}_i^{1/2}
		\ea}_{C_{(i)}}
	\underbrace{\ba{cc}
		\bar{\lambda}_i & -[\bar{\lambda}_i(1-\bar{\lambda}_i)]^{1/2} \\
		\bar{[\lambda}_i (1-\bar{\lambda}_i)]^{1/2} & \bar{\lambda}_i
		\ea}_{G_{(i)}}
	\underbrace{\ba{cc}
		1 & 0 \\
		0 & \bar{\lambda}_i^{-1/2}
		\ea}_{C^{(-1)}_{(i)}}
	\end{align}
	Since $G_{(i)}$ is normal, it holds that (see Appendix \ref{app:notation})
	\begin{align}\label{xcnxcnxcn}
	G_{(i)} = U_{(i)} D_{(i)} U_{(i)}^*, \quad \mbox{where} \quad D_i = \diag\{\sigma_1(G_{(i)}), \sigma_2(G_{(i)})\},
	\end{align}
	In the above expression, $\sigma_1(G_{(i)})$ and $\sigma_2(G_{(i)})$ are complex eigenvalues of $G_{(i)}$. Moreover, it holds that $|\sigma_1(G_{(i)})| = |\sigma_2(G_{(i)})| = \bar{\lambda}_i^{1/2} < 1$. The quantity $U_{(i)} \in \mathbb{R}^{2\times 2}$ is a unitary matrix. Next we define $C = \mathrm{BlockDiag}\{C_{(1)},\ldots, C_{(n)}\}$, $U = \mathrm{BlockDiag}\{U_{(1)},\ldots, U_{(n)}\}$, and $D = \mathrm{BlockDiag}\{D_{(1)},\ldots, D_{(n)}\}$. By substituting \eqref{x7236} and \eqref{xcnxcnxcn} into \eqref{237sdgsd9}, we have
	\begin{align}\label{2ebg}
	B = 
	d \ba{cc}
	Q & 0 \\
	0 & Q
	\ea
	\Pi\, C\, U\, D\, U^*\, C^{-1}\, \Pi^T 
	\ba{cc}
	Q^T & 0 \\
	0 & Q^T
	\ea d^{-1}
	\end{align}
	where $d$ is any positive constant. Next we define 
	\begin{align}
	X = d \ba{cc}
	Q & 0 \\
	0 & Q
	\ea
	\Pi\, C\, U, \quad X^{-1} = U^*\, C^{-1}\, \Pi^T 
	\ba{cc}
	Q^T & 0 \\
	0 & Q^T
	\ea d^{-1}.
	\end{align}
	By letting $d=\sqrt{n}$ and considering the structure of $Q$, $\Pi$, $C$, and $U$, it is easy to verify that 
	\begin{align}
	X &= [r_1\ r_2\ X_R] \quad \mbox{where} \quad 
	r_1 = 
	\ba{c}
	\mathds{1}_n\\
	0
	\ea,
	\quad
	r_2 = 
	\ba{c}
	0 \\
	\mathds{1}_n
	\ea \label{xnbsdbsd7-1}\\
	X^{-1} &= 
	[\ell_1\ \ell_2\ X_L^T]^T
	\ \, 
	\mbox{where}
	\quad
	\ell_1 = 
	\ba{c}
	\frac{1}{n}\mathds{1}_n \\
	0
	\ea,\quad
	\ell_2 = 
	\ba{c}
	0 \\
	\frac{1}{n}\mathds{1}_n
	\ea \label{xnbsdbsd7-2}
	\end{align}
	With \eqref{2ebg}--\eqref{xnbsdbsd7-2}, it holds that 
	\begin{align}
	B = X D X^{-1}
	\end{align}
	where $X$ and $X^{-1}$ take the form of \eqref{xnbsdbsd7-1} and \eqref{xnbsdbsd7-2}, and 
	\begin{align}
	D = 
	\ba{ccc}
	1 & 0 & 0\\
	0 & 1 & 0\\
	0 & 0 & D_1
	\ea
	\end{align}
	and $D_1$ is a diagonal matrix with complex entries.   The magnitudes of the diagonal entries in $D_1$ are all strictly less than $1$. Next we evaluate the quantity $\|X\|\|X^{-1}\|$:
	\begin{align}\label{xnwsdn}
	\|X\|\|X^{-1}\| & \le 
	\left\| 
	\ba{cc}
	Q & 0 \\
	0 & Q
	\ea
	\right\| \|\Pi\| \|C\| \|U\| \|U^*\| \|C^{-1}\| \|\Pi^T\| \left\|
	\ba{cc}
	Q^T & 0 \\
	0 & Q^T
	\ea
	\right\| \nonumber \\
	& \overset{(a)}{=} \|C\| \|C^{-1}\| \nonumber \\
	& \le \max_i\{\bar{\lambda}^{-1/2}_i\} = \bar{\lambda}^{-1/2}_{n}
	\end{align}
	where (a) holds because $Q$ is orthogonal, $U$ is unitary, and $\Pi^T \Pi = I$. Note that 
	\begin{align}
	X_R = X S, \quad \mbox{and}\quad X_L = S^T X^{-1}
	\end{align}
	where $S = [e_3,\ldots,e_{2n}] \in \mathbb{R}^{2n\times 2(n-1)}$ and $e_j$ is the $j$th column of the identity matrix $I_{2n}$.
	It then holds that
	\begin{align}
	\|X_R\|\|X_L\| \le \|X\| \|S\| \|S^T\| \|X^{-1}\| = \|X\| \|X^{-1}\| \overset{\eqref{xnwsdn}}{\le} \bar{\lambda}^{-1/2}_{n}
	\end{align}
	\rightline{$\blacksquare$}
	
	\subsection{Proof of Lemma \ref{lm-transform-error-dyanmic}}
	\label{app-transform}
	{
		\begin{proof}
			By left-multiplying $X^{-1}$ to both sides of \eqref{xzn2websd7} and utilizing the decomposition in \eqref{eq-fundamental-decomposision}, we have 
			\begin{align}\label{xzn2websd7-2}
			\ba{c}
			\hspace{-2mm}\ell_1^T \hspace{-2mm}\\
			\hspace{-2mm}\ell_2^T \hspace{-2mm}\\
			\hspace{-2mm}X_L/c \hspace{-2mm}
			\ea \hspace{-2mm}
			\left[
			\begin{array}{c}
			\hspace{-2mm}\vx^{(k+1)} - \vx^\star  \hspace{-1.5mm}\\
			\hspace{-2mm}\vy^{(k+1)} - \vy^\star \hspace{-1.5mm}
			\end{array}
			\right] \hspace{-0.6mm} = \hspace{-0.6mm}
			\ba{ccc}
			1 & 0 & 0\\
			0 & 1 & 0\\
			0 & 0 & D_1
			\ea \hspace{-2mm}
			\ba{c} 
			\hspace{-2mm}\ell_1^T\hspace{-2mm} \\
			\hspace{-2mm}\ell_2^T\hspace{-2mm} \\
			\hspace{-2mm}X_L/c\hspace{-2mm}
			\ea
			\hspace{-2mm}
			\left[
			\begin{array}{c}
			\hspace{-2mm}\vx^{(k)} - \vx^\star \hspace{-1.5mm}\\
			\hspace{-2mm}\vy^{(k)} - \vy^\star \hspace{-1.5mm}
			\end{array}
			\right]
			- \gamma 
			\ba{c}
			\hspace{-2mm}\ell_1^T \hspace{-2mm}\\
			\hspace{-2mm}\ell_2^T \hspace{-2mm}\\
			\hspace{-2mm}X_L/c\hspace{-2mm}
			\ea
			\hspace{-2mm}
			\left[
			\begin{array}{c}
			\hspace{-2mm}\bar{W}(\nabla f(\vx^{(k)}) - \nabla f(\vx^\star) + \vs^{(k)}) \hspace{-1.5mm}\\
			\hspace{-2mm} V \bar{W}(\nabla f(\vx^{(k)}) - \nabla f(\vx^\star) + \vs^{(k)}) \hspace{-1.5mm}
			\end{array}
			\right].
			\end{align}
			With the definition of $\bar{\z}^{(k)}$  in \eqref{w-y-vs-bz-cz}, the structure of $\ell_1$ in \eqref{zb236asd00991}, and $\bar{W}\mathds{1} = \mathds{1}$, the first line in \eqref{xzn2websd7-2} becomes 
			\begin{align}\label{2bnabzbz4}
			\bar{\z}^{(k+1)} = \bar{\z}^{(k)} - \frac{\gamma}{n}\mathds{1}^T (\nabla f(\vx^{(k)}) - \nabla f(\vx^\star)) - \gamma \bar{\vs}^{(k)}.
			\end{align}
			where $\bar{\vs}^{(k)}$ is defined in \eqref{s-bar}. With the structure of $\ell_2$ in \eqref{zb236asd00991} and $V \mathds{1} = 0$, the second line in \eqref{xzn2websd7-2} becomes 
			\begin{align}\label{9zbav5a}
			\ell_2^T 
			\left[
			\begin{array}{c}
			\hspace{-2mm}\vx^{(k+1)} - \vx^\star  \hspace{-1.5mm}\\
			\hspace{-2mm}\vy^{(k+1)} - \vy^\star \hspace{-1.5mm}
			\end{array}
			\right] = \ell_2^T 
			\left[
			\begin{array}{c}
			\hspace{-2mm}\vx^{(k)} - \vx^\star  \hspace{-1.5mm}\\
			\hspace{-2mm}\vy^{(k)} - \vy^\star \hspace{-1.5mm}
			\end{array}
			\right]
			\quad \Longleftrightarrow \quad \frac{1}{n}\mathds{1}^T(\vy^{(k+1)} - \vy^\star) = \frac{1}{n}\mathds{1}^T(\vy^{(k)} - \vy^\star)
			\end{align}
			Since $\vy^\star$ lies in the range space of $V$ (see Lemma \ref{lm-opt-cond}) and $\vy^{(k)}$ also lies in the range space of $V$ when $\vy^{(0)} = 0$ (see the update of $\vy$ in \eqref{eq:d2-pd}), it holds that $\frac{1}{n}\mathds{1}^T(\vy^{(k)} - \vy^\star) = 0$. As a result, the recursion \eqref{9zbav5a} can be ignored since $\frac{1}{n}\mathds{1}^T(\vy^{(k)} - \vy^\star) = 0$ holds for all iterations. 
			
			Finally we examine the third line in \eqref{xzn2websd7-2}. To this end, we eigen-decompose $\bar{W}$ as 
			\begin{align} \label{zbnzb0zbzbz09}
			\bar{W} = 
			\underbrace{\ba{cc}
				\frac{1}{\sqrt{n}}\mathds{1} & Q_R
				\ea}_{:= Q}
			\ba{cc}
			1 & 0  \\
			0 & \bar{\Lambda}_R
			\ea
			\ba{cc}
			\frac{1}{\sqrt{n}}\mathds{1}^T \\
			Q_R^T
			\ea = \frac{1}{n}\mathds{1}\mathds{1}^T + Q_R \bar{\Lambda}_R Q_R^T
			\end{align}
			where $Q_R\in \RR^{n\times (n-1)}$ satisfies $Q_R^T Q_R=I$ and $\bar{\Lambda}_R = \mathrm{diag}\{{\lambda}_2(\bar{W}),\dots, {\lambda}_n(\bar{W})\}$. Since $V$ shares the same eigen-space  as $\bar{W}$, it holds that 
			\begin{align}\label{z236zgzba09}
			V \bar{W} = 
			\ba{cc}
			\frac{1}{\sqrt{n}}\mathds{1} & Q_R
			\ea
			\ba{cc}
			0& 0  \\
			0 & (I - \bar{\Lambda}_R)^{\frac{1}{2}} \bar{\Lambda}_R
			\ea
			\ba{cc}
			\frac{1}{\sqrt{n}}\mathds{1}^T \\
			Q_R^T
			\ea = Q_R (I - \bar{\Lambda}_R)^{\frac{1}{2}} \bar{\Lambda}_R Q_R^T
			\end{align}
			Next we rewrite $X_L = [X_{L,\ell}\ X_{L,r}]$ with $X_{L,\ell} \in \RR^{2(n-1) \times n}$ and $X_{L,r} \in \RR^{2(n-1) \times n}$. With the structure of $X$ and $X^{-1}$ in \eqref{eq-fundamental-decomposision} and the equality $X^{-1} X = I$, it holds that 
			\begin{align}\label{08z5}
			X_{L,\ell} \mathds{1} = 0 \quad \quad X_{L,r} \mathds{1} = 0.
			\end{align}
			With \eqref{zbnzb0zbzbz09}, \eqref{z236zgzba09}, and \eqref{08z5}, we have
			\begin{align}
			&\ X_L \left[
			\begin{array}{c}
			\hspace{-2mm}\bar{W}(\nabla f(\vx^{(k)}) - \nabla f(\vx^\star) + \vs^{(k)}) \hspace{-1.5mm}\\
			\hspace{-2mm} V \bar{W}(\nabla f(\vx^{(k)}) - \nabla f(\vx^\star) + \vs^{(k)}) \hspace{-1.5mm}
			\end{array}
			\right] \nonumber \\
			=&\ X_{L, \ell} \bar{W}(\nabla f(\vx^{(k)}) - \nabla f(\vx^\star) + \vs^{(k)})  + X_{L, r} V \bar{W}(\nabla f(\vx^{(k)}) - \nabla f(\vx^\star) + \vs^{(k)}) \nonumber \\
			\overset{(a)}{=}&\ X_{L, \ell} Q_R \bar{\Lambda}_R Q_R^T(\nabla f(\vx^{(k)}) - \nabla f(\vx^\star) + \vs^{(k)}) + X_{L, r} Q_R (I - \bar{\Lambda}_R)^{\frac{1}{2}} \bar{\Lambda}_R Q_R^T(\nabla f(\vx^{(k)}) - \nabla f(\vx^\star) + \vs^{(k)}) \nonumber \\
			=&\ \check{\g}^{(k)} + \check{\s}^{(k)} 
			\end{align}
			where (a) holds because of \eqref{zbnzb0zbzbz09} -- \eqref{08z5}, and $\check{\g}^{(k)}$ and $\check{\s}^{(k)}$ in the last equality are defined in \eqref{g-check} and \eqref{s-check}, respectively. With the above equality and the definition of $\check{\vz}^{(k)}$ in \eqref{w-y-vs-bz-cz}, the third line in \eqref{xzn2websd7-2} becomes 
			\begin{align}\label{ba897}
				\check{\vz}^{(k+1)} = D_1 \check{\vz}^{(k)} - \frac{\gamma}{c}\check{\vg}^{(k)} -  \frac{\gamma}{c}\check{\vs}^{(k)}.
 			\end{align}
 			Combining \eqref{2bnabzbz4} and \eqref{ba897}, we achieve the result in \eqref{transformed-error-dynamics}. 
	\end{proof}}

	\subsection{Proof of Proposition \ref{remark-z0}}
	\label{app-proof-remark}
% 	{\color{blue}
		\begin{proof}
			We first evaluate the magnitude of $\|\vy^\star\|^2_F$. Recall that $V = (I - \bar{W})^{\frac{1}{2}}$ and $\bar{W} = (I+W)/2$ is a symmetric and doubly-stochastic matrix. If we let $\bar{\lambda}_k = \lambda_k(\bar{W})$, it holds that $1 = \bar{\lambda}_1 > \bar{\lambda}_2 \ge \cdots \ge \bar{\lambda}_n > 0$. We eigen-decompose $V = U \Lambda U^T$ with $\Lambda = \mathrm{diag}\{\lambda_k(V)\}$ and $\lambda_k(V) = (1 - \bar{\lambda}_k)^{\frac{1}{2}}$. We next introduce $V^\dagger = U \Lambda^\dagger U^T$ with $\Lambda^\dagger = \mathrm{diag}\{\lambda_k(V^\dagger)\}$ in which $\lambda_1(V^\dagger) = 0$ and $\lambda_k(V^\dagger) = \lambda_k^{-1}(V) = (1-\bar{\lambda}_k)^{-\frac{1}{2}}$ for $2\le k \le n$.  Recall optimality condition \eqref{eq-opt-cond-1} that 
			\begin{align}
			\gamma \bar{W} \nabla f(\vx^\star) + V \vy^\star = 0 \quad \Longleftrightarrow \quad V \vy^\star = - \gamma \bar{W} \nabla f(\vx^\star). \label{znbzbzbzb09123}
			\end{align}
			Since $\vy^\star$ lies in the range space of $V$, it holds that $V^\dagger V \vy^\star = \vy^\star$. This fact together with \eqref{znbzbzbzb09123} leads to 
			\begin{align}\label{znn098711}
			\vy^\star = -\gamma V^\dagger \bar{W} \nabla f(\vx^\star) \quad \Longrightarrow \quad \|\vy^\star\|_F^2 \le \gamma^2 \|V^\dagger \bar{W}\|_2^2 \|\nabla f(\vx^\star)\|_F^2 \le  \frac{\gamma^2\bar{\lambda}_2^2}{1-\bar{\lambda}_2} \|\nabla f(\vx^\star)\|_F^2 = O\left(\frac{n \gamma^2 \bar{\lambda}^2_2}{1-\bar{\lambda}_2}\right)
			\end{align}
			where we regard $\|\nabla f(\vx^\star)\|_F^2 = \sum_{i=1}^n \|\nabla f_i(x^\star)\|^2 = O(n)$ in the last inequality. 
			
			Next we evaluate the magnitude of $\mathbb{E}\|\check{\vz}^{(0)}\|^2_F$. Recall from \eqref{w-y-vs-bz-cz} that 
			\begin{align}
			\check{\vz}^{(0)} = \frac{X_{L,\ell}}{c} (\vx^{(0)} - \vx^\star) + \frac{X_{L,r}}{c} (\vy^{(0)} - \vy^\star) = -  \frac{X_{L,\ell}}{c} \vx^\star - \frac{X_{L,r}}{c} \vy^\star
			\end{align}
			where we utilized $\vx^{(0)} = 0$ and $\vy^{(0)} = 0$ in the last equality, and $X_L = [X_{L,\ell}, X_{L,r}]$. With \eqref{08z5} and the fact that $\vx^\star = x^\star \mathds{1}$, it holds that $X_{L,\ell} \vx^\star = 0$ and 
			\begin{align}
			\|\check{\vz}^{(0)} \|^2_F \le \frac{1}{c^2}\|X_{L,r}\|_2^2 \|\vy^\star\|_F^2 \overset{(a)}{\le} \frac{1}{c^2} \|X_{L}\|_2^2 \|\vy^\star\|_F^2 \overset{(b)}{=} \|\vy^\star\|_F^2 \overset{\eqref{znn098711}}{=} O\left(\frac{n \gamma^2 \bar{\lambda}^2_2}{1-\bar{\lambda}_2}\right)
			\end{align}
			where (a) holds because $\|X_{L,r}\| \le \|X_{L}\|$ (see the detail derivation in \eqref{znb213987}) and (b) holds by setting $c=\|X_L\|$). 	
			
		\end{proof}
% 	}
	
	\section{Convergence Analysis for Generally-Convex Scenario}
	\subsection{Proof of Lemma \ref{lm-descent-gc}}
	\label{app-lm-descent-gc}
	\begin{proof}
		From \eqref{w-y-vs-bz-cz} we have $[\bar{\z}^{(k)}]^T = [\frac{1}{n}\mathds{1}^T(\vx^{(k)} - \vx^\star)]^T = \bar{x}^{(k)} - x^\star \in \RR^{d}$, where $\bar{x}^{(k)} = \frac{1}{n}\mathds{1}^T\vx^{(k)}$ and $x^\star$ is the global solution to problem \eqref{eq:general-prob}. With this relation, the first line of \eqref{transformed-error-dynamics} becomes 
		\begin{align}\label{first-line}
		\bar{x}^{(k+1)} - x^\star = \bar{x}^{(k)} - x^\star - \frac{\gamma}{n}\mathds{1}^T (\nabla f(\vx^{(k)}) - \nabla f(\vx^\star)) - \gamma \bar{\vs}^{(k)}.
		\end{align}
		The above equality implies that
		\begin{align}\label{zxc7234g}
		\mathbb{E}[\|\bar{x}^{(k+1)} - x^\star\|^2|\cF^{(k)}] \overset{\eqref{avg-noise}}{\le} \|\bar{x}^{(k)} - x^\star - \frac{\gamma}{n}\mathds{1}^T (\nabla f(\vx^{(k)}) - \nabla f(\vx^\star))\|^2 + \frac{\gamma^2 \sigma^2}{n}
		\end{align}
		Note that the first term can be expanded as follows. 
		\begin{align}\label{shdshsd}
		&\ \|\bar{x}^{{(k)}} - x^\star -  \frac{\gamma}{n}\sum_{i=1}^n [\nabla f_i(x_i^{(k)}) - \nabla f_i(x^\star)]\|^2 \nonumber \\
		=&\ \|\bar{x}^{{(k)}} - x^\star\|^2 - \underbrace{\frac{2\gamma}{n}\sum_{i=1}^n \langle \bar{x}^{{(k)}} - x^\star, \nabla f_i(x_i^{(k)}) - \nabla f_i(x^\star) \rangle}_{(A)} + \underbrace{\gamma^2\| \frac{1}{n}\sum_{i=1}^n [\nabla f_i(x_i^{(k)}) - \nabla f_i(x^\star)]\ \|^2}_{(B)}
		\end{align}
		We now bound the term (A):
		\begin{align}\label{23nbsdn-2}
		& \frac{2\gamma}{n}\sum_{i=1}^n \langle \bar{x}^{{(k)}} - x^\star, \nabla f_i(x_i^{(k)}) - \nabla f_i(x^\star) \rangle \nonumber \\
		=&\  \frac{2\gamma}{n}\sum_{i=1}^n \langle \bar{x}^{{(k)}} - x^\star, \nabla f_i(x_i^{(k)})  \rangle \nonumber \\
		=&\ \frac{2\gamma}{n}\sum_{i=1}^n \langle \bar{x}^{{(k)}} - x_i^{(k)}, \nabla f_i(x_i^{(k)}) \rangle  + \frac{2\gamma}{n}\sum_{i=1}^n \langle x_i^{(k)} - x^\star, \nabla f_i(x_i^{(k)})  \rangle \nonumber \\
		\overset{(a)}{\ge}&\ \frac{2\gamma}{n}\sum_{i=1}^n \Big( f_i(\bar{x}^{{(k)}}) - f_i(x_i^{(k)}) - \frac{L}{2}\|\bar{x}^{{(k)}} - x_i^{(k)}\|^2 \Big) + \frac{2\gamma}{n}\sum_{i=1}^n \Big(f_i(x_i^{(k)}) - f_i(x^\star) \Big)  \nonumber \\
		=&\ \frac{2\gamma}{n}\sum_{i=1}^n\Big( f_i(\bar{x}^{{(k)}}) - f_i(x^\star) \Big) - \frac{\gamma L}{n}\|\bar{\vx}^{{(k)}} - \vx^{(k)}\|_F^2 \nonumber \\
		=&\ 2\gamma \big( f(\bar{x}^{{(k)}}) - f(x^\star)\big) - \frac{\gamma L}{n}\|\bar{\vx}^{{(k)}} - \vx^{(k)}\|_F^2
		\end{align}
		where (a) holds because of each $f_i(x)$ is convex and $L$-smooth. We next bound term (B):
		\begin{align}\label{23nbsdn-1}
		& \gamma^2\| \frac{1}{n}\sum_{i=1}^n [\nabla f_i(x_i^{(k)}) - \nabla f_i(x^\star)]\ \|^2 \nonumber \\
		=&\ \gamma^2\| \frac{1}{n}\sum_{i=1}^n [\nabla f_i(x_i^{(k)}) - \nabla f_i(\bar{x}^{(k)}) + \nabla f_i(\bar{x}^{(k)})  -  \nabla f_i(x^\star)]\ \|^2 \nonumber \\
		\overset{\eqref{smooth-1}}{\le}&\ \frac{2\gamma^2 L^2}{n}\|\vx^{(k)} - \bar{\vx}^{(k)}\|_F^2 + 2\gamma^2 \|\nabla f(\bar{x}^{(k)}) - \nabla f(x^\star)\|^2 \nonumber \\
		\le&\ \frac{2\gamma^2 L^2}{n}\|\vx^{(k)} - \bar{\vx}^{(k)}\|_F^2 + 4L \gamma^2\big( f(\bar{x}^{(k)}) - f({x}^{\star}) \big).
		\end{align}
		where the last inequality holds because $f(x)$ is $L$-smooth.
		Substituting \eqref{23nbsdn-1} and \eqref{23nbsdn-2} into \eqref{shdshsd}, we have
		\begin{align}\label{shdshsd-2}
		&\ \|\bar{x}^{{(k)}} - x^\star -  \frac{\gamma}{n}\sum_{i=1}^n \nabla f_i(x_i^{(k)})\|^2 \nonumber \\
		\le&\ \|\bar{x}^{(k)} - x^\star\|^2 - 2\gamma(1 - 2L\gamma) \big( f(\bar{x}^{{(k)}}) - f(x^\star)\big) + \Big( \frac{\gamma L}{n} + \frac{2\gamma^2 L^2}{n}\Big)\|\bar{\vx}^{{(k)}} - \vx^{(k)}\|_F^2  \nonumber \\
		\le&\ \|\bar{x}^{(k)} - x^\star\|^2 - \gamma \big( f(\bar{x}^{{(k)}}) - f(x^\star)\big) + \frac{3\gamma L}{2n}\|\bar{\vx}^{{(k)}} - \vx^{(k)}\|_F^2 
		\end{align}
		where the last inequality holds when $\gamma \le \frac{1}{4L}$. Substituting \eqref{shdshsd-2} into \eqref{zxc7234g} and taking expectation on the filtration $\cF^{(k)}$, we achieve
		\begin{align}\label{23bsd999}
		& \mathbb{E}\|\bar{x}^{{(k+1)}} - x^\star\|^2 \le \mathbb{E}\|\bar{x}^{(k)} - x^\star\|^2 \hspace{-0.5mm}-\hspace{-0.5mm} \gamma \big(\mathbb{E} f(\bar{x}^{(k)}) \hspace{-0.5mm}-\hspace{-0.5mm} f(x^\star)\big) \hspace{-0.5mm}+\hspace{-0.5mm} \frac{3L\gamma}{2n}\mathbb{E}\|\vx^{(k)} \hspace{-0.5mm}-\hspace{-0.5mm} \bar{\vx}^{(k)}\|_F^2 \hspace{-0.5mm}+\hspace{-0.5mm} \frac{\gamma^2 \sigma^2}{n}
		\end{align}
%		From relation \eqref{w-y-vs-bz-cz}, it holds that 
%		\begin{align}
%		\vx^{(k)} - \vx^\star = \mathds{1}_n \bar{\z}^{(k)} + c X_{R,u} \check{\vz}^{(k)} = \bar{\vx}^{(k)} - \vx^\star + c X_{R,u} \check{\vz}^{(k)},
%		\end{align}
%		where $X_{R,u} \in \RR^{n\times 2(n-1)}$ is the upper part of matrix $X_R = [X_{R,u};X_{R,d}]$. The above relation implies that 
%		\begin{align}\label{x-z-relation}
%		\vx^{(k)} - \bar{\vx}^{(k)} = c X_{R,u} \check{\vz}^{(k)}. 
%		\end{align}
		Substituting $\vx^{(k)} - \bar{\vx}^{(k)} = c X_{R,u} \check{\vz}^{(k)}$ (see \eqref{zzznnn}) and $[\bar{\z}^{(k)}]^T  = \bar{x}^{(k)} - x^\star$ into \eqref{23bsd999}, we achieve 
		\begin{align}\label{23bsd999-2}
		\mathbb{E}\|\bar{\z}^{(k+1)}\|^2  \le &  \mathbb{E}\|\bar{\z}^{(k)}\|^2 \hspace{-0.5mm}-\hspace{-0.5mm} \gamma \big(\mathbb{E} f(\bar{x}^{(k)}) \hspace{-0.5mm}-\hspace{-0.5mm} f(x^\star)\big) \hspace{-0.5mm}+\hspace{-0.5mm} \frac{3L\gamma c^2}{2n} \|X_{R,u}\|^2 \mathbb{E}\|\check{\vz}^{(k)}\|_F^2 \hspace{-0.5mm}+\hspace{-0.5mm} \frac{\gamma^2 \sigma^2}{n} \nonumber \\
		\le & \mathbb{E}\|\bar{\z}^{(k)}\|^2 \hspace{-0.5mm}-\hspace{-0.5mm} \gamma \big(\mathbb{E} f(\bar{x}^{(k)}) \hspace{-0.5mm}-\hspace{-0.5mm} f(x^\star)\big) \hspace{-0.5mm}+\hspace{-0.5mm} \frac{3L\gamma c^2}{2n} \|X_{R}\|^2 \mathbb{E}\|\check{\vz}^{(k)}\|_F^2 \hspace{-0.5mm}+\hspace{-0.5mm} \frac{\gamma^2 \sigma^2}{n}
		\end{align}
		where the last inequality holds because 
		\begin{align}\label{87z}
		\|X_{R,u}\| = \| 
		\ba{cc}
		I_n & 0
		\ea
		X_R \| \le \|\ba{cc}
		I_n & 0
		\ea\| \cdot  \|X_R\| = \| X_R \|.
		\end{align}
		By setting $c^2 = \|X_L\|^2$ and recalling that $\|X_L\|\|X_R\| \le \bar{\lambda}_n^{-1/2}$ from Lemma \ref{lm-decom}, we achieve \eqref{23bsd999-3}.
		
	\end{proof}
	
	\subsection{Proof of Lemma \ref{lm-consensus}}
	\label{app-gc-consensus}
	{
		\begin{proof}
			From the second line in \eqref{transformed-error-dynamics}, it holds that 
			\begin{align}
			\check{\vz}^{(k+1)} = D_1 \check{\vz}^{(k)} - \frac{\gamma}{c} \check{\vg}^{(k)} - \frac{\gamma}{c}\check{\vs}^{(k)}
			\end{align}
			We next introduce $\beta_1 = \|D_1\|$. With \eqref{D1-norm2}, we know that $\beta_1 = \bar{\lambda}^{1/2}_2$. By taking mean-square for both sides of the above recursion, we achieve
			\begin{align}\label{znx6515}
			&\ \mathbb{E}[\|\check{\vz}^{(k+1)}\|_F^2|\cF^{(k)}] \nonumber \\
			=&\ \|D_1 \check{\vz}^{(k)} - \frac{\gamma}{c}\check{\vg}^{(k)}\|_F^2 + \frac{\gamma^2}{c^2} \mathbb{E}\|\check{\vs}^{(k)}\|^2_F \nonumber \\
			\overset{(a)}{\le}&\ \frac{1}{t}\|D_1\|^2 \|\check{\vz}^{(k)}\|_F^2 + \frac{\gamma^2}{(1-t)c^2}\|\check{\vg}^{(k)}\|_F^2 + \frac{\gamma^2}{c^2}\|\check{\vs}^{(k)}\|^2_F \nonumber \\
			\overset{(b)}{\le}&\ \beta_1 \|\check{\vz}^{(k)}\|_F^2 + \frac{\gamma^2 }{(1-\beta_1)c^2}\|\check{\vg}^{(k)}\|_F^2 + \frac{\gamma^2}{c^2}\|\check{\vs}^{(k)}\|^2_F
			\end{align}
			%\begin{align}\label{znx6515}
			%&\ \mathbb{E}[\|\check{\vz}^{(k+1)}\|_F^2|\cF^{(k)}] \nonumber \\
			%=&\ \|D_1 \check{\vz}^{(k)} - \frac{\gamma}{c}\check{\vg}^{(k)}\|_F^2 + \frac{\gamma^2}{c^2} \mathbb{E}\|\check{\vs}^{(k)}\|^2_F \nonumber \\
			%\overset{(a)}{\le}&\ \frac{1}{t}\|D_1\|^2 \|\check{\vz}^{(k)}\|_F^2 + \frac{\gamma^2}{(1-t)c^2}\|D_1\|^2 \|\check{\vg}^{(k)}\|_F^2 + \frac{\gamma^2}{c^2}\|\check{\vs}^{(k)}\|^2 \nonumber \\
			%\overset{(b)}{\le}&\ \beta_1 \|\check{\vz}^{(k)}\|_F^2 + \frac{\gamma^2 \beta_1^2}{(1-\beta_1)c^2}\|X_L\|^2 \|\nabla f(\vx^{(k)}) - \nabla f(\vx^\star)\|_F^2 + \frac{n \gamma^2  \beta_1^2}{c^2}\|X_L\|^2 \sigma^2
			%\end{align}
			where inequality (a) holds because of the Jensen's inequality for any $t\in (0,1)$, and inequality (b) holds by letting $t = \beta_1 = \|D_1\|$. Next we bound $\|\check{\vg}^{(k)}\|_F^2$ and $\|\check{\vs}^{(k)}\|^2_F$. Recall the definition of $\check{\vg}^{(k)}$ in \eqref{g-check}, we have
			\begin{align}\label{6askd}
			&\ \|\check{\vg}^{(k)}\|_F^2 \nonumber \\
			=&\ \|\big(X_{L, \ell} Q_R  + X_{L,r} Q_R (I - \bar{\Lambda}_R)^{\frac{1}{2}} \big) \bar{\Lambda}_R Q_R^T (\nabla f(\vx^{(k)}) - \nabla f(\vx^\star))\|_F^2 \nonumber \\
			\le&\ 2 \|X_{L, \ell} Q_R \bar{\Lambda}_R Q_R^T (\nabla f(\vx^{(k)}) - \nabla f(\vx^\star))\|^2 + 2\|X_{L,r} Q_R (I - \bar{\Lambda}_R)^{\frac{1}{2}}  \bar{\Lambda}_R Q_R^T (\nabla f(\vx^{(k)}) - \nabla f(\vx^\star))\|^2_F \nonumber \\
			\le&\ 2 \|X_{L, \ell}\|^2 \|Q_R \bar{\Lambda}_R Q_R^T\|^2 \|\nabla f(\vx^{(k)}) - \nabla f(\vx^\star)\|_F^2 \nonumber \\
			& \quad + 2 \|X_{L,r}\|^2  \|Q_R (I - \bar{\Lambda}_R)^{\frac{1}{2}}  \bar{\Lambda}_R Q_R^T\|^2 \|\nabla f(\vx^{(k)}) - \nabla f(\vx^\star)\|^2_F
			\end{align}
			First, it is easy to verify that $\|X_{L, \ell}\| \le \|X_L\|$ and $\|X_{L, r}\|\le \|X_L\|$. For example, it holds that 
			\begin{align}\label{znb213987}
			\|X_{L,\ell}\| = \| X_L 
			\ba{c}
			I_n \\
			0
			\ea\| \le \|X_L\| \left\|\ba{c}
			I_n \\
			0
			\ea \right\| = \| X_L \|.
			\end{align}
			Second, quantity $\|Q_R \bar{\Lambda}_R Q_R^T\|^2$ can be bounded as 
			\begin{align}\label{znb213987-1}
			\|Q_R \bar{\Lambda}_R Q_R^T\|^2 & = \lambda_{\max}(Q_R \bar{\Lambda}^2_R Q_R^T) \nonumber \\
			&= \lambda_{\max}(0 \cdot \frac{1}{n}\mathds{1}\mathds{1}^T + Q_R \bar{\Lambda}^2_R Q_R^T) \nonumber \\
			&= \lambda_{\max}(Q \Lambda^\prime Q^T)  = \bar{\lambda}_2^2
			\end{align}
			where $Q:= [\frac{1}{\sqrt{n}}\mathds{1} \ Q_R]\in \RR^{n\times n}$ is defined in \eqref{zbnzb0zbzbz09} and is an orthonormal matrix, and $\Lambda^\prime = \mathrm{diag}\{0, \bar{\Lambda}_R^2\}$. Apparently, the largest eigenvalue of $Q \Lambda^\prime Q^T$ is $\bar{\lambda}^2_2(W)$, which is denoted as $\bar{\lambda}_2^2$.  Similarly, we can derive 
			\begin{align}\label{znb213987-2}
			\|Q_R (I - \bar{\Lambda}_R)^{\frac{1}{2}}  \bar{\Lambda}_R Q_R^T\|^2 = \lambda_{\max}\{(1-\bar{\lambda}_i)\bar{\lambda}_i^2\} \le (1-\bar{\lambda}_n)\bar{\lambda}_2^2 \le \bar{\lambda}_2^2
			\end{align}
			because $\bar{\lambda}_i := {\lambda}_i(\bar{W})$ and $\bar{\lambda}_2 \ge \cdots \ge \bar{\lambda}_n > 0$. Finally, quantity $\|\nabla f(\vx^{(k)}) - \nabla f(\vx^\star) \|_F^2$ can be bounded as 
			\begin{align}
			\|\nabla f(\vx^{(k)}) - \nabla f(\vx^\star) \|_F^2 &= \|\nabla f(\vx^{(k)}) - \nabla f(\bar{\vx}^{(k)}) + \nabla f(\bar{\vx}^{(k)}) - \nabla f(\vx^\star) \|_F^2 \nonumber \\
			&\le 2\|\nabla f(\vx^{(k)}) - \nabla f(\bar{\vx}^{(k)})\|_F^2 + 2 \|\nabla f(\bar{\vx}^{(k)}) - \nabla f(\vx^\star)\|_F^2  \nonumber \\
			&\overset{(a)}{\le} 2L^2\|\vx^{(k)} - \bar{\vx}^{(k)}\|_F^2 + 4nL(f(\bar{x}^{(k)}) - f(x^\star)) \nonumber \\
			&\overset{\eqref{zzznnn}}{\le} 2 c^2 L^2 \|X_R\|^2 \|\check{\vz}^{(k)}\|_F^2  + 4nL(f(\bar{x}^{(k)}) - f(x^\star))  \label{237sdbns7-3}
			\end{align}
			where (a) holds because $f(x)$ is convex and $L$-smooth. Substituting \eqref{znb213987}--\eqref{237sdbns7-3} into \eqref{6askd}, we achieve
			\begin{align}\label{6askd-2}
			\|\check{\vg}^{(k)}\|_F^2 \le  8 c^2 L^2 \bar{\lambda}_2^2 \|X_{L}\|^2 \|X_R\|^2 \|\check{\vz}^{(k)}\|_F^2  + 16nL\bar{\lambda}_2^2\|X_L\|^2 (f(\bar{x}^{(k)}) - f(x^\star)).
			\end{align}
			Next we bound $\|\check{\vs}^{(k)}\|^2_F$. Recalling the definition of $\check{\vs}^{(k)}$ in \eqref{s-check}, we have
			\begin{align}\label{zbq08az}
			\|\check{\vs}^{(k)}\|_F^2 &\le 2\|X_{L, \ell} \|^2 \|Q_R \bar{\Lambda}_R Q_R^T\|^2 \| \vs^{(k)}\|_F^2  + 2 \|X_{L,r}\|^2 \|Q_R (I - \bar{\Lambda}_R)^{\frac{1}{2}} \bar{\Lambda}_R Q_R^T\|^2\|\vs^{(k)}\|^2_F \nonumber \\
			&\overset{(a)}{\le} 4 \|X_L\|^2 \bar{\lambda}_2^2 \|\vs^{(k)}\|^2_F \nonumber \\
			&\le 4n\|X_L\|^2 \bar{\lambda}_2^2\sigma^2
			\end{align}
			where inequality (a) holds because of \eqref{znb213987}--\eqref{znb213987-2}. Substituting \eqref{6askd-2} and \eqref{zbq08az} into \eqref{znx6515}, and taking expectation over the filtration $\cF^{(k)}$, we achieve 
			\begin{align}\label{znx6515-2}
			&\ \mathbb{E}\|\check{\vz}^{(k+1)}\|_F^2\nonumber \\
			\le &\ \big(\beta_1 + \frac{8  L^2 \bar{\lambda}_2^2 \gamma^2 \|X_L\|^2\|X_R\|^2}{(1-\beta_1)} \big)\mathbb{E}\|\check{\vz}^{(k)}\|_F^2 + 
			\frac{16nL \gamma^2 \bar{\lambda}_2^2\|X_L\|^2}{c^2(1-\beta_1)}(\mathbb{E}f(\bar{x}^{(k)}) - f(x^\star)) + \frac{4n\|X_L\|^2 \bar{\lambda}_2^2\gamma^2\sigma^2}{c^2} \nonumber \\
			\overset{(a)}{=} &\ \big(\beta_1 + \frac{8  L^2 \bar{\lambda}_2^2 \gamma^2 \|X_L\|^2\|X_R\|^2}{(1-\beta_1)} \big)\mathbb{E}\|\check{\vz}^{(k)}\|_F^2 + 
			\frac{16nL \gamma^2 \bar{\lambda}_2^2}{1-\beta_1}(\mathbb{E}f(\bar{x}^{(k)}) - f(x^\star)) + 4n \bar{\lambda}_2^2\gamma^2\sigma^2 \nonumber \\
			\overset{(b)}{\le} &\ \big(\frac{1+\beta_1}{2}\big)\mathbb{E}\|\check{\vz}^{(k)}\|_F^2 + \frac{16nL \gamma^2 \bar{\lambda}_2^2}{1-\beta_1}(\mathbb{E}f(\bar{x}^{(k)}) - f(x^\star)) + 4n \bar{\lambda}_2^2\gamma^2\sigma^2
			\end{align}
			where (a) holds by setting $c^2 = \|X_L\|^2$, and (b) holds by setting $\gamma$ sufficiently small such that 
			\begin{align}\label{xc762535rzz}
			\beta_1 + \frac{8\bar{\lambda}_2^2\gamma^2L^2}{1-\beta_1}\|X_L\|^2\|X_R\|^2 \le \frac{1+\beta_1}{2}.
			\end{align}
			To satisfy the above inequality, it is enough to set (recall \eqref{bound-XLXR})
			\begin{align}
			\gamma \le  \frac{(1-\beta_1)\bar{\lambda}_n^{1/2}}{4\bar{\lambda}_2 L}. 
			\end{align}

	\end{proof}}
	
	\subsection{Proof of Lemma \ref{lm-ergodic-consensus}}
	\label{app-lm-ergodic-consensus}
	
	{
		\begin{proof}
			Keep iterating \eqref{xxcznzxnxcn} we achieve for $k=1,2,\ldots$ that
			\begin{align}\label{xz675123aa}
			\mathbb{E}\|\check{\vz}^{(k)}\|_F^2 \le& \Big(\frac{1+\beta_1}{2}\Big)^k \mathbb{E}\|\check{\vz}^{(0)}\|_F^2 + \frac{8n\gamma^2\bar{\lambda}_2^2\sigma^2}{1-\beta_1}  + \frac{16 n \gamma^2 \bar{\lambda}_2^2 L}{1-\beta_1}\sum_{\ell=0}^{k-1} \Big(\frac{1+\beta_1}{2}\Big)^{k-1-\ell} \big(\mathbb{E} f(\bar{x}^{(\ell)}) - f(x^\star) \big)
			\end{align}
			We let $C = \mathbb{E}\|\check{\vz}^{(0)}\|_F^2$ be a positive constant. By taking average over $k=1,2,\ldots, T$, we achieve
			\begin{align}
			\frac{1}{T}\sum_{k=1}^T \mathbb{E}\|\check{\vz}^{(k)}\|_F^2 
			\le & \frac{C}{T}\sum_{k=1}^T \Big(\frac{1+\beta_1}{2}\Big)^k + \frac{8n\gamma^2\bar{\lambda}_2^2\sigma^2}{1-\beta_1} \nonumber \\
			& \quad + \frac{16n \gamma^2\bar{\lambda}_2^2L}{(1-\beta_1)T}\sum_{k=1}^T\sum_{\ell=0}^{k-1} \Big(\frac{1+\beta_1}{2}\Big)^{k-1-\ell} \big(\mathbb{E} f(\bar{x}^{(\ell)}) - f(x^\star) \big) \nonumber \\
			\le& \frac{C}{T}\sum_{k=1}^T \Big(\frac{1+\beta_1}{2}\Big)^k + \frac{8 n\gamma^2\bar{\lambda}_2^2\sigma^2}{1-\beta_1} \nonumber \\
			& \quad + \frac{16 n \gamma^2\bar{\lambda}_2^2 L}{(1-\beta_1)T}\sum_{\ell=0}^{T-1}\Big[\sum_{k=\ell+1}^{T} \Big(\frac{1+\beta_1}{2}\Big)^{k-1-\ell}\Big]\big(\mathbb{E} f(\bar{x}^{(\ell)}) - f(x^\star) \big) \nonumber \\
			\le& \frac{2C}{T(1-\beta_1)} + \frac{32 n \gamma^2 \bar{\lambda}_2^2 L }{(1-\beta_1)^2T}\sum_{k=0}^{T-1}\big(\mathbb{E} f(\bar{x}^{(k)}) - f(x^\star) \big) +  \frac{8 n\gamma^2\bar{\lambda}_2^2\sigma^2}{1-\beta_1} \nonumber \\
			\le& \frac{2C}{T(1-\beta_1)} + \frac{32 n \gamma^2\bar{\lambda}_2^2L}{(1-\beta_1)^2T}\sum_{k=0}^{T}\big(\mathbb{E} f(\bar{x}^{(k)}) - f(x^\star) \big) +  \frac{8 n\gamma^2\bar{\lambda}_2^2\sigma^2}{1-\beta_1}
			\end{align}
			Since $\frac{1}{T+1}\sum_{k=0}^T \mathbb{E}\|\check{\vz}^{(k)}\|_F^2 = \big(\sum_{k=1}^T \mathbb{E}\|\check{\vz}^{(k)}\|_F^2  + \mathbb{E}\|\check{\vz}^{(0)}\|_F^2)/(T+1)$ and $1 < \frac{1}{1-\beta_1}$, we achieve the result \eqref{eq:ergodic-consensus}.
			
			% {\color{red}
			% \begin{align}
			%  \frac{1}{T+1}\sum_{k=0}^T \mathbb{E}\|\check{\vz}^{(k)}\|_F^2 
			% \le & \frac{3C}{(T+1)(1-\beta_1)} + \frac{16 n \gamma^2\beta_1^2L}{(1-\beta_1)^2(T+1)\bar{\lambda}_n}\sum_{k=0}^{T}\big(\mathbb{E} f(\bar{x}^{(k)}) - f(x^\star) \big) +  \frac{2n\gamma^2\beta_1^2\sigma^2}{(1-\beta_1)\bar{\lambda}_n}
			% \end{align}
			% }
		\end{proof}
	}
	
	\subsection{Proof of Theorem \ref{thm-generally-convex}}
	\label{app-thm-gc}
	
	% \textcolor{red}{
	% \begin{align}
	% 	\mathbb{E}\|\bar{\z}^{(k+1)}\|^2  \le  \mathbb{E}\|\bar{\z}^{(k)}\|^2 \hspace{-0.5mm}-\hspace{-0.5mm} \gamma \big(\mathbb{E} f(\bar{x}^{(k)}) \hspace{-0.5mm}-\hspace{-0.5mm} f(x^\star)\big) \hspace{-0.5mm}+\hspace{-0.5mm} \frac{3L\gamma c^2}{2n} \|X_{R}\|^2 \mathbb{E}\|\check{\vz}^{(k)}\|_F^2 \hspace{-0.5mm}+\hspace{-0.5mm} \frac{\gamma^2 \sigma^2}{n}
	% \end{align}
	% }
	% \textcolor{red}{
	% \begin{align}\label{znx6515-2}
	%  \mathbb{E}\|\check{\vz}^{(k+1)}\|_F^2
	% \le  \big(\frac{1+\beta_1}{2} \big)\mathbb{E}\|\check{\vz}^{(k)}\|_F^2 + 
	% \frac{16nL \gamma^2 \bar{\lambda}_2^2\|X_L\|^2}{c^2(1-\beta_1)}(\mathbb{E}f(\bar{x}^{(k)}) - f(x^\star)) + \frac{4n\|X_L\|^2 \bar{\lambda}_2^2\gamma^2\sigma^2}{c^2} 
	% \end{align}
	% \begin{align}
	% \frac{1}{T+1}\sum_{k=0}^T \mathbb{E}\|\check{\vz}^{(k)}\|_F^2 \le  \frac{32 n \gamma^2 \bar{\lambda}_2^2 L\|X_L\|^2}{c^2(1-\beta_1)^2(T+1)}\sum_{k=0}^{T}\big(\mathbb{E} f(\bar{x}^{(k)}) - f(x^\star) \big)  +  \frac{8 n\gamma^2\bar{\lambda}_2^2\sigma^2\|X_L\|^2}{c^2(1-\beta_1)} + \frac{3C}{(1-\beta_1)(T+1)} 
	% \end{align}
	% }
	
	{
	\begin{proof}
		From \eqref{23bsd999-3}, we have
		\begin{align}
		\mathbb{E} f(\bar{x}^{(k)}) \hspace{-0.5mm}-\hspace{-0.5mm} f(x^\star) \le \frac{\mathbb{E}\|\bar{\z}^{(k)}\|^2 - \mathbb{E}\|\bar{\z}^{(k+1)}\|^2}{\gamma} + \frac{3L}{2n \bar{\lambda}_n} \mathbb{E}\|\check{\vz}^{(k)}\|_F^2 + \frac{\gamma \sigma^2}{n}
		\end{align}
		% {\color{red}
		% \begin{align}
		% \mathbb{E} f(\bar{x}^{(k)}) \hspace{-0.5mm}-\hspace{-0.5mm} f(x^\star) \le \frac{\mathbb{E}\|\bar{\z}^{(k)}\|^2 - \mathbb{E}\|\bar{\z}^{(k+1)}\|^2}{\gamma} + \frac{3c^2L\|X_R\|^2}{2n } \mathbb{E}\|\check{\vz}^{(k)}\|_F^2 + \frac{\gamma \sigma^2}{n}
		% \end{align}
		% }
		By taking average over $k=0,1,\ldots,T$, we have
		\begin{align}\label{23756xcb0}
		&\ \frac{1}{T+1}\sum_{k=0}^T\big(\mathbb{E} f(\bar{x}^{(k)}) \hspace{-0.5mm}-\hspace{-0.5mm} f(x^\star)\big) \nonumber \\
		\le&\ \frac{\mathbb{E}\|\bar{\z}^{(0)}\|^2}{\gamma (T+1)} + \frac{3L}{2n \bar{\lambda}_n(T+1)}\sum_{k=0}^T \mathbb{E}\|\check{\vz}^{(k)}\|_F^2 + \frac{\gamma \sigma^2}{n} \nonumber \\
		\overset{\eqref{eq:ergodic-consensus}}{\le}&\  \frac{48 L^2 \gamma^2 \bar{\lambda}_2^2}{ (T+1)(1-\beta_1)^2 \bar{\lambda}_n}\sum_{k=0}^T\big(\mathbb{E} f(\bar{x}^{(k)}) \hspace{-0.5mm}-\hspace{-0.5mm} f(x^\star)\big) + \frac{12L\gamma^2\bar{\lambda}_2^2\sigma^2}{ \bar{\lambda}_n(1-\beta_1)}  + \frac{\gamma \sigma^2}{n} \nonumber \\
		&\ + \frac{\mathbb{E}\|\bar{\z}^{(0)}\|^2}{\gamma(T+1)} + \frac{9L\mathbb{E}\|\check{\vz}^{(0)}\|^2}{2n(T+1)(1-\beta_1)\bar{\lambda}_n}.
		\end{align}
		If $\gamma$ is sufficiently small such that 
		\begin{align}\label{23zvwd85123}
		\frac{48 L^2 \gamma^2 \bar{\lambda}_2^2}{(1-\beta_1)^2 \bar{\lambda}_n} \le \frac{1}{2}, 
		\end{align}
		% {\color{red}
		% \begin{align}
		% &\ \frac{1}{T+1}\sum_{k=0}^T\big(\mathbb{E} f(\bar{x}^{(k)}) \hspace{-0.5mm}-\hspace{-0.5mm} f(x^\star)\big) \nonumber \\
		% \overset{\eqref{eq:ergodic-consensus}}{\le}&\  \frac{48 L^2 \gamma^2 \bar{\lambda}_2^2}{ (T+1)(1-\beta_1)^2 \bar{\lambda}_n}\sum_{k=0}^T\big(\mathbb{E} f(\bar{x}^{(k)}) \hspace{-0.5mm}-\hspace{-0.5mm} f(x^\star)\big) + \frac{12L\gamma^2\bar{\lambda}_2^2\sigma^2}{ \bar{\lambda}_n(1-\beta_1)}  + \frac{\gamma \sigma^2}{n} \nonumber \\
		% &\ + \frac{\mathbb{E}\|\bar{\z}^{(0)}\|^2}{\gamma(T+1)} + \frac{9LCc^2\|X_R\|^2}{2n(T+1)(1-\beta_1)}
		% \end{align}
		% }
		inequality \eqref{23756xcb0} becomes 
		\begin{align}\label{lx7}
		\frac{1}{T+1}\sum_{k=0}^T\big(\mathbb{E} f(\bar{x}^{(k)}) \hspace{-0.5mm}-\hspace{-0.5mm} f(x^\star)\big) &\le \frac{24L\gamma^2\bar{\lambda}_2^2\sigma^2}{(1-\beta_1) \bar{\lambda}_n}  + \frac{2\gamma \sigma^2}{n} + \frac{2\mathbb{E}\|\bar{\z}^{(0)}\|^2}{\gamma(T+1)} + \frac{9L\mathbb{E}\|\check{\vz}^{(0)}\|^2}{n(T+1)(1-\beta_1)\bar{\lambda}_n} \nonumber \\
		&\overset{(a)}{\le} \frac{24L\gamma^2\bar{\lambda}_2^2\sigma^2}{(1-\beta_1) \bar{\lambda}_n}  + \frac{2\gamma \sigma^2}{n} + \frac{2\mathbb{E}\|\bar{\z}^{(0)}\|^2}{\gamma(T+1)} + \frac{18 L\gamma^2 \bar{\lambda}_2^2\|\nabla f(\vx^\star)\|_F^2}{n(T+1)(1-\beta_1) (1-\beta) \bar{\lambda}_n}
		\end{align}
		where (a) holds because $\mathbb{E}\|\check{\vz}^{(0)}\|^2 \leq \frac{\gamma^2 \bar{\lambda}_2^2\|\nabla f(\vx^\star)\|_F^2}{1-\bar{\lambda}_2}$ (see Proposition \ref{remark-z0}) and $ \bar{\lambda}_2 \le (1+\beta)/2$. 
		%\newpage 
		% \textcolor{red}{
		% \begin{align}
		% \frac{1}{T+1}\sum_{k=0}^T\big(\mathbb{E} f(\bar{x}^{(k)}) \hspace{-0.5mm}-\hspace{-0.5mm} f(x^\star)\big) \le \frac{24L\gamma^2\bar{\lambda}_2^2\sigma^2}{(1-\beta_1) \bar{\lambda}_n}  + \frac{2\gamma \sigma^2}{n} + \frac{2\mathbb{E}\|\bar{\z}^{(0)}\|^2}{\gamma(T+1)} + \frac{9LCc^2\|X_R\|^2}{n(T+1)(1-\beta_1)}
		% \end{align}
		% with $C=\EE\|\check{\z}^0\|^2=O\left(\frac{\|X_L\|^2}{c^2}n\right)$.
		% }
		To satisfy \eqref{23zvwd85123}, it is enough to let 
		\begin{align}
		\gamma \le \frac{(1-\beta_1) \bar{\lambda}^{1/2}_n}{10 L \bar{\lambda}_2}.
		\end{align}
		The way to choose step-size $\gamma$ is adapted from \cite[Lemma 15]{koloskova2020unified}. For simplicity, we let
		\begin{align}\label{sndsnnds}
		B^{(k)} = \mathbb{E} f(\bar{x}^{(k)}) \hspace{-0.5mm}-\hspace{-0.5mm} f(x^\star), \ r_0 = 2 \mathbb{E}\|\bar{\z}^{(0)}\|^2, \ r_1 = \frac{2\sigma^2}{n}, \ r_2 = \frac{24L\bar{\lambda}_2^2\sigma^2}{(1-\beta_1) \bar{\lambda}_n}, \ r_3 = \frac{18 L\bar{\lambda}_2^2\|\nabla f(\vx^\star)\|_F^2}{n(1-\beta_1)(1-\beta)\bar{\lambda}_n}
		\end{align}
		and inequality \eqref{lx7} becomes
		\begin{align}\label{sndsnnds-2}
		\frac{1}{T+1}\sum_{k=0}^T B^{(k)} \le \frac{r_0}{(T+1)\gamma} + r_1 \gamma
		+ r_2 \gamma^2 + \frac{r_3 \gamma^2 }{T+1}.
		\end{align}
		Now we let 
		\begin{align}\label{zn238sd6zzz1}
		\gamma = \min\big\{\frac{1}{4L}, \frac{(1-\beta_1)\bar{\lambda}_n^{1/2}}{10 L \bar{\lambda}_2}, \left(\frac{r_0}{r_1(T+1)}\right)^{\frac{1}{2}}, \left(\frac{r_0}{r_2(T+1)}\right)^{\frac{1}{3}}, \left(\frac{r_0}{r_3}\right)^{\frac{1}{3}}\big\}.
		\end{align}
		\begin{itemize}
			
			\item If $\frac{1}{4L}$ is the smallest, we let $\gamma = \frac{1}{4L}$. With $\gamma \le \left(\frac{r_0}{r_1(T+1)}\right)^{\frac{1}{2}}$, $\gamma \le \big(\frac{r_0}{r_2(T+1)}\big)^{\frac{1}{3}}$, and $\gamma \le \big(\frac{r_0}{r_3}\big)^{\frac{1}{3}}$, \eqref{sndsnnds-2} becomes 
			\begin{align}\label{xncxn-cn-sdf-test-0}
			\frac{1}{T+1}\sum_{k=0}^T B^{(k)} \le \frac{4 L  r_0}{T+1} + \Big(\frac{r_0 r_1}{T+1}\Big)^{\frac{1}{2}} + r_2^{\frac{1}{3}}\Big(\frac{r_0}{T+1}\Big)^{\frac{2}{3}} + \frac{r_3^{\frac{1}{3}}r_0^{\frac{2}{3}}}{T+1}.
			\end{align}
			
			\item If $\frac{(1-\beta_1)\bar{\lambda}_n^{1/2}}{10 L \bar{\lambda}_2}$ is the smallest, we let $\gamma = \frac{(1-\beta_1)\bar{\lambda}_n^{1/2}}{10 L \bar{\lambda}_2}$. With $\gamma \le \left(\frac{r_0}{r_1(T+1)}\right)^{\frac{1}{2}}$, $\gamma \le \big(\frac{r_0}{r_2(T+1)}\big)^{\frac{1}{3}}$, and $\gamma \le \big(\frac{r_0}{r_3}\big)^{\frac{1}{3}}$, \eqref{sndsnnds-2} becomes 
			\begin{align}\label{xncxn-cn-sdf-test}
			\frac{1}{T+1}\sum_{k=0}^T B^{(k)} \le \frac{10 L \bar{\lambda}_2 r_0}{(1-\beta_1)(T+1)\bar{\lambda}^{1/2}_n} + \Big(\frac{r_0 r_1}{T+1}\Big)^{\frac{1}{2}} + r_2^{\frac{1}{3}}\Big(\frac{r_0}{T+1}\Big)^{\frac{2}{3}} + \frac{r_3^{\frac{1}{3}}r_0^{\frac{2}{3}}}{T+1}.
			\end{align}
			
			\item If $\left(\frac{r_0}{r_1(T+1)}\right)^{\frac{1}{2}}$ is the smallest, we let $\gamma = \big(\frac{r_0}{r_1(T+1)}\big)^{\frac{1}{2}}$.  With $\gamma \le \big(\frac{r_0}{r_2(T+1)}\big)^{\frac{1}{3}}$ and $\gamma \le \big(\frac{r_0}{r_3}\big)^{\frac{1}{3}}$, \eqref{sndsnnds-2} becomes 
			\begin{align}\label{xncxn-cn-4-test}
			\frac{1}{T+1}\sum_{k=0}^T B^{(k)} \le 2 \Big(\frac{r_0 r_1}{T+1}\Big)^{\frac{1}{2}} + r_2^{\frac{1}{3}}\Big(\frac{r_0}{T+1}\Big)^{\frac{2}{3}} + \frac{r_3^{\frac{1}{3}}r_0^{\frac{2}{3}}}{T+1}.
			\end{align}	
			
			\item If $\left(\frac{r_0}{r_2(T+1)}\right)^{\frac{1}{3}}$ is the smallest, we let $\gamma = \left(\frac{r_0}{r_2(T+1)}\right)^{\frac{1}{3}}$. With $\gamma \le \left(\frac{r_0}{r_1(T+1)}\right)^{\frac{1}{2}}$ and $\gamma \le \big(\frac{r_0}{r_3}\big)^{\frac{1}{3}}$, \eqref{sndsnnds-2} becomes 
			\begin{align}\label{xncxn-cn-3-test}
			\frac{1}{T+1}\sum_{k=0}^T B^{(k)} \le 2r_2^{\frac{1}{3}} \Big(\frac{r_0}{T+1}\Big)^{\frac{2}{3}} + \Big(\frac{r_0 r_1}{T+1}\Big)^{\frac{1}{2}}  +  \frac{r_3^{\frac{1}{3}}r_0^{\frac{2}{3}}}{T+1}.
			\end{align}

			\item If $\big(\frac{r_0}{r_3}\big)^{\frac{1}{3}}$ is the smallest, we let $\gamma = \big(\frac{r_0}{r_3}\big)^{\frac{1}{3}}$. With $\gamma \le \left(\frac{r_0}{r_1(T+1)}\right)^{\frac{1}{2}}$ and $\gamma \le \big(\frac{r_0}{r_2(T+1)}\big)^{\frac{1}{3}}$, \eqref{sndsnnds-2} becomes 
			\begin{align}\label{xncxn-cn-sdf-test-2}
			\frac{1}{T+1}\sum_{k=0}^T B^{(k)} \le \frac{2r_3^{\frac{1}{3}}r_0^{\frac{2}{3}}}{T+1} + \Big(\frac{r_0 r_1}{T+1}\Big)^{\frac{1}{2}} + r_2^{\frac{1}{3}}\Big(\frac{r_0}{T+1}\Big)^{\frac{2}{3}}.
			\end{align}
		\end{itemize}
		Combining \eqref{xncxn-cn-3-test}, \eqref{xncxn-cn-4-test} and \eqref{xncxn-cn-sdf-test}, we have 
		\begin{align}\label{eqn:gjdosfjoasd}
		\frac{1}{T+1}\sum_{k=0}^T B^{(k)} \le \frac{4 L  r_0}{T+1} + \frac{10 L \bar{\lambda}_2 r_0}{(1-\beta_1)(T+1)\bar{\lambda}^{1/2}_n} + 2\Big(\frac{r_0 r_1}{T+1}\Big)^{\frac{1}{2}} + 2 r_2^{\frac{1}{3}}\Big(\frac{r_0}{T+1}\Big)^{\frac{2}{3}} + \frac{2r_3^{\frac{1}{3}}r_0^{\frac{2}{3}}}{T+1}.
		\end{align}
		Substituting constants $r_0$, $r_1$, $r_2$ and $r_3$ into the above inequality and regarding $\mathbb{E}\|\bar{\z}^{(0)}\|^2 = O(1)$ and $\|\nabla f(\vx)\|_F^2=O(n)$, we achieve
		\begin{align}\label{xbsd87-0}
		\frac{1}{T+1}\sum_{k=0}^T B^{(k)} &= O\Big( \frac{\sigma}{\sqrt{nT}} + 
		\frac{\sigma^{\frac{2}{3}}\bar{\lambda}^{\frac{2}{3}}_2}{(1-\beta_1)^{\frac{1}{3}}T^{\frac{2}{3}} \bar{\lambda}_n^{\frac{1}{3}}}  + \frac{\bar{\lambda}_2}{(1-\beta_1)T \bar{\lambda}^{\frac{1}{2}}_n} + \frac{\bar{\lambda}^{\frac{2}{3}}_2}{(1-\beta_1)^{\frac{1}{3}} (1-\beta)^{\frac{1}{3}} T \bar{\lambda}^{\frac{1}{3}}_n} + \frac{1}{T}\Big)
		\end{align}
		With $\beta_1^2 = \bar{\lambda}_2 \le (1+\beta)/2$, we have
		\begin{align}\label{znnbbb}
		1 - \beta_1 = \frac{1 - \beta_1^2}{1 + \beta_1} \ge \frac{1-\beta}{2(1 + \beta_1)} \ge \frac{1-\beta}{4}.
		\end{align}
		Substituting \eqref{znnbbb} to \eqref{zn238sd6zzz1} and \eqref{xbsd87-0} and regarding $\bar{\lambda}_2$ and $\bar{\lambda}_n$ as constants (note that $\bar{\lambda}_n$ is bounded away from $0$. For example, if $\bar{W} = (3I + W)/4$, we have $\bar{\lambda}_2 \ge \bar{\lambda}_n \ge 1/2$), we have the result in Theorem \ref{thm-generally-convex}. 
	\end{proof}}
	
	\section{Convergence Analysis for Strongly-Convex Scenario}
	
	\subsection{Proof of Lemma \ref{lm-sc-descent}}
	\label{app-sc-descent}
	\begin{proof}
		Since each $f_i(x)$ is strongly convex, it holds that 
		\begin{align}
		f_i(x) - f_i(y) + \frac{\mu}{2}\|x - y\|^2 \le \langle \nabla f_i(x), x - y \rangle, \quad \forall~x,y\in \mathbb{R}^d
		\end{align}
		Let $x = x_i^{(k)}$ and $y = x^\star$, we have
		\begin{align}
		f_i(x_i^{(k)}) - f_i(x^\star) + \frac{\mu}{2}\|x_i^{(k)} - x^\star\|^2 \le \langle \nabla f_i(x_i^{(k)}), x_i^{(k)} - x^\star \rangle.
		\end{align}
		Following arguments from \eqref{shdshsd} to \eqref{23nbsdn-1}, and replacing the bound in \eqref{23nbsdn-2} with \begin{align}\label{23nbsdn-3}
		& \frac{2\gamma}{n}\sum_{i=1}^n \langle \bar{x}^{{(k)}} - x^\star, \nabla f_i(x_i^{(k)}) - \nabla f_i(x^\star) \rangle \nonumber \\
		=&\ \frac{2\gamma}{n}\sum_{i=1}^n \langle \bar{x}^{{(k)}} - x_i^{(k)}, \nabla f_i(x_i^{(k)}) \rangle  + \frac{2\gamma}{n}\sum_{i=1}^n \langle x_i^{(k)} - x^\star, \nabla f_i(x^{(k)})  \rangle \nonumber \\
		\overset{(a)}{\ge}&\ \frac{2\gamma}{n}\sum_{i=1}^n \Big( f_i(\bar{x}^{{(k)}}) - f_i(x_i^{(k)}) - \frac{L}{2}\|\bar{x}^{{(k)}} - x_i^{(k)}\|^2 \Big) + \frac{2\gamma}{n}\sum_{i=1}^n \Big(f_i(x_i^{(k)}) - f_i(x^\star) + \frac{\mu}{2}\|x_i^{(k)} - x^\star\|^2 \Big)   \nonumber \\
		=&\ \frac{2\gamma}{n}\sum_{i=1}^n\Big( f_i(\bar{x}^{{(k)}}) - f_i(x^\star) \Big) - \frac{\gamma L}{n}\|\bar{\vx}^{{(k)}} - \vx^{(k)}\|_F^2 + \frac{\gamma \mu}{n}\|{\vx}^{{(k)}} - \vx^{\star}\|_F^2\nonumber \\
		\ge&\ 2\gamma \big( f(\bar{x}^{{(k)}}) - f(x^\star)\big) - \frac{\gamma (L + \mu)}{n}\|\bar{\vx}^{{(k)}} - \vx^{(k)}\|_F^2 + \frac{\gamma \mu}{2 }\|\bar{x}^{{(k)}} - x^{\star}\|_F^2,
		\end{align}
		we achieve a slightly different bound from \eqref{shdshsd-2}:
		\begin{align}\label{shdshsd-sc}
		&\ \|\bar{x}^{{(k)}} - x^\star -  \frac{\gamma}{n}\sum_{i=1}^n \nabla f_i(x_i^{(k)})\|^2 \nonumber \\
		\le&\ (1-\frac{\gamma \mu}{2}) \|\bar{x}^{(k)} - x^\star\|^2 - 2\gamma(1 - 2L\gamma) \big( f(\bar{x}^{{(k)}}) - f(x^\star)\big) + \Big( \frac{\gamma (L + \mu)}{n} + \frac{2\gamma^2 L^2}{n}\Big)\|\bar{\vx}^{{(k)}} - \vx^{(k)}\|_F^2  \nonumber \\
		\le&\ (1-\frac{\gamma \mu}{2}) \|\bar{x}^{(k)} - x^\star\|^2 - \gamma \big( f(\bar{x}^{{(k)}}) - f(x^\star)\big) + \frac{5\gamma L}{2n}\|\bar{\vx}^{{(k)}} - \vx^{(k)}\|_F^2 
		\end{align}
		where the last inequality holds when $\gamma \le \frac{1}{4L}$. With \eqref{shdshsd-sc}, we can follow arguments \eqref{23bsd999}-\eqref{87z} to achieve the result in \eqref{23bsd999-sc}. 
	\end{proof}
	
	\subsection{Proof of Lemma \ref{lm-sc-ergodic-consenus}}
	\label{app-sc-ergodic-consensus-lm}
	
	\begin{proof}
		Recall from \eqref{xz675123aa} that 
		\begin{align}\label{xz675123aa-1}
		\mathbb{E}\|\check{\vz}^{(k)}\|_F^2 \le& \Big(\frac{1+\beta_1}{2}\Big)^k \mathbb{E}\|\check{\vz}^{(0)}\|_F^2 + \frac{8n\gamma^2\bar{\lambda}_2^2\sigma^2}{1-\beta_1} \nonumber \\
		& \quad + \frac{16 n \gamma^2 \bar{\lambda}_2^2 L}{1-\beta_1}\sum_{\ell=0}^{k-1} \Big(\frac{1+\beta_1}{2}\Big)^{k-1-\ell} \big(\mathbb{E} f(\bar{x}^{(\ell)}) - f(x^\star) \big)
		\end{align}
		By taking the weighted sum over $k=1,2,\ldots,T$, we achieve
		\begin{align}\label{1bsd0925}
		\sum_{k=1}^T h_k \mathbb{E}\|\check{\vz}^{(k)}\|_F^2  \le&\   \mathbb{E}\|\check{\vz}^{(0)}\|_F^2 \sum_{k=1}^T h_k \Big(\frac{1+\beta_1}{2}\Big)^k + \frac{8 n \gamma^2 \bar{\lambda}_2^2  \sigma^2}{1-\beta_1}\sum_{k=1}^T h_k \nonumber \\
		& \quad + \frac{16 n \gamma^2 \bar{\lambda}_2^2 L}{1-\beta_1} \sum_{k=1}^T h_k \sum_{\ell=0}^{k-1} \Big(\frac{1+\beta_1}{2}\Big)^{k-1-\ell} \big(\mathbb{E} f(\bar{x}^{(\ell)}) - f(x^\star) \big)
		\end{align}
		Since $h_k$ satisfies condition \eqref{h-condition}, 
		it holds (we define $B^{(\ell)} = \mathbb{E} f(\bar{x}^{(\ell)}) - f(x^\star)$) that 
		\begin{align}
		\sum_{k=1}^T h_k \sum_{\ell=0}^{k-1} \Big(\frac{1+\beta_1}{2}\Big)^{k-1-\ell} B^{(\ell)} &\le \big( 1 + \frac{1-\beta_1}{4} \big) \sum_{k=1}^T \sum_{\ell=0}^{k-1} h_\ell \Big[\big( 1 + \frac{1-\beta_1}{4} \big) \big(\frac{1+\beta_1}{2}\big)\Big]^{k-1-\ell} B^{(\ell)} \nonumber \\
		&\le 2 \sum_{k=1}^T \sum_{\ell=0}^{k-1} h_\ell \big(\frac{3+\beta_1}{4}\big)^{k-1-\ell} B^{(\ell)} \nonumber \\
		&\le 2 \sum_{\ell=0}^{T-1} h_{\ell} \sum_{k=\ell+1}^T \big(\frac{3+\beta_1}{4}\big)^{k-1-\ell} B^{(\ell)} \nonumber \\
		&\le \frac{8}{1-\beta_1} \sum_{\ell=0}^{T-1} h_{\ell} B^{(\ell)} \le \frac{8}{1-\beta_1} \sum_{\ell=0}^{T} h_{\ell} B^{(\ell)}
		\end{align}
		Substituting the above inequality into \eqref{1bsd0925}, we achieve 
		\begin{align}\label{1bsd0925-2}
		\sum_{k=1}^T h_k \mathbb{E}\|\check{\vz}^{(k)}\|_F^2  \le&\   C \sum_{k=1}^T h_k \Big(\frac{1+\beta_1}{2}\Big)^k + \frac{8 n\gamma^2 \bar{\lambda}_2^2\sigma^2}{1-\beta_1}\sum_{k=0}^T h_k  + \frac{128 n\gamma^2 \bar{\lambda}_2^2 L}{(1-\beta_1)^2} \sum_{\ell=0}^{T} h_{\ell} B^{(\ell)}
		\end{align}
		where $C = \mathbb{E}\|\check{\vz}^{(0)}\|_F^2$. Adding $h_0 C$ to both sides of \eqref{1bsd0925-2}, we achieve
		\begin{align}\label{1bsd0925-3}
		\sum_{k=0}^T h_k \mathbb{E}\|\check{\vz}^{(k)}\|_F^2  \le&\   C \sum_{k=0}^T h_k \Big(\frac{1+\beta_1}{2}\Big)^k + \frac{8 n\gamma^2 \bar{\lambda}_2^2  \sigma^2}{1-\beta_1}\sum_{k=0}^T h_k  + \frac{128 n\gamma^2 \bar{\lambda}_2^2  L}{(1-\beta_1)^2} \sum_{\ell=0}^{T} h_{\ell} B^{(\ell)}.
		\end{align}
		Furthermore, with condition \eqref{h-condition}, we have $h_k \le h_0 (1 + \frac{1-\beta_1}{4})^k$ for any $k=0,1,\ldots$. This implies
		\begin{align}\label{23bb}
		\sum_{k=0}^T h_k \Big(\frac{1+\beta_1}{2}\Big)^k \le h_0 \sum_{k=0}^T (1 + \frac{1-\beta_1}{4})^k  (\frac{1+\beta_1}{2})^k \le h_0 \sum_{k=0}^T (\frac{3+\beta_1}{4})^k \le \frac{4 h_0}{1-\beta_1}
		\end{align}
		Substituting \eqref{23bb} into \eqref{1bsd0925-3} and dividing both sides by $H_T = \sum_{k=0}^T h_k$, we achieve the final result in \eqref{1bsd0925-sc-lm}. 
	\end{proof}

	\subsection{Proof of Theorem \ref{thm2}}
	\label{app-thm-sc-convergence}
	The following proof is inspired by \cite{stich2019unified}. 
	
	\begin{proof}
		With descent inequality \eqref{23bsd999-sc}, we have 
		\begin{align}\label{23bsd999-sc-thm}
		\mathbb{E} f(\bar{x}^{(k)}) \hspace{-0.5mm}-\hspace{-0.5mm} f(x^\star) \le    (1 - \frac{\gamma \mu}{2})\frac{\mathbb{E} \|\bar{\z}^{(k)}\|^2}{\gamma} - \frac{\mathbb{E}\|\bar{\z}^{(k+1)}\|^2}{\gamma}  \hspace{-0.5mm}+\hspace{-0.5mm} \frac{5L }{2n \bar{\lambda}_n} \mathbb{E}\|\check{\vz}^{(k)}\|_F^2 \hspace{-0.5mm}+\hspace{-0.5mm} \frac{\gamma \sigma^2}{n}
		\end{align}
		Taking the weighted average over $k$, it holds that (we let $B^{(k)} = \mathbb{E} f(\bar{x}^{(k)}) - f(x^\star)$)
		\begin{align}
		\frac{1}{H_T}\sum_{k=0}^T h_k B^{(k)} \le  \frac{1}{H_T}\sum_{k=0}^T h_k \Big( \frac{(1 - \frac{\gamma \mu}{2})\mathbb{E} \|\bar{\z}^{(k)}\|^2}{\gamma} - \frac{\mathbb{E}\|\bar{\z}^{(k+1)}\|^2}{\gamma}  \Big) + \frac{5L}{2 n H_T \bar{\lambda}_n}\sum_{k=0}^T  h_k \mathbb{E}\|\check{\vz}^{(k)}\|_F^2  + \frac{\gamma \sigma^2}{n}.
		\end{align}
		If we let $h_{k} = (1 - \frac{\gamma \mu}{2}) h_{k+1}$ for $k=0,1,\ldots$, the above inequality becomes
		\begin{align}\label{znb2375zzz}
		\frac{1}{H_T}\sum_{k=0}^T h_k B^{(k)} \le \frac{h_0 \mathbb{E}\|\bar{\z}^{(0)}\|^2 }{H_T \gamma} + \frac{5L}{2 n H_T \bar{\lambda}_n}\sum_{k=0}^T  h_k \mathbb{E}\|\check{\vz}^{(k)}\|_F^2  + \frac{\gamma \sigma^2}{n}.
		\end{align}
		Since $h_{k} = (1 - \frac{\gamma \mu}{2}) h_{k+1}$, we have 
		\begin{align}
		h_k = h_{\ell} \big(\frac{1}{1 - \frac{\gamma \mu}{2}}\big)^{k - \ell}, \mbox{ for any } k\ge 0 \mbox{ and } 0\le \ell \le k.
		\end{align}
		If $\gamma$ is sufficiently small such that 
		\begin{align}\label{h-cond-2}
		\frac{1}{1 - \frac{\gamma \mu}{2}} \le 1 + \frac{1-\beta_1}{4}, \quad \mbox{(it is enough to set $\gamma \le \frac{1-\beta_1}{2\mu}$)}
		\end{align}
		then $\{h_k\}_{k=0}^\infty$ satisfy condition \eqref{h-condition}. As a result, we can substitute inequality \eqref{1bsd0925-sc-lm} into \eqref{znb2375zzz} to achieve 
		\begin{align}\label{znb2375zzz-2}
		\frac{1}{H_T}\sum_{k=0}^T h_k B^{(k)} \le \frac{h_0 \mathbb{E}\|\bar{\z}^{(0)}\|^2 }{H_T \gamma} +\frac{\gamma \sigma^2}{n} + \frac{10 L C h_0}{n H_T(1-\beta_1) \bar{\lambda}_n} + \frac{20 L \gamma^2 \bar{\lambda}_2^2 \sigma^2}{\bar{\lambda}_n(1-\beta_1)} + \frac{320 \gamma^2 \bar{\lambda}_2^2 L^2}{(1-\beta_1)^2 \bar{\lambda}_n} \frac{1}{H_T}\sum_{k=0}^T h_k B^{(k)}.
		\end{align}
		If $\gamma$ is sufficiently small such that 
		\begin{align}\label{xbnzbzbzbzz}
		\frac{320 \gamma^2 \bar{\lambda}_2^2 L^2}{(1-\beta_1)^2 \bar{\lambda}_n} \le \frac{1}{2}, \quad \mbox{(it is enough to set $\gamma \le \frac{1-\beta_1}{26L} \big(\frac{\bar{\lambda}_n^{1/2}}{\bar{\lambda}_2}\big)$)}
		\end{align}
		it holds that 
		\begin{align}\label{znb2375zzz-3}
		\frac{1}{H_T}\sum_{k=0}^T h_k B^{(k)} \le \frac{2h_0 \mathbb{E}\|\bar{\z}^{(0)}\|^2 }{H_T \gamma} + \frac{20 L C h_0}{n H_T(1-\beta_1) \bar{\lambda}_n} +\frac{2 \gamma \sigma^2}{n}  + \frac{40 L \gamma^2 \bar{\lambda}_2^2 \sigma^2}{\bar{\lambda}_n(1-\beta_1)}.
		\end{align}
		Since $H_T \ge h_T = h_0 (1 - \frac{\gamma \mu}{2})^{-T}$, we have
		\begin{align}\label{znb2375zzz-4}
		\frac{1}{H_T}\sum_{k=0}^T h_k B^{(k)} &\le \frac{2 \mathbb{E}\|\bar{\z}^{(0)}\|^2 }{\gamma}(1 - \frac{\gamma \mu}{2})^{T} + \frac{20 L C }{n (1-\beta_1) \bar{\lambda}_n} (1 - \frac{\gamma \mu}{2})^{T} +\frac{2 \gamma \sigma^2}{n}  + \frac{40 L \gamma^2 \bar{\lambda}_2^2 \sigma^2}{\bar{\lambda}_n(1-\beta_1)} \nonumber \\
		&\le \big(  \frac{2 \mathbb{E}\|\bar{\z}^{(0)}\|^2 }{\gamma} + \frac{20 L C }{n (1-\beta_1) \bar{\lambda}_n} \big) \exp(-\frac{\gamma \mu T}{2}) + \frac{2 \gamma \sigma^2}{n}  + \frac{40 L \gamma^2 \bar{\lambda}_2^2 \sigma^2}{\bar{\lambda}_n(1-\beta_1)} \nonumber \\
		&\le \big(  \frac{2 \mathbb{E}\|\bar{\z}^{(0)}\|^2 }{\gamma} + \frac{40 L  \gamma^2 \bar{\lambda}_2^2\|\nabla f(\vx^\star)\|_F^2}{ n(1-\beta_1) (1-\beta) \bar{\lambda}_n} \big) \exp(-\frac{\gamma \mu T}{2}) + \frac{2 \gamma \sigma^2}{n}  + \frac{40 L \gamma^2 \bar{\lambda}_2^2 \sigma^2}{\bar{\lambda}_n(1-\beta_1)} 
		\end{align}
		where the last inequality holds because $C = \mathbb{E}\|\check{\vz}^{(0)}\|^2_F \leq \frac{\gamma^2\bar{\lambda}_2^2\|\nabla f(\vx^\star)\|_F^2}{1-\bar{\lambda}_2}=O(\frac{n\gamma^2\bar{\lambda}_2^2}{1-\bar{\lambda}_2})$ (see Proposition \ref{remark-z0}) and $\bar{\lambda}_2\le (1+\beta)/2$, and $\gamma$ needs to satisfy condition \eqref{xbnzbzbzbzz}. Now we let 
		\begin{align}\label{zn238sd6zzz1-0}
		\gamma = \min\big\{\frac{1}{4L}, \frac{1-\beta_1}{26L} \Big(\frac{\bar{\lambda}_n^{1/2}}{\bar{\lambda}_2}\big), \frac{2\ln(2 n \mu \mathbb{E}\|\bar{\z}^{(0)}\|^2 T^2/[\sigma^2 (1-\beta)])}{\mu T}\big\}.
		\end{align}
		\begin{itemize}
		\item If $\frac{2\ln(2 n \mu \mathbb{E}\|\bar{\z}^{(0)}\|^2 T^2/[\sigma^2 (1-\beta)])}{\mu T}$ is smallest, we set 
		\begin{align}
		\gamma = \frac{2\ln(2 n \mu  \mathbb{E}\|\bar{\z}^{(0)}\|^2 T^2/[\sigma^2 (1-\beta)])}{\mu T} \quad \mbox{ so that } \quad \exp(-\frac{\gamma \mu T}{2}) = \frac{\sigma^2(1-\beta)}{2 n \mu \mathbb{E}\|\bar{\z}^{(0)}\|^2 T^2}
		\end{align}
		Substituting the above $\gamma$ into \eqref{znb2375zzz-4} and regarding $\mathbb{E}\|\bar{\z}^{(0)}\|^2 = O(1)$, we achieve  
		\begin{align}\label{znb2375zzz-5}
		\frac{1}{H_T}\sum_{k=0}^T h_k B^{(k)} = \tilde{O}\left( \frac{\sigma^2}{n T} + \frac{\sigma^2 \bar{\lambda}_2^2}{(1-\beta_1) T^2 \bar{\lambda}_n} \right).
		\end{align}
		
		\item If $\frac{1-\beta_1}{26L} \Big(\frac{\bar{\lambda}_n^{1/2}}{\bar{\lambda}_2}\big)$ is smallest, we set $\gamma = \frac{1-\beta_1}{26L} \Big(\frac{\bar{\lambda}_n^{1/2}}{\bar{\lambda}_2}\big)$. Since $\gamma \le \frac{1}{4L}$ and $\gamma \le \frac{2\ln(2 n \mu \mathbb{E}\|\bar{\z}^{(0)}\|^2 T^2/[\sigma^2 (1-\beta)])}{\mu T}$, \eqref{znb2375zzz-4} becomes 
		\begin{align}\label{znb2375zzz-6}
		\frac{1}{H_T}\sum_{k=0}^T h_k B^{(k)} = \tilde{O}\left(
		\frac{\bar{\lambda}_2}{(1-\beta_1)\bar{\lambda}^{\frac{1}{2}}_n}  \exp\{-(1-\beta_1) \big(\frac{\bar{\lambda}^{\frac{1}{2}}_n}{\bar{\lambda}_2}\big) T \}  + \frac{\sigma^2}{n T} + \frac{\sigma^2}{(1-\beta_1) T^2} \big(\frac{\bar{\lambda}^2_2}{\bar{\lambda}_n}\big) \right).
		\end{align}
		
		\item If $\frac{1}{4L}$ is smallest, we set $\gamma = \frac{1}{4L}$. Since $\gamma \le \frac{1-\beta_1}{26L} \Big(\frac{\bar{\lambda}_n^{1/2}}{\bar{\lambda}_2}\big)$ and $\gamma \le \frac{1}{4L}$ and $\gamma \le \frac{2\ln(2 n \mu \mathbb{E}\|\bar{\z}^{(0)}\|^2 T^2/[\sigma^2 (1-\beta)])}{\mu T}$, \eqref{znb2375zzz-4} becomes 
		\begin{align}\label{znb2375zzz-7}
		\frac{1}{H_T}\sum_{k=0}^T h_k B^{(k)} = \tilde{O}\left(\exp\{-T \}  + \frac{\sigma^2}{n T} + \frac{\sigma^2}{(1-\beta_1) T^2} \big(\frac{\bar{\lambda}^2_2}{\bar{\lambda}_n}\big) \right).
		\end{align}
		\end{itemize}
		Combining \eqref{znb2375zzz-5} -- \eqref{znb2375zzz-60}, substituting relation \eqref{znnbbb} to bound $1-\beta_1$, we achieve
		\begin{align}\label{znb2375zzz-60}
		\frac{1}{H_T}\sum_{k=0}^T h_k B^{(k)} = \tilde{O}\left(
		\frac{\sigma^2}{n T} + \frac{\sigma^2}{(1-\beta) T^2} \big(\frac{\bar{\lambda}^2_2}{\bar{\lambda}_n}\big)  + \frac{\bar{\lambda}_2\exp\{-(1-\beta) \big(\frac{\bar{\lambda}^{\frac{1}{2}}_n}{\bar{\lambda}_2}\big) T \}}{(1-\beta)\bar{\lambda}^{\frac{1}{2}}_n} + \exp\{-T\} \right).
		\end{align}
		Ignoring constants $\bar{\lambda}_2$ and $\bar{\lambda}_n$ (these quantities can be regarded as  constants when $\bar{\lambda}_n$ is bounded away from zero. For example, if $\bar{W} = (3I + W)/4$, we have $\bar{\lambda}_2 \ge \bar{\lambda}_n \ge 1/2$. ), and recalling the relation in \eqref{znnbbb}, we achieve the result in \eqref{thm2-results}. 
	\end{proof}
	
	\section{Proof of Theorem \ref{thm-lower-bound}}
	\label{app-lower-bound-prop}
	\begin{proof}
		We consider the minimization problem of the form \eqref{eq:general-prob} with $f_i(x) = \frac{1}{2}\|x\|^2$ where $x \in \RR$ and with $W=\beta I_n+\frac{1}{n}(1-\beta)\one_n\one_n^T$. Note that the eigenvalues of $W$ are $\lambda_1(W)=1$ and $\lambda_k(W)=\beta$, $\forall\,2\leq k\leq n$,
% 		$\Lambda(W)=\{1,\beta,\dots,\beta\}$. Note this $W$ is doubly stochastic when we designate $\frac{1}{\sqrt{n}}\mathds{1}_n$ as the eigenvector of $W$ corresponding to $1$. More specifically, let $Q=[\frac{1}{\sqrt{n}}\mathds{1}_n, \tilde{Q}]\in\RR^{n\times n}$ be an orthonormal matrix, then $W:=Q\text{diag}(\Lambda(W))Q^T$ is a doubly stochastic matrix with eigenvalues $\Lambda(W)$.   
		Under such setting, it holds that $f_i(x) = f_j(x)$ for any $i,j \in [n]$ and there is no heterogeneity, {\em i.e.}, $b^2 = 0$ and $\rho(W-\frac{1}{n}\mathds{1}_n\mathds{1}_n^T)=\beta$. The D-SGD algorithm in this setting will iterate as follows:
		\begin{align}
		\vx^{(k+1)} &= {W}(\vx^{(k)} - \gamma \vx^{(k)} - \gamma \vs^{(k)}) = (1-\gamma){W}\vx^{(k)} - \gamma \bar{W} \vs^{(k)}  \label{27sdga109} 
		\end{align}
		where $\vx \in \RR^{n}$ is a vector, and $\vs \in \RR^{n}$ is the gradient noise.  With \eqref{27sdga109}, we have
		\begin{equation}\label{eqn:vdgvioreqw}
		    \bar{x}^{(k+1)}=(1-\gamma)\bar{x}^{(k)}-\gamma \bar{s}^{(k)}
		\end{equation}
		and 
		\begin{align}
		\bar{\vx}^{(k+1)} = (1-\gamma) \frac{1}{n}\mathds{1}\mathds{1}^T \vx^{(k)} - \gamma \frac{1}{n}\mathds{1}\mathds{1}^T \vs^{(k)}.
		\end{align}
		Moreover, we assume each element of the noise follows standard Gaussian distribution, \textit{i.e.}, $s_i^{(k)} \sim \cN(0,\sigma^2)$, and $s_i^{(k)}$ is independent of each other for any $k$ and $i$. We also assume the gradient noise $\vs^{(k)}$ is independent of $\vx^{(\ell)}$ for any $\ell\leq  k$. With these assumptions, it holds that $\mathbb{E}[\vs^{(k)} (\vs^{(k)})^T] = \sigma^2 I \in \RR^{n\times n}$. 
% 		With \eqref{27sdga109}, we have
% 		\begin{align}
% 		\bar{\vx}^{(k+1)} = (1-\gamma) \frac{1}{n}\mathds{1}\mathds{1}^T \vx^{(k)} - \gamma \frac{1}{n}\mathds{1}\mathds{1}^T \vs^{(k)}
% 		\end{align}
		Subtracting the above recursion from \eqref{27sdga109}, we have 
		\begin{align}\label{xz6213gsb}
		\vx^{(k+1)} - \bar{\vx}^{(k+1)}  = (1-\gamma) ({W} - \frac{1}{n}\mathds{1}\mathds{1}^T)(\vx^{(k)} - \bar{\vx}^{(k)})  - \gamma ({W} - \frac{1}{n}\mathds{1}\mathds{1}^T) \vs^{(k)}.
		\end{align}
		We next define matrix $P = {W} - \frac{1}{n}\mathds{1}\mathds{1}^T$. Note that $\vs^{(k)}$ is independent of $\vx^{(k)}$. By taking the  mean-square-expectation over both sides of the above equality, we have 
		\begin{align}\label{0am}
		\mathbb{E}\|\vx^{(k+1)} - \bar{\vx}^{(k+1)}\|^2 &= \mathbb{E}\|(1-\gamma) P (\vx^{(k)} - \bar{\vx}^{(k)})\|^2 + \gamma^2 \mathbb{E}\|P\vs^{(k)}\|^2 \nonumber \\
		&\overset{\eqref{xz6213gsb}}{=} \mathbb{E}\|(1-\gamma)^2 P^2 (\vx^{(k-1)} - \bar{\vx}^{(k-1)}) - \gamma (1-\gamma)P^2 \vs^{(k-1)}\|^2 + \gamma^2 \mathbb{E}\|P\vs^{(k)}\|^2 \nonumber \\
		&= \mathbb{E}\|(1-\gamma)^2 P^2 (\vx^{(k-1)} - \bar{\vx}^{(k-1)})\|^2 + \gamma^2(1-\gamma)^2\mathbb{E}\|P^2\vs^{(k-1)}\|^2 + \gamma^2 \mathbb{E}\|P\vs^{(k)}\|^2 \nonumber \\
		&= \cdots \nonumber \\
		&= \|(1-\gamma)^{k+1}P^{k+1}(\vx^{(0)} - \bar{\vx}^{(0)})\|^2 + \gamma^2 \sum_{\ell=0}^k \mathbb{E}\|(1-\gamma)^\ell P^{\ell+1} \vs^{(k-\ell)}\|^2
		\end{align} 
		In the above derivations, we used the fact that $\vs^{(k)}$ is independent of $\vx^{(k)}$ for any $k$. Without loss of generality, we can assume $\gamma\leq  \frac{2}{L} = 2$, otherwise the iteration explodes.
		Since $x^\star = 0$,  by \eqref{eqn:vdgvioreqw}, we similarly have 
		\begin{align}\label{eqn:gvbioqwn1}
		    \EE\|\bar{x}^{(k+1)}-\bar{x}^\star\|^2&= (1-\gamma)^{2(k+1)}\|\bar{x}^{(0)}-\bar{x}^\star\|^2+{\gamma^2}\sum\limits_{\ell=0}^k\mathbb{E}\|(1-\gamma)^\ell  \bar{s}^{(k-\ell)}\|^2\geq  (1-\gamma)^{2(k+1)}\|\bar{x}^{(0)}-\bar{x}^\star\|^2.
		\end{align}
		
		Next we examine $\mathbb{E}\|(1-\gamma)^\ell P^{\ell+1} \vs^{(k-\ell)}\|^2$:
% 		\begin{align}
% 		&\ \mathbb{E}\|(1-\gamma)^\ell P^{\ell+1} \vs^{(k-\ell)}\|^2 \nonumber \\
% 		=&\ (1-\gamma)^{2\ell}\mathbb{E}\{\tr([\vs^{(k-\ell)}]^T P^{\ell+1} P^{\ell+1} \vs^{(k-\ell)})\} \nonumber \\
% 		=&\ (1-\gamma)^{2\ell}\mathbb{E}\{\tr(P^{2(\ell+1)}\vs^{(k-\ell)}[\vs^{(k-\ell)}]^T)\} \nonumber \\
% 		=&\ (1-\gamma)^{2\ell}\tr\big(P^{2(\ell+1)}\mathbb{E}\{\vs^{(k-\ell)}[\vs^{(k-\ell)}]^T\}\big) \nonumber \\
% 		\overset{(a)}{=}&\ \sigma^2(1-\gamma)^{2\ell} \tr(P^{2(\ell+1)}) \nonumber \\
% 		\overset{(b)}{=}&\ \sigma^2(1-\gamma)^{2\ell} \tr(U \Lambda_P^{2(\ell+1)} U^T) \nonumber \\
% 		{=}&\ \sigma^2(1-\gamma)^{2\ell} \tr(\Lambda_P^{2(\ell+1)} U^T U) \nonumber \\
% 		=&\ \sigma^2(1-\gamma)^{2\ell}\tr(\Lambda_P^{2(\ell+1)}) \nonumber \\
% 		=&\ \sigma^2(1-\gamma)^{2\ell}\sum_{i=2}^n {\lambda}^{2(\ell+1)}_i \nonumber \\
% 		\overset{(c)}{=}&\ (n-1)\sigma^2(1-\gamma)^{2\ell}\lambda^2 
% % 		\overset{(c)}{\ge}&\  \sigma^2(1-\gamma)^{2\ell}{\lambda}_2^{2(\ell+1)}
% 		\label{zxnzxcncxn}
% 		\end{align}
\begin{align}
		&\ \mathbb{E}\|(1-\gamma)^\ell P^{\ell+1} \vs^{(k-\ell)}\|^2 \nonumber \\
		=&\ (1-\gamma)^{2\ell}\mathbb{E}\{\tr([\vs^{(k-\ell)}]^T P^{\ell+1} P^{\ell+1} \vs^{(k-\ell)})\} \nonumber \\
		=&\ (1-\gamma)^{2\ell}\mathbb{E}\{\tr(P^{2(\ell+1)}\vs^{(k-\ell)}[\vs^{(k-\ell)}]^T)\} \nonumber \\
		=&\ (1-\gamma)^{2\ell}\tr\big(P^{2(\ell+1)}\mathbb{E}\{\vs^{(k-\ell)}[\vs^{(k-\ell)}]^T\}\big) \nonumber \\
		\overset{(a)}{=}&\ \sigma^2(1-\gamma)^{2\ell} \tr(P^{2(\ell+1)}) \nonumber \\
		\overset{(b)}{=}&\ \sigma^2(1-\gamma)^{2\ell} \tr(U \Lambda_P^{2(\ell+1)} U^T) \nonumber \\
		{=}&\ \sigma^2(1-\gamma)^{2\ell} \tr(\Lambda_P^{2(\ell+1)} U^T U) \nonumber \\
		=&\ \sigma^2(1-\gamma)^{2\ell}\tr(\Lambda_P^{2(\ell+1)}) \nonumber \\
% 		=&\ \sigma^2(1-\gamma)^{2\ell}\sum_{i=2}^n {\lambda}^{2(\ell+1)}_i \nonumber \\
		\overset{(c)}{=}&\ (n-1)\sigma^2(1-\gamma)^{2\ell}\beta^2 
% 		\overset{(c)}{\ge}&\  \sigma^2(1-\gamma)^{2\ell}{\lambda}_2^{2(\ell+1)}
		\label{zxnzxcncxn}
		\end{align}
		where (a) holds because $\mathbb{E}[\vs^{(k)} (\vs^{(k)})^T] = \sigma^2 I \in \RR^{n\times n}$ for any $k$, and (b) holds because $\Lambda_P = \Lambda_{{W} - \frac{1}{n}\mathds{1}\mathds{1}^T}=\{0,\beta,\dots,\beta\}$. 
% 		The eigenvalue ${\lambda}_i$ is defined as ${\lambda}_i(W)$. 
With \eqref{zxnzxcncxn}, we have 
% 		\begin{align}\label{z2ba00999}
% 		\gamma^2 \sum_{\ell=0}^k \mathbb{E}\|(1-\gamma)^\ell P^{\ell+1} \vs^{(k-\ell)}\|^2 \ge \gamma^2 \sigma^2 \lambda_2^2  \sum_{\ell=0}^k  (1-\gamma)^{2\ell}{\lambda}_2^{2\ell} = \gamma^2 \sigma^2 \lambda_2^2 \frac{1-(1-\gamma)^{2(k+1)}{\lambda}_2^{2(k+1)}}{1 - (1-\gamma)^2\lambda_2^2}.
% 		\end{align}
\begin{align}
		\gamma^2 \sum_{\ell=0}^k \mathbb{E}\|(1-\gamma)^\ell P^{\ell+1} \vs^{(k-\ell)}\|^2 
		\ge &(n-1)\gamma^2 \sigma^2 \beta^2  \sum_{\ell=0}^k  (1-\gamma)^{2\ell}{\beta}^{2\ell}\nonumber\\
		=&(n-1) \gamma^2 \sigma^2 \beta^2 \frac{1-(1-\gamma)^{2(k+1)}{\beta}^{2(k+1)}}{1 - (1-\gamma)^2\beta^2}.\label{z2ba00999}
		\end{align}
% 		where we use the facts that $1-x\leq \exp(-x)$.
		Substituting \eqref{z2ba00999} into \eqref{0am}, we achieve 
% 		\begin{align}
% 		\mathbb{E}\|\vx^{(k)} - \bar{\vx}^{(k)}\|^2 \ge  \gamma^2 \sigma^2 \lambda_2^2 \frac{1-(1-\gamma)^{2k}{\lambda}_2^{2k}}{1 - (1-\gamma)^2\lambda_2^2}
% 		\end{align}
\begin{align}
		\mathbb{E}\|\vx^{(k)} - \bar{\vx}^{(k)}\|^2 \ge  (n-1)\gamma^2 \sigma^2 \beta^2 \frac{1-(1-\gamma)^{2k}{\beta}^{2k}}{1 - (1-\gamma)^2\beta^2}\ge  \frac{n}{2}\gamma^2 \sigma^2 \beta^2 \frac{1-(1-\gamma)^{2k}{\beta}^{2k}}{1 - (1-\gamma)^2\beta^2}.
		\end{align}
		Since $\frac{1}{n}\mathbb{E}\|\vx^{(k)} - \vx^\star\|^2 =  \mathbb{E}\|\bar{x}^{(k)} - x^\star\|^2 + \frac{1}{n}\mathbb{E}\|\vx^{(k)} - \bar{\vx}^{(k)}\|^2$, with \eqref{eqn:gvbioqwn1} and \eqref{eqn:gvbioqwn2},
		we have 
% 		\begin{align}
% 		\frac{1}{n}\mathbb{E}\|\vx^{(k)} - \vx^\star\|^2 \ge \frac{\sigma^2}{n}\gamma^2\lambda_2^2 \frac{1-(1-\gamma)^{2k}{\lambda}_2^{2k}}{1 - (1-\gamma)^2\lambda_2^2}
% 		\end{align}
\begin{align}\label{eqn:gvbioqwn2}
		\frac{1}{n}\mathbb{E}\|\vx^{(k)} - \vx^\star\|^2 \ge (1-\gamma)^{2k}\|\bar{x}^{(0)}-x^\star\|^2+ \frac{\sigma^2\gamma^2\beta^2}{2} \frac{1-(1-\gamma)^{2k}{\beta}^{2k}}{1 - (1-\gamma)^2\beta^2}
		\end{align}
		To guarantee D-SGD to achieve the linear speedup, we require that   $\frac{1}{n}\mathbb{E}\|\vx^{(k)} - \vx^\star\|^2 \lesssim \frac{\sigma^2}{n k}$ holds for any sufficiently large $k$ (note that P-SGD will achieve the linear speedup $\frac{\sigma^2}{n k}$ for the strongly-convex scenario). Thus, it is necessary to have that 
% 		\begin{align}\label{bzbzbz86234}
% 		\frac{\sigma^2}{n}\gamma^2\lambda_2^2 \frac{1-(1-\gamma)^{2k}{\lambda}_2^{2k}}{1 - (1-\gamma)^2\lambda_2^2} &\le \frac{\sigma^2}{2nk}
% 		\end{align}
\begin{align}\label{bzbzbz86234}
		\underbrace{(1-\gamma)^{2k}\|\bar{x}^{(0)}-x^\star\|^2}_{\text{I}}+ \underbrace{\frac{\sigma^2\gamma^2\beta^2}{2} \frac{1-(1-\gamma)^{2k}{\beta}^{2k}}{1 - (1-\gamma)^2\beta^2}}_{\text{II}} &\le\frac{\sigma^2}{nk}
		\end{align}
		up to some absolute constants.
		In other words, there exists transient time $k_{\text{trans}}$ such that  for all $k\geq k_{\text{trans}}$, the above inequality holds up to some absolute constants. We omit the potential constant factors for simplicity since our analysis can be easily adapted to the case with some absolute constants on the two sides of \eqref{bzbzbz86234} and the rate remains the same. 
		
		Next we find $\gamma:=\gamma(k)$ such that \eqref{bzbzbz86234} holds and show that such $\gamma$ exists only when $k_{\text{trans}}=\tilde{\Omega}\left(\frac{n\beta^2}{1-\beta}\right)$.
		\begin{itemize}
		\item If $\gamma\geq \frac{1}{2}$, then $\text{II}\geq \frac{\sigma^2\beta^2}{8}$ which means \eqref{bzbzbz86234} can only holds when $k=O(1)$. Therefore, to let \eqref{bzbzbz86234} holds for all sufficiently large $k$, one must consider $\gamma<\frac{1}{2}$.
		\item If $\gamma<\frac{1}{2}$, then by inequality $\exp(\frac{x}{1+x})\leq 1+x$ for $x>-1$, we have 
		\begin{align}
		    \frac{\sigma^2}{nk}\geq (1-\gamma)^{2k}\|\bar{x}^{(0)}-x^\star\|^2\geq \exp(-2k\gamma/(1-\gamma))\|\bar{x}^{(0)}-x^\star\|^2\geq e^{-4k\gamma}\|\bar{x}^{(0)}-x^\star\|^2
		\end{align}
		where the last inequality is due to the fact $\gamma<\frac{1}{2}$. Therefore, \eqref{bzbzbz86234} implies
		\begin{equation}\label{eqn:bidosm1}
		    \gamma\geq \frac{\ln(nk\|\bar{x}^{(0)}-x^\star\|^2/\sigma^2)}{4k}.
		\end{equation}
		On the other hand, since \eqref{bzbzbz86234} implies  $(1-\gamma)^{2k}\leq \frac{\sigma^2}{nk\|\bar{x}^{(0)}-{x}^\star\|^2}$, we have
		\begin{equation}
		    \text{II}\geq \frac{\sigma^2\gamma^2\beta^2}{2}\frac{1-\frac{\sigma^2}{nk\|\bar{x}^{(0)}-{x}^\star\|^2}\beta^{2k}}{1-(1-\gamma)^2\beta^2}\ge \frac{\sigma^2\gamma^2\beta^2}{2}\frac{1-\frac{\sigma^2}{nk\|\bar{x}^{(0)}-{x}^\star\|^2}}{1-(1-\gamma)^2\beta^2}
		\end{equation}
		where we assume $k$ sufficiently large such that $\frac{\sigma^2}{nk\|\bar{x}^{(0)}-{x}^\star\|^2}\leq 1$.
		Therefore, \eqref{bzbzbz86234} also implies
		\begin{align}
		    &\frac{\sigma^2}{nk}\geq \frac{\sigma^2\gamma^2\beta^2}{2}\frac{1-\frac{\sigma^2}{nk\|\bar{x}^{(0)}-{x}^\star\|^2}}{1-(1-\gamma)^2\beta^2}\nonumber\\
		    \Longleftrightarrow\;&\left(nk- \frac{\sigma^2}{\|\bar{x}^{(0)}-x^\star\|^2}+2\right)\gamma^2-4\gamma \leq \frac{2}{\beta^2}-2.\label{eqn:bidosm2}
		\end{align}
		Note that for $\gamma>0$, $f(\gamma)\triangleq (nk-\frac{\sigma^2}{\|\bar{x}^{(0)}-x^\star\|^2}+2)\gamma^2-4\gamma$ decreases first then keeps increasing with respect to $\gamma$, so \eqref{eqn:bidosm1} and \eqref{eqn:bidosm2} are compatible only when 
		\begin{align}
		   &\left(nk- \frac{\sigma^2}{\|\bar{x}^{(0)}-x^\star\|^2}+2\right) \frac{\ln(nk\|\bar{x}^{(0)}-x^\star\|^2/\sigma^2)^2}{16k^2}-4\frac{\ln(nk\|\bar{x}^{(0)}-x^\star\|^2/\sigma^2)}{4k}\leq \frac{2}{\beta^2}-2\nonumber\\
		   \Longrightarrow\;& \frac{\left(n- \frac{\sigma^2}{k\|\bar{x}^{(0)}-x^\star\|^2}+\frac{2}{k}\right)\ln(nk\|\bar{x}^{(0)}-x^\star\|^2/\sigma^2)^2-16\ln(nk\|\bar{x}^{(0)}-x^\star\|^2/\sigma^2)}{16k}\leq\frac{2}{\beta^2}-2
		\end{align}
		which leads to $k\geq \tilde{\Omega}\left(\frac{n\beta^2}{1-\beta^2}\right)=\tilde{\Omega}\left(\frac{n\beta^2}{1-\beta}\right)$. Therefore, we reach the conclusion that $k_{\text{trans}}=\tilde{\Omega}\left(\frac{n\beta^2}{1-\beta}\right)$.
		\end{itemize}

	\end{proof}
	
	{
		\section{Convergence of Algorithm \ref{Algorithm: ED_multiple_gossip}}
		\label{app-algo-2}
		In this section we will establish the convergence of D$^2$/Exact-Diffusion with multiple gossip steps. As we have discussed in Sec.~\ref{sec-reformulation-D2-mg}, there are two fundamental differences between the vanilla D$^2$/Exact-Diffusion and its variant with multiple gossip steps: 
		\begin{itemize}
			\item \textbf{Gradient accumulation.} For each outer loop $k$, each node $i$ in D$^2$/Exact-Diffusion with multiple gossip steps will compute the stochastic gradient with $g_i^{(k)} = \frac{1}{R}\sum_{r=1}^R \nabla F(x_i^{(k)};\xi_i^{(k,r)})$ with $R$ independent data samples $\{\xi_i^{(k,r)}\}_{r=1}^R$. This will result in a reduced gradient noise:
			\begin{align}
			\mathbb{E}[\|g_i^{(k)} - \nabla f_i(x_i^{(k)})\|^2 | \cF^{(k-1)}] \le \frac{\sigma^2}{R}.
			\end{align}
			
			\item \textbf{Fast gossip averaging.} With fast gossip averaging (i.e., Algorithm \ref{Algorithm: fast-gossip}), the weight matrix utilized in D$^2$/Exact-Diffusion with multiple gossip steps is $\bar{M}$ instead of $\bar{W}$ (see recursions \eqref{eq:d2-multiple-gossip} and \eqref{eq:d2-pd-mg}). In addition, it is established in Proposition \ref{prop-lambda-mbar} that $\lambda_k(\bar{M}) \in [\frac{1}{4n}, \frac{3}{4n}]$ for $2\le k \le n$ if $R = \lceil \frac{\ln(n)+4}{\sqrt{1-\beta}} \rceil$. 
		\end{itemize}
		
		\noindent We will utilize these facts to facilitate the analysis for D$^2$/Exact-Diffusion with multiple gossip steps.
		
		\subsection{Proof of Proposition \ref{prop-M-property}}\label{app:M-propert}
		This proposition directly follows the results of \cite{liu2011accelerated}. We provide the proof for completeness. Note that \eqref{eqn:M2} can be transformed into a first-order iteration as follows:
		\[
		\begin{bmatrix}
		M^{(r+1)}\\
		M^{(r)}
		\end{bmatrix}=\underbrace{\begin{bmatrix}
		(1+\eta)W & -\eta I\\
		I & 0
		\end{bmatrix}}_{W_2}\begin{bmatrix}
		M^{(r)}\\
		M^{(r-1)}
		\end{bmatrix}.
		\] 
		By \cite[Proposition 3]{liu2011accelerated}, the projection of augmented matrix $W_2$  on the subspace orthogonal to $\mathds{1}_n$ is a contraction with  spectral norm of $\frac{\beta}{1+\sqrt{1-\beta^2}}$. Since $\frac{\beta}{1+\sqrt{1-\beta^2}}\leq 1-\sqrt{1-\beta}$ for any $0\leq \beta\leq 1$, we have
		\[
		\left\|\begin{bmatrix}
		M^{(r)}\\
		M^{(r-1)}
		\end{bmatrix} \vz\right\|\leq \Big(1-\sqrt{1-\beta}\Big)^{r}\left\|\begin{bmatrix}
		M^{(0)}\\
		M^{(-1)}
		\end{bmatrix} \vz\right\|=\sqrt{2}\Big(1-\sqrt{1-\beta}\Big)^r\|\vz\|
		\]
		for any $\vz \perp \mathds{1}_n$. We thus have, for any $\vz \perp \mathds{1}_n$, that
		\[\|	M^{(r)}\vz\|\leq \sqrt{2}\Big(1-\sqrt{1-\beta}\Big)^r\|\vz\|\quad \text{i.e.,} \quad \rho(M^{(r)}-\frac{1}{n}\one_n\one_n^T)\leq \sqrt{2}\Big(1-\sqrt{1-\beta}\Big)^r.
		\]
% 		It might be possible to eliminate the $\sqrt{2}$ factor by further subtle analysis but it is really non-trivial. I found several previous multiple-gossip papers that use the results of \cite{liu2011accelerated} do omit $\sqrt{2}$ without any explanation, which is actually a tiny fault to me. Maybe we should write the short proof in the Appendix to clarify the confusion for readers.
		
		\subsection{Proof of Proposition \ref{prop-lambda-mbar}}
		\label{app-proof-mbar}
		If we choose $R=\lceil \frac{\ln(n)+4}{\sqrt{1-\beta}}\rceil $, $\tau = \frac{1}{2n}$ and denote $M \triangleq  M^{(R)}$ and $\bar{M}\triangleq (1-\tau)M+\tau I$. It follows from Proposition \ref{prop-M-property} that 
		\[
		\sqrt{2}\Big(1-\sqrt{1-\beta}\Big)^R=\sqrt{2}\exp(R\ln(1-\sqrt{1-\beta})) \overset{(a)}{\leq} \sqrt{2}\exp(-R\sqrt{1-\beta})\leq \frac{1}{4n},
		\]
		where (a) holds because of the inequality $\ln(1-x) \le -x$ for any $x\in (0,1)$. The above inequality implies \[
		\max\{|\lambda_2(M)|, |\lambda_n(M)|\}= \rho(M-\frac{1}{n}\one\one^T)\leq \frac{1}{4n}.
		\] 
		Since $\bar{M}\triangleq (1-\tau)M+\tau I$ and $\tau = \frac{1}{2n}$, we have $\lambda_k(\bar{M})=(1-\frac{1}{2n})\lambda_k(M)+\frac{1}{2n}$, the spectrum estimates of $\bar{M}$ is given by \eqref{znzbab09823}. 
		
		\subsection{Proof of Theorem \ref{thm-generally-convex-mg}}
		\label{app-proof-mg-convex}
		The gradient accumulation and the fast gossip  averaging do not affect the convergence analysis of D$^2$/Exact-Diffusion. By following the analysis of Theorem \ref{thm-generally-convex}, if the learning rate is set as 
		\begin{align}\label{znnbzzbbzz0000000}
		% \gamma = \min\left\{\frac{(1-\beta)}{28 L} \Big( \frac{\tilde{\lambda}_n}{\tilde{\lambda}_2}\Big)^{\frac{1}{2}}, \Big(\frac{\tilde{r}_0}{\tilde{r}_1(K+1)}\Big)^{\frac{1}{2}}, \Big(\frac{\tilde{r}_0}{\tilde{r}_2(K+1)}\Big)^{\frac{1}{3}}\right\}
		\gamma = \min\left\{\frac{1}{4L},\frac{(1-\tilde{\beta}) \tilde{\lambda}_n^{\frac{1}{2}}}{40 L \tilde{\lambda}_2}, \Big(\frac{\tilde{r}_0}{\tilde{r}_1(K+1)}\Big)^{\frac{1}{2}}, \Big(\frac{\tilde{r}_0}{\tilde{r}_2(K+1)}\Big)^{\frac{1}{3}},\left(\frac{\tilde{r}_0}{\tilde{r}_3}\right)^\frac{1}{3}\right\},
		\end{align}
		it holds from \eqref{xbsd87-0} that 
		\begin{align}\label{xbsd87-0-mg}
		\frac{1}{K+1}\sum_{k=0}^K\left(\EE f(\bar{x}^{(k)})-f(x^\star)\right)\le O\left(\frac{\tilde{\sigma}}{\sqrt{nK}}\hspace{-0.3mm}+\hspace{-0.3mm}\frac{\tilde{\sigma}^\frac{2}{3}\tilde{\lambda}_2^\frac{2}{3}}{(1\hspace{-0.3mm}-\hspace{-0.3mm}\tilde{\beta}_1)K^\frac{2}{3}\tilde{\lambda}_n^\frac{1}{3}}\hspace{-0.3mm}+\hspace{-0.3mm}\frac{\tilde{\lambda}_2}{(1\hspace{-0.3mm}-\hspace{-0.3mm}\tilde{\beta}_1)K\tilde{\lambda}_n^\frac{1}{2}}\hspace{-0.3mm}+\hspace{-0.3mm}\frac{\tilde{\lambda}_2^\frac{2}{3}}{(1\hspace{-0.3mm}-\hspace{-0.3mm}\tilde{\beta}_1)^\frac{1}{3}(1\hspace{-0.3mm}-\hspace{-0.3mm}\tilde{\beta})^\frac{1}{3}K\tilde{\lambda}_n^\frac{1}{3}}\hspace{-0.3mm}+\hspace{-0.3mm}\frac{1}{K}\right)
		\end{align}
		where $K$ is the number of outer loops, and by the definition of $M$, we have
		\begin{align}\label{zbn24390a65}
		&\tilde{\sigma}^2 = \sigma^2/R, \quad  \tilde{\lambda}_2 = \lambda_2(\bar{M}) \in [\frac{1}{4n}, \frac{3}{4n}], \quad \tilde{\lambda}_n = \lambda_n(\bar{M}) \in [\frac{1}{4n}, \frac{3}{4n}], \nonumber \\
		&\tilde{\beta}_1 = \sqrt{\lambda_2(\bar{M})}\in [\frac{1}{2n^\frac{1}{2}},\frac{\sqrt{3}}{2n^\frac{1}{2}}], \quad \tilde{\beta} =\max\{|\lambda_2(M)|,|\lambda_n(M)|\} \leq  \frac{3}{4n}
		% &\tilde{r}_0 = 2 C \overset{\eqref{eqn:buoqiwpdmc}}{=} O\left(\frac{n\gamma^2\tilde{\lambda}_2^2}{1-\tilde{\lambda}_2}\right)=O\left(n^{1-2s}\gamma^2\right), \quad \tilde{r}_1 = \frac{2\tilde{\sigma}^2}{n}=\frac{2\sigma^2}{nR}, \quad \tilde{r}_2 = \frac{6L\tilde{\beta}_1^2\tilde{\sigma}^2}{(1-\tilde{\beta}_1) \tilde{\lambda}_n}=O\left(\frac{L\sigma^2}{R}\right). \label{zbn24390a65-2}
		% &\tilde{r}_0 = 2 \EE\|\bar{\vz}^{(0)}\|^2, \quad \tilde{r}_1 = \frac{2\tilde{\sigma}^2}{n}, \quad \tilde{r}_2 = \frac{6L\tilde{\lambda}_2^2\tilde{\sigma}^2}{(1-\tilde{\beta}_1) \tilde{\lambda}_n},\quad \tilde{r}_3= \frac{9LC}{n(1-\tilde{\beta}_1)\tilde{\lambda}_n},\quad  C\overset{\eqref{eqn:buoqiwpdmc}}{=} O\left(\frac{n\gamma^2\tilde{\lambda}_2^2}{1-\tilde{\lambda}_2}\right). \label{zbn24390a65-2}
		\end{align}
		In addition, constants $\tilde{r}_0$, $\tilde{r}_1$, $\tilde{r}_2$, and $\tilde{r}_3$ in \eqref{znnbzzbbzz0000000} are defined as follows
		\begin{align}\label{kzunb}
		\tilde{r}_0 = 2 \mathbb{E}\|\bar{\z}^{(0)}\|^2, \ \tilde{r}_1 = \frac{2\tilde{\sigma}^2}{n}, \ \tilde{r}_2 = \frac{24L\bar{\lambda}_2^2\tilde{\sigma}^2}{(1-\tilde{\beta}_1) \tilde{\lambda}_n}, \ \tilde{r}_3 = \frac{9L\tilde{\lambda}_2^2}{(1-\tilde{\beta}_1)(1-\tilde{\beta})\tilde{\lambda}_n}. 
		\end{align}
		We let $T$ be the total number of sampled data or gossip communications, it holds that $K = T/R$. Substituting $K = T/R$ and the facts in \eqref{zbn24390a65}  into \eqref{xbsd87-0-mg}, and ignoring all constants, we achieve 
		% \begin{align}\label{xbsd87-0-mg-2}
		% \frac{1}{K+1}\sum_{k=0}^K [\mathbb{E} f(\bar{x}^{(k)}) \hspace{-0.5mm}-\hspace{-0.5mm} f(x^\star)] = O\Big( \frac{{\sigma}}{\sqrt{nT}} + 
		% \frac{R^{\frac{1}{3}}{\sigma}^{\frac{2}{3}}}{T^{\frac{2}{3}}}  +  \frac{R}{T}\Big) = O\Big( \frac{{\sigma}}{\sqrt{nT}} +
		% \frac{{\sigma}^{\frac{2}{3}}}{T^{\frac{2}{3}}(1-\beta)^{\frac{1}{6}}}  +  \frac{1}{T(1-\beta)^{\frac{1}{2}}}\Big).
		% \end{align}
		\begin{align}
		\frac{1}{K+1}\sum_{k=0}^K \left(\mathbb{E} f(\bar{x}^{(k)}) \hspace{-0.5mm}-\hspace{-0.5mm} f(x^\star)\right) =& O\Big( \frac{{\sigma}}{\sqrt{nT}} +
		\frac{R^{\frac{1}{3}}{\sigma}^{\frac{2}{3}}}{n^{\frac{1}{3}}T^{\frac{2}{3}}}  +  \frac{R}{n^{\frac{1}{2}}T}+\frac{R}{n^{\frac{1}{3}}T}+\frac{R}{T}\Big)\nonumber \\
		=& {O}\Big( \frac{{\sigma}}{\sqrt{nT}} +
		\frac{\ln(n)^\frac{1}{3}{\sigma}^{\frac{2}{3}}}{n^{\frac{1}{3}}T^{\frac{2}{3}}(1-\beta)^{\frac{1}{6}}}  +  \frac{\ln(n)}{T(1-\beta)^{\frac{1}{2}}}\Big)\label{xbsd87-0-mg-2}
		% =& \tilde{O}\Big( \frac{{\sigma}}{\sqrt{nT}} +
		% \frac{{\sigma}^{\frac{2}{3}}n^{-\frac{s}{3}}}{T^{\frac{2}{3}}(1-\beta)^{\frac{1}{6}}}  +  \frac{n^{-\frac{s}{2}}}{T(1-\beta)^{\frac{1}{2}}}+\frac{n^{1-s}R^2}{\sigma^2T^2(1-\beta)^{\frac{1}{2}}}\Big)\nonumber
		\end{align} 
		where the last inequality holds by substituting $R = \lceil \frac{\ln(n)+4}{\sqrt{1-\beta}} \rceil={O}\left(\frac{\ln(n)}{(1-\beta)^\frac{1}{2}}\right)$. Note that the third term is less than or equal to the last term, we achieve the result in \eqref{xbsd87-mg}.

%		Combining with the fact $T\geq R $, the transient time is:
%		\[
%		T\geq O\left(\max\left\{\frac{s^2\ln(n)^2}{\sigma^2(1-\beta)}\max\left\{n^{3-2s},n^{1-s},n^{1-\frac{2}{3}s}\right\},\frac{s\ln(n)}{\sqrt{1-\beta}}\right\}\right).
%		\]
%		Taking $s>\frac{3}{2}\left(1+\frac{1}{2}\log_n(\frac{1}{1-\beta})\right)$and assuming $n$ sufficiently large, we reach transient time
%		% \[
%		% T\geq O\left(\frac{s\ln(n)}{\sqrt{1-\beta}}\right)=\tilde{O}\left(\frac{\ln(n)}{\sqrt{1-\beta}}\right).
%		% \]
%		\[
%		T\geq O\left(\frac{s\ln(n)}{\sqrt{1-\beta}}\right)={O}\left(\frac{\ln(n)+\ln(\frac{1}{1-\beta})}{\sqrt{1-\beta}}\right).
%		\]
		
		\subsection{Proof of Theorem \ref{thm2-mg}}
		\label{app-proof-thm2-mg}
		By following the analysis of Theorem \ref{thm2}, if the learning rate is set as  ($T=KR$)
		\begin{align}\label{gamma-sc-mg-app}
		\gamma = \min\left\{\frac{1}{4L},   \frac{1-\tilde{\beta}_1}{26L} \Big(\frac{\tilde{\lambda}_n^{1/2}}{\tilde{\lambda}_2}\Big),\frac{2\ln(2 n \mu  \mathbb{E}\|\bar{\z}^{(0)}\|^2 K^2/[\tilde{\sigma}^2 (1-\tilde{\beta})])}{\mu K}\right\}
		\end{align}
% 		it holds from \eqref{znb2375zzz-6} that 
% 		\begin{align}\label{results-mg-app}
% 		\frac{1}{H_K}\sum_{k=0}^K h_k \big(\mathbb{E} f(\bar{x}^{(k)}) - f(x^\star) \big) = \tilde{O}\left(\frac{\tilde{\sigma}^2}{n K} +  \frac{\tilde{\sigma}^2}{(1-\tilde{\beta}) K^2} \big(\frac{\tilde{\lambda}^2_2}{\tilde{\lambda}_n}\big)  +
% 		\frac{\tilde{\lambda}_2}{(1-\tilde{\beta})\tilde{\lambda}^{\frac{1}{2}}_n}  \exp\{-(1-\tilde{\beta}) \big(\frac{\tilde{\lambda}^{\frac{1}{2}}_n}{\tilde{\lambda}_2}\big) K \}+\exp(-K)  \right).
% 		\end{align}
		it holds from \eqref{zn238sd6zzz1-0} that 
		\begin{align}\label{results-mg-app}
% 		&\frac{1}{H_K}\sum_{k=0}^K h_k \big(\mathbb{E} f(\bar{x}^{(k)}) - f(x^\star) \big)\\ =&{O}\left(\frac{\tilde{\sigma}^2\ln(nK^2/(\tilde{\sigma}^2(1-\tilde{\beta})))}{n K} +  \frac{\tilde{\sigma}^2\ln(nK^2/(\tilde{\sigma}^2(1-\tilde{\beta})))^2}{(1-\tilde{\beta}) K^2} \big(\frac{\tilde{\lambda}^2_2}{\tilde{\lambda}_n}\big)  +
% 		\frac{\tilde{\lambda}_2}{(1-\tilde{\beta})\tilde{\lambda}^{\frac{1}{2}}_n}  \exp\{-(1-\tilde{\beta}) \big(\frac{\tilde{\lambda}^{\frac{1}{2}}_n}{\tilde{\lambda}_2}\big) K \}+\exp(-K)  \right).
\frac{1}{H_K}\sum_{k=0}^K h_k \big(\mathbb{E} f(\bar{x}^{(k)}) - f(x^\star) \big)=\tilde{O}\left(\frac{\tilde{\sigma}^2}{n K} +  \frac{\tilde{\sigma}^2}{(1-\tilde{\beta}) K^2} \big(\frac{\tilde{\lambda}^2_2}{\tilde{\lambda}_n}\big)  +
		\frac{\tilde{\lambda}_2}{(1-\tilde{\beta})\tilde{\lambda}^{\frac{1}{2}}_n}  \exp\{-(1-\tilde{\beta}) \big(\frac{\tilde{\lambda}^{\frac{1}{2}}_n}{\tilde{\lambda}_2}\big) K \}+\exp(-K)  \right).
		\end{align}
		where $K$ is the number of outer loop, and $h_k$ and $H_K$ are defined in Lemma \ref{lm-sc-ergodic-consenus}. Notation $\tilde{O}(\cdot)$ hides all logarithm factors.  Substituting $K = T/R$,  and the facts in \eqref{zbn24390a65} into \eqref{results-mg-app}, we have 
		\begin{align}\label{results-mg-app-2}
% 		\frac{1}{H_K}\sum_{k=0}^K h_k &\big(\mathbb{E} f(\bar{x}^{(k)}) - f(x^\star) \big)\\
% 		=& {O}\left(\frac{{\sigma}^2\ln(nT^2/(R\sigma^2))}{n T} + \frac{R {\sigma}^2\ln(nT^2/(R\sigma^2))^2n^{-s}}{T^2}+n^{-\frac{s}{2}}
% 		\exp\{- n^{\frac{s}{2}}\frac{T}{R} \}  + \exp(-\frac{T}{R})\right)\nonumber \\
% 		=& {O}\left(\frac{{\sigma}^2\ln(nT^2(1-\beta)^\frac{1}{2}/s\ln(n)\sigma^2)}{n T} + \frac{s\ln(n){\sigma}^2\ln(nT^2(1-\beta)^\frac{1}{2}/s\ln(n)\sigma^2)^2n^{-s}}{T^2(1-\beta)^{\frac{1}{2}}} + \exp\{- (1-\beta)^\frac{1}{2}T/s\ln(n)\}\right).
\frac{1}{H_K}\sum_{k=0}^K h_k\big(\mathbb{E} f(\bar{x}^{(k)}) - f(x^\star) \big)
		=& \tilde{O}\left(\frac{{\sigma}^2}{n T} + \frac{R {\sigma}^2}{nT^2}+n^{-\frac{1}{2}}
		\exp\{- n^{\frac{1}{2}}\frac{T}{R} \}  + \exp(-\frac{T}{R})\right)\nonumber \\
		=& \tilde{O}\left(\frac{{\sigma}^2}{n T} + \frac{{\sigma}^2}{n(1-\beta)^{\frac{1}{2}}T^2} + \exp\{- (1-\beta)^\frac{1}{2}T\}\right).
		\end{align}
% 		Therefore, we achieve the transient time \textcolor{red}{(be more careful)}:
% 		\[
% 		T\geq\max\{ \frac{s\ln(n)n^{1-s}\ln(nT^2(1-\beta)^\frac{1}{2}/\sigma^2)}{(1-\beta)^\frac{1}{2}},\frac{s\ln(n)}{(1-\beta)^\frac{1}{2}}\}.
% 		\]
	}
	
	% {\color{black}}
	% \newpage

\end{document}